\def\@currentlabel{2.1}\label{e:dispaa}
\def\@currentlabel{2.21}\label{e:dispau}
\def\@currentlabel{2.22}\label{e:dispav}
\def\@currentlabel{2.23}\label{e:dispaw}
\def\@currentlabel{2.24}\label{e:dispax}
\def\theequation{\thesection.\@arabic\c@equation}
\let\oldbibliography\thebibliography
\renewcommand{\thebibliography}[1]{%
\oldbibliography{#1}%
\setlength{\itemsep}{0pt}%
}
\renewcommand{\theequation}{\thesection.\arabic{equation}}
\newtheorem{lemma}{Lemma}[section]
\newtheorem{definition}{Definition}[section]
\newtheorem{proposition}{Proposition}[section]
\newtheorem{corollary}{Corollary}[section]
\newtheorem{remark}{Remark}[section]
\newcommand{\bremark}{\begin{remark} \em}
\newcommand{\eremark}{\end{remark} }
\newtheorem*{thmA}{Theorem A}
\newtheorem{open problem}{Open Problem}[section]
\newtheorem{open question}{Open Question}[section]
\newtheorem{open questions on hexagonal lattice}{Open Questions on hexagonal lattice}[section]
\newtheorem{theorem}{Theorem}[section]
\newtheorem*{problemA}{Problem A}
\newtheorem{example}{Example}[section]
\newtheorem*{reformulation}{Reformulation}
\newcommand{\R}{{\mathbb R}}
\newcommand{\BE}{\begin{equation}}
\newcommand{\BEN}{\begin{equation*}}
\newcommand{\EE}{\end{equation}}
\newcommand{\EEN}{\end{equation*}}
\newcommand{\BL}{\begin{lemma}}
\newcommand{\EL}{\end{lemma}}
\newcommand{\BT}{\begin{theorem}}
\newcommand{\ET}{\end{theorem}}
\newcommand{\BP}{\begin{proposition}}
\newcommand{\EP}{\end{proposition}}
\newcommand{\BC}{\begin{corollary}}
\newcommand{\EC}{\end{corollary}}
\renewcommand{\Re}{\operatorname{Re}}
\renewcommand{\Im}{\operatorname{Im}}
\begin{document}


\title[Hexagonal to square phase transition]{\bf On a variational model for the continuous mechanics exhibiting hexagonal to square Phase Transitions}

\author{Senping Luo}

\author{Juncheng Wei}

\address[S.~Luo]{School of Mathematics and statistics, Jiangxi Normal University, Nanchang, 330022, China}
\address[J.~Wei]{Department of Mathematics, University of British Columbia, Vancouver, B.C., Canada, V6T 1Z2}

\email[S.~Luo]{luosp1989@163.com}

\email[J.~Wei]{jcwei@math.ubc.ca}

\begin{abstract}
Inspired by   Conti and Zanzotto \cite{Conti2004A}, we reformulate a simple variational model for reconstructive phase transitions in crystals arising in continuum mechanics in the framework of Landau's theory of phase transition(with slight modification). We provide and prove that this  class of modular invariant functions admit exactly hexagonal-square lattices minimizers without passing through rhombic lattices, being the first rigorous result in this regard.
Our result gives an affirmative answer to an open problem by in \cite{Conti2004A}. In addition, our result has independent interest from number theory.

\end{abstract}

\maketitle

\setcounter{equation}{0}

\section{Introduction and main results}
\setcounter{equation}{0}

We start with some basic analysis in continuum mechanics. For more details, see \cite{Conti2019,Z2020,Z2023} and the references therein. Let $\mathbf{y}=\mathbf{y}(\mathbf{x})$  be the deformation of a continuum body, $\mathbf{y}$ be the current configuration
and $\mathbf{x}$ be the reference state. The strain energy density of an elastic solid $\mathbf{\phi}=\phi(\mathbf{C})$ depends on deformation
gradient $\mathbf{F}=\nabla \mathbf{y}$ through Cauchy-Green tensor $\mathbf{C}=\mathbf{F}'\mathbf{F}$. The strain energy density
$\phi(\mathbf{C})$ admits rotational invariance for $\mathbf{F}\in\mathbf{O}(n)$ by the structure of Cauchy-Green tensor $\mathbf{C}$.
Among all deformations that map a Bravais lattice into itself, the strain energy density $\phi(\mathbf{C})$
must satisfy
\begin{equation}\aligned\label{V1}
\phi(\mathbf{C})=\phi(\mathbf{m}'\mathbf{C}\mathbf{m}),
\endaligned\end{equation}
where $\mathbf{m}$ is taken into $\hbox{GL}(n,\mathbb{Z})$ that denotes the classical modular group and $\det(\mathbf{m})=\pm1$, and the dimension $n=2,3$.
 We are interested in dimension $n=2$, which is closely connected to complex analysis and modular functions on the Poincar\'e upper complex half-plane. To see this, let $\tilde{\mathbf{C}}:=\det(\mathbf{C})^{-\frac{1}{2}}\mathbf{C}$ be the unimodular tensor, one can
 smoothly map the space of unimodular (positive-definite, symmetric) strain tensors $\tilde{\mathbf{C}}$ bijectively to the complex half-plane $\mathbb{H}$: namely,
 \begin{equation}\aligned\label{V2}
\tilde{\mathbf{C}}=\left( \begin{array}{cc}
        \tilde{\mathbf{C}}_{11} & \tilde{\mathbf{C}}_{12} \\
        \tilde{\mathbf{C}}_{21} & \tilde{\mathbf{C}}_{22} \\
      \end{array}\right)
      \mapsto z:= \tilde{\mathbf{C}}_{11}^{-1}( \tilde{\mathbf{C}}_{12}+i)\in\mathbb{H},\\
 \mathbb{H}=\{z= x+ i y\;\hbox{or}\;(x,y)\in\mathbb{C}: y>0\},\;\; ds^2=\frac{dx^2+dy^2}{y^2}
\endaligned\end{equation}
The bijection in \eqref{V2} leads to the strain energy density $\phi$(after changing of variables) satisfies
\begin{equation}\aligned\label{V3}
\phi(\gamma(z))=\phi(z),\;\; \gamma\in \hbox{SL}(2,\mathbb{Z}),
\endaligned\end{equation}
where $\hbox{SL}(2,\mathbb{Z})$ is the classical modular group and denotes the subgroup collecting the positive-determinant elements of
$\hbox{GL}(2,\mathbb{Z})$. Precisely, $\hbox{SL}(2,\mathbb{Z})$ is the group generated by two elements
\begin{equation}\aligned\nonumber
z\mapsto -\frac{1}{z};\;\; z\mapsto z+1; .
\endaligned\end{equation}
The fundamental domain associated to the group $\hbox{SL}(2,\mathbb{Z})$ is
\begin{equation}\aligned\label{Fd30}
\mathcal{D}_0:=\{
z\in\mathbb{H}: |z|\geq1,\; |\Re(z)|\leq\frac{1}{2}
\}.
\endaligned\end{equation}
In this paper, the fundamental domain used is
\begin{equation}\aligned\label{Fd3}
\mathcal{D}:=\{
z\in\mathbb{H}: |z|\geq1,\; 0\leq\Re(z)\leq\frac{1}{2}
\},
\endaligned\end{equation}
induced by the group generated by
\begin{equation}\aligned\nonumber
z\mapsto -\frac{1}{z};\;\; z\mapsto z+1;\;\hbox{and}\; z\mapsto -\overline{z}.
\endaligned\end{equation}

The boundary of the fundamental domain $\mathcal{D}$ (see Picture \ref{LJFFF}) has mechanical meaning and is defined by
\begin{equation}\aligned\nonumber
\Gamma_a:&=\{
z\in\mathbb{H}: z=iy,\; y\geq1
\},\;\;
\Gamma_b:=\{
z\in\mathbb{H}: z=e^{i\theta},\; \theta\in[\frac{\pi}{3},\frac{\pi}{2}]
\},\\
\Gamma_c:&=\{
z\in\mathbb{H}: z=\frac{1}{2}+iy,\; y\geq\frac{\sqrt3}{2}
\},\;\; \partial\mathcal{D}=\Gamma_a\cup\Gamma_b\cup\Gamma_c.
\endaligned\end{equation}
\begin{figure}
\centering
 \includegraphics[scale=0.58]{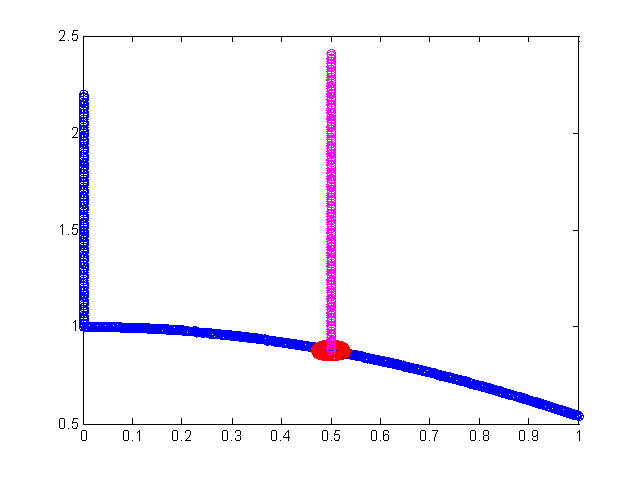}
 \caption{Boundary of the fundamental domain $\mathcal{D}$.}
\label{LJFFF}
\end{figure}

The interior points of $\mathcal{D}$ through \eqref{V2} correspond to oblique lattices with trivial symmetry. Again through \eqref{V2}, the boundary points represent strain tensors and hence lattices in different meaning.
The boundary points on the imaginary axis($z\in\Gamma_a$) correspond to rectangular lattices, the boundary points on the unit arc($z\in\Gamma_b$) correspond to flat-rhombic lattices, and the boundary points on the $\frac{1}{2}-$axis($z\in\Gamma_c$) correspond to skinny-rhombic lattices. A detailed description of the strain tensors under the group $\hbox{GL}(2,\mathbb{Z})$ is depicted in \cite{Conti2004A}.
In particular, the intersection point $\{i\}=\Gamma_a\cap\Gamma_b$ represents the square lattice, while the intersection point $\{e^{i\frac{\pi}{3}}\}=\Gamma_b\cap\Gamma_c$ represents the hexagonal lattice. The points $i$ and $e^{i\frac{\pi}{3}}$(hence square and hexagonal lattice) play a central role in our analysis. Another viewpoint is that
the strain tensors $\tilde{\mathbf{C}}$(hence $z$) represent the local deformation and function as the role of phase in phase transition.

Let $L$ be a two dimensional lattice spanned by two linearly independent vectors $\mathbf{w}_1, \mathbf{w}_2$. In the complex
coordinates, the shape of the lattice $L$ is completely determined by a new complex parameter
$$z=\frac{\mathbf{w}_2}{\mathbf{w}_1}, \;\hbox{or}\;\;\mathbf{w}_2=z\mathbf{w}_1.
$$
In this way, a Bravais lattice with unit density can be parameterized by $L =\sqrt{\frac{1}{\Im(z)}}\Big({\mathbb Z}\oplus z{\mathbb Z}\Big)$   where $z \in \mathbb{H}$.

A classical way to construct a strain energy density satisfying \eqref{V3} is by the Klein modular function $J$ or its variants(\cite{Conti2019,Parry1998}), while a more general way is by the summation on the lattices through a external potential $\mathbf{v}$, namely by
\begin{equation}\aligned\label{V5}f(z):=\sum_{\mathbb{P}\in L}\mathbf{v}(|\mathbb{P}|^2).
\endaligned\end{equation}
It is straightforward to check that $f(z)$ satisfies \eqref{V3}. Note that Folkins \cite{Folkins1991} proposed the lattice summation \eqref{V5}
to describe the energy per atom on Bravais lattices, harmonic analysis of functionals \eqref{V5} are given there.
A function of the form in \eqref{V5}(modular invariant)
appears in number theory and gives rise to natural candidates for strain energy functions with symmetry group.
Along this line, one obtains that
\begin{reformulation}\label{R1}
Landau's theory of plannar crystal plasticity can be formulated to a variational problem
\begin{equation}\aligned\label{V6}\min_{L, |L|=\rho>0}\sum_{\mathbb{P}\in L}\mathbf{v}(\alpha\cdot|\mathbb{P}|^2),
\endaligned\end{equation}
here $\mathbf{v}$ is the background function related to the plasticity$($involving parameters$)$, and $\rho$ is the density of the crystal$($lattice$)$. The parameter $\alpha$ plays the role of "temperature". The summation represents free energy density of the crystals at a fixed "temperature" $\alpha$.
The shape of $L$ determines the phase of the crystals.
\end{reformulation}
This reformulation consisting in lattice summation form looks new and different, comparing the classical form of phase transitions consisting in integral form(see e.g. \cite{Conti2007,Muller} and the references therein).

An interesting open question closely related to the Reformulation is

\begin{problemA}[Square-hexagonal phase transition, \cite{Conti2004A,Conti2004B}, \cite{Conti2004A} Section 4, page 82]\label{ProA} Obtain a model energy for the square-hexagonal transformation, i.e., find a strain energy function$($satisfying \eqref{V3}$)$ such that the global minimum is always either the square or the hexagonal state when changing a parameter, the system goes through the following three regimes:
\begin{itemize}
  \item $(i)$ the square state is the global minimum and the hexagonal is unstable(precisely, a saddle point);
  \item $(ii)$ both the square and the hexagonal states are local minimum;
  \item $(iii)$ the hexagonal is the global minimum, and the square is unstable.
\end{itemize}

\end{problemA}
For phase transitions in crystals, Landau 1936(\cite{Landau1936}) proposed a theory to explain it, namely, finding the minimum of the density function, further
this density function should has some symmetries. This theory was further developed by many mathematicians(\cite{Ericksen1980,Folkins1991,Conti2004A,Conti2004B,Parry1998} etc.). In two dimensional crystals, these symmetries was further identified by $\hbox{SL}(2,\mathbb{Z})$.

A mathematical theory to explain hexagonal to square lattice phase transitions is to find a strain energy function with a parameter such that
its minimum is achieved at hexagonal lattice or square lattice as the parameter changing, without passing through the rhombic lattice.
This problem is initiated by Landau in 1936 and justified by many mathematicians and physicists.

Complex six-order polynomials were constructed to answer Problem A under the assumption that the reference lattice is either square or hexagonal(\cite{Conti2004A}), and recently in \cite{Conti2019,Z2023,Z2020}, a modular invariant function
\begin{equation}\aligned\nonumber
|J(z)-1|+\alpha |J(z)|^{2/3},
\endaligned\end{equation}
was constructed to answer Problem A by numerical computation restricted on the unit arc, where $J$ is the Klein invariant function.  These two results are interesting progress to Problem A, while a complete and rigorous proof  without assumption on the reference lattice or restriction on the arc are needed.  Our result provides  a simple but rigorously justified  strain energy function (satisfying \eqref{V1}) to the mathematical theory of Landau's phase transition, by invoking the form in \eqref{V5}.

Our idea is motivated from classical number theory, this is partially inspired by Parry \cite{Parry1998}(page 2), where he remarked that"{\it My purpose, here, is to describe and elaborate upon methods from classical complex analysis and number theory which have a natural application in this area$($continuum mechanics of phase transitions in crystals$)$}". In 1988, number theorist Montgomery \cite{Mon1988} proved the following celebrated result:
\begin{thmA}[Hexagonal lattice] \label{Theorem4} For all $\alpha> 0$,
\begin{equation}\aligned
\min_{L}\sum_{\mathbb{P}\in L, |L|=1} e^{- \pi\alpha|\mathbb{P}|^2}\;\;\hbox{is achieved at hexagonal lattice}.\;\;
\endaligned\end{equation}

\end{thmA}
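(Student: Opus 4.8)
The plan is to parametrize unit-density lattices by $z=x+iy\in\mathbb{H}$ via $L=\sqrt{1/y}\,(\mathbb{Z}\oplus z\mathbb{Z})$ and to rewrite the energy as the theta function
\[
\Theta(z;\alpha):=\sum_{\mathbb{P}\in L}e^{-\pi\alpha|\mathbb{P}|^2}
=\sum_{(m,n)\in\mathbb{Z}^2}e^{-\frac{\pi\alpha}{y}\left((m+nx)^2+n^2y^2\right)}.
\]
Since this depends only on the lattice $L$, it is $\hbox{SL}(2,\mathbb{Z})$-invariant and invariant under $z\mapsto-\overline{z}$, so by \eqref{V3} it suffices to minimize over the fundamental domain $\mathcal{D}$ of \eqref{Fd3}, on which $y\geq\frac{\sqrt3}{2}$. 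Because $\Theta(z;\alpha)\to\infty$ as $y\to\infty$ (visible after the Poisson summation below), a minimizer exists in $\mathcal{D}$, and the goal is to show the unique minimizer is the corner $e^{i\pi/3}=\tfrac12+i\tfrac{\sqrt3}{2}\in\Gamma_b\cap\Gamma_c$.

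First I would apply Poisson summation in the index $m$ for each fixed $n$ to obtain the Fourier expansion in $x$,
\[
\Theta(z;\alpha)=\sqrt{\tfrac{y}{\alpha}}\sum_{k\in\mathbb{Z}}c_k(y;\alpha)\,e^{2\pi i k x},
\qquad
c_k(y;\alpha)=\sum_{\substack{n,j\in\mathbb{Z}\\ nj=k}}e^{-\pi\alpha y\,n^2-\frac{\pi y}{\alpha}j^2}.
\]
Each $c_k>0$, and pairing $(n,j)$ with $(-n,-j)$ gives $c_k=c_{-k}$, so for fixed $y$ the slice $F(x):=\Theta(x+iy;\alpha)$ is even, $1$-periodic, and expands in $\cos(2\pi k x)$ with positive coefficients. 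The key reduction I would then prove is that $F$ is strictly decreasing on $[0,\tfrac12]$; since on every horizontal slice the admissible range of $x$ is a subinterval ending at $\tfrac12$, this forces the slicewise minimum onto the boundary segment $\Gamma_c$, where $\Re(z)=\tfrac12$.

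This monotonicity is where I expect the main difficulty to lie. It amounts to the positivity of the sine series $\sum_{k\geq1}k\,c_k\sin(2\pi k x)$ on $(0,\tfrac12)$, and the obstacle is to make this hold \emph{uniformly in $\alpha>0$}. Two reductions should make it tractable: the Poisson/duality identity $\Theta(z;\alpha)=\alpha^{-1}\Theta(z^{*};\alpha^{-1})$ (the inverse quadratic form lies in the same modular orbit, and the hexagonal lattice is self-dual) lets me assume $\alpha\geq1$; and the lower bound $y\geq\frac{\sqrt3}{2}$ forces rapid Gaussian decay of $c_k$ in $k$, after which a convexity/decay estimate on the sequence $(c_k)_{k\geq0}$ showing that the $k=1$ term dominates the tail yields the desired positivity.

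Finally I would reduce to the one-variable problem of minimizing $G(y):=\Theta(\tfrac12+iy;\alpha)$ over $y\geq\frac{\sqrt3}{2}$. The hexagonal point carries an order-$6$ symmetry, which forces $G'(\tfrac{\sqrt3}{2})=0$; I would then show $G$ is increasing on $[\tfrac{\sqrt3}{2},\infty)$, using the same Fourier coefficients evaluated at $x=\tfrac12$ (where the modes appear with signs $(-1)^k$), so that the minimum along $\Gamma_c$ sits at $y=\frac{\sqrt3}{2}$. Combining the two reductions locates the global minimizer at $e^{i\pi/3}$, the hexagonal lattice. I expect the uniform-in-$\alpha$ monotonicity in $x$ to be by far the hardest step, the one-dimensional endgame along $\Gamma_c$ being comparatively routine.
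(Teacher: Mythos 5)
First, note that the paper does not prove Theorem A at all: it is Montgomery's theorem, quoted from \cite{Mon1988}. So the relevant comparison is with Montgomery's argument (which the paper's own Sections 3--6 adapt to the harder functional $M$). Your overall architecture is exactly that argument: reduce to the fundamental domain, show $\partial_x\Theta<0$ on $0<x<\tfrac12$ so the slicewise minimum lands on the line $\Re(z)=\tfrac12$, then show monotonicity in $y$ along that line away from $y=\tfrac{\sqrt3}{2}$. The reductions you list (duality $\alpha\mapsto 1/\alpha$, the vanishing of $G'(\tfrac{\sqrt3}{2})$ by symmetry, cf.\ Lemma \ref{Lemma3.4}) are all correct and are the same ones used here.

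The genuine gap is in your proposed execution of the key step, the uniform-in-$\alpha$ positivity of $\sum_{k\ge1}k\,c_k\sin(2\pi kx)$ on $(0,\tfrac12)$. Both mechanisms you suggest for the coefficients $c_k=\sum_{nj=k}e^{-\pi y(\alpha n^2+j^2/\alpha)}$ fail when $\alpha$ is large (equivalently, before dualizing, when $\alpha$ is small): one has $c_1=2e^{-\pi y(\alpha+1/\alpha)}$ while $c_2\ge 2e^{-\pi y(\alpha+4/\alpha)}$, so $c_2/c_1\ge e^{-3\pi y/\alpha}\to 1$ as $\alpha\to\infty$ for fixed $y$. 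Hence the crude domination $c_1>\sum_{k\ge2}k^2c_k$ (which is what the elementary bound $|\sin k\theta|\le k|\sin\theta|$ requires) is false once $\alpha\gtrsim 7y$, and the sequence $(kc_k)$ is then not even decreasing, so Fej\'er-type convexity criteria do not apply either. The duality reduction to $\alpha\ge1$ does not help because $[1,\infty)$ is not compact. The missing idea is to \emph{not} collapse the double sum into the divisor sums $c_k$: keep the grouping by $n$, writing $\partial_x\Theta=2\sqrt{y/\alpha}\sum_{n\ge1}n\,e^{-\pi\alpha yn^2}\vartheta_Y(\tfrac{y}{\alpha};nx)$ as in Lemma \ref{Lemma3}, use that $-\vartheta_Y(\tfrac{y}{\alpha};x)\ge0$ on $[0,\tfrac12]$ (Lemma \ref{Lemma6}), and control the $n\ge2$ terms \emph{relative to} $-\vartheta_Y(\tfrac{y}{\alpha};x)$ via the ratio bounds $|\vartheta_Y(X;kY)/\vartheta_Y(X;Y)|\le k\cdot(\cdots)$ of Lemma \ref{Lemma2a}; the separating factor $e^{-3\pi\alpha y}$ between $n=1$ and $n\ge2$ then closes the estimate uniformly in $\alpha\ge1$. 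This is precisely Montgomery's Lemma 1 mechanism and the mechanism of Section 3 of this paper. With that replacement your outline becomes a complete proof; the endgame along $\Re(z)=\tfrac12$ is, as you say, comparatively routine for $\theta$ (unlike for $M$, where it is exactly where the phase transition occurs).
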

Here we fix the volume/density of the lattice to be $1$, the hexagonal lattice is $L =\sqrt{\frac{2}{\sqrt3}}\Big({\mathbb Z}\oplus e^{i\frac{\pi}{3}}{\mathbb Z}\Big)$.
Montgomery's Theorem has profound application in mathematical physics(see e.g. \cite{Serfaty2012}).
Perhaps, the modular invariant function most related to $\sum_{\mathbb{P}\in L, |L|=1} e^{- \pi\alpha|\mathbb{P}|^2}$ is $\sum_{\mathbb{P}\in L, |L|=1} |\mathbb{P}|^2e^{- \pi\alpha|\mathbb{P}|^2}$ which is the derivative of Montgomery functional with respect to the parameter $\alpha$. In contrast to Montgomery's Theorem A, we establish that

\begin{theorem}[Hexagonal-Square-Rectangular Phase Transitions]\label{Th1}Assume that $\alpha>0$, then there exist $\alpha_a<\alpha_b\in(\frac{4}{5},1)$ such that
\begin{equation}\aligned\nonumber
\min_{L}\sum_{\mathbb{P}\in L, |L|=1}|\mathbb{P}|^2 e^{- \pi\alpha|\mathbb{P}|^2}\;\;\hbox{is achieved at}\;\;
\begin{cases}
\;\;rectangular\; lattice, &\hbox{if}\;\; \alpha\in(0,\alpha_a),\\
\;\;square\; lattice\; &\hbox{if}\;\; \alpha\in[\alpha_a,\alpha_b),\\
\;\;square\;\hbox{or}\;hexagonal\; lattice &\hbox{if}\;\; \alpha=\alpha_b,\\
\;\;hexagonal\;lattice, &\hbox{if}\;\; \alpha\in(\alpha_b,\infty).
\end{cases}
\endaligned\end{equation}
Here, numerically,
\begin{equation}\aligned\nonumber
\alpha_a:=0.8947042694\cdots,\;\;\alpha_b:=0.9203340927\cdots.
\endaligned\end{equation}

\end{theorem}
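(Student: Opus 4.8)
The plan is to recast the energy as a lattice theta sum and to exploit that, up to a constant, it is the $\alpha$-derivative of Montgomery's theta function. Writing $z=x+iy\in\mathbb H$ and $\mathbb P=\tfrac1{\sqrt y}(m+nz)$ for $(m,n)\in\mathbb Z^2$, the energy becomes
\[
F(\alpha,z):=\sum_{(m,n)\in\mathbb Z^2}\frac{|m+nz|^2}{y}\,\exp\!\Big(-\pi\alpha\tfrac{|m+nz|^2}{y}\Big)=-\frac1\pi\,\partial_\alpha\Theta(\alpha,z),
\]
where $\Theta(\alpha,z)=\sum_{(m,n)}\exp(-\pi\alpha|m+nz|^2/y)$ is the Montgomery theta function. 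Since $F(\alpha,\cdot)$ inherits the $\mathrm{SL}(2,\mathbb Z)$-invariance \eqref{V3}, it suffices to minimize $z\mapsto F(\alpha,z)$ over the fundamental domain $\mathcal D$. The whole strategy is to collapse this two-dimensional problem onto the two edges through the square point: the imaginary axis $\Gamma_a$ (rectangular lattices) and the unit arc $\Gamma_b$ (which directly joins the square $z=i$ to the hexagonal $z=e^{i\pi/3}$).

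\textbf{Step 1: collapse onto $\Gamma_a\cup\Gamma_b$.} First I would prove that for every $\alpha>0$,
\[
\partial_x F(\alpha,x+iy)>0\qquad\text{for }0<x<\tfrac12,\ (x+iy)\in\mathcal D.
\]
The vertical lines $x=0$ and $x=\tfrac12$ are reflection axes of $F(\alpha,\cdot)$ (from $z\mapsto-\bar z$ and $z\mapsto 1-\bar z$), hence critical lines in $x$; the strict inequality then says that at each height $y$ the energy increases from the left boundary of $\mathcal D$ toward the skinny-rhombic edge $\Gamma_c=\{x=\tfrac12\}$. Consequently the minimizer at each height sits on the left boundary, i.e.\ on $\Gamma_a$ (for $y\ge1$) or on $\Gamma_b$ (for $\tfrac{\sqrt3}2\le y\le1$), while the interior and $\Gamma_c$ are excluded and the hexagonal corner is retained as the bottom of $\Gamma_b$. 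Termwise,
\[
\partial_x F=\sum_{(m,n)}\frac{2n(m+nx)}{y}\big(1-\pi\alpha\tfrac{|m+nz|^2}{y}\big)\exp\!\Big(-\pi\alpha\tfrac{|m+nz|^2}{y}\Big),
\]
which is sign-indefinite because of the factor $1-\pi\alpha|m+nz|^2/y$. The idea is to Poisson-sum in $m$, turning $F=-\tfrac1\pi\partial_\alpha\Theta$ into a Fourier sine series in $x$ with Gaussian coefficients in $(n,k)$, and then to regroup that series into a manifestly positive expression on $0<x<\tfrac12$. I expect this to be the \emph{main obstacle}.

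\textbf{Steps 2 and 3: the two one-dimensional problems and the thresholds.} On $\Gamma_a$ the sum factorizes: with $\vartheta(t)=\sum_n e^{-\pi tn^2}$ and $\psi(t)=\sum_n n^2e^{-\pi tn^2}=-\tfrac1\pi\vartheta'(t)$,
\[
F(\alpha,iy)=\tfrac1y\,\psi(\alpha/y)\,\vartheta(\alpha y)+y\,\vartheta(\alpha/y)\,\psi(\alpha y),
\]
which is invariant under $y\mapsto1/y$, so $u:=\log y=0$ (the square) is always critical and $F(\alpha,iy)\to\infty$ as $y\to\infty$. I would define $\alpha_a$ as the unique zero of $\partial_u^2F(\alpha,ie^u)\big|_{u=0}$, show this second derivative is positive for $\alpha>\alpha_a$ and negative for $\alpha<\alpha_a$, and then use log-convexity/monotonicity of $\vartheta,\psi$ to rule out any other critical point on $u>0$; thus the $\Gamma_a$-minimum is the square for $\alpha\ge\alpha_a$ and a genuine rectangle $y>1$ (strictly below the square value) for $\alpha<\alpha_a$. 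On $\Gamma_b$, with $z=e^{i\theta}$ one has $|m+nz|^2/y=(m^2+2mn\cos\theta+n^2)/\sin\theta$, and both endpoints — the square ($\theta=\tfrac\pi2$, symmetry $\theta\mapsto\pi-\theta$) and the hexagonal ($\theta=\tfrac\pi3$, six-fold symmetry) — are critical. I would show there is no interior minimum (a single interior critical point, a maximum), so the arc minimum is attained at an endpoint, its location decided by
\[
D(\alpha):=F(\alpha,i)-F(\alpha,e^{i\pi/3}),
\]
which I would prove has a unique zero $\alpha_b$ with $D<0$ for $\alpha<\alpha_b$ and $D>0$ for $\alpha>\alpha_b$; writing $F(\alpha,e^{i\pi/3})$ via the hexagonal (Eisenstein-type) lattice series, the monotonicity of $D$ should follow from a term-by-term comparison of the two sums.

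\textbf{Step 4: assembly.} By Step 1 the global minimum equals $\min\{\min_{\Gamma_a}F,\ \min_{\Gamma_b}F\}$, and since the square lies on both edges the four regimes drop out: for $\alpha<\alpha_a$ the rectangle beats the square on $\Gamma_a$ while the square is still the arc minimum ($\alpha<\alpha_b$), so the minimizer is rectangular; for $\alpha_a\le\alpha<\alpha_b$ the square is simultaneously the $\Gamma_a$- and $\Gamma_b$-minimum; at $\alpha=\alpha_b$ the square and hexagonal values coincide; and for $\alpha>\alpha_b$ the hexagonal value drops below the square (the $\Gamma_a$-minimum), giving the hexagonal lattice. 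The numerical values $\alpha_a,\alpha_b\in(\tfrac45,1)$ are obtained by solving the two transcendental equations $\partial_u^2F(\alpha,ie^u)|_{u=0}=0$ and $D(\alpha)=0$. Beyond Step 1, the subsidiary difficulties are precisely the uniqueness-of-critical-point statements on $\Gamma_a$ and $\Gamma_b$ and the single-sign-change of $D$, all of which rest on quantitative estimates for $\vartheta,\psi$ and their logarithmic derivatives at arguments $\alpha\in(\tfrac45,1)$.
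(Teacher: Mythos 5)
Your overall architecture (reduce to the boundary $\Gamma_a\cup\Gamma_b$ of the half fundamental domain, solve two one-dimensional problems, define $\alpha_a$ by the degeneracy of the second $y$-derivative at $i$ and $\alpha_b$ by $M(\alpha,i)=M(\alpha,e^{i\pi/3})$, then assemble) coincides with the paper's, and your characterizations of the two thresholds are equivalent to the paper's via the duality $M(\tfrac1\alpha,z)=\tfrac{\alpha^2}{\pi}\big(\theta(\alpha,z)-\pi\alpha M(\alpha,z)\big)$. But there is a genuine gap at your Step 1. You claim $\partial_x F(\alpha,z)>0$ throughout the interior of $\mathcal D_{\mathcal G}$ for \emph{every} $\alpha>0$ and propose to prove it by Poisson summation and regrouping. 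The paper's own analysis shows this single mechanism cannot be pushed through the relevant parameter window: the transversal monotonicity is established only for $\alpha\in(0,0.9155730607)$ (Propositions 3.1--3.3, and even there only after inverting to the dual functional $\theta-\pi\alpha M$ with $\alpha\ge 1.0922$, where the Jacobi-theta ratio bounds of Lemmas 2.2--2.4 apply), while for $\alpha\in[0.9155730607,1]$ — which contains the transition value $\alpha_b\approx 0.9203$ — the reduction to the boundary is achieved by an entirely different, second-order ``minimum principle'' (Proposition 4.2) resting on $(\partial_{yy}+\tfrac2y\partial_y)M>0$ and $\partial_{xy}M>0$ together with the vanishing of $\partial_y M$ at $z=i$ and on $\Gamma_b$; the paper even remarks that the region where $\partial_{xy}M>0$ holds is ``almost sharp''. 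So your plan hinges on an inequality that is at best unproven uniformly in $\alpha$ and that the authors evidently could not establish near $\alpha\approx 0.92$--$1$; without a substitute argument there, the collapse onto $\Gamma_a\cup\Gamma_b$ — and hence the whole proof — is incomplete exactly where the square-to-hexagonal transition happens.

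A secondary, smaller gap: your uniqueness-of-critical-point claims on $\Gamma_a$ and $\Gamma_b$ and the single sign change of $D(\alpha)$ are asserted from ``log-convexity/monotonicity of $\vartheta,\psi$'' and ``term-by-term comparison'', but the paper's corresponding steps (Sections 5 and 6) require the duality to move to $\alpha\ge1$ (to control the slowly convergent series for small $\alpha$), a decomposition into dominant and error parts of the ratios $X_b/X_a$ and $Y_b/Y_a$, and a monotone-quotient rule to handle the degeneracy $X_a=X_b=0$ at $y=1$. These are carried out quantitatively in the paper and are not routine; you should either invoke the duality as the paper does or explain how to control the $\alpha\to\alpha_a^-$ and $\alpha\in(\tfrac45,1)$ regimes directly.
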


A square lattice can be parameterized by $L =\Big({\mathbb Z}\oplus i{\mathbb Z}\Big)$, and a rectangular lattice is
$L =\sqrt{\frac{1 }{y}}\Big({\mathbb Z}\oplus iy{\mathbb Z}\Big)$ for $y>1$. Theorem \ref{Th1} has independent interest from number theory along the line of Theorem A, and has many consequences in phase transition and particle physics.

Problem A is answered by the following result (after reordering and re-scaling the parameter, e.g. $\alpha\mapsto\frac{1}{\alpha}, \alpha\mapsto k\cdot\alpha$), followed directly by Theorem \ref{Th1}. Note that the parameter $\alpha$ or $\frac{1}{\alpha}$ now plays the role of "temperature" in the phase transitions, the thresholds $\alpha_a, \alpha_b$ play the similar role of temperatures $0^o,100^o$ in solid-liquid, and liquid-gas phase transitions of water.
\begin{corollary}
[Hexagonal-Square Phase Transitions]\label{Coro1}Assume that $\alpha\geq\alpha_a$, then
\begin{equation}\aligned\nonumber
\min_{L}\sum_{\mathbb{P}\in L, |L|=1}|\mathbb{P}|^2 e^{- \pi\alpha|\mathbb{P}|^2}\;\;\hbox{is achieved at}\;\;
\begin{cases}
\;\;square\; lattice\; &\hbox{if}\;\; \alpha\in[\alpha_a,\alpha_b),\\
\;\;square\;\hbox{or}\;hexagonal\; lattice &\hbox{if}\;\; \alpha=\alpha_b,\\
\;\;hexagonal\;lattice, &\hbox{if}\;\; \alpha\in(\alpha_b,\infty).
\end{cases}
\endaligned\end{equation}
Here, numerically,
\begin{equation}\aligned\nonumber
\alpha_a:=0.8947042694\cdots,\;\;\alpha_b:=0.9203340927\cdots.
\endaligned\end{equation}

\end{corollary}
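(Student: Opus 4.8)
The statement quantifies only over $\alpha\ge\alpha_a$, so it is nothing more than the last three cases of Theorem~\ref{Th1} copied verbatim: restricting the parameter range there to $\alpha\ge\alpha_a$ deletes the single regime $\alpha\in(0,\alpha_a)$ in which the optimal lattice is rectangular, leaving exactly the square branch on $[\alpha_a,\alpha_b)$, the coexistence point $\alpha_b$, and the hexagonal branch on $(\alpha_b,\infty)$. Thus the plan is to invoke Theorem~\ref{Th1} and read off these cases; the entire mathematical content sits in Theorem~\ref{Th1}, and I will organize the proposal around how I would establish that theorem and then extract the Problem~A trichotomy from it.

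For the theorem, write $z=x+iy\in\mathcal D$ and parameterize the unit-density lattice by $\mathbb P=y^{-1/2}(m+nz)$, so that the energy is a shifted $\alpha$-derivative of a lattice theta function,
\[
E_\alpha(z):=\sum_{(m,n)\in\mathbb Z^2}\frac{|m+nz|^2}{y}\,e^{-\pi\alpha|m+nz|^2/y}=-\frac1\pi\,\partial_\alpha\Theta(z;\alpha),
\]
where $\Theta(z;\alpha):=\sum_{(m,n)\in\mathbb Z^2}e^{-\pi\alpha|m+nz|^2/y}$ is the lattice theta function. Because $\Theta$ is modular invariant, so is $E_\alpha$, and the minimization reduces to $z\in\mathcal D$. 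The first main step is to push the minimizer onto the boundary $\partial\mathcal D=\Gamma_a\cup\Gamma_b\cup\Gamma_c$: I would show, by a term-by-term sign analysis of $\partial_x E_\alpha$ on $0<x<\tfrac12$ and of the radial derivative on $|z|>1$, that $E_\alpha$ has no interior critical point, so every minimizer lies on one of the three arcs. The second step is the one-variable analysis on each arc, with the decisive case being the unit arc $\Gamma_b$ (the flat-rhombic lattices): I would prove that $\theta\mapsto E_\alpha(e^{i\theta})$ on $[\tfrac\pi3,\tfrac\pi2]$ is strictly monotone for every $\alpha\neq\alpha_b$, so that its minimum over $\Gamma_b$ is always attained at an endpoint—either $i$ (square) or $e^{i\pi/3}$ (hexagonal)—and never in the interior. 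This monotonicity is precisely the statement that the transition does not pass through a genuinely rhombic lattice, and it is the crux of the whole argument.

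With the arc analysis in hand, the thresholds are pinned down by scalar comparisons. On $\Gamma_a$ (rectangular, $z=iy$, $y\ge1$) the function $y\mapsto E_\alpha(iy)$ has an interior minimum $y^\ast>1$ for small $\alpha$, and this minimum slides down to $y=1$ as $\alpha$ increases; $\alpha_a$ is defined as the value at which the minimum first reaches the square point $z=i$, equivalently where the second variation of $E_\alpha$ at $i$ along $\Gamma_a$ changes sign. The crossover $\alpha_b$ is defined by the single equation $E_{\alpha_b}(i)=E_{\alpha_b}(e^{i\pi/3})$; monotonicity of $\alpha\mapsto E_\alpha(i)-E_\alpha(e^{i\pi/3})$ then makes $\alpha_b$ unique and forces the square branch below it and the hexagonal branch above it, with coexistence exactly at $\alpha_b$, while a separate check shows $\Gamma_c$ never beats these candidates. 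I expect the hard part to be the rigorous, non-numerical control of the competing signs in the first two steps—in particular proving the strict monotonicity of $E_\alpha$ along $\Gamma_b$ uniformly in $\alpha$—since this requires dominating the infinite family of mixed-sign exponential terms produced by differentiating $\Theta$ in both $\alpha$ and the lattice variable; locating $\alpha_a,\alpha_b$ numerically is then routine.

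Finally, to read Problem~A's three regimes out of the Corollary I would append the local (second-variation) analysis at the two fixed points. The point $i$ is fixed by $z\mapsto-1/z$ and $e^{i\pi/3}$ by an order-three modular element, so both are critical points of $E_\alpha$ for every $\alpha$; their stability is governed by the sign of the Hessian of $E_\alpha$, which flips as $\alpha$ crosses explicit thresholds. For $\alpha$ just above $\alpha_a$ the square is the global minimum while the hexagonal point is still a saddle (regime (i)); in an interval straddling $\alpha_b$ both Hessians are positive definite, so square and hexagonal are simultaneously local minima (regime (ii)); and for large $\alpha$ the hexagonal point is the global minimum while the square point has turned into a saddle (regime (iii)). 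These sign changes follow from the same estimates used above, now specialized to the two symmetric points where the computation simplifies considerably.
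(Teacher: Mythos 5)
Your derivation of the corollary itself coincides with the paper's: Corollary \ref{Coro1} is presented there as an immediate consequence of Theorem \ref{Th1}, obtained by discarding the rectangular regime $\alpha\in(0,\alpha_a)$, which is exactly your first paragraph, so the statement at hand is correctly proved. The remainder of your proposal re-sketches a proof of Theorem \ref{Th1}, and there your route departs from the paper's in two respects worth flagging. First, on the arc $\Gamma_b$ you propose to prove that $\theta\mapsto E_\alpha(e^{i\theta})$ is strictly monotone for every $\alpha\neq\alpha_b$; this is stronger than what the paper establishes and is in fact false for some $\alpha$ --- Proposition \ref{Prop5.1} and Figure \ref{Y-Shape} show that the restriction to the arc can be first increasing and then decreasing, with one interior critical point that is necessarily a local maximum. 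What is actually needed, and what the paper proves, is only that the minimum over $\Gamma_b$ is attained at an endpoint $i$ or $e^{i\pi/3}$. Second, the paper does not run a direct sign analysis of $\partial_x E_\alpha$ together with a radial derivative for small $\alpha$: the double series converges too slowly there, so it first inverts the parameter via the duality $M(\tfrac1\alpha,z)=\tfrac{\alpha^2}{\pi}\bigl(\theta(\alpha,z)-\pi\alpha M(\alpha,z)\bigr)$ of Lemma \ref{Lemma1}, reducing $\alpha\in(0,1)$ to $\alpha\geq1$ for the functional $\theta-\pi\alpha M$, and then combines transversal monotonicity in $x$ (Proposition \ref{Prop2}) with the second-order minimum principle of Proposition \ref{PropA} (built on $\partial_{yy}$ and $\partial_{xy}$ estimates) to push minimizers onto $\Gamma_a\cup\Gamma_b$; the edge $\Gamma_c$ is absorbed by conformal equivalence with $\Gamma_b$ rather than checked separately. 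Your closing paragraph on Hessian sign changes at $i$ and $e^{i\pi/3}$ concerns the stability regimes of Problem A and is not required for the corollary as stated. None of these points affects the validity of your proof of the corollary, which correctly reduces it to Theorem \ref{Th1}.
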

\begin{remark} By Corollary \ref{Coro1}, a strain energy function satisfying \eqref{V1} with parameter $\alpha\geq \alpha_a$ is
\begin{equation}\aligned\label{PPP}
\sum_{(m,n)\in\mathbb{Z}^2}
e^{-\pi\alpha \frac{\mathbf{C}_{11} }{\sqrt{\mathbf{C}_{11}\mathbf{C}_{22}-\mathbf{C}_{12}^2} }|m\frac{\mathbf{C}_{12}+\sqrt{\mathbf{C}_{11}C_{22}-\mathbf{C}_{12}^2}\cdot i }{\mathbf{C}_{11}}+n|^2},
\endaligned\end{equation}
which admits the hexagonal-square phase transitions,
here $\mathbf{C}=(\mathbf{C}_{ij})$ is the Cauchy-Green tensor.
\end{remark}

Via the classification and representation of 2d lattices in \cite{Conti2004A}, we have
\begin{corollary}
[Minimization on the Cauchy-Green tensor]\label{Coro2}Assume that $\alpha\geq\alpha_a$, then
\begin{equation}\aligned\nonumber
&\min_{\mathbf{C}>0,\mathbf{C}^T=\mathbf{C} }\sum_{(m,n)\in\mathbb{Z}^2}
e^{-\pi\alpha \frac{\mathbf{C}_{11} }{\sqrt{\mathbf{C}_{11}\mathbf{C}_{22}-\mathbf{C}_{12}^2} }|m\frac{\mathbf{C}_{12}+\sqrt{\mathbf{C}_{11}C_{22}-\mathbf{C}_{12}^2}\cdot i }{\mathbf{C}_{11}}+n|^2}\;\;\\
\hbox{is achieved at}\;\;
&\begin{cases}
\;\;\mathbf{C}_{11}=\mathbf{C}_{22}=2\mathbf{C}_{12}>0, &\hbox{if}\;\; \alpha\in[\alpha_a,\alpha_b],\\
\;\;\mathbf{C}_{11}=\mathbf{C}_{22}>0, \mathbf{C}_{12}=0, &\hbox{if}\;\; \alpha\in[\alpha_b,\infty).
\end{cases}
\endaligned\end{equation}
Here, numerically,
\begin{equation}\aligned\nonumber
\alpha_a:=0.8947042694\cdots,\;\;\alpha_b:=0.9203340927\cdots.
\endaligned\end{equation}

\end{corollary}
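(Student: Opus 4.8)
The plan is to prove Corollary \ref{Coro2} not by any fresh minimization but purely as a change-of-variables restatement of Corollary \ref{Coro1}; all of the genuine analytic content already lives in Theorem \ref{Th1}, and what remains is a dictionary between positive-definite symmetric tensors and points of $\mathbb{H}$. The first step is to record that the functional in Corollary \ref{Coro2} is invariant under the scaling $\mathbf{C}\mapsto\lambda\mathbf{C}$ for $\lambda>0$: both the coefficient $\mathbf{C}_{11}/\sqrt{\mathbf{C}_{11}\mathbf{C}_{22}-\mathbf{C}_{12}^2}$ and the complex number $z:=(\mathbf{C}_{12}+\sqrt{\mathbf{C}_{11}\mathbf{C}_{22}-\mathbf{C}_{12}^2}\,i)/\mathbf{C}_{11}$ are homogeneous of degree zero in the entries of $\mathbf{C}$. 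Hence the functional factors through the unimodular part $\tilde{\mathbf{C}}$ alone, equivalently through the point $z\in\mathbb{H}$ furnished by the bijection \eqref{V2}. Consequently each minimizing $\mathbf{C}$ is pinned down only up to a positive multiple, so the set of minimizers is necessarily a ray, which is exactly the form of the two conditions in the statement.

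Next I would check that, under this substitution, the Cauchy--Green functional is literally the unit-density lattice sum of Theorem \ref{Th1}. Writing $\Re(z)=\mathbf{C}_{12}/\mathbf{C}_{11}$ and $\Im(z)=\sqrt{\mathbf{C}_{11}\mathbf{C}_{22}-\mathbf{C}_{12}^2}/\mathbf{C}_{11}$, so that $\mathbf{C}_{11}/\sqrt{\mathbf{C}_{11}\mathbf{C}_{22}-\mathbf{C}_{12}^2}=1/\Im(z)$, the exponent becomes $-\pi\alpha\,|mz+n|^2/\Im(z)=-\pi\alpha|\mathbb{P}|^2$ for the lattice point $\mathbb{P}=(mz+n)/\sqrt{\Im(z)}$ of $L=\Im(z)^{-1/2}(\mathbb{Z}\oplus z\mathbb{Z})$, while the accompanying prefactor reproduces the weight $|\mathbb{P}|^2$. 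Thus the two minimization problems coincide, and the modular invariance \eqref{V3} lets me fold the full range $z\in\mathbb{H}$ into the fundamental domain $\mathcal{D}$, where Corollary \ref{Coro1} applies directly.

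It then remains to push the two distinguished minimizers back through the dictionary. The square shape $z=i$ forces $\Re(z)=0$ and $\Im(z)=1$, i.e. $\mathbf{C}_{12}=0$ and $\mathbf{C}_{22}=\mathbf{C}_{11}$; the hexagonal shape $z=e^{i\pi/3}=\tfrac12+i\tfrac{\sqrt3}{2}$ forces $\Re(z)=\tfrac12$ and $\Im(z)=\tfrac{\sqrt3}{2}$, and solving $\mathbf{C}_{12}/\mathbf{C}_{11}=\tfrac12$ together with $(\mathbf{C}_{11}\mathbf{C}_{22}-\mathbf{C}_{12}^2)/\mathbf{C}_{11}^2=\tfrac34$ yields $\mathbf{C}_{11}=\mathbf{C}_{22}=2\mathbf{C}_{12}$. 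Corollary \ref{Coro1} tells me, for each $\alpha\ge\alpha_a$, which of these two shapes is the global minimizer; feeding those two values through the correspondence above produces exactly the two tensor conditions, with the $\alpha$-thresholds $\alpha_a,\alpha_b$ inherited verbatim. By the scale degeneracy of the first paragraph, each equality describes the claimed ray in the cone of positive-definite symmetric matrices.

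I expect the only delicate point to be bookkeeping rather than estimation. I must confirm that \eqref{V2} sends the entire open cone $\{\mathbf{C}>0,\ \mathbf{C}^T=\mathbf{C}\}$ onto all of $\mathbb{H}$ (it does: $\Re(z)$ sweeps $\mathbb{R}$ as the sign of $\mathbf{C}_{12}$ varies and $\Im(z)$ sweeps $(0,\infty)$), so that no lattice shape is omitted from the minimization, and that the $\mathrm{GL}(2,\mathbb{Z})$ action behind \eqref{V3} correctly identifies tensors differing by a change of lattice basis with a single representative in $\mathcal{D}$. Once these identifications are in place the corollary is immediate from Corollary \ref{Coro1}, and no new inequality beyond those proved for Theorem \ref{Th1} is needed.
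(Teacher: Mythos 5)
Your overall strategy is exactly the paper's: Corollary \ref{Coro2} is stated there with no argument beyond the phrase ``via the classification and representation of 2d lattices in \cite{Conti2004A}'', i.e.\ it is meant to be the image of Corollary \ref{Coro1} under the dictionary \eqref{V2}. Your scale-invariance observation, the identification $\Re(z)=\mathbf{C}_{12}/\mathbf{C}_{11}$, $\Im(z)=\sqrt{\det\mathbf{C}}/\mathbf{C}_{11}$, and the computations sending $z=i$ to $\mathbf{C}_{12}=0$, $\mathbf{C}_{11}=\mathbf{C}_{22}$ and $z=e^{i\pi/3}$ to $\mathbf{C}_{11}=\mathbf{C}_{22}=2\mathbf{C}_{12}$ are all correct and constitute precisely the intended content.

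However, your last step --- ``feeding those two values through the correspondence above produces exactly the two tensor conditions'' --- is asserted rather than checked, and it is the one place where the check fails against the statement as printed. Corollary \ref{Coro1} puts the \emph{square} lattice ($z=i$, hence $\mathbf{C}_{12}=0$) on $[\alpha_a,\alpha_b)$ and the \emph{hexagonal} lattice ($z=e^{i\pi/3}$, hence $\mathbf{C}_{11}=\mathbf{C}_{22}=2\mathbf{C}_{12}$) on $(\alpha_b,\infty)$, whereas Corollary \ref{Coro2} prints these two conditions in the opposite order. Likewise, the displayed summand in Corollary \ref{Coro2} carries no weight $\frac{\mathbf{C}_{11}}{\sqrt{\det\mathbf{C}}}|mz+n|^2$ in front of the exponential, so as written it is the theta function $\theta(\alpha,z)$ of \eqref{T}, whose minimizer is hexagonal for \emph{every} $\alpha>0$ by Montgomery's Theorem A; your second paragraph quietly supplies a prefactor that ``reproduces the weight $|\mathbb{P}|^2$'' which is not present in the formula you are asked to minimize. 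Both discrepancies are almost certainly typographical on the paper's side (compare the Remark preceding the corollary and \eqref{P1}), but a complete proof must either restore the weight and swap the two cases, or explicitly note that the statement follows only after these corrections; as it stands your argument proves the corrected statement, not the printed one.
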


By Corollary \ref{Coro1}, the strain energy function to {\bf Problem A} is $\sum_{\mathbb{P}\in L, |L|=1}|\mathbb{P}|^2 e^{- \pi\alpha|\mathbb{P}|^2}$ for $\alpha\geq\alpha_a$ (see also \eqref{PPP}). This provides an affirmative and positive answer to {\bf Problem A}.

By the parametrization of the Bravais lattice with fixed density $\rho$($\rho>0$), one has
\begin{equation}\aligned\label{P1}
\sum_{\mathbb{P}\in L, |L|=\rho}|\mathbb{P}|^2 e^{- \pi\alpha|\mathbb{P}|^2}=\sum_{(m,n)\in\mathbb{Z}^2 }
\frac{|mz+n|^2}{\Im(z) }
 e^{- \frac{\pi\cdot\alpha\cdot \rho |mz+n|^2}{\Im(z) }}.
\endaligned\end{equation}
By the parametrization, with loss of generality, we assume that density the lattice is 1 in statement of previous theorems.
The proof of Theorem \ref{Th1} is based on
\begin{theorem}[Minimization]     \label{Th2}
For all $\alpha>0$,
\begin{equation}\aligned
\min_{z\in\mathbb{H}}\sum_{(m,n)\in\mathbb{Z}^2 }
\frac{|mz+n|^2}{\Im(z) }
 e^{- \frac{\pi\alpha |mz+n|^2}{\Im(z) }}\;\;\hbox{is achieved at}\;\;
\begin{cases}
\;\;iy_\alpha, &\hbox{if}\;\; \alpha\in(0,\alpha_a),\\
\;\;i\; &\hbox{if}\;\; \alpha\in[\alpha_a,\alpha_b),\\
\;\;i\;\hbox{or}\;e^{i\frac{\pi}{3}} &\hbox{if}\;\; \alpha=\alpha_b,\\
\;\;e^{i\frac{\pi}{3}}, &\hbox{if}\;\; \alpha\in(\alpha_b,\infty).
\end{cases}
\endaligned\end{equation}
Here, numerically,
\begin{equation}\aligned\nonumber
\alpha_a:=0.8947042694\cdots,\;\;\alpha_b:=0.9203340927\cdots,
\endaligned\end{equation}

and $y_\alpha>1$, further $y_\alpha$ is increasing as $\alpha$ decreasing, namely,
 $$\frac{d y_\alpha }{d\alpha}<0.$$
 Let
\begin{equation}\aligned
\nonumber
{M} (\alpha, z)=\sum_{(m,n)\in\mathbb{Z}^2} \frac{|mz+n|^2 }{\Im(z) } e^{- \pi\alpha \frac{|mz+n|^2}{\Im(z) }},
\;\;\theta (\alpha, z)=\sum_{(m,n)\in\mathbb{Z}^2}e^{- \pi\alpha \frac{|mz+n|^2}{\Im(z) }}
\endaligned\end{equation}
then analytically,
$\alpha_b$ is the unique solution of $$
M(\alpha,i)=M(\alpha,e^{i\frac{\pi}{3}}),\;\;\hbox{for}\;\;\alpha\in[\frac{5}{6},1],$$
$\frac{1}{\alpha_a}$ is the unique solution of
\begin{equation}\aligned\nonumber
\theta_{yy}(\alpha,i)=\pi\alpha M_{yy}(\alpha,i),\;\;\hbox{for}\;\;\alpha\in[1,\frac{9}{8}].
\endaligned\end{equation}

\end{theorem}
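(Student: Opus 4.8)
The plan is to exploit that the functional is, up to a constant, the $\alpha$-derivative of the theta function: writing $\theta(\alpha,z)=\sum_{(m,n)}e^{-\pi\alpha|mz+n|^2/\Im(z)}$, one has $M(\alpha,z)=-\tfrac1\pi\,\partial_\alpha\theta(\alpha,z)$. Since $M$ is $\mathrm{SL}(2,\mathbb Z)$-invariant as in \eqref{V3} and even in $\Re(z)$, it suffices to minimize over the fundamental domain $\mathcal D$. The essential structural input is the Jacobi (Poisson) functional equation: because the lattice $\tfrac1{\sqrt{\Im z}}(\mathbb Z\oplus z\mathbb Z)$ is unimodular with dual of shape $-1/z$, Poisson summation gives $\theta(\alpha,z)=\alpha^{-1}\theta(1/\alpha,z)$. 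Differentiating in $\alpha$ yields the duality
\begin{equation}\nonumber
\pi M(\alpha,z)=\alpha^{-2}\theta(1/\alpha,z)-\pi\alpha^{-3}M(1/\alpha,z),
\end{equation}
valid for every $z$. I would use this identity to transport the analysis between the ``high-temperature'' regime $\alpha<1$ and the rapidly convergent regime $\alpha>1$; in particular, differentiating it twice in the vertical direction at $z=i$ and imposing the stability condition $M_{yy}(\alpha_a,i)=0$ converts, after multiplying by $\alpha_a^2$, into exactly $\theta_{yy}(1/\alpha_a,i)=\pi(1/\alpha_a)M_{yy}(1/\alpha_a,i)$, which is the stated characterization of $1/\alpha_a$.

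Next I would localize the minimizer to the boundary $\partial\mathcal D=\Gamma_a\cup\Gamma_b\cup\Gamma_c$ and then to the competing configurations: the rectangular ray $\Gamma_a$, the square corner $i$, and the hexagonal corner $e^{i\pi/3}$. For fixed $y=\Im(z)$ the function $M$ is periodic and even in $x=\Re(z)$, so I would expand it in a Fourier cosine series in $x$ and control the signs of the exponentially small coefficients in order to rule out interior critical points and to establish monotonicity along the horizontal slices; this forces the minimizer onto $\Gamma_a$, $\Gamma_c$ and the unit arc $\Gamma_b$. A further one-dimensional monotonicity analysis along $\Gamma_b$ and $\Gamma_c$ (with the value decreasing toward the hexagonal corner) then leaves only $\Gamma_a$ and the two symmetric corners $i$ and $e^{i\pi/3}$ as candidate minimizers.

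On $\Gamma_a$, where $z=iy$, the sum factorizes into one-dimensional theta sums: with $\vartheta(t):=\sum_n e^{-\pi t n^2}$ and $\psi(t):=\sum_n n^2 e^{-\pi t n^2}$ one has $\theta(\alpha,iy)=\vartheta(\alpha/y)\vartheta(\alpha y)$ and hence
\begin{equation}\nonumber
M(\alpha,iy)=\tfrac1y\,\psi(\alpha/y)\,\vartheta(\alpha y)+y\,\vartheta(\alpha/y)\,\psi(\alpha y).
\end{equation}
This expression is invariant under $y\mapsto 1/y$, so $y=1$ (the square) is always a critical point; the sign of the second $y$-derivative of $M(\alpha,iy)$ at $y=1$, analyzed through this closed form and the duality above, is positive for $\alpha>\alpha_a$ and negative for $\alpha<\alpha_a$, which is precisely the onset of the rectangular branch. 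For $\alpha<\alpha_a$ the axis-minimizer $y_\alpha>1$ is the unique solution of $\partial_y M(\alpha,iy)=0$ with $y>1$, and differentiating this relation implicitly gives $dy_\alpha/d\alpha<0$ once the relevant Hessian entry is shown to keep its sign.

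Finally I would settle the square--hexagonal competition by studying $D(\alpha):=M(\alpha,i)-M(\alpha,e^{i\pi/3})$. Using the factorized value $M(\alpha,i)=2\,\psi(\alpha)\vartheta(\alpha)$ and the analogous rapidly convergent expansion at $e^{i\pi/3}$, I would show that $D$ is strictly monotone on $[\tfrac56,1]$ with a unique zero $\alpha_b$, so that $D<0$ (square lower) for $\alpha<\alpha_b$ and $D>0$ (hexagonal lower) for $\alpha>\alpha_b$; combined with the localization and the axis analysis this yields the four-case conclusion, while the inclusions $\alpha_a<\alpha_b\in(\tfrac45,1)$ and the stated decimal values follow from rigorous upper and lower bounds on the exponentially convergent tails. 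The main obstacle is the localization step together with the sign of the second variation: both require uniform control of the two-variable lattice series $M(\alpha,z)$ over a region rather than merely at the symmetric points, and it is here that the Jacobi duality and the Fourier-in-$x$ sign analysis must be pushed through carefully, with the explicit thresholds ultimately certified by interval estimates on the finitely many dominant terms.
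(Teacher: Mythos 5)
Your overall architecture coincides with the paper's: the identity $M=-\tfrac1\pi\partial_\alpha\theta$, the Poisson duality $\theta(\alpha,z)=\alpha^{-1}\theta(1/\alpha,z)$ and its $\alpha$-derivative (the paper's Lemma 3.3), the reduction to the boundary of the fundamental domain by sign control of the $x$-derivative via a Fourier expansion in $x$, the factorization $M(\alpha,iy)=\tfrac1y\psi(\alpha/y)\vartheta(\alpha y)+y\vartheta(\alpha/y)\psi(\alpha y)$ on the imaginary axis, and the characterizations of $\alpha_a$ via $\theta_{yy}=\pi\alpha M_{yy}$ at $1/\alpha_a$ and of $\alpha_b$ via $M(\alpha,i)=M(\alpha,e^{i\pi/3})$ are all exactly the paper's ingredients. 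However, there is a genuine gap in your treatment of the arc $\Gamma_b$. You assert a ``one-dimensional monotonicity analysis along $\Gamma_b$ and $\Gamma_c$ (with the value decreasing toward the hexagonal corner).'' This cannot hold for $\alpha<\alpha_b$: if $M(\alpha,\cdot)$ restricted to $\Gamma_b$ decreased from $i$ to $e^{i\pi/3}$, then $M(\alpha,e^{i\pi/3})<M(\alpha,i)$, contradicting the very conclusion you need in the square regime $[\alpha_a,\alpha_b)$. The correct, and much harder, statement is the one the paper proves in Section 6 after conformally transporting the arc to the segment $\tfrac12+iy$, $y\in[\tfrac12,\tfrac{\sqrt3}2]$: the restriction admits at most one interior critical point and that point is a local \emph{maximum}, so the minimum over the arc is attained at an endpoint. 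Ruling out an interior minimum on the arc is precisely ruling out a rhombic minimizer, which is the crux of the whole theorem; monotonicity is both false in part of the range and, as stated, logically inconsistent with your own endpoint comparison $D(\alpha)$.

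A second, smaller gap concerns the localization step. The transversal monotonicity $\partial_xM>0$ in the interior is only established in the paper for $\alpha\in(0,0.9155\ldots)$ (equivalently, dual parameter $\geq1.0922\ldots$); for $\alpha\in[0.9155\ldots,1]$ the paper must switch to a different mechanism, namely a minimum principle for modular invariant functions built on the second-order estimates $(\partial_{yy}+\tfrac2y\partial_y)M>0$ on $\{y\leq2\}$ and $\partial_{xy}M>0$ on $\{y\leq1.05\}$, together with $\partial_yM>0$ for $y\geq2$. Your single Fourier-in-$x$ sign argument is not known to cover this window, and the paper's remark that the mixed-derivative condition is ``almost sharp'' near $y=1$ suggests it genuinely cannot. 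Similarly, on $\Gamma_a$ you assert uniqueness of the nontrivial critical point $y_\alpha>1$ and the resulting shape classification; this is where the paper invests most of Section 5 (monotonicity of the ratio $X_b/X_a$ and its $y$-derivative, split into the regimes $y\in[1,\tfrac{11}{10}\alpha]$ and $[\tfrac{11}{10}\alpha,2\alpha]$), so your plan is incomplete there rather than wrong. The arc issue, by contrast, needs a different idea than the one you propose.
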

The $y_\alpha$ in Theorem \ref{Th2} can be located precisely for each $\alpha$.

{\bf
\begin{table}[!htbp]\label{TableB}
\caption{{\bf Phase transitions in crystals: the variational solutions.}}
\label{VP}
\centering
\begin{tabular}{|c|c|c|c|}

\hline

The ranges of $\alpha$  & $(0,\alpha_a)$ & $[\alpha_a,\alpha_b]$ & $(\alpha_b,\infty)$ \\

\hline

Minimizers &  $iy_\alpha$ & $i$ & $e^{i\frac{\pi}{3}}$  \\

\hline
Optimal lattice shapes & Rectangular&  Square & Hexagonal   \\
\hline

\end{tabular}
\end{table}
}
An illustration of Theorem \ref{Th1} is displayed in Table \ref{TableB}.
Theorem \ref{Th1} has a direct application in phase transitions in crystals.

\begin{remark} As the density $\alpha$ decreasing from $\infty$ to $0$, one sees the {\bf phase transition of hexagonal-square-rectangular lattice shapes},
there is {\bf no rhombic phase} appearing.

\end{remark}

\begin{remark}
This is the first rigorous proof of hexagonal-square-rectangular phase transition.
\end{remark}

\begin{remark} Part of item $(4)$, i.e., the case $\alpha\geq1$ was proved in our previous work$($\cite{LuoA}$)$.

\end{remark}

We provide more examples to admit the hexagonal to square phase transitions(answering {\bf Problem A}).
\begin{theorem}[Hexagonal-Square Phase Transitions]\label{Th3}Assume that $\alpha>0$ and $\gamma>0$ then there exists $\alpha_{\gamma_1}<\alpha_{\gamma_2}\in(0,1)$ such that
\begin{equation}\aligned\nonumber
\min_{L}\sum_{\mathbb{P}\in L, |L|=1}(|\mathbb{P}|^2+\gamma) e^{- \pi\alpha|\mathbb{P}|^2}\;\;\hbox{is achieved at}\;\;
\begin{cases}
\;\;square\; lattice\; &\hbox{if}\;\; \alpha\in[\alpha_{\gamma_1},\alpha_{\gamma_2}),\\
\;\;square\;\hbox{or}\;hexagonal\; lattice &\hbox{if}\;\; \alpha=\alpha_{\gamma_2},\\
\;\;hexagonal\;lattice, &\hbox{if}\;\; \alpha\in(\alpha_{\gamma_2},\infty).
\end{cases}
\endaligned\end{equation}

\end{theorem}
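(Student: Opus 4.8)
The plan is to reduce to the fundamental domain and decompose the energy into the two sums already controlled by Theorem~\ref{Th2} and Montgomery's Theorem~A. By the density-one parametrization in \eqref{P1}, minimizing the functional of Theorem~\ref{Th3} over lattices is equivalent to minimizing over $z\in\mathbb{H}$ the $\mathrm{SL}(2,\mathbb{Z})$-invariant function
\begin{equation}\aligned\nonumber
F_\gamma(\alpha,z):=\sum_{(m,n)\in\mathbb{Z}^2}\Big(\tfrac{|mz+n|^2}{\Im(z)}+\gamma\Big)e^{-\pi\alpha\frac{|mz+n|^2}{\Im(z)}}=M(\alpha,z)+\gamma\,\theta(\alpha,z),
\endaligned\end{equation}
so it suffices to minimize over $\mathcal{D}$. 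I would record the identity $M=-\frac{1}{\pi}\partial_\alpha\theta$, which expresses every $\alpha$-derivative of $F_\gamma$ through $\theta$ alone and will be used repeatedly.

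First I would settle the large-parameter regime by a sandwiching argument requiring no new computation. For $\alpha\ge\alpha_b$, Theorem~\ref{Th2} gives $M(\alpha,z)\ge M(\alpha,e^{i\pi/3})$ on $\mathcal{D}$, while Montgomery's Theorem~A gives $\theta(\alpha,z)\ge\theta(\alpha,e^{i\pi/3})$; adding these, the second weighted by $\gamma>0$, yields $F_\gamma(\alpha,z)\ge F_\gamma(\alpha,e^{i\pi/3})$. Thus hexagonal is the global minimizer on $[\alpha_b,\infty)$ for every $\gamma>0$, which already confines the transition to $\alpha<\alpha_b<1$ and shows $\alpha_{\gamma_2}\le\alpha_b$.

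Next I would perform the boundary reduction for $F_\gamma$ on $\mathcal{D}$ in the range $\alpha<\alpha_b$, mirroring the proof of Theorem~\ref{Th2}. The steps are: (i) reduce to $\partial\mathcal{D}=\Gamma_a\cup\Gamma_b\cup\Gamma_c$ by controlling the sign of $\partial_x F_\gamma$ in the interior, exactly as in the proof of Theorem~\ref{Th2}; (ii) on the imaginary axis $\Gamma_a$, the square point $i$ is a critical point of $y\mapsto F_\gamma(\alpha,iy)$ by the symmetry $z\mapsto-1/z$, and it is the minimizer on $\{y\ge1\}$ exactly when $\partial_{yy}F_\gamma(\alpha,i)=M_{yy}(\alpha,i)+\gamma\,\theta_{yy}(\alpha,i)\ge0$, so I would \emph{define} the lower threshold $\alpha_{\gamma_1}$ by the equality $M_{yy}(\alpha,i)+\gamma\,\theta_{yy}(\alpha,i)=0$ and verify it has a unique root (below which the axis-minimizer leaves $i$ into the rectangular branch $iy_\alpha$, outside the stated range); note that since $\theta_{yy}(\alpha,i)>0$ the term $\gamma\theta_{yy}$ stabilizes the square against rectangular deformation, so $\alpha_{\gamma_1}<\alpha_a$; (iii) on $\Gamma_c$ the energy increases with height, so its minimum is the endpoint $e^{i\pi/3}$, while on the arc $\Gamma_b$ I must rule out an interior (rhombic) minimizer and show the minimum is attained at an endpoint $i$ or $e^{i\pi/3}$. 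Together these reduce the global minimization, for $\alpha\in[\alpha_{\gamma_1},\alpha_b)$, to the competition between $i$ and $e^{i\pi/3}$.

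Finally I would resolve that competition and locate $\alpha_{\gamma_2}$. Set
\begin{equation}\aligned\nonumber
\Delta_\gamma(\alpha):=F_\gamma(\alpha,i)-F_\gamma(\alpha,e^{i\pi/3})=\big[M(\alpha,i)-M(\alpha,e^{i\pi/3})\big]+\gamma\big[\theta(\alpha,i)-\theta(\alpha,e^{i\pi/3})\big].
\endaligned\end{equation}
By Theorem~A the bracket multiplying $\gamma$ is strictly positive, and by Theorem~\ref{Th2} the first bracket is negative for $\alpha<\alpha_b$ and vanishes only at $\alpha_b$; the small-$\alpha$ asymptotics from Poisson summation make the first bracket dominate the $\gamma$-term as $\alpha\to0$ (the $\theta$-gap is exponentially small while the $M$-gap carries an extra $\alpha^{-2}$), so $\Delta_\gamma$ is negative for small $\alpha$ and positive at $\alpha_b$, giving a square window for every $\gamma>0$. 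I would then establish a one-signed estimate for $\frac{d}{d\alpha}\Delta_\gamma$ on $(0,\alpha_b)$—reducing all derivatives to $\theta$ via $M=-\frac1\pi\partial_\alpha\theta$—to conclude $\Delta_\gamma$ has a single zero $\alpha_{\gamma_2}\in(0,\alpha_b)$, with square strictly lower for $\alpha<\alpha_{\gamma_2}$ and hexagonal strictly lower for $\alpha>\alpha_{\gamma_2}$, and check $\alpha_{\gamma_1}<\alpha_{\gamma_2}$. The \textbf{main obstacle} is the arc analysis in Step~(iii): showing $F_\gamma$ attains its minimum on $\Gamma_b$ at an endpoint, i.e. that there is \emph{no rhombic phase}, which is delicate precisely because $M$ pulls the arc-minimum toward $i$ while $\gamma\theta$ pulls it toward $e^{i\pi/3}$, so one must prove their competing monotone structures never produce an interior minimizer uniformly in $\gamma>0$. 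By comparison the large-$\alpha$ sandwiching and the final one-variable comparison are comparatively soft.
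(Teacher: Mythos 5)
The paper itself states Theorem \ref{Th3} without supplying any proof (Sections 3--7 are devoted entirely to $M(\alpha,z)$ and Theorem \ref{Th1}), so there is no argument of the authors to compare yours against; I can only assess your outline on its merits. Your decomposition $F_\gamma=M+\gamma\theta$ is certainly the intended starting point, and your large-parameter step is correct and complete: for $\alpha\ge\alpha_b$, Theorem \ref{Th2} and Montgomery's Theorem A each bound their summand below by its value at $e^{i\pi/3}$, so the sum is minimized there. Your small-$\alpha$ asymptotic also checks out: using $\theta(1/\alpha,z)=\alpha\theta(\alpha,z)$ and Lemma \ref{Lemma1} one finds $\Delta_\gamma(\alpha)\sim\frac{4e^{-\pi/\alpha}}{\alpha}\bigl(\gamma-\alpha^{-2}\bigr)<0$ as $\alpha\to0$, so a square window does open.

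The genuine gap is the entire regime $\alpha<\alpha_b$, and it is larger than your "main obstacle" remark suggests. First, the reduction to $\Gamma_a\cup\Gamma_b$ (your step (i)) cannot be quoted from the paper: Propositions \ref{Prop2} and \ref{PropA1} and the second-order estimates of Section 4 are calibrated to $M$ alone, with numerical thresholds such as $0.9155\ldots$ tuned to that functional; for $M+\gamma\theta$ the sign of $\partial_xF_\gamma$ and the hypotheses \eqref{Cabc1}--\eqref{Cabc2} of the minimum principle must be re-established, and \emph{uniformly in} $\gamma>0$, since for large $\gamma$ the $\theta$-term dominates and the balance of the estimates changes. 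Second, on the arc, the fact that $M$ and $\theta$ are each minimized at an endpoint of $\Gamma_b$ does not imply their sum is; the paper's mechanism (Proposition \ref{Prop5.1}: at most one interior critical point, necessarily a local maximum) would have to be reproved for the relevant combination, which after the duality $\alpha\mapsto1/\alpha$ is no longer $\theta-\pi\alpha M$ but $c_1(\gamma,\alpha)\theta-c_2(\alpha)M$ with $\gamma$-dependent coefficients, so the threshold inequality \eqref{Yab} and Lemmas \ref{Lemma5.3}--\ref{Lemma5.4} do not transfer verbatim. Third, the single-crossing of $\Delta_\gamma$ on $(0,\alpha_b)$ and the ordering $\alpha_{\gamma_1}<\alpha_{\gamma_2}$ for \emph{every} $\gamma>0$ are asserted rather than proved; since both thresholds move with $\gamma$ (your own asymptotic shows $\alpha_{\gamma_2}\to0$ as $\gamma\to\infty$), the claim that the square window never closes needs an actual argument. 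As it stands the proposal is a plausible programme, not a proof.
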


Some comments are in order. We obtain several classes of modular invariant functions admitting
hexagonal to square phase transitions. This is the first time to obtain such functions, although it was
conjectured to exist for long time.  On the other hand, continuous hexagonal-rhombic-square-rectangular phase transitions have been observed and rigorously established in many modular invariant functions.  See \cite{Luo2022, Luo2023}.

Given by Theorems \ref{Th1}-\ref{Th3}, one may incline to guess that such results hold for
$$\sum_{\mathbb{P}\in L, |L|=1}|\mathbb{P}|^4 e^{- \pi\alpha|\mathbb{P}|^2},$$ however this is not true (by numerically computations).

Hexagonal and square lattices have the dominated role in two dimensional Bravais lattices (as seen in Theorem \ref{Th1}). While in dimension 3, the FCC and BCC lattices play a similar key role. In contrast to Theorem \ref{Th1}, we propose an open problem:
\begin{open problem}\label{PPA}
Classify
\begin{equation}\aligned\nonumber
\min_{L}\sum_{\mathbb{P}\in L, |L|=1}|\mathbb{P}|^2 e^{- \pi\alpha|\mathbb{P}|^2},\;\;\hbox{here}\;\;L\;\;\hbox{is a 3-dimensional lattice and}\;\;\alpha>0.
\endaligned\end{equation}
\end{open problem}

The paper is organized as follows: in Section 2, we collect some basic properties of the functionals and some basic estimates of derivatives of Jacobi theta functions.

In Section 3, we prove a transversal monotonicity of the functionals, as a consequence, we show that for certain range of $\alpha$, the minimizers of the functionals must locate on the partial boundary of the half fundamental domain, namely, $\Gamma_a\cup \Gamma_b$(see Picture \ref{LJFFF}).

In Section 4, we establish a minimum principle of modular invariant functions and deduce several second-order estimates in using the minimum principle, as a consequence, we prove that for certain range of $\alpha$, the the minimizers of the functionals must locate on the partial boundary of the half fundamental domain, namely, $\Gamma_b$(see Picture \ref{LJFFF}).

 By the main results in Sections 3 and 4, we obtain that for the full range of $\alpha(\alpha>0)$,
the minimizers of the functionals must locate on $\Gamma_a\cup \Gamma_b$(see Picture \ref{LJFFF}).

In Sections 5 and 6, we develop effective methods to analyze the functionals on the vertical line $\Gamma_a$ and the $\frac{1}{4}-$ arc $\Gamma_b$(see Picture \ref{LJFFF}) respectively. Our methods in Sections 3-6 are powerful and can be applied to many other related problems.

 Finally, we give the proof of our main Theorem in Section 7.

\section{Preliminaries }
\setcounter{equation}{0}

In this section, we collect  some  simple symmetry properties  of the functionals and the associated fundamental domain, and also the estimates of derivatives Jacobi theta functions to be used in later sections.

Let
$
\mathbb{H}
$
 denote the upper half plane and  $\mathcal{S} $ denote the modular group
\begin{equation}\aligned\label{modular}
\mathcal{S}:=\hbox{SL}_2(\mathbb{Z})=\{
\left(
  \begin{array}{cc}
    a & b \\
    c & d \\
  \end{array}
\right), ad-bc=1, a, b, c, d\in\mathbb{Z}
\}.
\endaligned\end{equation}

We use the following definition of fundamental domain which is slightly different from the classical definition (see \cite{Mon1988}):
\begin{definition} [page 108, \cite{Eva1973}]
The fundamental domain associated to group $G$ is a connected domain $\mathcal{D}$ satisfies
\begin{itemize}
  \item For any $z\in\mathbb{H}$, there exists an element $\pi\in G$ such that $\pi(z)\in\overline{\mathcal{D}}$;
  \item Suppose $z_1,z_2\in\mathcal{D}$ and $\pi(z_1)=z_2$ for some $\pi\in G$, then $z_1=z_2$ and $\pi=\pm Id$.
\end{itemize}
\end{definition}

By Definition 2.1, the fundamental domain associated to modular group $\mathcal{S}$ is
\begin{equation}\aligned\label{Fd1}
\mathcal{D}_{\mathcal{S}}:=\{
z\in\mathbb{H}: |z|>1,\; -\frac{1}{2}<x<\frac{1}{2}
\}
\endaligned\end{equation}
which is open.  Note that the fundamental domain can be open. (See [page 30, \cite{Apo1976}].)

Next we introduce another group related  to the functionals $\theta(\alpha;z)$. The generators of the group are given by
\begin{equation}\aligned\label{GroupG1}
\mathcal{G}: \hbox{the group generated by} \;\;\tau\mapsto -\frac{1}{\tau},\;\; \tau\mapsto \tau+1,\;\;\tau\mapsto -\overline{\tau}.
\endaligned\end{equation}

It is easy to see that
the fundamental domain associated to group $\mathcal{G}$ denoted by $\mathcal{D}_{\mathcal{G}}$ is
\begin{equation}\aligned\label{Fd3}
\mathcal{D}_{\mathcal{G}}:=\{
z\in\mathbb{H}: |z|>1,\; 0<x<\frac{1}{2}
\}.
\endaligned\end{equation}

The following lemma characterizes the fundamental symmetries of the theta functions $\theta (s; z)$. The proof is easy so we omit it.
Let
\begin{equation}\aligned
\nonumber
{M} (\alpha, z):&=\sum_{\mathbb{P}\in L,\; |L|=1} |\mathbb{P}|^2e^{- \pi\alpha |\mathbb{P}|^2}=\sum_{(m,n)\in\mathbb{Z}^2} \frac{|mz+n|^2 }{\Im(z) } e^{- \pi\alpha \frac{|mz+n|^2}{\Im(z) }}.
\endaligned\end{equation}
\begin{lemma}\label{G111} For any $\alpha>0$, any $\gamma\in \mathcal{G}$ and $z\in\mathbb{H}$,
$M(\alpha; \gamma(z))=M(\alpha;z)$.
\end{lemma}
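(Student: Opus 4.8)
The plan is to show that $M(\alpha;z)$ is invariant under each of the three generators of $\mathcal{G}$ listed in \eqref{GroupG1}, since invariance under the generators immediately yields invariance under the whole group. The natural representation to work with is the lattice-sum form
\begin{equation}\aligned\nonumber
M(\alpha;z)=\sum_{(m,n)\in\mathbb{Z}^2}\frac{|mz+n|^2}{\Im(z)}\,e^{-\pi\alpha\frac{|mz+n|^2}{\Im(z)}},
\endaligned\end{equation}
because each generator acts on $\mathbb{H}$ by a simple substitution, and the key observation is that the quantity $Q(z):=\tfrac{|mz+n|^2}{\Im(z)}$, which appears both in the prefactor and in the exponent, transforms in a controlled way under these substitutions. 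Once I establish that for each generator there is a relabeling of the summation index $(m,n)\mapsto(m',n')$ that is a bijection of $\mathbb{Z}^2$ and under which $Q$ is preserved, the equality of the two sums follows term by term.

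First I would treat $\tau\mapsto\tau+1$. Writing $z'=z+1$, one has $\Im(z')=\Im(z)$ and $mz'+n=mz+(m+n)$, so setting $n'=m+n$, $m'=m$ gives a bijection of $\mathbb{Z}^2$ with $m'z+n'=mz'+n$ and $\Im(z')=\Im(z)$; hence $Q(z')$ for index $(m,n)$ equals $Q(z)$ for index $(m',n')$, and the sum is unchanged. Next, for $\tau\mapsto-1/\tau$, writing $z'=-1/z$ one computes $\Im(z')=\Im(z)/|z|^2$ and $mz'+n=\tfrac{-m+nz}{z}$, so $|mz'+n|^2=\tfrac{|nz-m|^2}{|z|^2}$; dividing by $\Im(z')$ produces $\tfrac{|nz-m|^2}{\Im(z)}$, which is exactly $Q(z)$ evaluated at the index $(m',n')=(n,-m)$. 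Since $(m,n)\mapsto(n,-m)$ is a bijection of $\mathbb{Z}^2$, the sum is again preserved. Finally, for the reflection $\tau\mapsto-\overline{\tau}$, writing $z'=-\overline{z}$ gives $\Im(z')=\Im(z)$ and $|mz'+n|^2=|{-m\overline{z}+n}|^2=|{-mz+n}|^2$ (complex conjugation preserves modulus), so relabeling $(m,n)\mapsto(-m,n)$ yields the invariance.

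The verification is genuinely elementary, so the main ``obstacle'' is bookkeeping rather than any conceptual difficulty: one must confirm in each case that the index map is a bijection of $\mathbb{Z}^2$ (so that no terms are lost or double-counted) and that absolute convergence of the series justifies the rearrangement. Absolute convergence holds because the summand decays like $|mz+n|^2 e^{-c|mz+n|^2}$ with $c>0$ for fixed $z\in\mathbb{H}$, which is summable over the lattice; this legitimizes term-by-term reindexing. Assembling the three invariances and noting that $\mathcal{G}$ is generated by these three maps completes the proof, which is indeed why the statement can be dispatched quickly.
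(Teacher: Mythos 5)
Your proof is correct, and it is precisely the standard generator-by-generator verification that the paper has in mind when it states ``the proof is easy so we omit it'': all three substitution computations and the reindexing bijections check out, and since the summands are nonnegative the rearrangement needs no further justification. Nothing to add.
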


Next we need  some delicate analysis of the Jacobi theta function which is defined as

\begin{equation}\aligned\nonumber
\vartheta_J(z;\tau):=\sum_{n=-\infty}^\infty e^{i\pi n^2 \tau+2\pi i n z}.
 \endaligned\end{equation}
 The classical one-dimensional theta function  is given by
\begin{equation}\aligned\label{TXY}
\vartheta(X;Y):=\vartheta_J(Y;iX)=\sum_{n=-\infty}^\infty e^{-\pi n^2 X} e^{2n\pi i Y}.
 \endaligned\end{equation}
By the Poisson summation formula, it holds that
\begin{equation}\aligned\label{PXY}
\vartheta(X;Y)=X^{-\frac{1}{2}}\sum_{n=-\infty}^\infty e^{-\frac{\pi(n-Y)^2}{X}}.
 \endaligned\end{equation}
To estimate bounds of quotients of derivatives of $\vartheta(X:Y)$, we denote that

\begin{equation}\aligned\label{mmmx}
\mu(X):=\sum_{n=2}^\infty n^2 e^{-\pi(n^2-1)X},\;\;
\nu(X):=\sum_{n=2}^\infty n^4 e^{-\pi(n^2-1)X}.
\endaligned\end{equation}

The following three lemmas are proved in \cite{LuoA}.
\begin{lemma}\cite{LuoA}\label{Lemma2a} Assume that $Y>0, k\in \mathbb{N}^+$. It holds that
\begin{itemize}

  \item $(1):$
$
|\frac{\vartheta_Y(X;k Y)}{\vartheta_Y(X;Y)}|\leq k\cdot\frac{1+\mu(X)}{1-\mu(X)}\;\;\hbox{for}\;\;X>\frac{1}{5};
$
  \item $(2):$
$
|\frac{\vartheta_Y(X;k Y)}{\vartheta_Y(X;Y)}|\leq k\cdot\frac{1}{\pi}e^{\frac{\pi}{4X}}\;\;\hbox{for}\;\;X<\frac{\pi}{\pi+2}.
$
\end{itemize}

\end{lemma}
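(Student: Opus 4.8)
The plan is to reduce both estimates to the real Fourier series of the $Y$-derivative, dispose of the numerator once and for all, and let the two ranges of $X$ differ only in the lower bound for the denominator. Differentiating \eqref{TXY} termwise and pairing $\pm n$ records
\[
\vartheta_Y(X;Y)=-4\pi\sum_{n\ge1}n\,e^{-\pi n^2X}\sin(2\pi nY),
\]
which is $1$-periodic and odd in $Y$, so it suffices to argue for $Y\in(0,\tfrac12]$ (the points with $\sin 2\pi Y=0$ being $0/0$ limits handled by continuity). The only elementary inequality needed for the numerator is $|\sin(m\theta)|\le m|\sin\theta|$ for $m\in\mathbb{N}^+$; taking $m=nk$ and $\theta=2\pi Y$ gives, for every $k$ and every $X>0$,
\[
|\vartheta_Y(X;kY)|\le 4\pi k\,|\sin 2\pi Y|\sum_{n\ge1}n^2e^{-\pi n^2X}=4\pi k\,|\sin 2\pi Y|\,e^{-\pi X}\bigl(1+\mu(X)\bigr),
\]
using $\sum_{n\ge1}n^2e^{-\pi n^2X}=e^{-\pi X}(1+\mu(X))$ from \eqref{mmmx}. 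This single bound furnishes the factor $k$ in both $(1)$ and $(2)$.

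For $(1)$ I factor the common zero out of the denominator: with $\sin(2\pi nY)=\sin(2\pi Y)\,U_{n-1}(\cos 2\pi Y)$, where $U$ is the Chebyshev polynomial of the second kind, we have $\vartheta_Y(X;Y)=-4\pi\sin(2\pi Y)\,\phi(X,Y)$ with $\phi(X,Y):=\sum_{n\ge1}n\,e^{-\pi n^2X}U_{n-1}(\cos 2\pi Y)$. Since $|U_{n-1}(\cos\theta)|\le n$ and $U_0\equiv1$, isolating the $n=1$ term yields
\[
|\phi(X,Y)|\ge e^{-\pi X}-\sum_{n\ge2}n^2e^{-\pi n^2X}=e^{-\pi X}\bigl(1-\mu(X)\bigr),
\]
which is positive exactly when $\mu(X)<1$. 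As $\mu$ is decreasing and a direct check gives $\mu(\tfrac15)<1$, the bound $\mu(X)<1$ holds for all $X>\tfrac15$; dividing the numerator estimate by $4\pi|\sin 2\pi Y|\,|\phi(X,Y)|$ produces $k\,\frac{1+\mu(X)}{1-\mu(X)}$, which is $(1)$.

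For $(2)$ the range $X<\frac{\pi}{\pi+2}$ lies below the zero of $1-\mu$, so the Chebyshev bound is vacuous and I would switch to the dual form \eqref{PXY}, namely $\vartheta_Y(X;Y)=2\pi X^{-3/2}\sum_n(n-Y)e^{-\pi(n-Y)^2/X}$, adapted to small $X$. Retaining the numerator estimate, it remains to bound $\phi$ below; the dual form shows $\phi>0$ on $(0,\tfrac12)$ and, crucially, that $\phi(X,\cdot)$ is minimized at the half-period, where $\phi(X,\tfrac12)=\sum_{n\ge1}(-1)^{n-1}n^2e^{-\pi n^2X}=\frac{1}{8\pi^2}\vartheta_{YY}(X;\tfrac12)$. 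Evaluating $S:=\sum_{n\ge1}n^2e^{-\pi n^2X}$ and $\phi(X,\tfrac12)$ through the Jacobi (dual) expansions of the relevant theta-constants gives, to leading order for small $X$, $S\sim\frac{1}{4\pi}X^{-3/2}$ and $\phi(X,\tfrac12)\sim\frac14 X^{-5/2}e^{-\pi/(4X)}$, so $S/\phi(X,\tfrac12)\sim\frac{X}{\pi}e^{\pi/(4X)}$; since $X<\frac{\pi}{\pi+2}<1$ the factor $X$ absorbs the constants and yields the clean bound $\frac{k}{\pi}e^{\pi/(4X)}$. The routine labor here is to replace these asymptotics by rigorous two-sided inequalities valid on all of $0<X<\frac{\pi}{\pi+2}$ by controlling the exponentially small dual tails.

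The genuine obstacle is the extremal claim that $\phi(X,\cdot)$ attains its minimum at $Y=\tfrac12$, equivalently that $g(Y)/\sin 2\pi Y$ is extremized at the half-period. I would prove $\phi_Y<0$ on $(0,\tfrac12)$ from the dual representation $\phi(X,Y)=-\frac{1}{2X^{3/2}}\,\psi(Y)/\sin 2\pi Y$ with $\psi(Y)=\sum_n(n-Y)e^{-\pi(n-Y)^2/X}$, where for small $X$ the sum is governed by the two nearest integers $n=0,1$ and the remainder is uniformly exponentially smaller; the monotonicity can then be read off once that remainder is bounded. With the extremal location secured, $(1)$ and $(2)$ follow from the elementary combination above.
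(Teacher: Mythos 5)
The paper does not prove this lemma itself --- it is quoted verbatim from \cite{LuoA} --- so there is no in-paper argument to compare against; I therefore assess your proposal on its own terms. Part $(1)$ is complete and correct: the identity $\vartheta_Y(X;Y)=-4\pi\sum_{n\ge1}n e^{-\pi n^2X}\sin(2\pi nY)$, the bound $|\sin(nk\cdot 2\pi Y)|\le nk|\sin 2\pi Y|$ for the numerator, and the Chebyshev factorization $\sin(2\pi nY)=\sin(2\pi Y)U_{n-1}(\cos 2\pi Y)$ with $|U_{n-1}|\le n$ for the denominator give exactly $k\frac{1+\mu(X)}{1-\mu(X)}$, and $\mu(1/5)<1$ with $\mu$ decreasing makes the denominator positive on $X>\frac15$. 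Your skeleton for part $(2)$ is also the right one, and the constants are not accidental: with $S:=\sum_{n\ge1}n^2e^{-\pi n^2X}$ one has $S\le\frac{1}{4\pi}X^{-3/2}$ for $X<2\pi$ (the dual-side corrections are all negative), while the dual expansion of $\vartheta_{YY}(X;\tfrac12)$ gives $\phi(X,\tfrac12)\ge\frac14X^{-5/2}\bigl(1-\frac{2X}{\pi}\bigr)e^{-\pi/(4X)}$, and for $X\le\frac{\pi}{\pi+2}$ one has $1-\frac{2X}{\pi}\ge\frac{\pi}{\pi+2}$, whence $S/\phi(X,\tfrac12)\le\frac{X(\pi+2)}{\pi^2}e^{\pi/(4X)}\le\frac1\pi e^{\pi/(4X)}$. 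This shows the threshold $\frac{\pi}{\pi+2}$ and the constant $\frac1\pi$ fall out of precisely the mechanism you describe, so the ``routine labor'' you defer is genuinely routine.

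The gap is the step you yourself flag: the claim that $\phi(X,\cdot)$ attains its minimum over $Y$ at $Y=\tfrac12$, equivalently a lower bound for $-\vartheta_Y(X;Y)/(4\pi\sin 2\pi Y)$ that is uniform in $Y$, is asserted with only a sketch (``the sum is governed by the two nearest integers''). Everything in part $(2)$ funnels through this inequality, so as written the argument for $(2)$ is conditional. The sketch is plausible --- numerically $\phi(X,\cdot)$ is indeed decreasing on $(0,\tfrac12)$ throughout the range $0<X<\frac{\pi}{\pi+2}$ --- but at the upper end of that range $e^{-\pi/(4X)}\approx 0.28$ is not small, so the ``two nearest integers dominate, remainder exponentially smaller'' heuristic must be turned into an explicit two-term monotonicity argument for $Y\mapsto\bigl(Ye^{-\pi Y^2/X}-(1-Y)e^{-\pi(1-Y)^2/X}\bigr)/\sin 2\pi Y$ plus a quantitative tail estimate; neither is supplied. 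Until that extremality (or any uniform-in-$Y$ lower bound for the denominator of the same strength) is proved, part $(2)$ is an outline rather than a proof.
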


To give the desired estimates, we further need the following
\begin{lemma}\cite{LuoA}\label{Lemma2b}Assume that $Y>0, k\in \mathbb{N}^+$. It holds that
\begin{itemize}
  \item $(1):$
$
|\frac{\vartheta_{XY}(X;k Y)}{\vartheta_{XY}(X;Y)}|\leq k\cdot\frac{1+\nu(X)}{1-\nu(X)}\;\;\hbox{for}\;\;X\geq\frac{3}{10};
$
  \item $(2):$
$
|\frac{\vartheta_{XY}(X;k Y)}{\vartheta_{Y}(X;Y)}|\leq k\pi\cdot \frac{1+\nu(X)}{1-\mu(X)}\;\;\hbox{for}\;\;X\geq\frac{1}{5}.
$
 \item
$(3):$ and for $k=1$, we have the more precise bound
$
|\frac{\vartheta_{XY}(X;Y)}{\vartheta_{Y}(X;Y)}|\leq\pi\cdot \frac{1+\nu(X)}{1+\mu(X)}
$ for $X\geq\frac{1}{5}$.
\end{itemize}

\end{lemma}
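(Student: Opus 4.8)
The plan is to reduce all three quotients to the real (sine) Fourier series of the relevant derivatives and then compare every term against the leading $n=1$ contribution. Differentiating the defining series \eqref{TXY} and pairing $n$ with $-n$ (so that the even part cancels), one obtains
\[
\vartheta_Y(X;Y)=-4\pi\sum_{n\geq1}n\,e^{-\pi n^2X}\sin(2\pi nY),\qquad
\vartheta_{XY}(X;Y)=4\pi^2\sum_{n\geq1}n^3\,e^{-\pi n^2X}\sin(2\pi nY).
\]
Writing $S_j(X;Y):=\sum_{n\geq1}n^j e^{-\pi n^2X}\sin(2\pi nY)$, the three quotients are, up to sign, $S_3(X;kY)/S_3(X;Y)$, $\pi\,S_3(X;kY)/S_1(X;Y)$ and $\pi\,S_3(X;Y)/S_1(X;Y)$. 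The one analytic input I would use is the Chebyshev inequality $|\sin(m\theta)|\leq m|\sin\theta|$ for $m\in\mathbb{N}^+$ (since $\sin(m\theta)/\sin\theta=U_{m-1}(\cos\theta)$ with $|U_{m-1}|\leq m$); applied with $\theta=2\pi Y$ it gives $|\sin(2\pi nkY)|\leq nk\,|\sin(2\pi Y)|$, which is exactly what lets one control every term by the first.

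For (1) and (2) I would factor the $n=1$ term out of numerator and denominator. In the numerator, $|\sin(2\pi nkY)|\leq nk|\sin(2\pi Y)|$ gives
\[
|\vartheta_{XY}(X;kY)|\leq 4\pi^2 k\,|\sin(2\pi Y)|\,e^{-\pi X}\Big(1+\sum_{n\geq2}n^4e^{-\pi(n^2-1)X}\Big)=4\pi^2 k\,|\sin(2\pi Y)|\,e^{-\pi X}(1+\nu(X)).
\]
For the denominators, factoring $e^{-\pi X}\sin(2\pi Y)$ and applying $|\sin(2\pi nY)|\leq n|\sin(2\pi Y)|$ on the tail yields the lower bounds $|\vartheta_{XY}(X;Y)|\geq 4\pi^2 e^{-\pi X}|\sin(2\pi Y)|(1-\nu(X))$ and $|\vartheta_Y(X;Y)|\geq 4\pi e^{-\pi X}|\sin(2\pi Y)|(1-\mu(X))$. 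Dividing produces (1) with factor $\frac{1+\nu}{1-\nu}$ and (2) with factor $\frac{1+\nu}{1-\mu}$; the stated thresholds are precisely the conditions that keep these lower bounds positive, namely $\nu(X)<1$ (which holds for $X\geq\frac{3}{10}$) for (1) and $\mu(X)<1$ (which holds for $X\geq\frac15$) for (2).

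The delicate point is (3), where the claimed denominator factor is the strictly smaller $1+\mu(X)$ rather than $1-\mu(X)$. Estimating $S_3$ and $S_1$ separately can never beat $\frac{1+\nu}{1-\mu}$: even writing $n^3=n+n(n^2-1)$ only gives $\frac{1+\nu-2\mu}{1-\mu}$, and a short computation shows $\frac{1+\nu-2\mu}{1-\mu}-\frac{1+\nu}{1+\mu}=\frac{2\mu(\nu-\mu)}{(1-\mu)(1+\mu)}\geq0$, so the improvement cannot come from independent majorization. What makes the sharper bound available only for $k=1$ is that then $S_3(X;Y)$ and $S_1(X;Y)$ carry the identical weights $\sin(2\pi nY)$, so their term signs are perfectly correlated. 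I would therefore treat $S_3/S_1$ as a single object: setting $w_n:=e^{-\pi(n^2-1)X}$ and $s_n:=\sin(2\pi nY)/\sin(2\pi Y)$ (so $w_1=s_1=1$ and $|s_n|\leq n$), one has $S_3/S_1=\big(\sum_{n\geq1}n^3w_ns_n\big)/\big(\sum_{n\geq1}nw_ns_n\big)$, and the target becomes the correlation inequality $(1+\mu(X))\,\big|\sum_{n\geq1}n^3w_ns_n\big|\leq(1+\nu(X))\,\big|\sum_{n\geq1}nw_ns_n\big|$. Establishing this is the main obstacle: it should follow from a Chebyshev-type rearrangement/pairing argument in the index $n$ that exploits the common signs of the two sums together with the admissibility constraints $|s_n|\leq n$, rather than any crude bound. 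I expect this shared-sign cancellation to be the crux, with the range $X\geq\frac15$ again entering only through $\mu(X)<1$.
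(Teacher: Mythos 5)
The paper itself does not prove this lemma --- it is imported verbatim from \cite{LuoA} --- so I am judging your proposal on its own terms. Your treatment of parts $(1)$ and $(2)$ is correct and complete: the sine-series identities for $\vartheta_Y$ and $\vartheta_{XY}$, the Chebyshev bound $|\sin(2\pi nkY)|\leq nk|\sin(2\pi Y)|$ applied termwise in the numerator, and the triangle-inequality lower bound on the denominator (valid because $\nu(3/10)<1$ and $\mu(1/5)<1$, which is exactly where the stated thresholds come from) deliver the constants $k\frac{1+\nu}{1-\nu}$ and $k\pi\frac{1+\nu}{1-\mu}$ as claimed.

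The gap is part $(3)$, and it is a real one. You correctly diagnose that $\pi\frac{1+\nu}{1+\mu}$ cannot be reached by majorizing numerator and denominator independently (your identity $\frac{1+\nu-2\mu}{1-\mu}-\frac{1+\nu}{1+\mu}=\frac{2\mu(\nu-\mu)}{(1-\mu)(1+\mu)}\geq0$ is right), and the bound is in fact sharp: with your notation, $S_3/S_1\to(1+\nu)/(1+\mu)$ as $Y\to0$. But the proposal then stops at ``it should follow from a Chebyshev-type rearrangement/pairing argument'' --- that sentence is the entire content of $(3)$, and the most natural such argument fails. Writing $\sigma_n:=\sin(2\pi nY)/\sin(2\pi Y)$ and symmetrizing the cross terms of the target inequality $\bigl(\sum_n n^3 a_n\sigma_n\bigr)\bigl(\sum_m m^2a_m\bigr)\leq\bigl(\sum_n n^4a_n\bigr)\bigl(\sum_m m a_m\sigma_m\bigr)$ in $(n,m)$ reduces it to the sign condition $(n^2-m^2)(n\sigma_m-m\sigma_n)\geq0$, i.e.\ to $\sigma_n/n$ being nonincreasing in $n$; this is false (for $2\pi Y$ near $\pi$ one has $\sigma_2/2\approx-1$ while $\sigma_3/3\approx+1$). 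So the ``shared-sign correlation'' is not a routine rearrangement; what is actually needed is something like the monotonicity of $Y\mapsto\vartheta_{XY}(X;Y)/(-\vartheta_Y(X;Y))$ on $[0,\tfrac12]$, whose limiting value at $Y=0^+$ is exactly $\pi(1+\nu)/(1+\mu)$ --- a fact the authors invoke elsewhere (e.g.\ in Case c of Lemma \ref{Lemma8a}) and which requires its own delicate proof. As written, part $(3)$ remains unproved.
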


We shall establish the following estimates which are useful in the next section.
\begin{lemma}\cite{LuoA}\label{Lemma2c} For $X\leq\frac{1}{2}$ and any $Y>0$, $k\in \mathbb{N}^+$, it holds that
\begin{itemize}
  \item $(1):$ $|\frac{\vartheta_{XY}(X;Y)}{\vartheta_Y(X;Y)}|\leq \frac{3}{2}X^{-1}(1+\frac{\pi}{6}\frac{1}{X})$;
  \item $(2):$ $|\frac{\vartheta_{XY}(X;k Y)}{\vartheta_Y(X;Y)}|\leq \frac{3k}{2\pi}X^{-1}(1+\frac{\pi}{6}\frac{1}{X})e^{\frac{\pi}{4X}}$.
\end{itemize}

\end{lemma}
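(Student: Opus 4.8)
The plan is to read off both estimates from the Poisson‑transformed representation \eqref{PXY}, which is the natural tool in the regime $X\le\frac12$, and to reduce everything to a single pointwise inequality on a ratio of Gaussian sums. Writing $a_n:=n-Y$ and $f_n:=e^{-\pi a_n^2/X}$, differentiation of \eqref{PXY} gives
\[
\vartheta_Y(X;Y)=2\pi X^{-3/2}\sum_{n}a_nf_n,\qquad
\vartheta_{XY}(X;Y)=2\pi\Big(-\tfrac32 X^{-5/2}\sum_n a_nf_n+\pi X^{-7/2}\sum_n a_n^3 f_n\Big),
\]
so that, with $g:=\sum_n a_nf_n$ and $h:=\sum_n a_n^3 f_n$,
\[
\frac{\vartheta_{XY}(X;Y)}{\vartheta_Y(X;Y)}=-\frac{3}{2X}+\frac{\pi}{X^2}\,\frac{h}{g}.
\]
Granting the key bound $|h|\le\frac14|g|$ (for $X\le\frac12$ and every $Y$ with $g\neq0$), the triangle inequality immediately yields $\big|\vartheta_{XY}/\vartheta_Y\big|\le \frac{3}{2X}+\frac{\pi}{4X^2}=\frac{3}{2X}\big(1+\frac{\pi}{6X}\big)$, which is part $(1)$; the exceptional points $Y\in\frac12\mathbb Z$, where $\vartheta_Y=\vartheta_{XY}=0$, are covered by passing to the limit.

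Part $(2)$ then follows by the factorization
\[
\frac{\vartheta_{XY}(X;kY)}{\vartheta_Y(X;Y)}=\frac{\vartheta_{XY}(X;kY)}{\vartheta_Y(X;kY)}\cdot\frac{\vartheta_Y(X;kY)}{\vartheta_Y(X;Y)},
\]
bounding the first factor by part $(1)$ (valid at the argument $kY$) and the second by Lemma \ref{Lemma2a}$(2)$, which applies since $\frac12<\frac{\pi}{\pi+2}$; the product is exactly $\frac{3k}{2\pi}X^{-1}(1+\frac{\pi}{6X})e^{\pi/(4X)}$.

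The entire difficulty is thus concentrated in the key inequality $|h|\le\frac14|g|$, equivalently $\big|\sum_n a_n^3 f_n\big|\le\frac14\big|\sum_n a_nf_n\big|$ for $X\le\frac12$. Using the periodicity $Y\mapsto Y+1$ and the evenness $Y\mapsto-Y$ (both of which leave $h/g$ invariant) I would reduce to $Y\in(0,\frac12]$, on which one checks $g<0$. The claim then splits into the two sign statements $\sum_n a_n^3f_n\le0$ and $\sum_n a_n\big(a_n^2-\tfrac14\big)f_n\ge0$; the latter is the sharp one, since the cubic $a\mapsto a(a^2-\frac14)$ is nonnegative exactly on the central indices $n\ge0$ (where $|a_n|$ is smallest, hence $f_n$ largest) and negative only for $n\le-1$, where the Gaussian weights are exponentially small. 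The natural execution is to bound the negative block $\sum_{n\le-1}$ against the positive central terms by comparing the pair $(n,-n)$ term by term: the polynomial ratio exceeds $1$ but is beaten by the Gaussian gap $e^{-4\pi nY/X}$ once $X\le\frac12$.

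The main obstacle is that this pairwise domination degenerates exactly at the two endpoints $Y\to0^+$ and $Y\to\frac12^-$: there $g$ and $h$ both tend to $0$ through near‑exact cancellations between symmetric index pairs (for instance $n=1$ against $n=-1$ near $Y=0$, and the fully symmetric configuration $\{n-\frac12\}$ at $Y=\frac12$), so no finite truncation of the series is legitimate and the crude pair comparison loses its margin. Handling these corners is what pins down the constant $\frac14$: a first‑order expansion near $Y=\frac12$ shows that $h/g$ tends to $\frac{3/2-\pi/(4X)}{2-\pi/X}$, which is increasing in $1/X$ and whose supremum over $X\le\frac12$ is attained in the limit $X\to0^+$, equal to precisely $\frac14$. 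I would therefore treat a neighborhood of each endpoint by such a local expansion, keeping the canceling pairs together, and treat the complementary range of $Y$ (bounded away from $0$ and $\frac12$) by the Gaussian‑gap comparison above, where ample room is available.
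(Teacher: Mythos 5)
The paper itself does not prove this lemma --- it is imported verbatim from \cite{LuoA} --- so there is no in-paper argument to compare against; I can only assess your proposal on its own terms. Your reduction is the natural one and most of it is correct: the Poisson-side formulas
$\vartheta_Y=2\pi X^{-3/2}\sum_n a_nf_n$ and $\vartheta_{XY}=2\pi(-\tfrac32X^{-5/2}\sum_n a_nf_n+\pi X^{-7/2}\sum_n a_n^3f_n)$ are right, the identity $\vartheta_{XY}/\vartheta_Y=-\tfrac{3}{2X}+\tfrac{\pi}{X^2}\tfrac{h}{g}$ is right, the arithmetic $\tfrac{3}{2X}+\tfrac{\pi}{4X^2}=\tfrac{3}{2X}(1+\tfrac{\pi}{6X})$ is right, and part $(2)$ does follow cleanly from part $(1)$ at argument $kY$ combined with Lemma \ref{Lemma2a}$(2)$ (which applies because $\tfrac12<\tfrac{\pi}{\pi+2}$), the constants multiplying out exactly. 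So everything hinges, as you say, on $|h|\le\tfrac14|g|$ for $X\le\tfrac12$.

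There is, however, a concrete error in your proposed proof of that key inequality. After reducing to $Y\in(0,\tfrac12]$ where $g<0$, the statement $|h|\le\tfrac14|g|$ is equivalent to the \emph{pair} $\sum_n a_n(a_n^2-\tfrac14)f_n\ge0$ and $\sum_n a_n(a_n^2+\tfrac14)f_n\le0$; you replaced the second of these by the stronger claim $h=\sum_n a_n^3f_n\le0$, and that claim is false. For small $Y$ one has $h\approx -Y^3+(\tfrac{4\pi}{X}-6)e^{-\pi/X}\,Y$, so $h>0$ whenever $Y^2<(\tfrac{4\pi}{X}-6)e^{-\pi/X}$; numerically, at $X=\tfrac12$, $Y=\tfrac14$ one finds $g\approx-0.147$ and $h\approx+1.65\times10^{-3}>0$ (the $n=1$ term $(\tfrac34)^3e^{-9\pi/8}$ beats the $n=0$ term $-(\tfrac14)^3e^{-\pi/8}$). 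The target inequality $|h|\le\tfrac14|g|$ does still hold at such points, so the defect is in your decomposition, not in the lemma; the fix is to prove $\sum_n a_n(a_n^2+\tfrac14)f_n\le0$ in place of $h\le0$ (which your own pairwise-domination and endpoint-expansion machinery should handle, since the extra $+\tfrac14 a_nf_n$ only helps). Beyond this, the genuinely hard content --- the uniform verification of both sign conditions on all of $(0,\tfrac12]\times(0,\tfrac12]$, including the degenerate corners $Y\to0^+$ and $Y\to\tfrac12^-$ where $g$ and $h$ vanish simultaneously --- is only sketched, not executed; your endpoint computation $h/g\to\frac{3/2-\pi/(4X)}{2-\pi/X}\uparrow\tfrac14$ as $X\to0^+$ is correct and does explain why $\tfrac14$ is the sharp constant, but it is a boundary check rather than a proof.
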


\section{The transversal monotonicity}

Recall that the lattice with unit density is parameterized by $L =\sqrt{\frac{1}{\Im(z)}}\Big({\mathbb Z}\oplus z{\mathbb Z}\Big)$.
The classical Theta function is defined as
\begin{equation}\aligned
 \label{T}
\theta (\alpha, z):&=\sum_{\mathbb{P}\in L,\; |L|=1} e^{- \pi\alpha |\mathbb{P}|^2}=\sum_{(m,n)\in\mathbb{Z}^2} e^{-\pi\alpha \frac{1 }{\Im(z) }|mz+n|^2}.
\endaligned\end{equation}
We then define the ${M}$ function as
\begin{equation}\aligned
 \label{J}
{M} (\alpha, z):&=\sum_{\mathbb{P}\in L,\; |L|=1} |\mathbb{P}|^2e^{- \pi\alpha |\mathbb{P}|^2}=\sum_{(m,n)\in\mathbb{Z}^2} \frac{|mz+n|^2 }{\Im(z) } e^{- \pi\alpha \frac{|mz+n|^2}{\Im(z) }}.
\endaligned\end{equation}

We also recall that right-half fundamental domain
\begin{equation}\aligned\nonumber
\mathcal{D}_{\mathcal{G}}:=\{
z\in\mathbb{H}: |z|>1,\; 0<x<\frac{1}{2}
\}.
\endaligned\end{equation}
The boundaries of $ \mathcal{D}_{\mathcal{G}}$ are divided into two pieces
\begin{equation}\aligned\nonumber
\Gamma_a:&=\{
z\in\mathbb{H}: z=iy,\; y\geq1
\},\\
\Gamma_b:&=\{
z\in\mathbb{H}: z=e^{i\theta},\; \theta\in[\frac{\pi}{3},\frac{\pi}{2}]
\},
\endaligned\end{equation}
\begin{equation}\aligned\nonumber
\Gamma=\Gamma_a\cup\Gamma_b.
\endaligned\end{equation}
In \cite{LuoA}, we already prove that
\begin{equation}\aligned\nonumber
\min_{z\in\mathbb{H}}M(\alpha,z)\;\;\hbox{is achieved at}\;\; e^{i\frac{\pi}{3}}\;\;\hbox{for}\;\;\alpha\geq1.
\endaligned\end{equation}

In this Section, we aim to prove that
\begin{proposition}\label{Prop1} For $\alpha\in(0,0.9155730607)$,
\begin{equation}\aligned\nonumber
\min_{z\in\mathbb{H}}M(\alpha,z)=\min_{z\in\overline{\mathcal{D}_{\mathcal{G}}}}M(\alpha,z)=\min_{z\in\Gamma}M(\alpha,z).
\endaligned\end{equation}
\end{proposition}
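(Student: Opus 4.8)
\textbf{The plan} is to reduce the minimization over the whole upper half plane $\mathbb{H}$ to a minimization over the partial boundary $\Gamma = \Gamma_a \cup \Gamma_b$ by exhibiting a \emph{transversal monotonicity}: the function $M(\alpha,z)$ is strictly monotone along a suitable family of curves (or directions) foliating the half fundamental domain $\mathcal{D}_{\mathcal{G}}$, pointing the descent toward $\Gamma$. The first reduction is purely symmetric: by Lemma \ref{G111}, $M(\alpha,\cdot)$ is invariant under the group $\mathcal{G}$, so it suffices to minimize over the closed half fundamental domain $\overline{\mathcal{D}_{\mathcal{G}}}$, giving the first equality for free. The real content is the second equality, namely that no interior minimizer exists for $\alpha$ in the stated range $(0, 0.9155730607)$.

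\textbf{The core step} I would carry out is to differentiate $M(\alpha,z)$ in the transversal (horizontal) direction and show this derivative has a definite sign throughout the interior. Writing $z = x+iy$ and using the Jacobi-theta factorization of the lattice sum, one can express $M$ and $\theta$ via the one-dimensional theta function $\vartheta(X;Y)$ of \eqref{TXY}, where the two variables $X,Y$ are built from $x,y$ (typically $X = \alpha/y$ or a similar combination, and $Y$ carrying the $x$-dependence through $mx$). The horizontal partial derivative $\partial_x M$ then becomes a sum whose dominant term is governed by $\vartheta_Y$ and whose error terms involve the higher-frequency quotients $\vartheta_Y(X;kY)/\vartheta_Y(X;Y)$ and the mixed quotients $\vartheta_{XY}(X;kY)/\vartheta_Y(X;Y)$. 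This is precisely where Lemmas \ref{Lemma2a}, \ref{Lemma2b}, and \ref{Lemma2c} enter: they provide the quantitative bounds on these quotients (in terms of $\mu(X)$, $\nu(X)$, and the explicit exponential bounds for small $X$) needed to show that the leading term dominates the remainder, so that $\partial_x M(\alpha,z) > 0$ (or $<0$) for $0 < x < \tfrac12$, $|z|>1$. A positive sign forces the minimum onto the left edge $\Gamma_a$ (the line $x=0$) together with the arc $\Gamma_b$ where the descent is obstructed by $|z|\ge 1$.

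\textbf{The main obstacle} will be controlling the error terms uniformly over the whole interior of $\mathcal{D}_{\mathcal{G}}$ and over the full parameter range $\alpha \in (0, 0.9155730607)$ simultaneously. The theta quotient bounds in Lemmas \ref{Lemma2a}--\ref{Lemma2c} are split according to whether the argument $X$ is large ($X > \tfrac15$, where $\mu,\nu$ are summably small) or small ($X < \tfrac{\pi}{\pi+2}$, where one must use the exponential bound $e^{\pi/(4X)}$ coming from the Poisson-summed form \eqref{PXY}). Matching these two regimes and verifying that the leading positive contribution still beats the sum of all error contributions in the intermediate range is the delicate part: the small-$X$ regime corresponds to $y$ large or $\alpha$ small, exactly where the numerical threshold $0.9155730607$ is being saturated, so the estimate must be tight rather than crude. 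I expect the argument to proceed by isolating the $(m,n)=(\pm1,0)$ and $(0,\pm1)$ lattice contributions as the main term, bounding the tail $|m|\ge 2$ or $|n|\ge 2$ using the cited quotient lemmas, and finally checking one scalar inequality (likely reducible to a monotonicity or single-variable estimate in $X$) that certifies the strict sign of $\partial_x M$ everywhere in the open region, thereby eliminating interior critical points and completing the reduction to $\Gamma$.
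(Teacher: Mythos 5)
Your overall strategy is the right one and matches the paper's first move: by the invariance of Lemma \ref{G111} reduce to $\overline{\mathcal{D}_{\mathcal{G}}}$, and then establish the transversal monotonicity $\frac{\partial}{\partial x}M(\alpha,z)>0$ on the interior (this is exactly Proposition \ref{Prop2}), which pushes the minimum onto $\Gamma=\Gamma_a\cup\Gamma_b$. However, there is a genuine gap in how you propose to prove the sign of $\partial_x M$: you plan to estimate $\partial_x M(\alpha,z)$ \emph{directly} for $\alpha\in(0,0.9155730607)$, isolating the $(m,n)=(\pm1,0),(0,\pm1)$ contributions as a main term and bounding the tail by the theta-quotient lemmas. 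This cannot work uniformly down to $\alpha\to 0$. After the reduction of dimension (Lemma \ref{Lemma3}), the $x$-derivative is a sum over $n\ge 1$ weighted by $e^{-\pi\alpha y(n^2-1)}$; for small $\alpha$ this weight decays arbitrarily slowly, so the number of non-negligible modes grows like $\alpha^{-1/2}$ and no finite main term dominates the remainder. Equivalently, on the lattice side the Gaussian $e^{-\pi\alpha|\mathbb{P}|^2}$ flattens and all points out to radius $\sim\alpha^{-1/2}$ contribute comparably. The quotient bounds of Lemmas \ref{Lemma2a}--\ref{Lemma2c} control the ratios $\vartheta_Y(X;kY)/\vartheta_Y(X;Y)$ etc., but they do not rescue the divergence of the outer $n$-sum's effective length; indeed every quantitative lemma in the paper's Section 3 (Lemmas \ref{Lemma7}--\ref{Lemma9}) carries the hypothesis $\alpha\ge 1$ or $\alpha\ge 1.092212127$ for precisely this reason.

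The missing idea is the duality of Lemma \ref{Lemma1},
\begin{equation}\aligned\nonumber
M(\tfrac{1}{\alpha},z)=\frac{\alpha^2}{\pi}\Big(\theta(\alpha,z)-\pi\alpha M(\alpha,z)\Big),
\endaligned\end{equation}
which follows from the functional equation $\theta(1/\alpha,z)=\alpha\,\theta(\alpha,z)$ by differentiating in $\alpha$ and using $M=-\frac{1}{\pi}\partial_\alpha\theta$. This converts the claim $\partial_x M(\alpha,z)>0$ for $\alpha\in(0,0.9155730607)$ into the claim $\partial_x\big(\theta(\alpha,z)-\pi\alpha M(\alpha,z)\big)>0$ for $\alpha\in(1.092212127,\infty)$ (Proposition \ref{Prop3}); note $0.9155730607=1/1.092212127$, so the threshold itself is produced on the dual, large-$\alpha$ side. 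In that regime the factor $e^{-\pi\alpha y(n^2-1)}$ decays rapidly, the $n=1$ term $2\pi y\alpha^2-\alpha-2y\,\vartheta_{XY}/(-\vartheta_Y)$ genuinely dominates, and the positivity is certified by the single scalar inequality at the corner $x=\frac12$, $y=\frac{\sqrt3}{2}$ (Lemmas \ref{Lemma8}--\ref{Lemma8a}). Without this inversion of the parameter your error control collapses exactly where the proposition is hardest, near the lower end of the $\alpha$-range and near the corner of the fundamental domain.
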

The proof of Proposition \ref{Prop1} is based on the following
\begin{proposition}\label{Prop2} For $\alpha\in(0,0.9155730607)$,
\begin{equation}\aligned\nonumber
\frac{\partial}{\partial x}M(\alpha,z)>0,\;\;\hbox{for}\;\;z\in\mathcal{D}_{\mathcal{G}}.
\endaligned\end{equation}
\end{proposition}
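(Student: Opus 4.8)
The plan is to make the $x$–dependence of $M$ completely explicit by collapsing the inner sum with Poisson summation, and then to read off the sign of $\partial_x M$ from a rapidly converging series. Writing $z=x+iy$ and applying \eqref{PXY} to the sum over $n$ gives $\theta(\alpha,z)=\sqrt{y/\alpha}\sum_{m}e^{-\pi\alpha m^2y}\vartheta(y/\alpha;mx)$, in which $x$ enters only through the even variable $mx$ of the one–dimensional theta. Since $M=-\tfrac1\pi\partial_\alpha\theta$ and $\vartheta$ solves the heat relation $\vartheta_X=\tfrac1{4\pi}\vartheta_{YY}$, differentiating first in $\alpha$ and then in $x$ and using the symmetry $m\mapsto-m$ yields
\[
\partial_x M(\alpha,z)=\frac{2\sqrt y}{\pi\sqrt\alpha}\sum_{m\ge1}m\,e^{-\pi\alpha m^2y}\Big[\big(\tfrac1{2\alpha}+\pi m^2y\big)\vartheta_Y(X;mx)+\tfrac{y}{\alpha^2}\vartheta_{XY}(X;mx)\Big],\quad X=\tfrac{y}{\alpha}.
\]
Expanding $\vartheta_Y$ and $\vartheta_{XY}$ into their sine series and collecting terms, this is the same as
\[
\partial_x M(\alpha,z)=\frac{8\sqrt y}{\sqrt\alpha}\sum_{m,k\ge1}mk\,\Lambda_{m,k}\,e^{-\pi y(\alpha m^2+k^2/\alpha)}\sin(2\pi kmx),\qquad \Lambda_{m,k}:=\pi y\Big(\tfrac{k^2}{\alpha^2}-m^2\Big)-\tfrac1{2\alpha}.
\]
The goal is thus to prove that this double series is positive for $0<x<\tfrac12$, $x^2+y^2>1$ and $0<\alpha<0.9155730607$. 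Since $0<x<\tfrac12$ forces $y>\tfrac{\sqrt3}2$, we have $X=y/\alpha>\tfrac{\sqrt3}{2\alpha}>\tfrac15$ on this set, so every quotient estimate in Lemmas \ref{Lemma2a}–\ref{Lemma2c} is available.

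The positivity is governed by the lowest mode $(m,k)=(1,1)$, whose coefficient is $\Lambda_{1,1}=\pi y\tfrac{1-\alpha^2}{\alpha^2}-\tfrac1{2\alpha}$ and whose sine factor $\sin(2\pi x)$ is positive on $0<x<\tfrac12$. Whenever $\Lambda_{1,1}>0$ — equivalently $y>\tfrac{\alpha}{2\pi(1-\alpha^2)}$, which holds on the whole admissible set as long as $\alpha$ stays below the root $\alpha_0\approx0.9123$ of $\tfrac{\alpha_0}{2\pi(1-\alpha_0^2)}=\tfrac{\sqrt3}2$, and holds off the corner for all $\alpha$ in our range — I would bound every higher mode against $(1,1)$. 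Using $|\sin(2\pi kmx)|\le km\,|\sin(2\pi x)|$, whose quantitative theta form is the quotient control in Lemmas \ref{Lemma2a}(1) and \ref{Lemma2b}(2), the mode $(m,k)$ is at most $\tfrac{(km)^2|\Lambda_{m,k}|}{\Lambda_{1,1}}\,e^{-\pi y(\alpha(m^2-1)+(k^2-1)/\alpha)}$ times the $(1,1)$ term, and the resulting double Gaussian tail sums to less than $1$ because $y>\tfrac{\sqrt3}2$ forces rapid decay in both $m$ and $k$. This gives $\partial_x M>0$ on this part of the domain, and in particular for all $\alpha\le\alpha_0$.

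The main obstacle is the neighbourhood of the hexagonal corner $e^{i\pi/3}=\tfrac12+i\tfrac{\sqrt3}2$, i.e. where $x\to\tfrac12^-$ and $y\to(\tfrac{\sqrt3}2)^+$ with $\alpha$ in the thin window $(\alpha_0,0.9155730607)$. There every $\sin(2\pi kmx)\to0$, so the whole series degenerates, and, worse, the leading coefficient $\Lambda_{1,1}$ becomes negative (since then $y<\tfrac{\alpha}{2\pi(1-\alpha^2)}$); the crude $(1,1)$ argument fails and positivity of $\partial_x M$ must be recovered from the competition of the next harmonics $(1,2),(2,1),\dots$. This is precisely the balance that pins the numerical threshold $0.9155730607$. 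To handle it I would set $s=\tfrac12-x$, factor out the common vanishing rate $\sin(2\pi kms)\sim 2\pi kms$, and reduce the positivity to a single scalar inequality along the arc $\Gamma_b$ near $e^{i\pi/3}$, in which the decisive cancellations among the low modes are bounded two–sidedly by the sharp quotient estimates of Lemmas \ref{Lemma2a}–\ref{Lemma2c}; the explicit constant then emerges as the value of $\alpha$ at which this scalar inequality first degenerates. I expect this corner analysis, rather than the reduction or the generic tail estimate, to be the crux of the argument.
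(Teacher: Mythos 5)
Your expansion of $\partial_x M$ into the double sine series with coefficients $\Lambda_{m,k}=\pi y(k^2/\alpha^2-m^2)-\tfrac{1}{2\alpha}$ is correct and agrees with the paper's dimension reduction (Lemmas \ref{Lemma3}--\ref{Lemma4}), but the proof has a genuine gap that traces back to a structural step you skipped: the paper does \emph{not} attack $\partial_x M(\alpha,z)$ directly for $\alpha<1$. It first invokes the duality $M(\tfrac{1}{\alpha},z)=\tfrac{\alpha^2}{\pi}\bigl(\theta(\alpha,z)-\pi\alpha M(\alpha,z)\bigr)$ (Lemma \ref{Lemma1}) to convert the statement into $\partial_x\bigl(\theta(\alpha,z)-\pi\alpha M(\alpha,z)\bigr)>0$ for $\alpha\geq 1.092212127$ (Proposition \ref{Prop3}). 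This matters because your tail estimate in the ``generic'' region does not close without it. The decay between consecutive lattice modes $m$ in your series is governed by $e^{-\pi\alpha y(m^2-1)}$, which is \emph{not} rapid when $\alpha$ is small: for, say, $\alpha=10^{-2}$ and $y$ of order $1$ one has $e^{-3\pi\alpha y}\approx 0.91$, while the coefficient ratio $m^2|\Lambda_{m,1}|/\Lambda_{1,1}$ is of order $m^2$, so the bound $|\sin(2\pi kmx)|\leq km\sin(2\pi x)$ combined with term-by-term domination gives a ``tail'' that diverges rather than summing to less than $1$. The proposition covers all $\alpha\in(0,0.9155730607)$, including $\alpha$ arbitrarily close to $0$, so this is not a boundary case you can wave away. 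The paper's route keeps the one-dimensional theta functions intact, factors out the nonnegative quantity $-\vartheta_Y(\tfrac{y}{\alpha};x)$ (Lemma \ref{Lemma6}), and only then expands in the lattice index --- where, after dualization, the factor $e^{-\pi\alpha y(n^2-1)}$ with $\alpha\geq 1.09$ makes everything beyond $n=1$ an $O(e^{-3\pi\alpha y})$ error controlled by the quotient bounds of Lemmas \ref{Lemma2a}--\ref{Lemma2c}.

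The second gap is that the part of the argument you yourself identify as ``the crux'' --- the neighbourhood of the hexagonal corner, where every $\sin(2\pi kmx)$ vanishes and $\Lambda_{1,1}$ changes sign, and which is where the constant $0.9155730607$ actually comes from --- is announced but not executed. You would also need to account for the alternating signs there: near $x=\tfrac12$ one has $\sin(2\pi kmx)=(-1)^{km+1}\sin(2\pi km(\tfrac12-x))$, so the low modes enter the limiting scalar inequality with parity-dependent signs, and establishing its positivity up to the stated threshold is exactly the content of the paper's Lemmas \ref{Lemma8}--\ref{Lemma8a} (the minimum of the leading part of $L(\alpha,x,y)$ over $\overline{\mathcal{D}_{\mathcal{G}}}$ is attained at $x=\tfrac12$, $y=\tfrac{\sqrt3}{2}$, and its sign at that corner pins the threshold). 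Without the duality reduction and without carrying out the corner computation, the proposal establishes the conclusion only for $\alpha$ bounded away from both $0$ and the threshold, which is strictly weaker than the proposition.
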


We first state a useful lemma.

\begin{lemma}[Duality of $M$ evaluating at $\frac{1}{\alpha}$ and $\alpha$]\label{Lemma1} Assume that $\alpha>0$, then
\begin{equation}\aligned\nonumber
M(\frac{1}{\alpha},z)=\frac{\alpha^2}{\pi}\Big(
\theta(\alpha,z)-\pi\alpha M(\alpha,z)
\Big).
\endaligned\end{equation}

\end{lemma}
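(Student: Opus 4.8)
The plan is to prove the duality identity
$$M\!\left(\tfrac{1}{\alpha},z\right)=\frac{\alpha^2}{\pi}\Big(\theta(\alpha,z)-\pi\alpha\,M(\alpha,z)\Big)$$
by exploiting the fact that all three quantities are built from the same Gaussian kernel $e^{-\pi\alpha|mz+n|^2/\Im(z)}$, and that differentiating this kernel in $\alpha$ produces exactly the $|mz+n|^2/\Im(z)$ weight appearing in $M$. Concretely, I would first observe the elementary relation
$$M(\alpha,z)=-\frac{1}{\pi}\,\frac{\partial}{\partial\alpha}\,\theta(\alpha,z),$$
which follows by differentiating the series for $\theta$ term by term (justified by absolute and locally uniform convergence of the Gaussian lattice sum). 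Thus the right-hand side of the claimed identity can be rewritten purely in terms of $\theta$ and its $\alpha$-derivative, turning the statement into a functional equation for $\theta$ alone.

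The key input is the modular/Poisson transformation of the theta function. Writing $z=x+iy$, Poisson summation in the lattice $\mathbb{Z}\oplus z\mathbb{Z}$ (equivalently the two-variable version of \eqref{PXY}) gives the self-duality
$$\theta(\alpha,z)=\frac{1}{\alpha}\,\theta\!\left(\tfrac{1}{\alpha},z\right),$$
reflecting that the dual lattice of $\sqrt{1/\Im(z)}(\mathbb{Z}\oplus z\mathbb{Z})$ scales the Gaussian parameter $\alpha\mapsto 1/\alpha$ while introducing the prefactor $1/\alpha$ from the determinant. I would then apply the derivative relation at the dual point: since $M(\tfrac{1}{\alpha},z)=-\tfrac1\pi\,\partial_\beta\theta(\beta,z)\big|_{\beta=1/\alpha}$, I can compute $\partial_\beta\theta(\beta,z)$ by differentiating the transformation formula $\theta(\beta,z)=\beta^{-1}\theta(1/\beta,z)$ rearranged so that $\beta=1/\alpha$ corresponds to the original parameter $\alpha$ on the right.

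The decisive computation is the chain rule. Setting $\beta=1/\alpha$, differentiate $\theta(\beta,z)=\tfrac{1}{\beta}\theta(\tfrac{1}{\beta},z)$ with respect to $\beta$, obtaining a term $-\beta^{-2}\theta(1/\beta,z)$ from the prefactor and a term $\beta^{-1}\cdot(-\beta^{-2})\,\theta_\alpha(1/\beta,z)$ from the inner argument, where $\theta_\alpha$ denotes the $\alpha$-derivative. Evaluating at $\beta=1/\alpha$ (so $1/\beta=\alpha$) and using $M(\alpha,z)=-\tfrac1\pi\theta_\alpha(\alpha,z)$ to reintroduce $M$, the two terms assemble into precisely $\tfrac{\alpha^2}{\pi}\big(\theta(\alpha,z)-\pi\alpha M(\alpha,z)\big)$ after collecting the powers of $\alpha$. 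I expect the main obstacle to be purely bookkeeping: tracking the powers of $\alpha$ versus $\beta=1/\alpha$ through the chain rule and keeping the sign conventions consistent, rather than any genuine analytic difficulty. The term-by-term differentiation and the Poisson formula are both routine for these rapidly convergent Gaussian sums, so once the transformation law and the derivative identity are in hand, the identity follows by direct substitution.
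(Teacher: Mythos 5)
Your proposal is correct and follows essentially the same route as the paper: the paper also combines the Poisson/Jacobi functional equation $\theta(\tfrac{1}{\alpha},z)=\alpha\,\theta(\alpha,z)$ with the relation $M(\alpha,z)=-\tfrac{1}{\pi}\partial_\alpha\theta(\alpha,z)$ and differentiates in $\alpha$. The only difference is the cosmetic choice of which side of the functional equation carries the prefactor before differentiating, and your chain-rule bookkeeping assembles to the stated identity exactly as the paper's does.
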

\begin{proof} By Fourier transform,
\begin{equation}\aligned\label{Ta}
\theta(\frac{1}{\alpha},z)=\alpha\theta(\alpha,z).
\endaligned\end{equation}
Taking derivative with respect to $\alpha$ on \eqref{Ta}, one has
\begin{equation}\aligned\label{Eq1}
-\frac{1}{\alpha^2}\frac{\partial}{\partial \alpha}\theta(\frac{1}{\alpha},z)
=\theta(\alpha,z)+\alpha\frac{\partial}{\partial \alpha}\theta(\alpha,z).
\endaligned\end{equation}

The result then follows by \eqref{Eq1} and Lemma \ref{Lemma2}.
\end{proof}

The new function $M$ has a close relation the classical Theta function.

\begin{lemma}[A relation between the functions $M$ and $\theta$]\label{Lemma2}
\begin{equation}\aligned\nonumber
M(\alpha,z)=-\frac{1}{\pi}\frac{\partial}{\partial \alpha}\theta(\alpha,z).
\endaligned\end{equation}

\end{lemma}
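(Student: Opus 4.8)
The plan is to read off the identity directly from the series definitions by differentiating under the summation sign. Write the common quadratic form as $Q_{m,n}(z):=\frac{|mz+n|^2}{\Im(z)}$, so that by \eqref{T}
\begin{equation}\aligned\nonumber
\theta(\alpha,z)=\sum_{(m,n)\in\mathbb{Z}^2} e^{-\pi\alpha Q_{m,n}(z)}.
\endaligned\end{equation}
Formally differentiating each summand in $\alpha$ produces the factor $-\pi Q_{m,n}(z)$, i.e. $\frac{\partial}{\partial\alpha}e^{-\pi\alpha Q_{m,n}}=-\pi Q_{m,n}e^{-\pi\alpha Q_{m,n}}$, and summing these over $(m,n)$ recovers precisely $-\pi$ times the defining series of $M(\alpha,z)$ in \eqref{J}. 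Hence the whole content of the lemma is the interchange of $\frac{\partial}{\partial\alpha}$ with the lattice sum.

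The only step requiring care is therefore the justification of term-by-term differentiation. For this I would fix an arbitrary compact interval $[\alpha_1,\alpha_2]\subset(0,\infty)$ and a fixed $z\in\mathbb{H}$. Since $Q_{m,n}(z)$ is a positive-definite quadratic form in $(m,n)$, there is a constant $c=c(z)>0$ with $Q_{m,n}(z)\geq c(m^2+n^2)$; consequently, for all $\alpha\in[\alpha_1,\alpha_2]$ the differentiated terms satisfy $\pi Q_{m,n}(z)\,e^{-\pi\alpha Q_{m,n}(z)}\leq \pi Q_{m,n}(z)\,e^{-\pi\alpha_1 c(m^2+n^2)}$, whose sum converges because the Gaussian decay dominates the quadratic growth of $Q_{m,n}$. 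By the Weierstrass $M$-test the differentiated series converges uniformly on $[\alpha_1,\alpha_2]$, which legitimizes differentiating the series for $\theta(\alpha,z)$ term by term and yields
\begin{equation}\aligned\nonumber
\frac{\partial}{\partial\alpha}\theta(\alpha,z)=-\pi\sum_{(m,n)\in\mathbb{Z}^2}Q_{m,n}(z)\,e^{-\pi\alpha Q_{m,n}(z)}=-\pi M(\alpha,z).
\endaligned\end{equation}
Rearranging gives the claimed identity. Since there is no genuine analytic obstacle here — the decay is Gaussian and the bound is uniform on compacta — this step is routine, and I would not expect any difficulty beyond recording the domination estimate above.
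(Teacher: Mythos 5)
Your proposal is correct and follows essentially the same route as the paper, which simply observes that the summand of $M$ in \eqref{J} is $-\frac{1}{\pi}$ times the $\alpha$-derivative of the summand of $\theta$ in \eqref{T}. The domination estimate you record to justify differentiating under the lattice sum is a worthwhile addition that the paper leaves implicit.
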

\begin{proof}
It is based an cute observation from \eqref{T} and \eqref{J}.

\end{proof}

By Lemma \ref{Lemma1}, Proposition \ref{Prop2} is equivalent to
\begin{proposition}\label{Prop3} For $\alpha\in(1.092212127,\infty)$,
\begin{equation}\aligned\nonumber
\frac{\partial}{\partial x}\Big(
\theta(\alpha,z)-\pi\alpha M(\alpha,z)
\Big)>0,\;\;\hbox{for}\;\;z\in\mathcal{D}_{\mathcal{G}}.
\endaligned\end{equation}
\end{proposition}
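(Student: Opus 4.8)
The plan is to prove Proposition \ref{Prop3} by writing the $x$-derivative explicitly as a lattice sum and then showing the sum is positive term-by-term after suitable grouping. Concretely, I would introduce the function
\begin{equation*}
\Phi(\alpha,z):=\theta(\alpha,z)-\pi\alpha M(\alpha,z)=\sum_{(m,n)\in\mathbb{Z}^2}\Bigl(1-\pi\alpha\tfrac{|mz+n|^2}{\Im(z)}\Bigr)e^{-\pi\alpha\frac{|mz+n|^2}{\Im(z)}},
\end{equation*}
so that each summand has the form $g(t):=(1-\pi\alpha t)e^{-\pi\alpha t}$ evaluated at $t=|mz+n|^2/\Im(z)$. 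First I would compute $\partial_x\bigl(|mz+n|^2/\Im(z)\bigr)$ for $z=x+iy$, obtaining a simple polynomial factor times the exponential; the key point is that $g'(t)=-\pi\alpha(2-\pi\alpha t)e^{-\pi\alpha t}$ changes sign once, so each individual term is neither positive nor monotone, and one cannot conclude positivity naively.

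The natural approach is therefore to exploit the symmetry already recorded in Lemma \ref{G111}: on $\mathcal{D}_{\mathcal{G}}$ the reflection $x\mapsto -x$ together with $x\mapsto 1-x$ pairs lattice indices, and I would group the summand for $(m,n)$ with those for $(m,-n)$, $(-m,n)$, etc., so that the $x$-derivative collapses into sums of expressions of the type $\sum_m \vartheta_{XY}$-quotients of the one-dimensional theta function. Precisely, I expect $\partial_x\Phi$ to reduce, after completing the square in $n$ and applying the Poisson-summation identity \eqref{PXY}, to a one-dimensional sum whose sign is governed by ratios $\vartheta_{XY}(X;kY)/\vartheta_Y(X;Y)$ of the kind estimated in Lemmas \ref{Lemma2a}--\ref{Lemma2c}. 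The strategy is to isolate the dominant $k=1$ term, show it has the correct sign, and then bound the tail $k\ge2$ using the quotient estimates so that it cannot overturn the leading contribution.

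The main obstacle will be the sign control in the transition region. Since $\alpha\in(1.092212127,\infty)$ is bounded below but the relevant variable $X=\Im(z)$ or its reciprocal ranges over the whole half-strip $0<x<\tfrac12$, $|z|>1$, the effective argument $X$ of the one-dimensional theta function can be both large and small, and the threshold $1.0922\cdots$ is precisely where the competition between the $\theta$ and $\pi\alpha M$ contributions is tightest. I would split into two regimes according to whether $\Im(z)$ is large (where Lemma \ref{Lemma2a}(1), Lemma \ref{Lemma2b}(1) with the $\mu,\nu$ series apply, since then $X>\tfrac15$) or small (where the exponential bounds of Lemma \ref{Lemma2a}(2) and Lemma \ref{Lemma2c} with the $e^{\pi/4X}$ factors are needed). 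In each regime the remaining task is to verify a one-variable inequality in $X$, which I expect to follow from the monotonicity of $\mu(X),\nu(X)$ and a finite explicit check at the endpoints; I anticipate this endpoint verification to be the genuinely delicate computation, and the lower bound $\alpha>1.0922\cdots$ (equivalently the upper bound $\tfrac{1}{\alpha}<0.9155\cdots$ in Proposition \ref{Prop2}) should be exactly what makes the tail estimates close.
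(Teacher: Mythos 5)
Your plan coincides with the paper's proof almost step for step: the paper likewise factors $\partial_x\bigl(\theta-\pi\alpha M\bigr)$ as a nonnegative multiple of $-\vartheta_Y(\tfrac{y}{\alpha};x)$ times a series $L(\alpha,x,y)$ whose $n=1$ term $2\pi y\alpha^2-\alpha-2y\,\vartheta_{XY}/(-\vartheta_Y)$ is the dominant contribution, bounds the $n\ge 2$ tail by the quotient estimates of Lemmas \ref{Lemma2a}--\ref{Lemma2c}, and splits into regimes according to the size of $X=y/\alpha$ exactly as you propose, with the threshold $1.092212127$ emerging from the evaluation at $x=\tfrac12$, $y=\tfrac{\sqrt3}{2}$. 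Your proposal is a correct outline of essentially the same argument.
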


In the rest of this section, we give the proof of Proposition \ref{Prop3}. To prove Proposition \ref{Prop3},
we give some auxiliary lemmas firstly.

By the explicit expression of $\theta$ in \eqref{T}, one can express $\theta$ in terms of one dimensional theta function defined in
\eqref{TXY} or \eqref{PXY}.
\begin{lemma}[Reduction of dimension]\label{Lemma3} It holds that
\begin{equation}\aligned\nonumber
\theta(\alpha,z)
&=\sqrt{\frac{y}{\alpha}}\sum_{n\in\mathbb{Z}} e^{-\pi\alpha y n^2}\vartheta(\frac{y}{\alpha};nx)\\
\frac{\partial}{\partial x}\theta(\alpha,z)
&=2\sqrt{\frac{y}{\alpha}}\sum_{n=1}^\infty n e^{-\pi\alpha y n^2}\vartheta_Y(\frac{y}{\alpha};nx)
.
\endaligned\end{equation}

\end{lemma}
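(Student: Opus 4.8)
The plan is to perform a partial Poisson summation: sum out the integer $n$ in the lattice index $(m,n)$ while keeping $m$ fixed, so that the two-dimensional sum collapses into a one-parameter family of one-dimensional theta values.

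First I would separate the quadratic form. With $z=x+iy$ and $|mz+n|^2=(mx+n)^2+m^2y^2$, the definition \eqref{T} factors as
\begin{equation}\aligned\nonumber
\theta(\alpha,z)=\sum_{m\in\mathbb{Z}}e^{-\pi\alpha m^2 y}\sum_{n\in\mathbb{Z}}e^{-\frac{\pi\alpha}{y}(n+mx)^2},
\endaligned\end{equation}
because the $m^2y^2$ contribution depends only on $m$ and pulls outside the inner sum.

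Next I would identify the inner Gaussian sum through the Poisson-summation identity \eqref{PXY}. Matching $\frac{\pi\alpha}{y}=\frac{\pi}{X}$ forces $X=\frac{y}{\alpha}$ and $Y=-mx$, giving $\sum_{n}e^{-\frac{\pi(n+mx)^2}{y/\alpha}}=\sqrt{\frac{y}{\alpha}}\,\vartheta(\frac{y}{\alpha};-mx)$. Since $\vartheta(X;Y)=\sum_n e^{-\pi n^2 X}e^{2n\pi iY}$ is even in $Y$ (send $n\mapsto-n$), the sign is irrelevant and $\vartheta(\frac{y}{\alpha};-mx)=\vartheta(\frac{y}{\alpha};mx)$. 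Substituting back and relabeling $m$ as $n$ produces the first identity.

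Finally, for the second identity I would differentiate the first in $x$ term by term, which is legitimate because the Gaussian weights $e^{-\pi\alpha y n^2}$ together with the smoothness of $\vartheta(\frac{y}{\alpha};\cdot)$ render the series and all its $x$-derivatives locally uniformly convergent. Only the second slot depends on $x$, so the chain rule gives $\partial_x\vartheta(\frac{y}{\alpha};nx)=n\,\vartheta_Y(\frac{y}{\alpha};nx)$; the $n=0$ term vanishes, and pairing $n$ with $-n$ while using that $\vartheta_Y$ is odd in $Y$ (the derivative of an even function) shows the $\pm n$ terms coincide, yielding the factor $2$ and the restriction to $n\geq1$. There is no genuine obstacle here; the only points demanding care are the correct reading of the parameters $(X,Y)=(\frac{y}{\alpha},-mx)$ in \eqref{PXY} and the parity bookkeeping---evenness of $\vartheta$ and oddness of $\vartheta_Y$ in $Y$---that folds the symmetric sum over $\mathbb{Z}$ into a sum over positive integers with a factor of two.
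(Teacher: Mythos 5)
Your proposal is correct and is exactly the computation the paper intends: the paper omits the proof, remarking only that the identity follows from the explicit expression \eqref{T} together with \eqref{TXY}/\eqref{PXY}, which is precisely your splitting $|mz+n|^2=(mx+n)^2+m^2y^2$, partial Poisson summation in the inner index with $(X,Y)=(\frac{y}{\alpha},-mx)$, and the parity bookkeeping for $\vartheta$ and $\vartheta_Y$. No gaps.
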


By Lemmas \ref{Lemma2} and \ref{Lemma3}, one has
\begin{lemma}[An exponentially decaying expansion of $\frac{\partial}{\partial x}M(\alpha,z)$]\label{Lemma4}
\begin{equation}\aligned\nonumber
\frac{\partial}{\partial x}M(\alpha,z)&=\frac{1}{\pi}\Big(
\sqrt{y}\alpha^{-\frac{3}{2}}\sum_{n=1}^\infty n e^{-\pi\alpha y n^2}\vartheta_Y(\frac{y}{\alpha};nx)
+2\pi y^{\frac{3}{2}}\alpha^{-\frac{1}{2}}\sum_{n=1}^\infty n^3 e^{-\pi\alpha y n^2}\vartheta_Y(\frac{y}{\alpha};nx)\\
&\;\;\;\;+2 y^{\frac{3}{2}}\alpha^{-\frac{5}{2}}\sum_{n=1}^\infty n e^{-\pi\alpha y n^2}\vartheta_{XY}(\frac{y}{\alpha};nx)\Big).
\endaligned\end{equation}

\end{lemma}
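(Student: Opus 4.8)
The plan is to combine the two structural identities just established, Lemma~\ref{Lemma2} and Lemma~\ref{Lemma3}, and thereby reduce the claim to a single explicit differentiation of a rapidly convergent series. By Lemma~\ref{Lemma2} we have $M(\alpha,z)=-\frac{1}{\pi}\partial_\alpha\theta(\alpha,z)$, so applying $\partial_x$ and interchanging the two derivatives gives $\partial_x M(\alpha,z)=-\frac{1}{\pi}\,\partial_\alpha\bigl(\partial_x\theta(\alpha,z)\bigr)$. The inner factor is supplied in closed form by the second identity of Lemma~\ref{Lemma3}, namely $\partial_x\theta(\alpha,z)=2\sqrt{y/\alpha}\,\sum_{n\geq1} n\,e^{-\pi\alpha y n^2}\vartheta_Y(\tfrac{y}{\alpha};nx)$. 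Thus the entire statement reduces to computing $\partial_\alpha$ of this one expression and multiplying by $-\tfrac1\pi$.

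First I would carry out the $\alpha$-differentiation term by term. Writing $X=y/\alpha$, the parameter $\alpha$ enters in exactly three places: the scalar prefactor $2\sqrt{y}\,\alpha^{-1/2}$, the Gaussian weight $e^{-\pi\alpha y n^2}$, and the first slot of $\vartheta_Y(\tfrac{y}{\alpha};nx)$. The product rule then produces three families of terms. Differentiating the prefactor gives $\partial_\alpha(2\sqrt{y}\,\alpha^{-1/2})=-\sqrt{y}\,\alpha^{-3/2}$, hence the first summand $\sqrt{y}\,\alpha^{-3/2}\sum n\,e^{-\pi\alpha y n^2}\vartheta_Y$. Differentiating the Gaussian brings down a factor $-\pi y n^2$, which raises the power $n$ to $n^3$ and yields the $2\pi y^{3/2}\alpha^{-1/2}$ summand. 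Finally, the chain rule applied to the first argument of $\vartheta_Y$, using $\partial_\alpha(y/\alpha)=-y/\alpha^2$, converts $\vartheta_Y$ into $\vartheta_{XY}$ and produces the $2 y^{3/2}\alpha^{-5/2}$ summand. Since all three contributions to $\partial_\alpha(\partial_x\theta)$ carry a minus sign, multiplying by $-\tfrac1\pi$ flips them to the three plus signs displayed in the statement, reproducing the asserted formula verbatim.

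The one point that genuinely requires care, rather than being mechanical bookkeeping, is the analytic justification of the two interchanges used above: exchanging $\partial_x$ with $\partial_\alpha$, and differentiating the series in $n$ under the summation sign. Both are controlled by the Gaussian factor $e^{-\pi\alpha y n^2}$: for $(\alpha,z)$ ranging over any compact subset of $(0,\infty)\times\mathbb{H}$, the quantities $y$ and $X=y/\alpha$ lie in a compact subinterval of $(0,\infty)$, so $e^{-\pi\alpha y n^2}$ decays super-exponentially in $n$ and dominates both the polynomial prefactors $n$, $n^3$ and the factors $\vartheta_Y,\vartheta_{XY}$, which are bounded (they are smooth and periodic of period $1$ in their second argument $Y=nx$, hence bounded uniformly in $n$ for $X$ in a compact set). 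Consequently each differentiated series converges locally uniformly, term-by-term differentiation is legitimate, and the mixed partials may be exchanged by the standard Schwarz ($C^2$) criterion. This is the step at which one should pause to record the estimates; once it is in place the remainder is an explicit, finite application of the product and chain rules, and the identity follows on every such compact set and therefore for all $z\in\mathbb{H}$ and $\alpha>0$.
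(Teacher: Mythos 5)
Your proposal is correct and follows exactly the route the paper intends: the paper derives Lemma \ref{Lemma4} by combining Lemma \ref{Lemma2} with the second identity of Lemma \ref{Lemma3} and differentiating in $\alpha$, which is precisely your computation (and your three product-rule terms match the three displayed summands, signs included). The only difference is that you additionally justify the interchange of derivatives and the term-by-term differentiation, which the paper leaves implicit.
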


By Lemmas \ref{Lemma3} and \ref{Lemma4}, we get
\begin{lemma}[An exponentially decaying expansion of $\frac{\partial}{\partial x}\Big(
\theta(\alpha,z)-\pi\alpha M(\alpha,z)
\Big)$]\label{Lemma5}
\begin{equation}\aligned\nonumber
\frac{\partial}{\partial x}\Big(
\theta(\alpha,z)-\pi\alpha M(\alpha,z)
\Big)
=&\sqrt{y}\alpha^{-\frac{3}{2}}\cdot(-\vartheta_Y(\frac{y}{\alpha};x))\cdot e^{-\pi\alpha y}
\Big(
2\pi y\alpha^2 \sum_{n=1}^\infty n^3 e^{-\pi\alpha y (n^2-1)}\frac{\vartheta_Y(\frac{y}{\alpha};nx)}{\vartheta_Y(\frac{y}{\alpha};x)}\\
&-2y\sum_{n=1}^\infty n e^{-\pi\alpha y (n^2-1)}\frac{\vartheta_{XY}(\frac{y}{\alpha};nx)}{-\vartheta_Y(\frac{y}{\alpha};x)}
-\alpha\sum_{n=1}^\infty n e^{-\pi\alpha y (n^2-1)}\frac{\vartheta_Y(\frac{y}{\alpha};nx)}{\vartheta_Y(\frac{y}{\alpha};x)}
\Big)
\endaligned\end{equation}
\end{lemma}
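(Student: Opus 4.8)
The plan is to prove Lemma \ref{Lemma5} by pure substitution and algebraic rearrangement, since $\theta-\pi\alpha M$ is an explicit linear combination whose $x$-derivative is already known term by term. First I would write $\frac{\partial}{\partial x}\big(\theta-\pi\alpha M\big)=\frac{\partial}{\partial x}\theta-\pi\alpha\,\frac{\partial}{\partial x}M$ and insert the single-series expansion of $\frac{\partial}{\partial x}\theta$ from Lemma \ref{Lemma3} together with the three-fold expansion of $\frac{\partial}{\partial x}M$ from Lemma \ref{Lemma4}. This produces four series over $n\ge 1$, all sharing the exponential $e^{-\pi\alpha y n^2}$ and the argument $\frac{y}{\alpha}$ with frequency $nx$: a $\vartheta_Y$-series of weight $n$ coming from $\theta$, and, from $\pi\alpha M$, a $\vartheta_Y$-series of weight $n$, a $\vartheta_Y$-series of weight $n^3$, and a $\vartheta_{XY}$-series of weight $n$.

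Next I would simplify the powers of $\alpha$ and $y$ in each coefficient. Multiplication by $\pi\alpha$ cancels the $\frac{1}{\pi}$ prefactor of Lemma \ref{Lemma4} and raises every exponent of $\alpha$ by one; after this the two weight-$n$ $\vartheta_Y$-series merge into a single weight-$n$ $\vartheta_Y$-series, the one from $\theta$ carrying coefficient $2\sqrt{y}\,\alpha^{-1/2}$ and the one from $\pi\alpha M$ carrying $\sqrt{y}\,\alpha^{-1/2}$, so that (because of the minus sign in front of $\pi\alpha M$) they subtract to leave $\sqrt{y}\,\alpha^{-1/2}$. I would then factor out the common prefactor $\sqrt{y}\,\alpha^{-3/2}$, which rescales the three surviving coefficients to $\alpha$ for the weight-$n$ $\vartheta_Y$-series, $2\pi y\alpha^2$ for the weight-$n^3$ $\vartheta_Y$-series, and $2y$ for the weight-$n$ $\vartheta_{XY}$-series, exactly as in the statement.

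The final step is formal: I would factor the $n=1$ exponential $e^{-\pi\alpha y}$ out of every series, converting $e^{-\pi\alpha y n^2}$ into $e^{-\pi\alpha y(n^2-1)}$, and then factor out the $n=1$ theta value by expressing each summand relative to $-\vartheta_Y(\frac{y}{\alpha};x)$. This turns the three series into the ratio form $\frac{\vartheta_Y(\frac{y}{\alpha};nx)}{\vartheta_Y(\frac{y}{\alpha};x)}$ and $\frac{\vartheta_{XY}(\frac{y}{\alpha};nx)}{-\vartheta_Y(\frac{y}{\alpha};x)}$ displayed in the lemma; the middle ($\vartheta_{XY}$) term is deliberately written against $-\vartheta_Y$ so that its numerator and denominator carry matching signs.

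I do not expect a genuine obstacle, as Lemma \ref{Lemma5} is a rewriting rather than an estimate. The only places demanding care are the cancellation of the two weight-$n$ $\vartheta_Y$-contributions, where I must confirm that $2\sqrt{y}\,\alpha^{-1/2}$ and $\sqrt{y}\,\alpha^{-1/2}$ subtract to the residual $\alpha$ after pulling out $\sqrt{y}\,\alpha^{-3/2}$, and the sign bookkeeping of the $n^3$ and $\vartheta_{XY}$ terms under the factored $-\vartheta_Y(\frac{y}{\alpha};x)$. Presenting the identity in this particular quotient form is forward-looking: for $z\in\mathcal{D}_{\mathcal{G}}$ one has $0<x<\frac12$, so $\vartheta_Y(\frac{y}{\alpha};x)<0$ and the prefactor $\sqrt{y}\,\alpha^{-3/2}\,(-\vartheta_Y(\frac{y}{\alpha};x))\,e^{-\pi\alpha y}$ is positive; thus the sign of the derivative is the sign of the bracket, which is precisely the shape to which the ratio bounds of Lemmas \ref{Lemma2a}--\ref{Lemma2c} apply, isolating the positive $n=1$ contribution $2\pi y\alpha^2-\alpha$ and controlling the remaining $n\ge 2$ tails to establish the positivity asserted in Proposition \ref{Prop3}.
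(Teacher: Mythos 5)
Your proposal is correct and matches the paper's (largely implicit) derivation: Lemma \ref{Lemma5} is obtained exactly by substituting the expansions of Lemmas \ref{Lemma3} and \ref{Lemma4} into $\partial_x\theta-\pi\alpha\,\partial_x M$, cancelling the $\tfrac{1}{\pi}$ against $\pi\alpha$, merging the two weight-$n$ $\vartheta_Y$-series, and factoring out $\sqrt{y}\,\alpha^{-3/2}(-\vartheta_Y(\tfrac{y}{\alpha};x))e^{-\pi\alpha y}$. Your coefficient bookkeeping ($\alpha$, $2\pi y\alpha^2$, $2y$) and sign tracking under the factored $-\vartheta_Y(\tfrac{y}{\alpha};x)$ all check out.
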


For convenience, one denotes that
\begin{equation}\aligned\nonumber
L(\alpha,x,y):=
&
2\pi y\alpha^2 \sum_{n=1}^\infty n^3 e^{-\pi\alpha y (n^2-1)}\frac{\vartheta_Y(\frac{y}{\alpha};nx)}{\vartheta_Y(\frac{y}{\alpha};x)}
-2y\sum_{n=1}^\infty n e^{-\pi\alpha y (n^2-1)}\frac{\vartheta_{XY}(\frac{y}{\alpha};nx)}{-\vartheta_Y(\frac{y}{\alpha};x)}\\
&-\alpha\sum_{n=1}^\infty n e^{-\pi\alpha y (n^2-1)}\frac{\vartheta_Y(\frac{y}{\alpha};nx)}{\vartheta_Y(\frac{y}{\alpha};x)}
.
\endaligned\end{equation}
Then Lemma \ref{Lemma5} can be rewritten as
\begin{equation}\aligned\label{Jx}
\frac{\partial}{\partial x}\Big(
\theta(\alpha,z)-\pi\alpha M(\alpha,z)
\Big)
=&\sqrt{y}\alpha^{-\frac{3}{2}}\cdot(-\vartheta_Y(\frac{y}{\alpha};x))\cdot e^{-\pi\alpha y}
\cdot L(\alpha,x,y).
\endaligned\end{equation}
To obtain a qualitative result from \eqref{Jx}, one notes that
\begin{lemma}[\cite{Luo2022}]\label{Lemma6} For $y>0,\alpha>0$ and $x\in[0,\frac{1}{2}]$,
\begin{equation}\aligned\nonumber
-\vartheta_Y(\frac{y}{\alpha};x)\geq0.
\endaligned\end{equation}
\end{lemma}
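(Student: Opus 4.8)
The plan is to prove the equivalent statement that the one–dimensional theta function $Y\mapsto\vartheta(X;Y)$ is nonincreasing on $[0,\tfrac12]$ for every fixed $X>0$; here I abbreviate $X=\tfrac{y}{\alpha}$, which ranges over all of $(0,\infty)$ as $y,\alpha$ vary. Writing out the definition \eqref{TXY} and pairing the $\pm n$ terms gives the real Fourier expansion
\begin{equation*}
\vartheta(X;Y)=1+2\sum_{n=1}^\infty e^{-\pi n^2 X}\cos(2\pi n Y),\qquad -\vartheta_Y(X;Y)=4\pi\sum_{n=1}^\infty n\,e^{-\pi n^2 X}\sin(2\pi n Y),
\end{equation*}
so the claim $-\vartheta_Y(\tfrac{y}{\alpha};x)\ge0$ is exactly the assertion that this sine series is nonnegative for $x\in[0,\tfrac12]$. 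At the endpoints $x=0$ and $x=\tfrac12$ every sine vanishes, so equality holds there; the content is the interior inequality.

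Before committing to an approach I would record that a direct termwise sign analysis fails. Neither the sine series above nor the Gaussian (Poisson) form $-\vartheta_Y=\tfrac{2\pi}{X^{3/2}}\sum_n (Y-n)e^{-\pi(Y-n)^2/X}$ has a fixed sign term by term: the factor $(Y-n)e^{-\pi(Y-n)^2/X}$ changes sign with $n$, and the crude pairing of indices $0$ and $1$ already breaks down for large $X$ because $t\mapsto t e^{-\pi t^2/X}$ is increasing near the origin. This is what motivates factoring $\vartheta$ rather than estimating it termwise.

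The key step is the Jacobi triple product. Setting $q=e^{-\pi X}\in(0,1)$, one has the positive factorization
\begin{equation*}
\vartheta(X;Y)=\prod_{m=1}^\infty(1-q^{2m})\bigl(1+2q^{2m-1}\cos(2\pi Y)+q^{4m-2}\bigr),
\end{equation*}
in which each factor $1+2q^{2m-1}\cos(2\pi Y)+q^{4m-2}=|1+q^{2m-1}e^{2\pi iY}|^2$ is strictly positive. Differentiating $\log\vartheta$ in $Y$ term by term then yields
\begin{equation*}
-\vartheta_Y(X;Y)=4\pi\,\sin(2\pi Y)\,\vartheta(X;Y)\sum_{m=1}^\infty\frac{q^{2m-1}}{1+2q^{2m-1}\cos(2\pi Y)+q^{4m-2}}.
\end{equation*}
For $Y=x\in[0,\tfrac12]$ one has $\sin(2\pi x)\ge0$, $\vartheta(X;x)>0$, and every summand positive, so the right-hand side is nonnegative, which is the claim; this representation also shows the only zeros on $[0,\tfrac12]$ sit at the two endpoints, recovering the equality cases noted above.

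The main obstacle is analytic bookkeeping rather than anything conceptual: justifying the triple-product identity together with the termwise logarithmic differentiation uniformly down to $X\to0^+$ (equivalently $q\to1^-$), where convergence of the infinite product is slowest. I would handle this by restricting to $X\ge X_0>0$ to obtain locally uniform convergence of the product and of its logarithmic derivative on compact $Y$-intervals, perform the differentiation there, and then observe that the resulting identity is an identity between real-analytic functions of $(X,Y)$ and hence persists for all $X>0$.
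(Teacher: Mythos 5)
Your proof is correct. Note first that the paper itself gives no argument for this lemma: it is quoted from \cite{Luo2022}, so there is no in-text proof to compare against; your write-up is a correct, self-contained justification. The Jacobi triple product computation checks out: with $q=e^{-\pi X}$ each quadratic factor equals $|1+q^{2m-1}e^{2\pi iY}|^{2}>0$, the logarithmic derivative in $Y$ of each factor is $-4\pi q^{2m-1}\sin(2\pi Y)$ over that positive denominator, and summing gives exactly your identity, from which nonnegativity on $x\in[0,\tfrac12]$ (with equality only at the endpoints) is immediate. Your preliminary observation that neither the sine series nor the Gaussian form is termwise signed is also accurate and correctly motivates the factorization. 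The only superfluous point is the closing worry about $X\to0^{+}$: the lemma is a pointwise statement in $X$, and for each fixed $X>0$ one has $q=e^{-\pi X}<1$, so the product and its logarithmic derivative converge absolutely and uniformly on compact $Y$-sets without any limiting argument; no uniformity in $X$ down to $0$ is required.
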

The function $L(\alpha,x,y)$ can be estimated by
\begin{lemma}\label{Lemma7} For $y\geq\frac{\sqrt3}{2},\alpha\geq1$ and $x\in\R$,
\begin{equation}\aligned\nonumber
L(\alpha,x,y)=2\pi y\alpha^2-\alpha-2y\frac{\vartheta_{XY}(\frac{y}{\alpha};x)}{-\vartheta_Y(\frac{y}{\alpha};x)}
+O(e^{-3\pi \alpha y}).
\endaligned\end{equation}
In fact,
\begin{equation}\aligned\nonumber
L(\alpha,x,y)&=2\pi y\alpha^2-\alpha-2y\frac{\vartheta_{XY}(\frac{y}{\alpha};x)}{-\vartheta_Y(\frac{y}{\alpha};x)}\\
&+
\sum_{n=2}^\infty e^{-\pi\alpha y(n^2-1)}\big(2\pi y\alpha^2 n^3\frac{\vartheta_{Y}(\frac{y}{\alpha};nx)}{\vartheta_Y(\frac{y}{\alpha};x)}-
2yn\frac{\vartheta_{XY}(\frac{y}{\alpha};nx)}{-\vartheta_Y(\frac{y}{\alpha};x)}-\alpha n\frac{\vartheta_{Y}(\frac{y}{\alpha};nx)}{\vartheta_Y(\frac{y}{\alpha};x)}\big).
\endaligned\end{equation}
\end{lemma}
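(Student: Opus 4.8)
The plan is to read off the claimed expansion as an exact rearrangement and then to estimate the tail. Writing $X=\frac{y}{\alpha}$ throughout, I would first isolate the $n=1$ term in each of the three series defining $L(\alpha,x,y)$. Since $n^2-1=0$ and $\frac{\vartheta_Y(X;x)}{\vartheta_Y(X;x)}=1$ at $n=1$, the first series contributes $2\pi y\alpha^2$, the second contributes $-2y\frac{\vartheta_{XY}(X;x)}{-\vartheta_Y(X;x)}$, and the third contributes $-\alpha$. Collecting these yields precisely the three displayed leading terms, and the remaining $\sum_{n\ge 2}$ parts assemble into the exact series stated in the lemma. Thus the identity is pure algebra; the content is to show that this remainder, call it $R(\alpha,x,y)$, is $O(e^{-3\pi\alpha y})$.

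For the tail estimate I would first reduce to $x\in(0,\tfrac12]$ using the periodicity and evenness of $\vartheta(X;\cdot)$, and invoke Lemma \ref{Lemma6} to fix the sign $-\vartheta_Y(X;x)\ge 0$, so that the quotients appearing are genuine ratios of comparable quantities. In the principal regime $X=\frac{y}{\alpha}\ge\frac15$, which holds automatically for each fixed $\alpha$ once $y$ is large and in particular governs the behaviour as $y\to\infty$, Lemma \ref{Lemma2a}(1) and Lemma \ref{Lemma2b}(2) give $\bigl|\frac{\vartheta_Y(X;nx)}{\vartheta_Y(X;x)}\bigr|\le n\frac{1+\mu(X)}{1-\mu(X)}$ and $\bigl|\frac{\vartheta_{XY}(X;nx)}{-\vartheta_Y(X;x)}\bigr|\le n\pi\frac{1+\nu(X)}{1-\mu(X)}$; since $\mu,\nu$ are decreasing and $\mu(\tfrac15)<1$, these are controlled by the absolute constants $C_1=\frac{1+\mu(1/5)}{1-\mu(1/5)}$ and $C_2=\frac{1+\nu(1/5)}{1-\mu(1/5)}$. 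Substituting into $R$ bounds it by $\sum_{n\ge2}e^{-\pi\alpha y(n^2-1)}\bigl(2\pi C_1\,y\alpha^2 n^4+2\pi C_2\,y\,n^2+C_1\,\alpha\,n^2\bigr)$.

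Finally I would factor $e^{-3\pi\alpha y}$ out of the $n=2$ exponential, leaving $\sum_{n\ge2}n^4 e^{-\pi\alpha y(n^2-4)}$ and its lower-degree analogues; because $\alpha y\ge\frac{\sqrt3}{2}$ these residual series are bounded by absolute constants, so $|R|\le P(\alpha,y)\,e^{-3\pi\alpha y}$ with $P$ a fixed polynomial, which is the asserted $O(e^{-3\pi\alpha y})$ (the polynomial weight is harmless, being dominated by the leading term $2\pi y\alpha^2$ in every later application). The main obstacle is the uniformity of this bound across the full hypothesis range: when $\alpha>5y$ one leaves the region $X\ge\frac15$, and there I would instead use the Poisson-type bounds of Lemma \ref{Lemma2a}(2) and Lemma \ref{Lemma2c}(2); one then checks that the extra factor $e^{\pi/(4X)}=e^{\pi\alpha/(4y)}$ is outweighed by $e^{-\pi\alpha y(n^2-1)}$ (for $n\ge2$ the net exponent is negative since $12y^2>1$ follows from $y\ge\frac{\sqrt3}{2}$), so that the remainder stays exponentially small and negligible against the leading term $2\pi y\alpha^2$.
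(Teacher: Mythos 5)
Your proposal is correct and matches the intended argument: the paper treats this lemma as immediate from the definition of $L(\alpha,x,y)$, and isolating the $n=1$ term (where $e^{-\pi\alpha y(n^2-1)}=1$ and the quotients equal $1$) gives the three leading terms exactly, with the $n\ge 2$ tail being the stated series. Your tail estimate via the quotient bounds of Lemmas \ref{Lemma2a}--\ref{Lemma2c}, split according to whether $X=\frac{y}{\alpha}$ is above or below a fixed threshold, is exactly how the paper controls such remainders elsewhere (e.g.\ in Lemma \ref{Lemma8a}), and your reading of the $O(e^{-3\pi\alpha y})$ as carrying a harmless polynomial (and, for small $X$, an $e^{\pi\alpha/(4y)}$) prefactor is consistent with the paper's own loose use of that notation.
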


We shall further investigate the lower bound of the major term in $L(\alpha,x,y)$ from Lemma \ref{Lemma7}.
Indeed, we have
\begin{lemma}\label{Lemma8} For $\alpha\geq1$, then
\begin{equation}\aligned\nonumber
\min_{z\in\overline{\mathcal{D}_{{\mathcal{G}}}}}\Big(2\pi y\alpha^2-\alpha-2y\frac{\vartheta_{XY}(\frac{y}{\alpha};x)}{-\vartheta_Y(\frac{y}{\alpha};x)}\Big)
\;\;\hbox{is achieved at}\;\;x=\frac{1}{2},y=\frac{\sqrt3}{2}.
\endaligned\end{equation}
\end{lemma}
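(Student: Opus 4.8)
The plan is to reduce the two-dimensional minimization to the boundary arc $\Gamma_b$ and then to its hexagonal endpoint. Write $X=\frac{y}{\alpha}$, $Y=x$ and abbreviate
$$
G(\alpha,x,y):=2\pi\alpha^{2}y-\alpha-2y\,R(X,Y),\qquad R(X,Y):=\frac{\vartheta_{XY}(X;Y)}{-\vartheta_{Y}(X;Y)},
$$
which is nonnegative since $-\vartheta_{Y}\ge0$ on $[0,\tfrac12]$ (Lemma~\ref{Lemma6}). The guiding observation is that as $X\to\infty$ only the first harmonic survives, so $R\to\pi$; it is convenient to isolate this limit and write $G=2\pi(\alpha^{2}-1)y-\alpha-2y(R-\pi)$. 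The deviation $R-\pi$ is sign-definite on the two distinguished fibres: at $Y=0$ one has $R=\pi\,\frac{\sum_{n\ge1}n^{4}e^{-\pi n^{2}X}}{\sum_{n\ge1}n^{2}e^{-\pi n^{2}X}}>\pi$, whereas at the hexagonal fibre $Y=\tfrac12$ the alternating signs force $R<\pi$. Its magnitude is controlled by the quotient estimates already in hand: Lemma~\ref{Lemma2b}(3) gives $R\le\pi\frac{1+\nu(X)}{1+\mu(X)}$ for $X\ge\tfrac15$, and Lemma~\ref{Lemma2c} supplies the complementary bound for $X\le\tfrac12$.

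First I would establish vertical monotonicity, $\partial_{y}G\ge0$ throughout $\overline{\mathcal D_{\mathcal G}}$. Since $\partial_{y}(2yR)=2\frac{d}{dX}(XR)$, this is precisely the inequality $\frac{d}{dX}(XR)\le\pi\alpha^{2}$, in which the fixed large quantity $\pi\alpha^{2}$ is to be played off against the derivative of $XR$, estimated through Lemmas~\ref{Lemma2a}--\ref{Lemma2c}. Granting $\partial_{y}G\ge0$, for each fixed $x$ the minimum over the admissible range $y\ge\sqrt{1-x^{2}}$ is attained at $y=\sqrt{1-x^{2}}$; hence the minimizer lies on the arc and $\min_{\overline{\mathcal D_{\mathcal G}}}G=\min_{\Gamma_b}G$.

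Second, on $\Gamma_b$ I would set $y=\sqrt{1-x^{2}}$ and show that the resulting function of $x\in[0,\tfrac12]$ attains its minimum at the right endpoint $x=\tfrac12$, i.e. at $e^{i\pi/3}=(\tfrac12,\tfrac{\sqrt3}{2})$. One should not expect monotonicity: near $x=\tfrac14$ the even harmonics of $\vartheta_{Y}$ and $\vartheta_{XY}$ vanish, so $R\approx\pi$ and a small interior bump appears. The decisive comparison is therefore between the square endpoint $i$ and the hexagonal endpoint: the base term $2\pi(\alpha^{2}-1)\sqrt{1-x^{2}}$ strictly favours the hexagonal point, having the smaller $y$, and one must show this gain dominates the two sign-definite corrections carried by $-2y(R-\pi)$, which lowers $G$ at the square end and raises it at the hexagonal end. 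This endpoint comparison is where the hypothesis $\alpha\ge1$ is used quantitatively.

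The main obstacle will be the uniform control demanded in the first step across the whole range $X=\frac{y}{\alpha}\in(0,\infty)$: since $\alpha$ is unbounded, on the arc $X$ can be arbitrarily small, so the convenient large-$X$ estimates (Lemmas~\ref{Lemma2a}, \ref{Lemma2b}), under which $R-\pi$ is exponentially small, degenerate and must be replaced by the Poisson-dual small-$X$ bounds of Lemma~\ref{Lemma2c}, where $R$ departs substantially from $\pi$. Splicing the two regimes together while preserving $\frac{d}{dX}(XR)\le\pi\alpha^{2}$, and then making the endpoint comparison on $\Gamma_b$ effective precisely when the two base values are nearly equal, is the technical heart of the argument.
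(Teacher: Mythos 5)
Your plan hinges on the global vertical monotonicity $\partial_{y}G\ge 0$ on all of $\overline{\mathcal{D}_{\mathcal{G}}}$ for every $\alpha\ge1$, and that step fails near $\alpha=1$ — not for lack of sharp estimates, but because the inequality is false there. Writing $G=2\pi\alpha^{2}y-\alpha-2yR$ with $R=\vartheta_{XY}/(-\vartheta_{Y})$, your reduction $\partial_{y}G\ge0\Leftrightarrow\frac{d}{dX}(XR)\le\pi\alpha^{2}$ is correct; but for $x\in(\frac14,\frac12]$ one has $R=\pi-c(x)e^{-3\pi X}+O(e^{-8\pi X})$ with $c(x)=-12\pi\cos(2\pi x)>0$, hence
\begin{equation*}
\frac{d}{dX}(XR)=\pi+c(x)\,(3\pi X-1)\,e^{-3\pi X}+O\big(Xe^{-8\pi X}\big)>\pi,
\end{equation*}
since $X\ge\frac{\sqrt3}{2\alpha}>\frac{1}{3\pi}$ there. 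At $\alpha=1$ the right-hand side of your target inequality is exactly $\pi$, so $\partial_{y}G<0$ on the whole sub-strip $x>\frac14$; the monotonicity only returns once the slack $2\pi(\alpha^{2}-1)$ dominates the $O(Xe^{-3\pi X})$ excess, i.e.\ for $\alpha$ bounded away from $1$. The same defect kills your final endpoint comparison: at $\alpha=1$ the base term $2\pi(\alpha^{2}-1)\sqrt{1-x^{2}}$ contributes nothing and both sign-definite corrections favour the square endpoint. Indeed a direct evaluation (interpreting $R$ at $x=0,\frac12$ as the limit) gives $G(i)=2\pi-1-2\pi\tfrac{1+16e^{-3\pi}+\cdots}{1+4e^{-3\pi}+\cdots}\approx-1.006$ versus $G(e^{i\pi/3})\approx-0.981$, so the asserted minimizer is wrong at $\alpha=1$ and no argument can close on the full stated range; the inequality is only ever invoked, via Lemma~\ref{Lemma8a}, for $\alpha\ge1.0922\ldots$, where the slack you isolate is genuinely available. (A lesser gap: even where your reduction to $\Gamma_b$ works, comparing the two endpoints does not locate the minimum on the arc unless interior local minima are excluded.)

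For comparison, the paper never actually proves Lemma~\ref{Lemma8} as stated: the proof environment that follows belongs to Lemma~\ref{Lemma8a}, and it establishes positivity for $\alpha\ge1.0922\ldots$ directly by splitting on the size of $X=y/\alpha$ — small $X$ via the Poisson-dual bound of Lemma~\ref{Lemma2c}, the regime $X\ge\frac{\pi}{4}$, $y\ge1$ via $R\le\pi(1+\epsilon_{1})$ from Lemma~\ref{Lemma2b}, and only in the residual strip $y\in[\frac{\sqrt3}{2},1]$ does it descend to the arc using the $x$-monotonicity of $R$ and then monotonicity in $y$ along the arc. That route needs neither global vertical monotonicity nor an endpoint comparison, and it sidesteps the region $\alpha$ near $1$ entirely. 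If you wish to salvage your architecture, restrict from the outset to $\alpha\ge1.09$ (which is all that is needed downstream) and carry the explicit quantity $2\pi(\alpha^{2}-1)y\ge\sqrt3\,\pi(\alpha^{2}-1)$ through both steps as the budget against which the $e^{-3\pi X}$ corrections are measured.
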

By Lemma \ref{Lemma8}, after calculating a particular value at $x=\frac{1}{2},y=\frac{\sqrt3}{2}$, one has
\begin{lemma}\label{Lemma8a} For $z\in\overline{\mathcal{D}_{{\mathcal{G}}}}, \alpha\geq1.092212127$, then
\begin{equation}\aligned\nonumber
2\pi y\alpha^2-\alpha-2y\frac{\vartheta_{XY}(\frac{y}{\alpha};x)}{-\vartheta_Y(\frac{y}{\alpha};x)}
>0.
\endaligned\end{equation}
\end{lemma}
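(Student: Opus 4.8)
The plan is to use Lemma \ref{Lemma8} to collapse the two–variable minimization over $\overline{\mathcal{D}_{\mathcal{G}}}$ into a single inequality in $\alpha$. Since Lemma \ref{Lemma8} asserts that, for $\alpha\geq1$, the quantity $2\pi y\alpha^2-\alpha-2y\frac{\vartheta_{XY}(\frac{y}{\alpha};x)}{-\vartheta_Y(\frac{y}{\alpha};x)}$ attains its minimum over $\overline{\mathcal{D}_{\mathcal{G}}}$ at the hexagonal corner $x=\frac12,\ y=\frac{\sqrt3}{2}$, it suffices to prove strict positivity there. Writing $X:=\frac{y}{\alpha}=\frac{\sqrt3}{2\alpha}$, the target reduces to showing
\[
h(\alpha):=\sqrt3\,\pi\alpha^2-\alpha-\sqrt3\,Q(\alpha)>0,\qquad Q(\alpha):=\lim_{x\to\frac12^-}\frac{\vartheta_{XY}(X;x)}{-\vartheta_Y(X;x)},
\]
for every $\alpha\geq 1.092212127$.

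First I would make the boundary value $Q(\alpha)$ explicit, which is a genuine subtlety: at $x=\frac12$ one has $\sin(\pi n)=0$, so \emph{both} $\vartheta_Y(X;\frac12)$ and $\vartheta_{XY}(X;\frac12)$ vanish and the quotient is of type $\frac00$. Expanding $\sin\!\big(2\pi n(\tfrac12+\varepsilon)\big)=(-1)^n\sin(2\pi n\varepsilon)$ and sending $\varepsilon\to0$ converts both numerator and denominator to alternating sums, giving
\[
Q(\alpha)=\pi\,\frac{\sum_{n\geq1}(-1)^n n^4 e^{-\pi n^2 X}}{\sum_{n\geq1}(-1)^n n^2 e^{-\pi n^2 X}}=\pi\,\frac{1-16e^{-3\pi X}+81e^{-8\pi X}-\cdots}{1-4e^{-3\pi X}+9e^{-8\pi X}-\cdots}.
\]
By Lemma \ref{Lemma6} the denominator $-\vartheta_Y$ keeps a sign for $x<\tfrac12$, so the one–sided limit is well defined and $Q$ is a continuous boundary value. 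This exhibits $h$ as an explicit, rapidly convergent one–variable expression.

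Then I would prove $h(\alpha)>0$ in two regimes. For $\alpha\geq\sqrt3$ we have $X\leq\frac12$, so Lemma \ref{Lemma2c}(1) yields $Q(\alpha)\leq\frac32X^{-1}\big(1+\frac{\pi}{6}X^{-1}\big)=\sqrt3\,\alpha+\frac{\pi}{3}\alpha^2$; substituting gives $h(\alpha)\geq\frac{2\sqrt3\pi}{3}\alpha^2-4\alpha=\alpha\big(\frac{2\sqrt3\pi}{3}\alpha-4\big)$, which is positive once $\alpha>\frac{2\sqrt3}{\pi}\approx1.10$ and hence on all of $[\sqrt3,\infty)$. For the transitional window $\alpha\in[1.092212127,\sqrt3)$, i.e. $X\in(\tfrac12,\tfrac{\sqrt3}{2\cdot1.0922\ldots}]$, the series are governed by their $n=1,2$ terms; from the explicit form one checks that the quotient $\frac{A}{B}:=\frac{\sum(-1)^n n^4 e^{-\pi n^2X}}{\sum(-1)^n n^2 e^{-\pi n^2X}}$ is increasing in $X$ on this range, so $Q(\alpha)$ is \emph{decreasing} in $\alpha$, whence $h'(\alpha)=2\sqrt3\pi\alpha-1-\sqrt3\,Q'(\alpha)\geq2\sqrt3\pi\alpha-1>0$. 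Thus $h$ is strictly increasing on the window, and it remains only to verify the sign of $h$ at the endpoint $\alpha=1.092212127$ by direct evaluation of the first two exponentials.

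The main obstacle is precisely this threshold evaluation. Near $\alpha=1.092212127$ the positive term $\sqrt3\pi\alpha^2$ and the subtracted term $\sqrt3\,Q(\alpha)\approx\sqrt3\,\pi$ nearly cancel, the leading contributions differing only through the exponentially small correction $12e^{-3\pi X}$; consequently any crude bound such as $Q<\pi$, or even the general estimate $Q\leq\pi\frac{1+\nu(X)}{1+\mu(X)}$ from Lemma \ref{Lemma2b}(3), is too lossy to decide the sign. The resolution is to retain the exact alternating quotient and control its tail: truncating after $n=2$ and dominating $\sum_{n\geq3}$ by a geometric series gives the sharp two–sided bracket $Q(\alpha)\leq\pi\big(1-12e^{-3\pi X}+72e^{-8\pi X}\big)$ with an explicit, tiny remainder, which together with the monotonicity $h'>0$ above pins down the sign of $h$ at the threshold. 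This sharp near–cancellation estimate is the heart of the argument, while the two monotonicity/domination steps are routine.
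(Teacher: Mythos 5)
Your overall route is legitimate and is essentially the ``intended'' shortcut the paper gestures at (invoke Lemma \ref{Lemma8} to reduce to the corner $x=\tfrac12$, $y=\tfrac{\sqrt3}{2}$, then check one scalar inequality in $\alpha$), whereas the paper's written proof is a three-case covering argument: small $X=\tfrac{y}{\alpha}$ handled by Lemma \ref{Lemma2c}, the region $y\geq1$ handled by Lemma \ref{Lemma2b} with the explicit $\epsilon_1$, and only the strip $y\in[\tfrac{\sqrt3}{2},1]$ reduced to the corner via monotonicity in $x$ and $y$. Your identification of the corner value as a $0/0$ limit, with $Q(\alpha)=\pi\frac{1-16e^{-3\pi X}+81e^{-8\pi X}-\cdots}{1-4e^{-3\pi X}+9e^{-8\pi X}-\cdots}$ at $X=\tfrac{\sqrt3}{2\alpha}$, is correct and is actually more careful than the paper, which writes $\frac{\vartheta_{XY}(\frac{\sqrt3}{2\alpha};\frac12)}{-\vartheta_Y(\frac{\sqrt3}{2\alpha};\frac12)}$ as though it were a well-defined quotient. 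Your large-$\alpha$ regime (Lemma \ref{Lemma2c} giving $h(\alpha)\geq\frac{2\sqrt3\pi}{3}\alpha^2-4\alpha>0$ for $\alpha\geq\sqrt3$) and the monotonicity $h'(\alpha)\geq 2\sqrt3\pi\alpha-1>0$ on the transitional window are both sound.

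The genuine problem is the step you defer to the end: the sign check at $\alpha=1.092212127$ does not come out positive. With $X=\tfrac{\sqrt3}{2\alpha}\approx0.79291$ one gets $e^{-3\pi X}\approx5.683\times10^{-4}$, hence $Q(\alpha)\approx\pi(1-0.006834)\approx3.12012$, $\sqrt3\,Q\approx5.40421$, while $\sqrt3\pi\alpha^2-\alpha\approx6.49119-1.09221=5.39898$; thus $h(1.092212127)\approx-5.2\times10^{-3}<0$, and even your own upper bracket $Q\leq\pi(1-12e^{-3\pi X}+72e^{-8\pi X})$ only yields $h\geq-5.3\times10^{-3}$. Since $h'\approx11$ there, the actual root of $h$ sits near $\alpha\approx1.0927$, so your argument (and indeed the reduction to the corner itself, since the corner lies in $\overline{\mathcal{D}_{\mathcal{G}}}$) establishes the inequality only for $\alpha$ above roughly $1.0927$, not down to the stated threshold. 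To be fair, this defect is inherited from the paper: its own Case c asserts that the same corner quantity is positive ``if $\alpha>1.092212127$,'' which the computation above contradicts on the sliver $(1.092212127,\,1.0927)$. So your strategy is fine and arguably cleaner, but the ``heart of the argument'' you flag --- the near-cancellation at the threshold --- is not merely delicate, it fails at the stated constant; you would either need to correct the threshold (propagating the change back through Propositions \ref{Prop2}--\ref{Prop3} and the splitting value $0.9155730607$) or find additional positivity elsewhere that the corner reduction discards.
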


\begin{proof} It is split into three cases to complete the proof.
Note that $z\in\overline{\mathcal{D}_{{\mathcal{G}}}}$ implies that $y\geq\frac{\sqrt3}{2}$.

{\bf Case a:} $\{\frac{y}{\alpha}<\frac{\pi}{4}\}$. Using the upper bound of $\frac{\vartheta_{XY}(\frac{y}{\alpha};x)}{-\vartheta_Y(\frac{y}{\alpha};x)}$(Lemma \eqref{Lemma2c}),
\begin{equation}\aligned\nonumber
2\pi y\alpha^2-\alpha-2y\frac{\vartheta_{XY}(\frac{y}{\alpha};x)}{-\vartheta_Y(\frac{y}{\alpha};x)}
&>2\pi y\alpha^2-\alpha-3\alpha(1+\frac{\pi}{6}\frac{\alpha}{y})\\
&=\alpha\big(\frac{\pi\alpha}{2y}(4y^2-1)-4\big)\\
&>2\alpha\big(4y^2-3\big)\geq0.
\endaligned\end{equation}

{\bf Case b:} $\{\frac{y}{\alpha}\geq\frac{\pi}{4}\}\cap\{y\geq1\}$. Using the upper bound of $\frac{\vartheta_{XY}(\frac{y}{\alpha};x)}{-\vartheta_Y(\frac{y}{\alpha};x)}$(Lemma \eqref{Lemma2b}),
\begin{equation}\aligned\nonumber
2\pi y\alpha^2-\alpha-2y\frac{\vartheta_{XY}(\frac{y}{\alpha};x)}{-\vartheta_Y(\frac{y}{\alpha};x)}
&>2\pi y\alpha^2-\alpha-2\pi y(1+\epsilon_1)\\
&\geq2\pi \alpha^2-\alpha-2\pi (1+\epsilon_1)\\
&>0\;\;\hbox{if}\;\;\alpha>\frac{\sqrt{1+16\pi^2(1+\epsilon_1)}-1}{4\pi}.
\endaligned\end{equation}

\begin{equation}\aligned\nonumber
\epsilon_1:=\frac{\sum_{n=2}^\infty (n^4-n^2)e^{-\pi(n^2-1)\frac{y}{\alpha}}}
{1+\sum_{n=2}^\infty n^2e^{-\pi(n^2-1)\frac{y}{\alpha}}}\leq 13 e^{-\frac{3\pi^2}{4}}=0.007928797236\cdots.
\endaligned\end{equation}
And then
\begin{equation}\aligned\nonumber
\frac{\sqrt{1+16\pi^2(1+\epsilon_1)}-1}{4\pi}\leq1.086682914\cdots.
\endaligned\end{equation}

{\bf Case c:} $\{\frac{y}{\alpha}\geq\frac{\pi}{4}\}\cap\{y\in[\frac{\sqrt3}{2},1]\}$.
Using the function $\frac{\vartheta_{XY}(\frac{y}{\alpha};x)}{-\vartheta_Y(\frac{y}{\alpha};x)}$ is monotonically decreasing for $x\in[0,\frac{1}{2}]$.

\begin{equation}\aligned\label{KK}
2\pi y\alpha^2-\alpha-2y\frac{\vartheta_{XY}(\frac{y}{\alpha};x)}{-\vartheta_Y(\frac{y}{\alpha};x)}
&\geq2\pi y\alpha^2-\alpha-2y\frac{\vartheta_{XY}(\frac{y}{\alpha};\sqrt{1-y^2})}{-\vartheta_Y(\frac{y}{\alpha};\sqrt{1-y^2})}.
\endaligned\end{equation}
The function $\frac{\vartheta_{XY}(\frac{y}{\alpha};x)}{-\vartheta_Y(\frac{y}{\alpha};x)}$ is very close to the constant $\pi$ and $2\pi y\alpha^2-\alpha-2y\frac{\vartheta_{XY}(\frac{y}{\alpha};\sqrt{1-y^2})}{-\vartheta_Y(\frac{y}{\alpha};\sqrt{1-y^2})}$ is actually increasing for $y\in[\frac{\sqrt3}{2},1]$.
Continuing by \eqref{KK}, then
\begin{equation}\aligned\label{KKa}
2\pi y\alpha^2-\alpha-2y\frac{\vartheta_{XY}(\frac{y}{\alpha};x)}{-\vartheta_Y(\frac{y}{\alpha};x)}
&\geq2\pi y\alpha^2-\alpha-2y\frac{\vartheta_{XY}(\frac{y}{\alpha};\sqrt{1-y^2})}{-\vartheta_Y(\frac{y}{\alpha};\sqrt{1-y^2})}\\
&\geq\sqrt3\pi \alpha^2-\alpha-\sqrt3\frac{\vartheta_{XY}(\frac{\sqrt3}{2\alpha};\frac{1}{2})}{-\vartheta_Y(\frac{\sqrt3}{2\alpha};\frac{1}{2})}\\
&>0\;\;\hbox{if}\;\;\alpha>1.092212127.
\endaligned\end{equation}

\end{proof}

A slightly modification of Lemma \ref{Lemma8a} and application to Lemma \ref{Lemma7}, lead to that
\begin{lemma}\label{Lemma9} For $y\geq\frac{\sqrt3}{2},\alpha\geq1.092212127$ and $x\in\R$, then
\begin{equation}\aligned\nonumber
L(\alpha,x,y)
>0.
\endaligned\end{equation}
\end{lemma}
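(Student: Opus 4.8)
The plan is to combine the exact decomposition of Lemma \ref{Lemma7} with the positivity of its leading term (Lemma \ref{Lemma8a}) and a direct majorization of the exponentially small remainder. Write
\[
L(\alpha,x,y)=L_0(\alpha,x,y)+R(\alpha,x,y),\quad L_0:=2\pi y\alpha^2-\alpha-2y\frac{\vartheta_{XY}(\frac{y}{\alpha};x)}{-\vartheta_Y(\frac{y}{\alpha};x)},
\]
where $R$ is the $n\ge2$ part of the sum in Lemma \ref{Lemma7}, each term of which carries the Gaussian weight $e^{-\pi\alpha y(n^2-1)}$. Before estimating I would reduce the $x$-range: since $\vartheta(X;\cdot)$ is $1$-periodic and even, $\vartheta_Y$ and $\vartheta_{XY}$ are $1$-periodic and odd, so each quotient $\vartheta_Y(\frac{y}{\alpha};nx)/\vartheta_Y(\frac{y}{\alpha};x)$ and $\vartheta_{XY}(\frac{y}{\alpha};nx)/\vartheta_Y(\frac{y}{\alpha};x)$ (with $n\in\mathbb{Z}$) is $1$-periodic and even in $x$. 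Hence $L(\alpha,\cdot,y)$ is $1$-periodic and even, the endpoints $x=0,\tfrac12$ being read as the finite $0/0$ limits, and it suffices to treat $x\in[0,\tfrac12]$, the regime in which Lemma \ref{Lemma8a} applies.

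For the leading term, Lemma \ref{Lemma8a} already gives $L_0>0$ for $\alpha\ge1.092212127$, and Lemma \ref{Lemma8} tells me its minimum sits at the hexagonal point $(\tfrac12,\tfrac{\sqrt3}{2})$; the ``slight modification'' I would perform is to upgrade this to an explicit positive lower bound $L_0\ge m(\alpha,y)$ that keeps track of the margin over the threshold. For the remainder I would bound the theta-quotients by Lemmas \ref{Lemma2a}--\ref{Lemma2b}, valid once $X=\tfrac{y}{\alpha}\ge\tfrac15$, namely $|\vartheta_Y(X;nx)/\vartheta_Y(X;x)|\le n\frac{1+\mu(X)}{1-\mu(X)}$ and $|\vartheta_{XY}(X;nx)/\vartheta_Y(X;x)|\le n\pi\frac{1+\nu(X)}{1-\mu(X)}$, which yields
\[
|R|\le\sum_{n\ge2}e^{-\pi\alpha y(n^2-1)}\Big(2\pi y\alpha^2 n^4\tfrac{1+\mu}{1-\mu}+2\pi y\,n^2\tfrac{1+\nu}{1-\mu}+\alpha n^2\tfrac{1+\mu}{1-\mu}\Big).
\]
Since $\alpha y\ge\tfrac{\sqrt3}{2}\cdot1.0922$, this series is dominated by its $n=2$ term and is of size $|R|\le C e^{-3\pi\alpha y}$ with an explicit small constant $C$; in the complementary regime of large $\alpha$ (where $X<\tfrac15$) I would instead invoke the small-$X$ bounds of Lemmas \ref{Lemma2a}(2) and \ref{Lemma2c}, the Gaussian factor $e^{-\pi\alpha y n^2}$ easily absorbing the $e^{\pi/(4X)}$ growth, while $L_0\sim2\pi y\alpha^2$ makes positivity immediate there.

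The conclusion is then $L=L_0+R\ge m(\alpha,y)-Ce^{-3\pi\alpha y}$, and away from any fixed neighborhood of the hexagonal corner, where $m(\alpha,y)$ is bounded below by a fixed positive constant, this closes at once. The hard part is the corner itself: there $m(\alpha,y)$ is thin---it degenerates as $\alpha\downarrow1.092212127$ and $(x,y)\to(\tfrac12,\tfrac{\sqrt3}{2})$---while $R$, though exponentially small, is of comparable order on that scale, so the crude absolute-value majorant for $R$ is too lossy. I therefore expect the decisive step to be a sharp, sign-aware evaluation of $R$ at and near the hexagonal point, using the exact limiting quotients $\vartheta_Y(\frac{y}{\alpha};nx)/\vartheta_Y(\frac{y}{\alpha};x)$ and $\vartheta_{XY}(\frac{y}{\alpha};nx)/\vartheta_Y(\frac{y}{\alpha};x)$ as $x\to\tfrac12$ rather than their majorants, so as to verify that the margin supplied by Lemma \ref{Lemma8a} genuinely dominates the remainder for every $\alpha\ge1.092212127$.
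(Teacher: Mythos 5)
Your route---the Lemma \ref{Lemma7} decomposition, positivity of the leading term via Lemma \ref{Lemma8a}, and majorization of the $n\ge2$ tail by the theta-quotient bounds of Lemmas \ref{Lemma2a}--\ref{Lemma2c}---is exactly the paper's, whose entire proof of Lemma \ref{Lemma9} is the single sentence that ``a slight modification of Lemma \ref{Lemma8a} and application to Lemma \ref{Lemma7}'' yield the claim. The corner degeneracy you flag is genuine and is precisely what that unexplained modification must absorb: the threshold $1.092212127$ is the root of the case-(c) lower bound for the leading term at $(x,y)=(\tfrac12,\tfrac{\sqrt3}{2})$, so at that $\alpha$ and that point Lemma \ref{Lemma8a} supplies zero margin and an absolute-value bound on the remainder cannot close the argument; one must either evaluate the $n\ge2$ terms with their signs there or rerun the case-(c) computation with the remainder folded into the threshold equation. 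Your proposal makes this missing step explicit where the paper leaves it implicit; otherwise the two arguments coincide.
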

Therefore, Lemmas \ref{Lemma6}, \ref{Lemma9} and \eqref{Jx} provide the proof of Proposition \ref{Prop3}.
The proof is complete.

\section{Second order estimates}
Recall that
\begin{equation}\aligned\nonumber
\Gamma_b=\{
z\in\mathbb{H}: z=e^{i\theta},\; \theta\in[\frac{\pi}{3},\frac{\pi}{2}]
\}.
\endaligned\end{equation}
In this Section, we aim to prove that
\begin{proposition}\label{PropA1} For $\alpha\in[0.9155730607,1]$,
\begin{equation}\aligned\nonumber
\min_{z\in\mathbb{H}}M(\alpha,z)=\min_{z\in\overline{\mathcal{D}_{\mathcal{G}}}}M(\alpha,z)=\min_{z\in\Gamma_b}M(\alpha,z).
\endaligned\end{equation}
\end{proposition}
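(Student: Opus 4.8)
The plan is to push the global minimum onto the arc $\Gamma_b$ in two stages. First, a \emph{minimum principle} will be used to rule out interior minimizers, reducing the problem to the boundary $\partial\overline{\mathcal{D}_{\mathcal{G}}}=\Gamma_a\cup\Gamma_b\cup\Gamma_c$; then one-variable estimates on the two straight edges $\Gamma_a$ and $\Gamma_c$ will collapse them onto their distinguished endpoints $i$ and $e^{i\frac{\pi}{3}}$, both of which already lie on $\Gamma_b$. The window $[0.9155730607,1]$ is exactly the reciprocal window $[1,1.092212127]$ under the duality of Lemma \ref{Lemma1}, which lies just below the threshold where the transversal monotonicity of Proposition \ref{Prop1}–Proposition \ref{Prop2} is available; this is precisely why $\partial_x M$ may now vanish inside $\mathcal{D}_{\mathcal{G}}$ and a genuinely second-order argument is needed in place of the first-order sign used in Section 3.

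For the first stage I would argue slice by slice in the horizontal variable. On each segment $\{x+iy:\ 0\le x\le \tfrac12\}$ at fixed height $y$, the reflections $z\mapsto-\overline z$ and $z\mapsto 1-\overline z$ in $\mathcal{G}$ force $\partial_x M(\alpha,z)=0$ at the two ends $x=0$ and $x=\tfrac12$. The heart of the minimum principle is the second-order estimate that at any \emph{interior} zero of $\partial_x M$ (a critical point with $0<x<\tfrac12$) one has $\partial_x^2 M(\alpha,z)<0$; this makes every such point a horizontal local maximum, so at most one can occur, and the slice minimum is forced to an endpoint $x\in\{0,\tfrac12\}$ or onto $\Gamma_b$ where the slice meets $\partial\mathcal{D}_{\mathcal{G}}$. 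To produce this sign I would differentiate the exponentially decaying expansions of Section 3 (Lemmas \ref{Lemma3}–\ref{Lemma5} and \eqref{Jx}) once more in $x$, isolate the dominant $n=1$ contribution via the Jacobi-theta quotient bounds of Lemmas \ref{Lemma2a}–\ref{Lemma2c}, and absorb the tail as $O(e^{-3\pi\alpha y})$, now specialized to $\alpha\in[0.9155730607,1]$ (equivalently its dual window under Lemma \ref{Lemma1}).

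In the second stage I handle the straight edges as one-variable problems. On $\Gamma_a$ ($z=iy$, $y\ge1$) the relation $M(\alpha,iy)=M(\alpha,i/y)$ coming from $z\mapsto-1/z$ makes $y=1$ a critical point, and since $\alpha\ge0.9155730607>\alpha_a$, the characterization of $\alpha_a$ through $\theta_{yy}(\alpha,i)=\pi\alpha M_{yy}(\alpha,i)$ together with Lemma \ref{Lemma1} gives $M_{yy}(\alpha,i)>0$, so $i$ is a strict local minimum along $\Gamma_a$. I would upgrade this to the global statement $M(\alpha,iy)\ge M(\alpha,i)$ for $y\ge1$ by writing $\theta(\alpha,iy)$ as a product of two one-dimensional theta functions via Lemma \ref{Lemma3}, using $M=-\tfrac1\pi\theta_\alpha$ (Lemma \ref{Lemma2}), and showing the resulting $\partial_y M(\alpha,iy)$ has a definite sign for $y>1$. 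The edge $\Gamma_c$ ($z=\tfrac12+iy$, $y\ge\tfrac{\sqrt3}{2}$) is treated identically, with $e^{i\frac{\pi}{3}}$ as the distinguished critical point fixed by $z\mapsto-\tfrac1z+1$, yielding $M(\alpha,\tfrac12+iy)\ge M(\alpha,e^{i\frac{\pi}{3}})$. Combining the two stages places the minimum on $\Gamma_b$.

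The main obstacle is the interior second-order estimate $\partial_x^2 M<0$ underlying the minimum principle. In Section 3 a single sign sufficed and the nonnegative prefactor in \eqref{Jx} carried it, whereas here the leading contributions to $\partial_x^2 M$ nearly cancel for $\alpha$ close to $1$ and $y$ near the arc — precisely the delicate regime of the square/hexagonal competition. I expect this to require the sharpened quotient bounds of Lemma \ref{Lemma2b}(3) and Lemma \ref{Lemma2c}, a case split by the size of $X=\tfrac{y}{\alpha}$ in the spirit of Lemma \ref{Lemma8a}, and the verification of a few explicit one-dimensional numerical inequalities anchored at $i$ and $e^{i\frac{\pi}{3}}$. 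The edge analyses on $\Gamma_a$ and $\Gamma_c$ are comparatively routine monotonicity arguments, but they too degenerate as $\alpha\downarrow\alpha_a$, so every estimate must be quantitative enough to remain strict on the closed window $[0.9155730607,1]$.
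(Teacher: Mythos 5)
Your overall architecture (interior $\to$ boundary $\to$ two distinguished corners) is reasonable, but it is not the paper's route, and its central step has a genuine gap. The paper reduces to $\Gamma_b$ with a $y$-based minimum principle: it proves $\partial_y M>0$ on $\mathcal{D}_{\mathcal{G}}\cap\{y\geq2\}$ (Proposition \ref{PropA2}), and on $\{y\leq2\}$ it verifies $(\partial_{yy}+\tfrac{2}{y}\partial_y)M>0$ together with $\partial_{xy}M>0$ near $y\leq1$ (Lemmas \ref{Lemma37} and \ref{LemmaMxy}); Proposition \ref{PropA} then propagates $\partial_y M(i)=0$ into $\partial_y M>0$ on $\{1\leq y\leq 2\}$ and rules out interior minimizers below $y=1$ by comparison with the arc, landing directly on $\Gamma_b$ with no separate edge analysis. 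Your Stage 1 instead rests on the claim that at \emph{every} interior zero of $\partial_x M$ one has $\partial_{xx}M<0$. That claim is asserted, not established, and nothing in your plan rules out a crossing of $\partial_x M$ from negative to positive (an interior horizontal local minimum), which is exactly what would defeat the slice argument. Note also that $\partial_x M$ vanishes identically on both vertical edges $x=0$ and $x=\tfrac12$, and whenever $i$ is the global minimizer (which happens for part of your window, $\alpha\in[\alpha_a,\alpha_b)$) one necessarily has $\partial_{xx}M(i)\geq0$; so $\partial_{xx}M<0$ cannot hold on any region reaching $x=0$, and you would first have to localize the interior zeros of $\partial_x M$ away from that edge before any second-derivative estimate could apply. "Differentiate the expansion once more and isolate the $n=1$ term" does not address this, because the question is whether the statement is true, not how to estimate the tail.

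Your route also creates two obligations the paper never incurs. First, slicing horizontally forces you to control $\partial_x M$ and $\partial_{xx}M$ on slices of arbitrarily large height, where the entire $x$-dependence of $M$ is of size $O(e^{-\pi\alpha y})$ and sign determination degenerates; the paper sidesteps this with the vertical monotonicity $\partial_y M>0$ for $y\geq2$. Second, your Stage 2 requires a full one-variable analysis on $\Gamma_c=\{\tfrac12+iy:\,y\geq\tfrac{\sqrt3}{2}\}$ showing the minimum there sits at $e^{i\pi/3}$ throughout the window; this is plausible but appears nowhere in the paper — Section 6 analyzes the segment $y\in[\tfrac12,\tfrac{\sqrt3}{2}]$ of the $\tfrac12$-axis, which is the conformal image of the arc $\Gamma_b$ under Lemma \ref{LemmaH1}, not the edge $\Gamma_c$ — so it would have to be built from scratch. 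Your $\Gamma_a$ discussion is sound and coincides with Theorem \ref{Th31} (indeed $[0.9155730607,1]\subset[\alpha_1,1]$ with $\alpha_1=0.8947\cdots$), but by itself it does not repair the missing interior reduction.
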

A combination of Propositions \ref{Prop1} and \ref{PropA1} gives that
\begin{theorem}\label{ThA}
For $\alpha\in(0,1]$,
\begin{equation}\aligned\nonumber
\min_{z\in\mathbb{H}}M(\alpha,z)=\min_{z\in\overline{\mathcal{D}_{\mathcal{G}}}}M(\alpha,z)=\min_{z\in\Gamma}M(\alpha,z).
\endaligned\end{equation}

\end{theorem}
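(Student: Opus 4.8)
The final statement to prove is Theorem \ref{ThA}: for all $\alpha \in (0,1]$, the minimizer of $M(\alpha,z)$ over $\mathbb{H}$ lies on the boundary arc $\Gamma = \Gamma_a \cup \Gamma_b$ of the half fundamental domain. Since $M$ is invariant under the group $\mathcal{G}$ (Lemma \ref{G111}), it suffices to minimize over $\overline{\mathcal{D}_{\mathcal{G}}}$, so the content of the theorem is that no interior minimizer exists in this parameter range.

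The plan is to simply \emph{combine the two propositions already proved in Sections 3 and 4}, which together cover the whole interval $(0,1]$ with overlapping subintervals. First I would observe that the two ranges $(0,0.9155730607)$ from Proposition \ref{Prop1} and $[0.9155730607,1]$ from Proposition \ref{PropA1} have union exactly $(0,1]$ (the shared endpoint $0.9155730607$ being covered by Proposition \ref{PropA1}). For $\alpha$ in the first range, Proposition \ref{Prop1} gives directly
\begin{equation}\aligned\nonumber
\min_{z\in\mathbb{H}}M(\alpha,z)=\min_{z\in\overline{\mathcal{D}_{\mathcal{G}}}}M(\alpha,z)=\min_{z\in\Gamma}M(\alpha,z),
\endaligned\end{equation}
which is literally the conclusion sought. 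For $\alpha$ in the second range, Proposition \ref{PropA1} gives the stronger localization onto $\Gamma_b \subset \Gamma$, namely
\begin{equation}\aligned\nonumber
\min_{z\in\mathbb{H}}M(\alpha,z)=\min_{z\in\overline{\mathcal{D}_{\mathcal{G}}}}M(\alpha,z)=\min_{z\in\Gamma_b}M(\alpha,z).
\endaligned\end{equation}
Since $\Gamma_b \subseteq \Gamma$, the minimum over $\Gamma_b$ coincides with the minimum over $\Gamma$ whenever the overall minimizer already lies in $\Gamma_b$; thus $\min_{z\in\Gamma_b}M(\alpha,z)=\min_{z\in\Gamma}M(\alpha,z)$ in this case, and the desired chain of equalities follows.

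There is essentially no obstacle here: the theorem is a bookkeeping corollary of the two preceding propositions, and the only thing to verify carefully is that the two $\alpha$-ranges genuinely tile $(0,1]$ with no gap, which they do by construction since the cutoff value $0.9155730607$ appears as the right endpoint of the first range and the left endpoint of the second. The real mathematical work has already been front-loaded into Propositions \ref{Prop1} and \ref{PropA1} — the transversal monotonicity argument of Section 3 (culminating in Lemma \ref{Lemma9} and the sign identity \eqref{Jx}) for small $\alpha$, and the second-order / minimum-principle analysis of Section 4 for $\alpha$ near $1$. If I were to flag a subtlety, it would be the reduction from $\mathbb{H}$ to $\overline{\mathcal{D}_{\mathcal{G}}}$: this relies on the $\mathcal{G}$-invariance of $M$ together with the standard fact that every $z \in \mathbb{H}$ has a $\mathcal{G}$-translate in $\overline{\mathcal{D}_{\mathcal{G}}}$, so the infimum over the closed fundamental domain equals the infimum over all of $\mathbb{H}$; both propositions already incorporate this step, so at the level of Theorem \ref{ThA} it is automatic. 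I would therefore present the proof in two short sentences invoking Propositions \ref{Prop1} and \ref{PropA1} on their respective ranges and noting $\Gamma_b\subseteq\Gamma$.
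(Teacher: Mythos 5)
Your proposal is correct and is exactly the paper's argument: Theorem \ref{ThA} is stated there as an immediate combination of Proposition \ref{Prop1} on $(0,0.9155730607)$ and Proposition \ref{PropA1} on $[0.9155730607,1]$, whose ranges tile $(0,1]$. Your additional remarks on $\Gamma_b\subseteq\Gamma$ and the reduction from $\mathbb{H}$ to $\overline{\mathcal{D}_{\mathcal{G}}}$ are accurate bookkeeping that the paper leaves implicit.
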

For the proof of Proposition \ref{PropA1}, we have to split it into two cases.

In {\bf case $(a)$: i.e., $z\in\mathcal{D}_{\mathcal{G}}\cap \{y\geq 2\}$}, we shall prove that
\begin{equation}\aligned\nonumber
\frac{\partial}{\partial y}M(\alpha,z)>0,\;\;\hbox{for}\;\;z\in\mathcal{D}_{\mathcal{G}}\cap \{y\geq 2\}.
\endaligned\end{equation}
Then it follows that
\begin{equation}\aligned\label{Ka}
\min_{z\in\overline{\mathcal{D}_{\mathcal{G}}}\cap \{y\geq 2\}}M(\alpha,z)&=\min_{z\in\overline{\mathcal{D}_{\mathcal{G}}}\cap \{y=2\}}M(\alpha,z)\\
\min_{z\in\mathbb{H}}M(\alpha,z)=\min_{z\in\overline{\mathcal{D}_{\mathcal{G}}}}M(\alpha,z)&=\min_{z\in\overline{\mathcal{D}_{\mathcal{G}}}\cap \{y\leq2\}}M(\alpha,z)
\endaligned\end{equation}
It then reduces the minimization from the half fundamental domain to a small finite region $\overline{\mathcal{D}_{\mathcal{G}}}\cap \{y\leq2\}$. This case will be proved in Proposition \ref{PropA2}.

In {\bf case $(b)$: i.e., $z\in\mathcal{D}_{\mathcal{G}}\cap \{y\leq 2\}$}, we then establish a minimum principle as follows.
\begin{proposition}[A minimum principle]\label{PropA}  Assume that $\mathcal{W}$ is modular invariant, i.e.,
 \begin{equation}\aligned\label{M999}
\mathcal{W}(\frac{az+b}{cz+d})=\mathcal{W}(z),\;\;\hbox{for all}\;\;\left(
                                                                      \begin{array}{cc}
                                                                        a & b \\
                                                                        c & d \\
                                                                      \end{array}
                                                                    \right)\in \hbox{SL}_2(\mathbb{Z}),
\endaligned\end{equation}
and
 \begin{equation}\aligned\nonumber
\mathcal{W}(-\overline{z})=\mathcal{W}(z).
\endaligned\end{equation}

If \begin{equation}\aligned\label{Cabc1}
&(\frac{\partial^2}{\partial y^2}+\frac{2}{y}\frac{\partial}{\partial y})\mathcal{W}(z)>0,\;\;z\in\mathcal{D}_{\mathcal{G}}\cap\{y\leq y_0\}
\;\;\hbox{for some}\;\;y_0\geq1
\endaligned\end{equation}
and
\begin{equation}\aligned\label{Cabc2}
&\frac{\partial^2}{\partial y\partial x}\mathcal{W}(z)>0,\;\;z\in\mathcal{D}_{\mathcal{G}}\cap\{y\leq1\}.
\endaligned\end{equation}
Here $\mathcal{D}_{\mathcal{G}}$ is the fundamental domain corresponding to modular group $\hbox{SL}_2(\mathbb{Z})$, explicitly, $\mathcal{D}_{\mathcal{G}}=\{
z\in\mathbb{H}: |z|>1,\; 0<x<\frac{1}{2}
\}.$
Then
\begin{equation}\aligned\nonumber
\min_{z\in\overline{\mathcal{D}_{\mathcal{G}}}\cap\{y\leq y_0\}}\mathcal{W}(z)=\min_{z\in\Gamma_b}\mathcal{W}(z).
\endaligned\end{equation}

\end{proposition}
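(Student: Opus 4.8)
The plan is to turn the two symmetries of $\mathcal{W}$ into boundary information about its first derivatives, and then use the differential inequalities \eqref{Cabc1}--\eqref{Cabc2} to run a one–dimensional ``sliding'' argument that drains every point of $\overline{\mathcal{D}_{\mathcal{G}}}\cap\{y\le y_0\}$ down to $\Gamma_b$. First I would record three symmetry facts. The reflection $\mathcal{W}(-\overline z)=\mathcal{W}(z)$ makes $\mathcal{W}$ even in $x$, so $\mathcal{W}_x\equiv0$ on $\{x=0\}$; composing it with $z\mapsto z+1$ gives $\mathcal{W}(1-\overline z)=\mathcal{W}(z)$, whence $\mathcal{W}_x\equiv0$ on $\{x=\tfrac12\}$. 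Composing the reflection with $S:z\mapsto-1/z$ yields the anti–conformal inversion symmetry $\mathcal{W}(1/\overline z)=\mathcal{W}(z)$, which fixes the unit circle pointwise; writing $z=re^{i\theta}$ this reads $\mathcal{W}(1/r,\theta)=\mathcal{W}(r,\theta)$, and differentiating at $r=1$ gives the vanishing of the outward normal (radial) derivative on the arc, $\mathcal{W}_r=0$ on $\Gamma_b$. At the two vertices this normal identity together with $\mathcal{W}_x=0$ on the walls forces $\mathcal{W}_y(i)=\mathcal{W}_y(e^{i\pi/3})=0$.

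Next I would rewrite \eqref{Cabc1} as $\partial_y\big(y^2\mathcal{W}_y\big)=y^2\big(\mathcal{W}_{yy}+\tfrac2y\mathcal{W}_y\big)>0$, so that $y^2\mathcal{W}_y$ is strictly increasing in $y$ on each vertical segment; hence $\mathcal{W}_y$ changes sign at most once (from $-$ to $+$) and every vertical slice of $\mathcal{W}$ is unimodal. Using $\mathcal{W}_y(i)=\mathcal{W}_y(e^{i\pi/3})=0$ as initial data, this monotonicity yields $\mathcal{W}_y\ge0$ along the whole walls $\{x=0,\ y\ge1\}$ and $\{x=\tfrac12,\ y\ge\tfrac{\sqrt3}{2}\}$. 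Since \eqref{Cabc2} makes $\mathcal{W}_y(\cdot,1)$ increasing in $x$ with $\mathcal{W}_y(0,1)=0$, I get $\mathcal{W}_y(x,1)\ge0$ for all $x\in[0,\tfrac12]$; feeding this into the monotonicity of $y^2\mathcal{W}_y$ shows $\mathcal{W}_y\ge0$ on the entire upper strip $\overline{\mathcal{D}_{\mathcal{G}}}\cap\{1\le y\le y_0\}$. Thus the minimum over that strip sits on its bottom edge $\{y=1\}$, and the problem reduces to the lune $R:=\overline{\mathcal{D}_{\mathcal{G}}}\cap\{y\le1\}$, where both \eqref{Cabc1} and \eqref{Cabc2} are available.

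On $R$ I would rule out any minimum off $\Gamma_b$. If $z=(x,y)$ is interior to $R$ with $\mathcal{W}_y(x,y)=0$ (the only way a slice minimum can be interior), then unimodality gives $\mathcal{W}_y<0$ at the arc point $(x,\sqrt{1-x^2})$ below it, and the identity $\cos\theta\,\mathcal{W}_x+\sin\theta\,\mathcal{W}_y=\mathcal{W}_r=0$ on $\Gamma_b$ forces $\mathcal{W}_x>0$ there; since \eqref{Cabc2} makes $\mathcal{W}_x$ increasing in $y$ for $y\le1$, this propagates to $\mathcal{W}_x>0$ on the whole ``valley'' $\{\mathcal{W}_y=0\}$. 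In particular $\mathcal{W}$ has no interior critical point in $R$, and along the valley floor $\tfrac{d}{dx}\mathcal{W}(x,y^*(x))=\mathcal{W}_x>0$, so $\mathcal{W}$ strictly decreases as the floor is slid to the left; the floor terminates either where it meets $\Gamma_b$ (a sign change of $\mathcal{W}_y$ on the arc) or at the vertex $i$ as $x\to0^+$. Either way the valley value, and by the same sliding the value at every point of the top edge $\{y=1\}$ and the right edge $\{x=\tfrac12\}$, dominates some value of $\mathcal{W}$ on $\Gamma_b$. Since $\overline R$ is compact, this gives $\min_{\overline R}\mathcal{W}=\min_{\Gamma_b}\mathcal{W}$, and combined with the upper–strip reduction it proves the proposition.

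The hard part will be making the interior valley analysis rigorous. One must check that the zero set $\{\mathcal{W}_y=0\}$ is a regular curve (granted by $\mathcal{W}_{yy}>0$ there, via the implicit function theorem), control the possibly several connected components of the set of $x$ admitting an interior slice–minimum, and handle the degeneration of the slice to the single point $i$ as $x\to0^+$. The crux is the sign–seeding step: transporting $\mathcal{W}_x>0$ from the arc (where it issues solely from the normal–derivative identity $\mathcal{W}_r=0$) up to the valley by means of \eqref{Cabc2}. This is the one place where both hypotheses are used at once, and it is what converts the purely local inequalities into the global statement that the minimum lives on $\Gamma_b$.
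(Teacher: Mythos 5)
Your proof is correct and follows essentially the same route as the paper's: the paper likewise uses $\mathcal{W}_y(i)=\mathcal{W}_y(e^{i\pi/3})=0$ and the arc identity $\cos\theta\,\mathcal{W}_x=-\sin\theta\,\mathcal{W}_y$ (its Lemmas 4.3--4.4, which you rederive from the symmetries), then integrates \eqref{Cabc2} along $\{y=1\}$ and $\partial_y(y^2\mathcal{W}_y)>0$ vertically to clear the strip $\{1\le y\le y_0\}$, and finally excludes a minimizer off $\Gamma_b$ by transporting the sign of $\mathcal{W}_x$ or $\mathcal{W}_y$ from the arc point with the same abscissa up to the putative critical point. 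Your ``valley--sliding'' elaboration is unnecessary -- the Fermat contradiction you already set up suffices, exactly as in the paper -- but it introduces no error.
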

\begin{remark}
\begin{itemize} Four remarks are listed in order:
  \item The condition $(\frac{\partial^2}{\partial y^2}+\frac{2}{y}\frac{\partial}{\partial y})\mathcal{W}(z)>0,\;\;z\in\mathcal{D}_{\mathcal{G}}\cap\{y\leq y_0\}$ can be replaced by
$\frac{\partial^2}{\partial y^2}\mathcal{W}(z)>0,\;\;z\in\mathcal{D}_{\mathcal{G}}\cap\{y\leq y_0\}$, the conclusion does not change;
  \item The condition $\frac{\partial^2}{\partial y\partial x}\mathcal{W}(z)>0,\;\;z\in\mathcal{D}_{\mathcal{G}}\cap\{y\leq1\}$ is almost sharp in the sense that the region $\mathcal{D}_{\mathcal{G}}\cap\{y\leq1\}$ for $\frac{\partial^2}{\partial y\partial x}\mathcal{W}(z)>0$  is optimal;
  \item $\Gamma_b$ is a partial boundary of $\overline{\mathcal{D}_{\mathcal{G}}}$$($arc part$)$;
  \item In general, the conditions \eqref{Cabc1} and \eqref{Cabc2} are natural for certain classes of modular invariant functions, hence Proposition
 \ref{PropA} is quite useful in application. Therefore, we formulate a minimum principle here. It is a minimum principle for
 modular invariant functions.
\end{itemize}

\end{remark}

We will show the second order estimates for $M(\alpha,z)$ on the forms of \eqref{Cabc1} and \eqref{Cabc2} in Lemmas \ref{Lemma37} and \ref{LemmaMxy} respectively. Then by Proposition \ref{PropA},
\begin{equation}\aligned\label{Kb}
\min_{z\in\overline{\mathcal{D}_{\mathcal{G}}}\cap \{y\leq2\}}M(\alpha,z)
=\min_{z\in\Gamma_b}M(\alpha,z).
\endaligned\end{equation}
Therefore, \eqref{Ka} and \eqref{Kb} yield Proposition \ref{PropA1}. It remains to prove Propositions \ref{PropA2} and \ref{PropA}, Lemmas \ref{Lemma37} and \ref{LemmaMxy}. We shall prove these in order in the rest of this Section.

\begin{proposition}\label{PropA2} For $\alpha\in[0.9155730607,1]$,
\begin{equation}\aligned\nonumber
\frac{\partial}{\partial y}M(\alpha,z)>0,\;\;\hbox{for}\;\;z\in\mathcal{D}_{\mathcal{G}}\cap \{y\geq 2\}.
\endaligned\end{equation}
\end{proposition}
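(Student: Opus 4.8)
The plan is to prove the strict inequality $\partial_y M(\alpha,z)>0$ directly, by isolating the dominant row of the lattice sum and controlling the rest. Writing $z=x+iy$ and $\frac{|mz+n|^2}{y}=\frac{(mx+n)^2}{y}+m^2y$, I split
\[
M(\alpha,z)=M_0(\alpha,y)+M_R(\alpha,x,y),
\]
where $M_0$ collects the terms with $m=0$ and $M_R$ collects those with $|m|\ge 1$. The point is that for $y\ge 2$ the term $M_0$ (which is independent of $x$) dominates, its $y$-derivative has a clean positive main term, while $M_R$ and all its derivatives are exponentially small because every summand carries a factor $e^{-\pi\alpha m^2y}$ with $m^2y\ge 2$. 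The goal is then to show that $\partial_yM_0$ exceeds $|\partial_yM_R|$ uniformly on the strip.

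For $M_0$ I would start from $M_0(\alpha,y)=\frac{2}{y}\sum_{n\ge1}n^2e^{-\pi\alpha n^2/y}$. In this form $\partial_yM_0$ is \emph{not} manifestly positive (the summands carry a factor $-1+\frac{\pi\alpha n^2}{y}$ that changes sign), so the crucial step is to pass to the Poisson-dual series. Differentiating the Jacobi identity $\sum_ne^{-\pi n^2X}=X^{-1/2}\sum_ke^{-\pi k^2/X}$ in $X$ and setting $X=\alpha/y$ gives
\[
M_0(\alpha,y)=\frac{\sqrt y}{2\pi\alpha^{3/2}}S_1(\alpha,y)-\frac{y^{3/2}}{\alpha^{5/2}}S_2(\alpha,y),
\]
with $S_1=\vartheta(\tfrac{y}{\alpha};0)\ge 1$ and $S_2=\sum_{k\ge1}2k^2e^{-\pi k^2y/\alpha}$. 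Since $y/\alpha\ge 2$ on the range considered, $S_1=1+O(e^{-2\pi})$ and $S_2=O(e^{-2\pi})$, together with their $y$-derivatives, so differentiating yields $\partial_yM_0=\frac{S_1}{4\pi\alpha^{3/2}\sqrt y}+O(e^{-2\pi})$, hence $\partial_yM_0\ge \frac{1}{4\pi\alpha^{3/2}\sqrt y}-\varepsilon_0$ for an explicit tiny $\varepsilon_0$. The dual representation is exactly what exposes the positivity hidden in the original series.

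For $M_R$ I would use the reduction of dimension (Lemma \ref{Lemma3}) together with $M=-\frac1\pi\partial_\alpha\theta$ (Lemma \ref{Lemma2}) to write the $|m|\ge1$ part as a series in $\vartheta(\tfrac y\alpha;nx)$ and its first-slot derivative, each weighted by $e^{-\pi\alpha n^2y}$ with $n\ge1$. Because $y/\alpha\ge2$, the theta factors $\vartheta(\tfrac y\alpha;nx)$ are uniformly close to $1$ for $x\in(0,\tfrac12)$ and their derivatives are small; bounding them via Lemmas \ref{Lemma2a}--\ref{Lemma2c} (or by elementary estimates valid for argument $\ge2$) shows the series is dominated by its $n=1$ term. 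The leading $y$-dependence of each summand is through $e^{-\pi\alpha n^2 y}$, so $\partial_yM_R=-\pi\alpha\cdot(\text{leading})+\ldots$ and one obtains a bound $|\partial_yM_R|\le \pi\alpha\,C\,y^{3/2}\alpha^{-1/2}e^{-\pi\alpha y}+\ldots$ uniform in $x$.

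Combining, $\partial_yM(\alpha,z)\ge \frac{1}{4\pi\alpha^{3/2}\sqrt y}-\varepsilon_0-|\partial_yM_R|$. Since the main term decays only like $y^{-1/2}$ while $|\partial_yM_R|$ decays like $e^{-\pi\alpha y}$, the inequality is tightest at the left endpoint $y=2$; there the exponentially small correction is nevertheless a genuine fraction of the polynomially small main term, and the role of the hypothesis $\alpha\ge 0.9155730607$ is precisely to guarantee a positive margin at $y=2$ (for smaller $\alpha$ the factor $e^{-\pi\alpha y}$ in $M_R$ becomes too large and the sign is lost). This balance at $y=2$ is the main obstacle: the estimate on $M_R$ — especially the uniform-in-$x$ control of the theta quotients and the exact constant in front of $e^{-\pi\alpha y}$ — must be sharp, and I would close the argument by verifying the final scalar inequality monotonically in $y\ge 2$, reducing it through the exponential decay to a single explicit check near $y=2$.
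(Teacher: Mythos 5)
Your proposal is correct and is essentially the paper's own proof: the paper likewise Poisson-dualizes in the inner index (Lemma \ref{Lemma3}), so that the $m=0$ row contributes the positive main term $\tfrac{1}{4}\alpha^{2}\vartheta(\tfrac{y}{\alpha};0)$ (which, after the prefactor, is exactly your $\tfrac{S_1}{4\pi\alpha^{3/2}\sqrt{y}}$), while the $|m|\ge 1$ rows are controlled by their $m=\pm1$ part of size $O(y^{2}e^{-\pi\alpha y})$, with the decisive check at $y=2$ where the hypothesis on $\alpha$ supplies the margin. The only cosmetic difference is bookkeeping: the paper bounds $P_0+2P_1>0.1$ jointly and the tail $|n|\ge 2$ by $4\cdot10^{-7}$, whereas you bound $\partial_yM_0$ and $|\partial_yM_R|$ separately (and your intermediate constant $\varepsilon_0$ should really be a $y$-dependent, exponentially decaying error so the bound survives as $y\to\infty$, which your final monotonicity remark already accounts for).
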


To prove Proposition \ref{PropA2}, we regroup $\frac{\partial}{\partial y}M(\alpha,z)$ in using of Lemma \ref{Lemma3}.

\begin{lemma} An exponentially decaying identity for $\partial_{y} M(\alpha,z)$.
\begin{equation}\aligned\nonumber
\partial_{y} M(\alpha,z)
=&\frac{1}{\pi\sqrt{\frac{y}{\alpha}}\alpha^4}
\sum_{n\in\mathbb{Z}}
\Big(
2\alpha y\vartheta_X(\frac{y}{\alpha};nx)
+y^2\vartheta_{XX}(\frac{y}{\alpha};nx)\\
&-(\alpha^4\pi^2 y^2 n^4-\pi \alpha^3y n^2-\frac{1}{4}\alpha^2)\vartheta(\frac{y}{\alpha};nx)
\Big)\cdot e^{-\pi \alpha y n^2}
\endaligned\end{equation}
\end{lemma}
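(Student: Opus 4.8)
The plan is to obtain the identity by pure differentiation, starting from the one-dimensional expansion of $\theta$ in Lemma \ref{Lemma3} and the relation $M=-\frac{1}{\pi}\partial_\alpha\theta$ of Lemma \ref{Lemma2}. Since $\theta$ is smooth in $(\alpha,y,x)$, the mixed partials commute, so I would write $\partial_y M=-\frac{1}{\pi}\,\partial_y\partial_\alpha\theta$ and reduce the whole computation to differentiating the single summand $\sqrt{y/\alpha}\,e^{-\pi\alpha y n^2}\vartheta(\tfrac{y}{\alpha};nx)$ once in $\alpha$ and once in $y$, then resumming over $n\in\mathbb{Z}$. Before differentiating I would record the three loci where the variables enter a summand: the algebraic prefactor $\sqrt{y/\alpha}=y^{1/2}\alpha^{-1/2}$, the Gaussian weight $e^{-\pi\alpha y n^2}$, and the first slot $X=y/\alpha$ of the theta factor. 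Crucially, the second slot $nx$ is inert under both $\partial_y$ and $\partial_\alpha$, which explains structurally why only $\vartheta,\vartheta_X,\vartheta_{XX}$ and never $\vartheta_Y$ appear in the final formula.

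First I would carry out $\partial_\alpha$ to produce an explicit expansion of $M$. The product rule hits the prefactor (contributing $-\tfrac12\alpha^{-1}$), the exponential (bringing down $-\pi y n^2$), and the theta factor through the chain rule $\partial_\alpha X=-y\alpha^{-2}$ (producing a $\vartheta_X$ term). Collecting these gives
$M(\alpha,z)=\frac{y^{1/2}}{\pi}\sum_{n\in\mathbb{Z}} e^{-\pi\alpha y n^2}\bigl(\tfrac12\alpha^{-3/2}\vartheta+\pi y n^2\alpha^{-1/2}\vartheta+y\alpha^{-5/2}\vartheta_X\bigr),$
where all theta functions are evaluated at $(\tfrac{y}{\alpha};nx)$. (As a consistency check, differentiating this expansion in $x$ and folding $\sum_{n\in\mathbb{Z}}=2\sum_{n\geq1}$ via the oddness of $\vartheta_Y$ in its second slot recovers Lemma \ref{Lemma4}.)

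Next I would apply $\partial_y$ to this expansion of $M$. Each of the three terms is differentiated by the product rule, with $\partial_y X=\alpha^{-1}$ feeding a chain-rule factor into the theta pieces ($\vartheta\mapsto\vartheta_X$ and $\vartheta_X\mapsto\vartheta_{XX}$) and with $\partial_y e^{-\pi\alpha y n^2}=-\pi\alpha n^2 e^{-\pi\alpha y n^2}$ generating the higher powers of $n$. Collecting all contributions by the order of the $X$-derivative, I would see the $\vartheta_X$-coefficients combine to $2y^{1/2}\alpha^{-5/2}$, the $\vartheta_{XX}$-coefficient equal $y^{3/2}\alpha^{-7/2}$, and the $\vartheta$-coefficient equal $\frac14 y^{-1/2}\alpha^{-3/2}+\pi n^2 y^{1/2}\alpha^{-1/2}-\pi^2 n^4 y^{3/2}\alpha^{1/2}$. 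Pulling out the common factor $\bigl(\pi\sqrt{y/\alpha}\,\alpha^4\bigr)^{-1}=\frac1\pi y^{-1/2}\alpha^{-7/2}$ then converts these into the asserted coefficients $2\alpha y$ on $\vartheta_X$, $y^2$ on $\vartheta_{XX}$, and $-(\alpha^4\pi^2 y^2 n^4-\pi\alpha^3 y n^2-\tfrac14\alpha^2)$ on $\vartheta$, which is exactly the claimed identity.

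The only genuine obstacle here is bookkeeping rather than any idea: one must keep the three sources of $y$- and $\alpha$-dependence cleanly separated and sum several product/chain-rule terms without sign error. The most delicate piece is the coefficient of $\vartheta$: the $n^4$ term arises from the exponential derivative $-\pi\alpha n^2$ striking the $\pi y n^2$ already present in the $\vartheta$-part of $M$, while the $n^2$ term is the residue $\tfrac{3\pi}{2}-\tfrac{\pi}{2}=\pi$ of the competition between $\partial_y$ of the $y^{3/2}$ prefactor and the exponential derivative acting on the $\tfrac12$-term, and the constant $\tfrac14$ comes solely from $\partial_y y^{1/2}$ in the leading term; getting these three signs and the factor $\tfrac14$ right is where care is needed. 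Finally, the \emph{exponentially decaying} character of the identity is immediate from the Gaussian factor $e^{-\pi\alpha y n^2}$ together with the rapid decay of $\vartheta$ and its $X$-derivatives, so the rearranged series converges absolutely and may be grouped term by term, justifying the manipulations above.
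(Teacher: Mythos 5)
Your computation is correct and is exactly the route the paper intends: the lemma is obtained by differentiating the one-dimensional expansion of $\theta$ from Lemma \ref{Lemma3} via $M=-\frac{1}{\pi}\partial_\alpha\theta$, and your collected coefficients ($2y^{1/2}\alpha^{-5/2}$ on $\vartheta_X$, $y^{3/2}\alpha^{-7/2}$ on $\vartheta_{XX}$, and $\tfrac14 y^{-1/2}\alpha^{-3/2}+\pi n^2y^{1/2}\alpha^{-1/2}-\pi^2n^4y^{3/2}\alpha^{1/2}$ on $\vartheta$) match the stated identity after factoring out $\bigl(\pi\sqrt{y/\alpha}\,\alpha^4\bigr)^{-1}$. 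No gaps.
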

For convenience, we set that
\begin{equation}\aligned\nonumber
P_n(z,\alpha):&=\Big(
2\alpha y\vartheta_X(\frac{y}{\alpha};nx)
+y^2\vartheta_{XX}(\frac{y}{\alpha};nx)
-(\alpha^4\pi^2 y^2 n^4-\pi \alpha^3y n^2-\frac{1}{4}\alpha^2)\vartheta(\frac{y}{\alpha};nx)
\Big)\cdot e^{-\pi \alpha y n^2}.
\endaligned\end{equation}
Then
\begin{equation}\aligned\label{Myp}
\partial_{y} M(\alpha,z)
=&\frac{1}{\pi\sqrt{\frac{y}{\alpha}}\alpha^4}
\sum_{n\in\mathbb{Z}}
P_n(z,\alpha)=\frac{1}{\pi\sqrt{\frac{y}{\alpha}}\alpha^4}\Big(P_0(z,\alpha)+2P_1(z,\alpha)+
\sum_{n\geq3}
2P_n(z,\alpha)\Big).
\endaligned\end{equation}
Then the $P_0(z,\alpha)+2P_1(z,\alpha)$ and $\sum_{n\geq3}
2P_n(z,\alpha)$ are the major and error terms of $\partial_{y} M(\alpha,z)$ after a positive factor. We shall estimate these two terms
in Lemmas \ref{Lemma32} and \ref{Lemma33} respectively.

\begin{lemma}\label{Lemma32} For $(\alpha,y)\in[0.9155730607,1]\times[2,\infty)$,
\begin{equation}\aligned\nonumber
P_0(z,\alpha)+2P_1(z,\alpha)
\geq \alpha^2(\frac{1}{4}-(\pi\alpha^2 y^2-\pi\alpha y-\frac{1}{4}) e^{-\pi\alpha y})\vartheta(\frac{y}{\alpha};0)
-(8\pi\alpha y+8\pi^2 y^2) e^{-\pi\frac{y}{\alpha}}>0.1.
\endaligned\end{equation}
\end{lemma}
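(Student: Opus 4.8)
The plan is to prove Lemma \ref{Lemma32} by isolating a clean main term and showing the remaining pieces are exponentially negligible on the stated parameter rectangle $[0.9155730607,1]\times[2,\infty)$. First I would write out $P_0$ and $2P_1$ explicitly from the definition of $P_n$. Observe that $P_0(z,\alpha)$ has no $x$-dependence: since $n=0$ kills the argument $nx$, we get
\begin{equation}\aligned\nonumber
P_0(z,\alpha)=2\alpha y\,\vartheta_X(\tfrac{y}{\alpha};0)+y^2\vartheta_{XX}(\tfrac{y}{\alpha};0)+\tfrac{1}{4}\alpha^2\vartheta(\tfrac{y}{\alpha};0),
\endaligned\end{equation}
so the crucial positive contribution is the $\tfrac14\alpha^2\vartheta(\tfrac{y}{\alpha};0)$ term, and I would check that $2\alpha y\vartheta_X+y^2\vartheta_{XX}$ evaluated at the lattice point $0$ contributes with favorable sign or can be absorbed. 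The term $2P_1(z,\alpha)$ carries the factor $e^{-\pi\alpha y}$, which for $y\geq2$ and $\alpha\geq 0.9155\cdots$ is at most $e^{-2\pi\alpha}\le e^{-2\pi(0.9155\cdots)}$, already tiny; this produces the middle term $-(\pi\alpha^2y^2-\pi\alpha y-\tfrac14)e^{-\pi\alpha y}\vartheta(\tfrac{y}{\alpha};0)$ in the stated lower bound once the polynomial coefficient is read off from the $n=1$ piece of $P_n$.

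Next I would reduce everything to the single theta value $\vartheta(\tfrac{y}{\alpha};0)$. By \eqref{TXY} and the Poisson/Jacobi form \eqref{PXY}, for $X=\tfrac{y}{\alpha}$ large (here $\tfrac{y}{\alpha}\ge 2$ since $y\ge2,\alpha\le1$) one has $\vartheta(X;0)=1+2e^{-\pi X}+\cdots$ close to $1$, and the derivatives $\vartheta_X,\vartheta_{XX}$ at $Y=0$ are $O(e^{-\pi X})$ relative to $\vartheta$. The idea is to bound $2\alpha y\vartheta_X(\tfrac{y}{\alpha};0)+y^2\vartheta_{XX}(\tfrac{y}{\alpha};0)$ in absolute value by a constant times $(\alpha y+y^2)e^{-\pi y/\alpha}$, and to bound the full error tail $\sum_{n\ge2}$-type contributions hidden inside $2P_1$'s higher harmonics by the same exponential; collecting these is exactly how the subtracted quantity $(8\pi\alpha y+8\pi^2y^2)e^{-\pi y/\alpha}$ arises in the claim. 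Thus the first inequality in the lemma follows from term-by-term sign bookkeeping together with crude majorizations of the theta derivatives using their rapidly converging series \eqref{mmmx}.

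For the final strict bound $>0.1$, I would treat the resulting expression
\begin{equation}\aligned\nonumber
g(\alpha,y):=\alpha^2\Big(\tfrac14-(\pi\alpha^2y^2-\pi\alpha y-\tfrac14)e^{-\pi\alpha y}\Big)\vartheta(\tfrac{y}{\alpha};0)-(8\pi\alpha y+8\pi^2y^2)e^{-\pi y/\alpha}
\endaligned\end{equation}
as a function on the rectangle and show it stays above $0.1$. Since $\vartheta(\tfrac{y}{\alpha};0)\ge1$ and $\alpha^2\ge(0.9155\cdots)^2>0.838$, the leading constant piece is roughly $\tfrac14\cdot0.838>0.209$; I then need the two negative pieces, both of the form $(\text{polynomial in }y)\times e^{-cy}$ with $c\ge\pi\alpha\ge\pi(0.9155\cdots)>2.87$, to cost less than about $0.1$ uniformly for $y\ge2$. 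The key mechanism is that polynomial-times-exponential decay is maximized near the left endpoint $y=2$ and is monotone decreasing thereafter, so I would verify the bound at $y=2$ (where $e^{-\pi\alpha y}\le e^{-2\pi\cdot0.9155\cdots}$ is explicitly small) and confirm monotonicity of each negative term in $y$ by a one-line derivative sign check.

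The main obstacle I expect is controlling the term $2\alpha y\vartheta_X(\tfrac{y}{\alpha};0)+y^2\vartheta_{XX}(\tfrac{y}{\alpha};0)$ with the correct sign and magnitude: because $\vartheta_{XX}$ can be positive while $\vartheta_X$ is negative at $Y=0$, a naive triangle-inequality bound might be too lossy to keep $g$ above $0.1$ near $y=2$. The careful step is to use the explicit low-order expansions $\vartheta(X;0)=1+2e^{-\pi X}+O(e^{-4\pi X})$, $\vartheta_X(X;0)=-2\pi e^{-\pi X}+O(e^{-4\pi X})$, $\vartheta_{XX}(X;0)=2\pi^2 e^{-\pi X}+O(e^{-4\pi X})$ so that the combination $2\alpha y\vartheta_X+y^2\vartheta_{XX}$ is $\approx 2\pi e^{-\pi y/\alpha}(\pi y^2-2\alpha y)$, whose size is exactly what the $8\pi\alpha y+8\pi^2y^2$ coefficient is designed to dominate; verifying that this majorization is valid for all $y\ge2$ (not merely asymptotically) with the tail $O(e^{-4\pi X})$ controlled by $\mu,\nu$ from \eqref{mmmx} is the crux of the estimate.
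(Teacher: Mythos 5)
The paper gives no written proof of this lemma, so the only thing to measure you against is the displayed chain of inequalities itself, and here your proposal runs into a concrete problem. Your plan in the third paragraph is to take the displayed intermediate expression $g(\alpha,y)$ at face value, verify $g>0.1$ at the left endpoint $y=2$, and conclude by monotonicity. But $g(1,2)$ is \emph{negative}: the subtracted term $(8\pi\alpha y+8\pi^2 y^2)e^{-\pi y/\alpha}$ equals $(16\pi+32\pi^2)e^{-2\pi}\approx 0.68$ at $(\alpha,y)=(1,2)$, which swamps the main contribution $\tfrac14\alpha^2\vartheta(\tfrac{y}{\alpha};0)\approx 0.25$; the chain ``$\ge g>0.1$'' as displayed fails on roughly $y\in[2,2.7]$. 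So the endpoint check you propose would not close the argument --- it would instead reveal that the triangle-inequality majorization $|2\alpha y\vartheta_X+y^2\vartheta_{XX}|\le(8\pi\alpha y+8\pi^2y^2)e^{-\pi y/\alpha}$ (which is exactly where that coefficient comes from, covering both the $P_0$ and $2P_1$ copies) is too lossy near $y=2$, precisely the danger you yourself flag in your last paragraph.

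The saving observation is the one you state but then misapply: since $\vartheta_X(X;0)=-2\pi e^{-\pi X}+O(e^{-4\pi X})$ and $\vartheta_{XX}(X;0)=2\pi^2e^{-\pi X}+O(e^{-4\pi X})$, the combination $2\alpha y\vartheta_X(\tfrac{y}{\alpha};0)+y^2\vartheta_{XX}(\tfrac{y}{\alpha};0)=2\pi e^{-\pi y/\alpha}\bigl(\pi y^2-2\alpha y\bigr)+O(y^2e^{-4\pi y/\alpha})$ is \emph{positive} for $y\ge2$ (indeed $\pi y-2\alpha\ge 2\pi-2>0$), so it should be kept with its sign (or simply dropped as a favorable term after controlling the $n\ge2$ tail via $\mu,\nu$), not ``dominated'' by the $(8\pi\alpha y+8\pi^2y^2)$ coefficient --- dominating it is what breaks the bound. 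With the signed estimate, $P_0+2P_1\ge \tfrac14\alpha^2\vartheta(\tfrac{y}{\alpha};0)-2\alpha^2(\pi^2\alpha^2y^2-\pi\alpha y)\vartheta(\tfrac{y}{\alpha};0)e^{-\pi\alpha y}-O(y^2e^{-\pi y/\alpha}e^{-\pi\alpha y})\approx 0.23$ at the worst point $(1,2)$, which does give $>0.1$. Two further bookkeeping points you should make explicit: the coefficient read off from $2P_1$ is $2\alpha^2(\pi^2\alpha^2y^2-\pi\alpha y-\tfrac14)$, i.e.\ with $\pi^2$ and a factor $2$ (the display in the lemma appears to have typos here), and replacing $\vartheta(\tfrac{y}{\alpha};x)$ by $\vartheta(\tfrac{y}{\alpha};0)$ in that negative term requires the elementary inequality $\vartheta(X;Y)\le\vartheta(X;0)$. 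In short: your ingredients are right, but you must commit to the signed expansion as the actual proof and either repair or bypass the displayed intermediate bound, rather than verifying it numerically at $y=2$, where it is false.
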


\begin{lemma}\label{Lemma33} For $(\alpha,y)\in[0.9155730607,1]\times[2,\infty)$,
\begin{equation}\aligned\nonumber
\mid\sum_{n\geq3}
2P_n(z,\alpha)\mid
\leq
\sum_{n=2}^\infty 4(4\pi\alpha y+4\pi^2 y^2+\alpha^4\pi^2 y^2 n^4)e^{-\pi\alpha y n^2}\leq4\cdot 10^{-7}.
\endaligned\end{equation}
\end{lemma}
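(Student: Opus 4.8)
The plan is to prove Lemma \ref{Lemma33} by bounding the tail sum $\sum_{n\geq 3} 2P_n(z,\alpha)$ term by term, converting the theta-function ingredients of each $P_n$ into scalar bounds and then summing a rapidly decaying exponential series. First I would recall the explicit formula for $P_n(z,\alpha)$ and group its three constituents: the factor $2\alpha y\,\vartheta_X(\frac{y}{\alpha};nx)$, the factor $y^2\,\vartheta_{XX}(\frac{y}{\alpha};nx)$, and the polynomial-weighted term $-(\alpha^4\pi^2 y^2 n^4-\pi\alpha^3 y n^2-\frac14\alpha^2)\vartheta(\frac{y}{\alpha};nx)$, all multiplied by the dominant exponential $e^{-\pi\alpha y n^2}$. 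The key observation is that for $z\in\mathcal{D}_{\mathcal{G}}\cap\{y\geq 2\}$ we have $\frac{y}{\alpha}\geq 2$ (since $\alpha\leq 1$), which places the argument $X=\frac{y}{\alpha}$ comfortably in the regime where $\vartheta(X;\cdot)$ and its $X$-derivatives are uniformly bounded: indeed $\vartheta(X;Y)=1+2\sum_{k\geq 1}e^{-\pi k^2 X}\cos(2\pi kY)$ so $|\vartheta|\leq 1+2\sum_{k\geq1}e^{-\pi k^2 X}$, and each $\partial_X$ brings down a polynomial factor $k^2$ that is absorbed by the Gaussian decay. Thus I would derive uniform bounds $|\vartheta(\frac{y}{\alpha};nx)|\leq C_0$, $|\vartheta_X(\frac{y}{\alpha};nx)|\leq C_1$, $|\vartheta_{XX}(\frac{y}{\alpha};nx)|\leq C_2$ with explicit small constants (each close to $1$, with $C_1,C_2$ exponentially small correction terms since $X\geq 2$).

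Next I would assemble the pointwise estimate for a single term. Using the triangle inequality on $|2P_n(z,\alpha)|$ and the bounds above, each summand is controlled by $2(2\alpha y\,C_1 + y^2 C_2 + (\alpha^4\pi^2 y^2 n^4+\pi\alpha^3 y n^2+\frac14\alpha^2)C_0)\,e^{-\pi\alpha y n^2}$. Since $\alpha\in[0.9155730607,1]$ and $y\geq 2$, I can crudely bound the $\alpha$-dependent coefficients above (e.g. $\alpha\leq 1$, $\alpha^3\leq 1$, $\alpha^4\leq 1$) to reach the cleaner majorant $4(4\pi\alpha y+4\pi^2 y^2+\alpha^4\pi^2 y^2 n^4)e^{-\pi\alpha y n^2}$ claimed in the statement, after reindexing so that the $n\geq 3$ sum over $P_n$ is dominated by an $n\geq 2$ sum of the stated shape. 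The reindexing/majorization is the bookkeeping step: I would verify that the constants in front genuinely dominate the three grouped contributions, absorbing the lower-degree polynomial pieces (the $\pi\alpha^3 y n^2$ and $\frac14\alpha^2$ terms, and the $C_1,C_2$ contributions) into the displayed $4\pi\alpha y+4\pi^2 y^2$ and $\alpha^4\pi^2 y^2 n^4$ coefficients.

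Finally I would evaluate the numerical bound. With $\alpha\geq 0.9155730607$ and $y\geq 2$ the exponent $\pi\alpha y n^2\geq \pi\cdot 0.9155\cdot 2\cdot n^2$, so $e^{-\pi\alpha y n^2}$ is minuscule already at $n=2$ (where $\pi\alpha y n^2\geq \pi\cdot 0.9155\cdot 8\approx 23$, giving $e^{-23}\approx 10^{-10}$). Since the polynomial prefactor $4(4\pi\alpha y+4\pi^2 y^2+\alpha^4\pi^2 y^2 n^4)$ grows only polynomially in $y$ and $n$ while the Gaussian $e^{-\pi\alpha y n^2}$ decays in both, the worst case is at the boundary $y=2$ (monotonicity in $y$ being clear once the exponential dominates for $y\geq 2$), and the series $\sum_{n\geq 2}$ converges extremely fast and is controlled by its first term. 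A direct numerical evaluation at $y=2$, $\alpha=0.9155730607$, $n=2$ then yields the bound $\leq 4\cdot 10^{-7}$.

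The main obstacle I anticipate is not analytic depth but \emph{making the uniform theta bounds genuinely uniform in $x$} and sharp enough that the crude $\alpha\leq 1$, $\alpha^4\leq 1$ simplifications still leave room to reach the claimed majorant: I must confirm that the displayed coefficient structure $4\pi\alpha y+4\pi^2 y^2+\alpha^4\pi^2 y^2 n^4$ truly dominates \emph{all three} grouped terms of $2P_n$ simultaneously, including the derivative pieces $2\alpha y\,\vartheta_X$ and $y^2\,\vartheta_{XX}$, rather than only the leading polynomial term. Once the correction constants $C_1,C_2$ are shown to be small enough (which follows from $X=\frac{y}{\alpha}\geq 2$), this domination is routine, and the final numerical check against $4\cdot 10^{-7}$ is immediate.
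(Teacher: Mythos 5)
Your proposal is correct and follows exactly the route the paper intends (the lemma is stated without proof, but the displayed majorant already encodes the strategy): uniform bounds on $\vartheta$, $\vartheta_X$, $\vartheta_{XX}$ at $X=\tfrac{y}{\alpha}\ge 2$, a term-by-term triangle inequality giving the stated prefactor, and a numerical evaluation dominated by the $n=2$ term, where $\pi\alpha yn^2\ge 23$ makes the exponential crush the polynomial growth in both $y$ and $\alpha$. Your handling of the index bookkeeping is also the right call, since the paper's split of $\partial_y M$ into $P_0+2P_1+\sum_{n\ge 3}2P_n$ evidently intends $\sum_{n\ge 2}2P_n$ (consistent with the right-hand side starting at $n=2$), and your bound covers that larger tail.
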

In view of \eqref{Myp}, by Lemmas \ref{Lemma32} and \ref{Lemma33}, one gets
Proposition \ref{PropA2}.

Next, we are going to prove the minimum principle(Proposition \ref{PropA}). Before this, we shall introduce one definition and two lemmas.

\begin{definition}\label{DefA} A function $f(z)$ is called modular invariant if it is invariant under $SL(2,\mathbb{Z})$ and $z\mapsto-\overline{z}$.
Namely,
 \begin{equation}\aligned\nonumber
\mathcal{W}(\frac{az+b}{cz+d})=\mathcal{W}(z),\;\;\hbox{for all}\;\;\left(
                                                                      \begin{array}{cc}
                                                                        a & b \\
                                                                        c & d \\
                                                                      \end{array}
                                                                    \right)\in \hbox{SL}_2(\mathbb{Z})
\endaligned\end{equation}
and
 \begin{equation}\aligned\nonumber
\mathcal{W}(-\overline{z})=\mathcal{W}(z).
\endaligned\end{equation}

\end{definition}
Many physically related functions(like strain functions in crystals) are modular invariant according to Definition \ref{DefA}.
\begin{example} Modular invariant functions generated by  a single variable function on lattice are
\begin{equation}\aligned\nonumber
\sum_{\mathbb{P}\in L\setminus\{0\}, |L|=1} f(\alpha|\mathbb{P}|^2)\;\;\hbox{or}\;\;
\sum_{(m,n)\in\mathbb{Z}^2\setminus\{0\} }
f(\alpha\frac{|mz+n|^2}{\Im(z) }).
\endaligned\end{equation}
Here $f$ is any function with $f(x)=O(x^{-(2+\delta)})$ with $\delta>0$ when $x$ large enough.

\end{example}

There are several basic properties satisfied by modular invariant functions, we collected them there.

The first lemma is about the information on $\Gamma_b$, and the second lemma is the properties on $x-$axis and $\frac{1}{2}-$axis.
A proof can be found in \cite{Mon1988}(with slight modification), we omit the details here.
\begin{lemma}\label{Lemma3.3} If $\mathcal{W}(z)$ is a modular invariant function, then for $z=e^{i\theta}\in\Gamma_b$

\begin{itemize}
  \item $
\frac{\partial}{\partial x}\mathcal{W}( e^{i\theta})\cos(\theta)=-\frac{\partial}{\partial y}\mathcal{W}( e^{i\theta})\sin(\theta).
$
  \item $
\hbox{either}\;\;\frac{\partial}{\partial x}\mathcal{W}(z)\mid_{z\in\Gamma_b}\geq0\;\;\hbox{or}\;\;\frac{\partial}{\partial y}\mathcal{W}(z)\mid_{z\in\Gamma_b}\geq0.
$
\end{itemize}

\end{lemma}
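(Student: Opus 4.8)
The plan is to exploit the two symmetries that fix the unit arc $\Gamma_b$ and to read off the radial/tangential decomposition of the gradient of $\mathcal{W}$. The one genuine observation is that the inversion in the unit circle is a symmetry of $\mathcal{W}$, which forces the radial derivative of $\mathcal{W}$ to vanish on $\Gamma_b$; the rest is the chain rule and elementary sign analysis.

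First I would observe that the map $T\colon z\mapsto 1/\overline{z}$, i.e.\ inversion in the unit circle, preserves $\mathcal{W}$. Indeed $T$ is the composition of $S\colon z\mapsto-\frac{1}{z}$, which lies in $\mathrm{SL}_2(\mathbb{Z})$, with the reflection $R\colon z\mapsto-\overline{z}$: one computes $S(R(z))=S(-\overline{z})=-\frac{1}{-\overline{z}}=\frac{1}{\overline{z}}$. Since $\mathcal{W}$ is invariant under both $S$ and $R$ by hypothesis, it is invariant under $T$. Writing $z=re^{i\phi}$, the map $T$ acts in polar coordinates as $(r,\phi)\mapsto(1/r,\phi)$: it fixes the angle, inverts the radius, fixes the unit circle $r=1$ pointwise, and reverses the outward radial direction there.

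Next I would differentiate the identity $\mathcal{W}(1/r,\phi)=\mathcal{W}(r,\phi)$ with respect to $r$, obtaining $-r^{-2}\mathcal{W}_r(1/r,\phi)=\mathcal{W}_r(r,\phi)$, and evaluate at $r=1$ to get $-\mathcal{W}_r(1,\phi)=\mathcal{W}_r(1,\phi)$, hence $\mathcal{W}_r(1,\phi)=0$: the radial derivative of $\mathcal{W}$ vanishes on the entire unit circle. Converting to Cartesian derivatives through $\mathcal{W}_r=\mathcal{W}_x\cos\phi+\mathcal{W}_y\sin\phi$ and setting $r=1$, $\phi=\theta$ yields $\frac{\partial}{\partial x}\mathcal{W}(e^{i\theta})\cos\theta+\frac{\partial}{\partial y}\mathcal{W}(e^{i\theta})\sin\theta=0$, which is exactly the first assertion. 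The differentiation is legitimate because the functions of interest are real-analytic on $\mathbb{H}$ (for instance the convergent lattice sums defining $M$ and $\theta$), so $\mathcal{W}\in C^1$ near $\Gamma_b$.

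Finally, for the second assertion I would use the sign information on $\Gamma_b$: for $\theta\in[\frac{\pi}{3},\frac{\pi}{2}]$ one has $\cos\theta\geq0$ and $\sin\theta>0$. If $\frac{\partial}{\partial y}\mathcal{W}(e^{i\theta})\geq0$ there is nothing to prove. Otherwise $\frac{\partial}{\partial y}\mathcal{W}(e^{i\theta})<0$, whence $-\frac{\partial}{\partial y}\mathcal{W}(e^{i\theta})\sin\theta>0$, and by the first assertion $\frac{\partial}{\partial x}\mathcal{W}(e^{i\theta})\cos\theta>0$; this forces $\cos\theta>0$ and $\frac{\partial}{\partial x}\mathcal{W}(e^{i\theta})>0\geq0$. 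Thus at each point of $\Gamma_b$ at least one of the two partial derivatives is nonnegative. I expect no substantial obstacle beyond the bookkeeping of the polar-to-Cartesian conversion, since the only conceptual input is the invariance of $\mathcal{W}$ under $z\mapsto1/\overline{z}$.
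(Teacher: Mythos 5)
Your proof is correct and is precisely the standard argument that the paper omits (it defers to Montgomery \cite{Mon1988}): the invariance of $\mathcal{W}$ under $z\mapsto 1/\overline{z}=S\circ R$ forces the radial derivative to vanish on the unit arc, which is the first identity, and the second assertion follows from the sign of $\cos\theta$ and $\sin\theta$ on $[\frac{\pi}{3},\frac{\pi}{2}]$ exactly as you argue. Nothing further is needed.
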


\begin{lemma}\label{Lemma3.4}If $\mathcal{W}(z)$ is a modular invariant function, then

\begin{itemize}
  \item $\mathcal{W}(iy)=\mathcal{W}(i\frac{1}{y}),\;\; \mathcal{W}(\frac{1}{2}+i\frac{y}{2})=\mathcal{W}(\frac{1}{2}+i\frac{1}{2y}).$
  \item $\frac{\partial}{\partial y}\mathcal{W}(iy)\mid_{y=1}=0, \;\; \frac{\partial}{\partial y}\mathcal{W}(\frac{1}{2}+iy)\mid_{y=\frac{\sqrt3}{2}}=0.$
\end{itemize}

\end{lemma}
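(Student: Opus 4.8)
The plan is to realize each equality in the first bullet by an explicit symmetry of the group, and then to obtain the two derivative identities by differentiating along the straight boundary pieces, handling the square point $i$ and the hexagonal point $\rho:=\frac12+i\frac{\sqrt3}{2}$ separately.

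For the first bullet I would argue as follows. The imaginary-axis identity is immediate: $S:z\mapsto-1/z$ lies in $\mathrm{SL}_2(\mathbb{Z})$ and $S(iy)=i/y$, so modular invariance gives $\mathcal{W}(iy)=\mathcal{W}(i/y)$. For the $\frac12$-axis identity the task is to produce an element carrying $\frac12+i\frac{y}{2}$ to $\frac12+i\frac{1}{2y}$; I claim
\[
\mu:=\begin{pmatrix}1&-1\\2&-1\end{pmatrix}\in\mathrm{SL}_2(\mathbb{Z})
\]
does the job. Indeed, writing $z=\frac12+i\frac{y}{2}$ one computes $2z-1=iy$ and $z-1=-\frac12+i\frac{y}{2}$, whence $\mu z=\frac{z-1}{2z-1}=\frac12+i\frac{1}{2y}$; since $\det\mu=1$, invariance yields the claim. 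A natural way to discover $\mu$ is as $\mu=\gamma_0^{-1}S\gamma_0$ with $\gamma_0=\begin{pmatrix}1&-1\\1&0\end{pmatrix}$, the element sending $\frac12+\frac i2$ to the square point $i$, so that $\mu$ is the conjugate of the order-two stabilizer of $i$ and hence an involution fixing $\frac12+\frac i2$.

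For the second bullet, the vanishing at $i$ is obtained by differentiating the first identity: setting $g(y):=\mathcal{W}(iy)$ we have $g(y)=g(1/y)$, so $g'(y)=-y^{-2}g'(1/y)$, and evaluating at the fixed point $y=1$ gives $2g'(1)=0$, i.e. $\frac{\partial}{\partial y}\mathcal{W}(iy)\big|_{y=1}=\mathcal{W}_y(i)=0$. (Equivalently, Lemma \ref{Lemma3.3} at $\theta=\frac\pi2$ gives this at once, since $\cos\frac\pi2=0$.) The hexagonal case is the genuinely different one: differentiating the second identity of the first bullet only forces vanishing at its fixed point $\frac12+\frac i2$, which is $\mathrm{SL}_2(\mathbb{Z})$-equivalent to $i$ rather than to $\rho$, so a separate argument is required. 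I would use the mirror symmetry of $\Gamma_c$: combining $\mathcal{W}(-\overline{z})=\mathcal{W}(z)$ with $\mathcal{W}(z+1)=\mathcal{W}(z)$ yields $\mathcal{W}(1-\overline{z})=\mathcal{W}(z)$, and since the line $\Re z=\frac12$ is fixed pointwise by $z\mapsto1-\overline{z}$, differentiation in $x$ shows $\mathcal{W}_x\equiv0$ on $\Gamma_c$, in particular $\mathcal{W}_x(\rho)=0$. As $\rho=e^{i\pi/3}\in\Gamma_b$, the first bullet of Lemma \ref{Lemma3.3} reads $\mathcal{W}_x(\rho)\cos\frac\pi3=-\mathcal{W}_y(\rho)\sin\frac\pi3$; with $\mathcal{W}_x(\rho)=0$ and $\sin\frac\pi3\neq0$ this forces $\mathcal{W}_y(\rho)=0$, which is exactly $\frac{\partial}{\partial y}\mathcal{W}(\frac12+iy)\big|_{y=\frac{\sqrt3}{2}}=0$.

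A self-contained alternative for the hexagonal vanishing, which I would mention, uses the order-three stabilizer of $\rho$: the element $\gamma_0=\begin{pmatrix}1&-1\\1&0\end{pmatrix}$ fixes $\rho$ with derivative $\gamma_0'(\rho)=\rho^{-2}=e^{-2\pi i/3}$, so it acts on the tangent plane at $\rho$ as a rotation by $-120^\circ$; since $\nabla\mathcal{W}(\rho)$ must be invariant under this rotation and a $120^\circ$ rotation fixes no nonzero vector, $\nabla\mathcal{W}(\rho)=0$, giving $\mathcal{W}_y(\rho)=0$ in particular. The only non-mechanical steps in the whole argument are guessing the element $\mu$ and recognizing that the hexagonal derivative identity is \emph{not} a consequence of the $\frac12$-axis reflection but of the additional corner symmetry at $\rho$; everything else is a direct computation.
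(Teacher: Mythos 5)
Your proof is correct. The paper itself omits the proof of this lemma (it points to Montgomery \cite{Mon1988} ``with slight modification''), so there is no in-paper argument to compare against; what you supply is the standard symmetry argument and it is complete: $S=\bigl(\begin{smallmatrix}0&-1\\1&0\end{smallmatrix}\bigr)$ gives the imaginary-axis identity, your $\mu=\gamma_0^{-1}S\gamma_0=\bigl(\begin{smallmatrix}1&-1\\2&-1\end{smallmatrix}\bigr)$ does carry $\tfrac12+i\tfrac y2$ to $\tfrac12+i\tfrac1{2y}$, and the two derivative identities follow correctly (the one at $i$ from the involution, the one at $e^{i\pi/3}$ from the reflection $z\mapsto1-\overline z$ combined with Lemma \ref{Lemma3.3}, or from the order-three stabilizer). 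You are also right to flag that the vanishing at $e^{i\pi/3}$ is \emph{not} obtained by differentiating the $\tfrac12$-axis identity, whose fixed point is $\tfrac12+\tfrac i2$; that is the one place where a careless reader would go wrong, and your handling of it is exactly what the omitted proof requires.
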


With these prepare work, we are going to prove Proposition \ref{PropA}.

\begin{proof}{\bf Proof of Proposition \ref{PropA}.}
By Lemmas \ref{Lemma3.3}-\ref{Lemma3.4}, it follows that
\begin{equation}\aligned\label{Ma}
\frac{\partial}{\partial y}\mathcal{W}(z)\mid_{z=i}=0,\;\;\frac{\partial}{\partial y}\mathcal{W}(z)\mid_{z=e^{i\frac{\pi}{3}}}=0
\endaligned\end{equation}
and
\begin{equation}\aligned\label{Mb}
\hbox{for any} \;\;z\in \Gamma_b,\;\;
\hbox{either}\;\; \frac{\partial}{\partial x}\mathcal{W}(z)\geq0,\;\;
  \hbox{or}\;\; \frac{\partial}{\partial y}\mathcal{W}(z)\geq0.
\endaligned\end{equation}
From $\frac{\partial}{\partial y}\mathcal{W}(z)\mid_{z=i}=0$ in \eqref{Ma}, by \eqref{Cabc2},
\begin{equation}\aligned\label{GT1}
\frac{\partial}{\partial y}\mathcal{W}(z)>0,\;\;\hbox{ for}\;\; z\in\mathcal{D}_{\mathcal{G}}\cap \{y=1\}.
\endaligned\end{equation}
Thus by \eqref{Cabc1}(note that here $\frac{\partial^2}{\partial y^2}+\frac{2}{y}\frac{\partial}{\partial y}=y^{-2}\frac{\partial}{\partial y}(y^2\frac{\partial}{\partial y})$),
\begin{equation}\aligned\label{GT2}
\frac{\partial}{\partial y}\mathcal{W}(z)>0,\;\;\hbox{ for}\;\; z\in\mathcal{D}_{\mathcal{G}}\cap \{1\leq y\leq y_0\}.
\endaligned\end{equation}
By \eqref{GT2},
\begin{equation}\aligned\label{Pj1}
\min_{z\in\overline{\mathcal{D}_{\mathcal{G}}}\cap\{y\leq y_0\}}\mathcal{W}(z)
=\min_{z\in\overline{\mathcal{D}_{\mathcal{G}}}\cap\{y\leq1\}}\mathcal{W}(z).
\endaligned\end{equation}
Next, we shall show that $\min_{z\in\overline{\mathcal{D}_{\mathcal{G}}}\cap\{y\leq1\}}\mathcal{W}(z)$ must be attained at $\Gamma_b$. If not, assume that the minimizer is attained at some ${z_0:=x_0+iy_0\in\overline{\mathcal{D}_{\mathcal{G}}}\cap\{y<1\}}\setminus \Gamma_b$. Then
by Fermat's Theorem,
\begin{equation}\aligned\label{CK1}
\frac{\partial}{\partial x}\mathcal{W}(z)\mid_{z=z_0}=0,\;\;\frac{\partial}{\partial y}\mathcal{W}(z)\mid_{z=z_0}=0.
\endaligned\end{equation}
To get a contradiction, we consider a point $x_0+i\sqrt{1-x_0^2}\in\Gamma_b$ with the same $x-$coordinate of $z_0$.
Comparing the values at $z_0$ and $x_0+i\sqrt{1-x_0^2}$, by \eqref{Mb}, \eqref{Cabc1} and \eqref{Cabc2}, one obtains that

\begin{equation}\aligned\label{Mb2}
\hbox{either}\;\; \frac{\partial}{\partial x}\mathcal{W}(z)\mid_{z=z_0}>0,\;\;
  \hbox{or}\;\; \frac{\partial}{\partial y}\mathcal{W}(z)\mid_{z=z_0}>0.
\endaligned\end{equation}
Then, there is a contradiction between \eqref{CK1} and \eqref{Mb2}. Therefore,
\begin{equation}\aligned\label{Pj2}
\min_{z\in\overline{\mathcal{D}_{\mathcal{G}}}\cap\{y\leq1\}}\mathcal{W}(z)
=\min_{z\in\Gamma_b}\mathcal{W}(z).
\endaligned\end{equation}
Therefore, \eqref{Pj1} and \eqref{Pj2} yield the result.

\end{proof}

In the end of this Section, we are going to prove Lemmas \ref{Lemma37} and \ref{LemmaMxy}.
\begin{lemma}\label{Lemma37} For $\alpha\in[0.9155730607,1]$,
\begin{equation}\aligned\nonumber
(\partial_{yy}+\frac{2}{y}\partial_y)M(\alpha,z)>0,\;\;\hbox{for}\;\;z=x+iy\in\{[0,\frac{1}{2}]\times[\frac{\sqrt3}{2},2]\}.
\endaligned\end{equation}
\end{lemma}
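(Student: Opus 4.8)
The plan is to adapt the exponentially-decaying-expansion technique used for $\partial_y M$ in Proposition \ref{PropA2} to the present second-order operator. Since $y^{-2}\partial_y(y^2\partial_y)=\partial_{yy}+\tfrac{2}{y}\partial_y$ commutes with $\partial_\alpha$ and $M=-\tfrac1\pi\partial_\alpha\theta$ by Lemma \ref{Lemma2}, it suffices to compute $G(\alpha,z):=(\partial_{yy}+\tfrac2y\partial_y)\theta(\alpha,z)$ from the reduction-of-dimension formula in Lemma \ref{Lemma3} and then set $(\partial_{yy}+\tfrac2y\partial_y)M=-\tfrac1\pi\partial_\alpha G$. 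Writing $X=\tfrac y\alpha$ and $Y=nx$, each application of $\partial_y$ produces the factor $\tfrac1\alpha\partial_X$ on $\vartheta(\tfrac y\alpha;nx)$ together with polynomial-in-$(y,\alpha,n)$ coefficients coming from the prefactor $\sqrt{y/\alpha}$ and the Gaussian weight $e^{-\pi\alpha y n^2}$, while the final $\partial_\alpha$ contributes $\partial_\alpha X=-\tfrac{y}{\alpha^2}$. The outcome is an identity
\begin{equation*}
(\partial_{yy}+\tfrac2y\partial_y)M(\alpha,z)=c(\alpha,y)\sum_{n\in\mathbb{Z}}Q_n(z,\alpha)\,e^{-\pi\alpha y n^2},
\end{equation*}
with $c(\alpha,y)>0$ explicit and $Q_n$ a polynomial in $(y,\alpha,n)$ multiplying $\vartheta,\vartheta_X,\vartheta_{XX}$ (and, via the heat relation $\vartheta_X=\tfrac1{4\pi}\vartheta_{YY}$, re-expressible through $Y$-derivatives) evaluated at $(\tfrac y\alpha;nx)$, entirely parallel to the $P_n$ of Proposition \ref{PropA2}.

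Next I would split the series as in Lemmas \ref{Lemma32}--\ref{Lemma33}: a major term collecting $n=0$ together with the pair $n=\pm1$, and an error term $\sum_{|n|\ge2}$. The $n=0$ contribution is $x$-independent and involves only $\vartheta(\tfrac y\alpha;0),\vartheta_X(\tfrac y\alpha;0),\vartheta_{XX}(\tfrac y\alpha;0)$, explicit positive functions of $X=\tfrac y\alpha$; this is the dominant positive part. The $n=\pm1$ terms carry the factor $e^{-\pi\alpha y}$ and are controlled using the sign $-\vartheta_Y(\tfrac y\alpha;x)\ge0$ (Lemma \ref{Lemma6}) together with the quotient bounds for $\vartheta_Y$ and $\vartheta_{XY}$ (Lemmas \ref{Lemma2a}--\ref{Lemma2c}). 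I would then prove a lower bound of the form $(\text{major term})\ge\delta>0$ uniformly on the rectangle. For the error term, the crude bounds on $|\vartheta|,|\vartheta_X|,|\vartheta_{XX}|$ together with $\sum_{n\ge2}(\text{poly in }n)\,e^{-\pi\alpha y(n^2-1)}$ give a bound of order $10^{-k}$ on $[\tfrac{\sqrt3}{2},2]$, far below $\delta$; adding the two contributions and using $c(\alpha,y)>0$ then yields the claimed positivity.

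The main obstacle, by contrast with Proposition \ref{PropA2}, is that $y$ now descends to $\tfrac{\sqrt3}{2}$, so $X=\tfrac y\alpha$ can be close to $1$ and the weights $e^{-\pi\alpha y}$ multiplying the $n=\pm1$ terms are no longer negligible; moreover the $\partial_\alpha$ applied after the second $y$-derivative inflates the polynomial coefficients, so the competition between the $x$-independent $n=0$ term and the $x$-dependent $n=\pm1$ terms must be tracked sharply. I expect to handle this exactly as in Lemma \ref{Lemma8a}: subdivide the $y$-interval (for instance according to whether $\tfrac y\alpha$ lies above or below a threshold), use the monotonicity in $x$ of the relevant theta-quotient to reduce each subcase to its extremal value of $x$ (either $x=\tfrac12$ or the arc value $x=\sqrt{1-y^2}$), and then verify the resulting one-variable inequality in $(\alpha,y)$ against the explicit constants. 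Establishing this uniform positive lower bound for the major term across the whole rectangle --- rather than the series manipulations, which are routine --- is where the real work lies.
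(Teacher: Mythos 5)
Your route is genuinely different from the paper's. The paper does not pass through the one-dimensional reduction of Lemma \ref{Lemma3} for this lemma at all: it differentiates the raw double sum \eqref{J} directly, writes $(\partial_{yy}+\frac{2}{y}\partial_y)M(\alpha,z)=\sum_{m,n}P_{m,n}(x,y)$ with a fully explicit $P_{m,n}$, and splits it into the finitely many lattice points $|m|\le 2,|n|\le 2$ (the part $P_A$, shown to satisfy $P_A\ge 10^{-3}$) plus the tail $P_B$ with $|P_B|\le 10^{-6}$ (Lemma \ref{PropA3}). Your plan instead mirrors the machinery the paper uses for Proposition \ref{PropA2} and for $\partial_{xy}M$ (Lemmas \ref{Lemma38}--\ref{LemmaMxy}): reduce to one-dimensional theta functions, keep $n=0,\pm1$ as the major term, and control the rest with the quotient bounds of Lemmas \ref{Lemma2a}--\ref{Lemma2c}. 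Both are viable in principle; the paper's double-sum truncation avoids the theta-quotient bookkeeping at the price of handling $25$ explicit lattice terms, while yours reuses existing lemmas but must track the $\vartheta$, $\vartheta_X$, $\vartheta_{XX}$ quotients at $X=\tfrac{y}{\alpha}$ down to $X\approx 0.87$.

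There is, however, one concrete gap in your error estimate. The uniform margin here is thin: the paper's own bound is only $P_A\ge 10^{-3}$ (the minimum sits near the hexagonal corner $x=\tfrac12$, $y=\tfrac{\sqrt3}{2}$, where $M_y=0$ and $M_{yy}$ is small for $\alpha$ near the transition value). Against that, your proposed tail $\sum_{|n|\ge 2}$ is not ``far below $\delta$'': at $y=\tfrac{\sqrt3}{2}$, $\alpha\approx 0.916$, the $n=\pm2$ terms carry the weight $e^{-4\pi\alpha y}\approx 4.7\cdot 10^{-5}$ multiplied by polynomial coefficients of size $\alpha^4\pi^2y^2n^4\sim 10^2$ (and larger once the extra $\partial_\alpha$ from $M=-\tfrac1\pi\partial_\alpha\theta$ raises the degree in $n$), giving a contribution of order $10^{-2}$ --- an order of magnitude above the margin you need to protect. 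Note that the paper's analogous estimate (Lemma \ref{Lemma33}) only achieves $4\cdot 10^{-7}$ because there $y\ge 2$. So as written, your split cannot close; you must absorb $n=\pm 2$ into the major term (as the paper effectively does by truncating the double sum at $|m|,|n|\le 2$, and as Lemma \ref{LemmaMxy} does by keeping $n=1,2$ explicit), and only discard $|n|\ge 3$, for which $e^{-9\pi\alpha y}$ does render the tail harmless. With that correction, and with the case analysis you sketch for the major term, the argument should go through.
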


Using the explicit expression of $M(\alpha,z)$ in \eqref{J}, one has

\begin{lemma} A double sum identity for $(\partial_{yy}+\frac{2}{y}\partial_y)M(\alpha,z)$ with exponential factors.
\begin{equation}\aligned\nonumber
(\partial_{yy}+\frac{2}{y}\partial_y)M(\alpha,z)=&
\sum_{n,m}
P_{m,n}(x,y),
\endaligned\end{equation}
where
\begin{equation}\aligned\nonumber
P_{m,n}(x,y):=&
\Big(
\pi^2 \alpha^2(n^2-\frac{(m+nx)^2}{y^2})^2(yn^2+\frac{(m+nx)^2}{y})+\frac{2n^2}{y}\\
&-2\pi \alpha(n^2-\frac{(m+nx)^2}{y^2})^2-\frac{2\pi \alpha}{y}n^2(yn^2+\frac{(m+nx)^2}{y})
\Big)
e^{-\pi \alpha(yn^2+\frac{(m+nx)^2}{y})}.
\endaligned\end{equation}
\end{lemma}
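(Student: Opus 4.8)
The plan is to obtain the identity by differentiating the series \eqref{J} for $M(\alpha,z)$ term by term and simplifying each summand with the chain rule. Writing $z=x+iy$ and relabelling the summation index $(m,n)\mapsto(n,m)$ (legitimate since we sum over all of $\mathbb{Z}^2$), each summand of $M(\alpha,z)$ is of the form $f(Q)$ with
\begin{equation}\aligned\nonumber
Q=Q_{m,n}(x,y):=\frac{|nz+m|^2}{y}=yn^2+\frac{(m+nx)^2}{y},\qquad f(t):=t\,e^{-\pi\alpha t}.
\endaligned\end{equation}
Thus $M(\alpha,z)=\sum_{(m,n)}f(Q_{m,n})$, and the whole computation reduces to applying the operator $\partial_{yy}+\frac{2}{y}\partial_y$ to a single summand $f(Q)$.

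First I would record the elementary one-variable derivatives $f'(t)=(1-\pi\alpha t)\,e^{-\pi\alpha t}$ and $f''(t)=(\pi^2\alpha^2 t-2\pi\alpha)\,e^{-\pi\alpha t}$. Since only $Q$ depends on $y$, the chain rule gives $\partial_y f(Q)=f'(Q)\,Q_y$ and, after collecting the first-order terms,
\begin{equation}\aligned\nonumber
\Big(\partial_{yy}+\tfrac{2}{y}\partial_y\Big)f(Q)=f''(Q)\,Q_y^2+f'(Q)\Big(\partial_{yy}+\tfrac{2}{y}\partial_y\Big)Q .
\endaligned\end{equation}

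The key step is to evaluate the operator on $Q$ itself. Using the factorised form $\partial_{yy}+\frac{2}{y}\partial_y=y^{-2}\partial_y(y^2\partial_y)$ (already invoked in the proof of Proposition \ref{PropA}), and that $(m+nx)^2$ is independent of $y$, I would compute $Q_y=n^2-\frac{(m+nx)^2}{y^2}$ together with the clean identity
\begin{equation}\aligned\nonumber
\Big(\partial_{yy}+\tfrac{2}{y}\partial_y\Big)Q=y^{-2}\partial_y\big(y^2 n^2-(m+nx)^2\big)=\frac{2n^2}{y}.
\endaligned\end{equation}
Substituting $f'$, $f''$, $Q_y$ and this identity into the chain-rule formula and factoring out $e^{-\pi\alpha Q}$ reproduces $P_{m,n}(x,y)$ exactly: the terms $\pi^2\alpha^2 Q_y^2\,Q-2\pi\alpha Q_y^2$ arise from $f''(Q)Q_y^2$, while $\frac{2n^2}{y}-\frac{2\pi\alpha}{y}n^2 Q$ arise from $f'(Q)\cdot\frac{2n^2}{y}$. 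Summing over $(m,n)$ then yields the stated double-sum identity.

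The only point requiring care, rather than a genuine obstacle, is the justification of differentiation under the summation sign. Because $Q_{m,n}=yn^2+\frac{(m+nx)^2}{y}$ grows quadratically in $(m,n)$ while each summand carries the Gaussian weight $e^{-\pi\alpha Q_{m,n}}$, the series for $M$ and for all its $y$-derivatives converge absolutely and uniformly on compact subsets of $\mathbb{H}$ (where $y$ stays bounded above and below); the polynomial prefactors in $Q$ do not affect this. Hence term-by-term differentiation is valid and the formal computation is rigorous. Finally, the index $(m,n)=(0,0)$ gives $Q=0$ and $Q_y=0$, so it contributes $0$ to both sides and may be discarded.
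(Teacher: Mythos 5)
Your computation is correct and is exactly the direct term-by-term differentiation that the paper leaves implicit (it states the lemma with no proof beyond "using the explicit expression of $M$"): the reduction to $f(Q)$ with $f(t)=te^{-\pi\alpha t}$, the identity $(\partial_{yy}+\tfrac{2}{y}\partial_y)Q=\tfrac{2n^2}{y}$, and the substitution all check out and reproduce $P_{m,n}$ term for term. The relabelling of indices and the uniform-convergence justification are both sound, so nothing is missing.
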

To illustrate the proof, we define
\begin{equation}\aligned\nonumber
P_A(x,y):=\sum_{|n|\leq2,|m|\leq2}P_{m,n}(x,y),\;\;P_B(x,y):=\sum_{|n|\geq3\;\hbox{or}\;|m|\geq3}P_{m,n}(x,y)
\endaligned\end{equation}
be the approximate and error parts of $(\partial_{yy}+\frac{2}{y}\partial_y)M(\alpha,z)$ respectively.
Then
\begin{equation}\aligned\label{Mxy}
(\partial_{yy}+\frac{2}{y}\partial_y)M(\alpha,z)=P_A(x,y)+P_B(x,y).
\endaligned\end{equation}
With this split, we find that the error part $P_B(x,y)$ is negligible comparing the approximate part $P_A(x,y)$ in the desired domain. Namely,
$$
\frac{|P_B(x,y)|}{P_A(x,y)}\leq 10^{-3}\ll1,\;\;\hbox{for}\;\; (\alpha,x,y)\in[0.9155730607,1]\times[0,\frac{1}{2}]\times[\frac{\sqrt3}{2},2].
$$

Precisely, one can show that

\begin{lemma}\label{PropA3} For $\alpha\in[0.9155730607,1]$, $z=x+iy\in\{[0,\frac{1}{2}]\times[\frac{\sqrt3}{2},2]\}$, it holds that
\begin{equation}\aligned\nonumber
P_A(x,y)\geq 10^{-3},\;\; |P_B(x,y)|\leq 10^{-6}.
\endaligned\end{equation}
\end{lemma}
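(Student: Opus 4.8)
The statement to prove is Lemma \ref{PropA3}, which provides a quantitative lower bound $P_A(x,y)\geq 10^{-3}$ on the approximate part and an upper bound $|P_B(x,y)|\leq 10^{-6}$ on the error part, uniformly over the compact parameter box $(\alpha,x,y)\in[0.9155730607,1]\times[0,\frac12]\times[\frac{\sqrt3}{2},2]$. My plan is to treat the two bounds separately, since they are estimates of opposite type: a lower bound on a finite trigonometric-exponential sum, and an upper bound on an infinite tail.

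\textbf{The error bound $|P_B(x,y)|\leq 10^{-6}$.} The plan is to control $P_B$ by brute-force majorization of the summands $P_{m,n}$ with $|n|\geq 3$ or $|m|\geq 3$. First I would extract the Gaussian factor $e^{-\pi\alpha(yn^2+(m+nx)^2/y)}$ and bound the polynomial prefactor. Using $\alpha\leq 1$, $y\leq 2$, $|x|\leq\frac12$, the rational expressions $n^2-(m+nx)^2/y^2$ and $yn^2+(m+nx)^2/y$ are polynomially bounded in $|m|,|n|$, so each $|P_{m,n}|$ is dominated by $C(1+m^4+n^4)e^{-\pi\alpha(yn^2+(m+nx)^2/y)}$ for an explicit constant $C$. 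To make the exponent uniform I would use the worst case $\alpha\geq\alpha_a=0.9155730607$ and $y\geq\frac{\sqrt3}{2}$, replacing $(m+nx)^2/y$ by a quantity bounded below via $|m+nx|\geq |m|-\frac12|n|$ when $|m|$ is large relative to $|n|$, and symmetrically controlling the $yn^2$ term when $|n|$ is large. The upshot is a geometric-type convergent bound $\sum_{|m|\geq 3\,\text{or}\,|n|\geq 3}C(1+m^4+n^4)e^{-c(m^2+n^2)}$ with $c>0$ explicit; this can be summed (or bounded by an integral comparison) to a number well below $10^{-6}$, with room to spare because the smallest excluded lattice point already carries an exponential $e^{-\pi\alpha\cdot\frac{\sqrt3}{2}\cdot 9}$-scale suppression.

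\textbf{The main-term bound $P_A(x,y)\geq 10^{-3}$.} This is the harder part. Here $P_A$ is a finite sum of the nine-ish terms with $|m|,|n|\leq 2$, and unlike the tail it is not sign-definite term by term: the factors $\pi^2\alpha^2(n^2-(m+nx)^2/y^2)^2(\cdots)$ compete against the negative contributions $-2\pi\alpha(\cdots)$. My plan is to group the summands by symmetry. The lattice is invariant under $(m,n)\mapsto(-m,-n)$, so terms pair up and I can restrict to representatives. The dominant positive contribution should come from the $(m,n)=(0,\pm1)$ and $(m,n)=(\pm1,0)$ terms and their interplay; for these I would write out $P_{m,n}$ explicitly as a function of $(\alpha,x,y)$ and seek a uniform lower bound on the box. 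The natural strategy is to show $P_A$ is close to a cleaner "leading" expression on the boundary-relevant region and to bound the deviation. Because the box is low-dimensional and compact, a rigorous route is to bound the gradient of $P_A$ in each variable (all derivatives are again explicit exponential-polynomial sums with the same majorization) and then verify $P_A\geq 10^{-3}+(\text{Lipschitz slack})$ on a fine finite grid, turning the continuous estimate into finitely many checkable inequalities.

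\textbf{Expected main obstacle.} The genuine difficulty is the lower bound $P_A\geq 10^{-3}$ rather than the tail bound, precisely because of the cancellation between the $\pi^2\alpha^2$ terms and the $-2\pi\alpha$ terms in $P_{m,n}$. Near $z=e^{i\pi/3}$ (i.e.\ $x=\frac12,y=\frac{\sqrt3}{2}$) and near $z=i$ (i.e.\ $x=0,y=1$) several summands change character, and one must confirm the positive terms genuinely dominate throughout, including at the corners of the box where $\alpha$ is at its smallest value $0.9155730607$. I expect this is why the region $y\leq 2$ and the specific threshold $\alpha_a$ appear: the inequality should become tight somewhere, so the margin $10^{-3}$ is not generous everywhere, and a careless grid or a lossy Lipschitz constant could fail to close the gap. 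The safe plan is therefore to isolate the exact point(s) of near-degeneracy, expand $P_A$ there to leading order to confirm strict positivity, and handle the rest of the box by the grid-plus-derivative-bound argument with the constants chosen conservatively.
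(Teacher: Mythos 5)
Your two-part architecture (brute-force tail majorization plus a compact-box verification of $P_A$) is the natural one, and since the paper states this lemma without any proof there is nothing finer to compare against on the main-term side; your grid-plus-Lipschitz plan for $P_A\ge 10^{-3}$ is workable in principle. The genuine gap is in the tail estimate, and it is quantitative. You assert that every excluded lattice point carries suppression at the scale $e^{-\pi\alpha\cdot\frac{\sqrt3}{2}\cdot 9}$, i.e.\ you bound the exponent from below using $y\ge\frac{\sqrt3}{2}$. That is correct for the $yn^2$ part of the exponent, but the excluded set $\{|n|\ge3\ \text{or}\ |m|\ge3\}$ contains $(m,n)=(\pm3,0)$, for which the exponent is $\pi\alpha\cdot 9/y$ and is therefore \emph{minimized at the top of the range} $y=2$: there it equals $\pi\alpha\cdot\frac92\approx 12.94$, so the exponential factor is only $e^{-12.94}\approx 2.4\times10^{-6}$. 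The polynomial prefactor of $P_{\pm3,0}$ at $y=2$ is $\pi^2\alpha^2\bigl(\frac94\bigr)^2\cdot\frac92-2\pi\alpha\bigl(\frac94\bigr)^2\approx 1.6\times10^{2}$, so each of these two terms contributes about $3.8\times10^{-4}$ and $P_B(x,2)\approx 7.6\times10^{-4}$. This is not a fixable looseness in your majorization: the inequality $|P_B|\le 10^{-6}$ is itself violated on the stated box (it already fails for $y\gtrsim 1.2$), so no correct proof of the lemma as literally written exists, and your ``room to spare'' heuristic is exactly where it breaks.

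What survives is the intended application: near $y=2$ the $(\pm3,0)$ terms are positive and $P_A$ is of order $10^{-2}$, so the positivity of $(\partial_{yy}+\frac{2}{y}\partial_y)M$ is not in danger. To close the argument you must repair the statement, e.g.\ (i) take the error set to be $|n|\ge3$ only and absorb the full $m$-sums for $|n|\le2$ into one-dimensional theta functions as in Lemma \ref{Lemma3}, so that the worst exponent really is $\pi\alpha y\cdot 9\ge \pi\alpha\cdot\frac{9\sqrt3}{2}$; or (ii) move $(\pm3,0)$ (and, if needed, $(\pm4,0)$) into $P_A$; or (iii) prove the weaker but sufficient pair $P_A\ge c$ with $c$ of order $10^{-2}$ and $|P_B|\le 10^{-3}$ on the sub-region $y\ge 1.2$ where the tail is not tiny. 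Also note, for your grid verification of the lower bound on $P_A$, that the worst case for all the exponential factors is large $y$ and small $\alpha$, not the hexagonal corner, so the Lipschitz constants must be computed at $y=2$, $\alpha=0.9155730607$.
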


Therefore, Lemma \ref{PropA3} and identity \eqref{Mxy} prove \ref{Lemma37}.

It remains to prove second mixed order estimate(Lemma \ref{LemmaMxy}).

Using Lemma \ref{Lemma3}, after direct computations and regrouping the terms, one gets
\begin{lemma}\label{Lemma38} An exponentially decaying identity for $\partial_{xy} M(\alpha,z)$.
\begin{equation}\aligned\nonumber
\partial_{xy} M(\alpha,z)
=&\frac{2}{\pi\sqrt{\frac{y}{\alpha}}\alpha^4}\Big(
\sum_{n=1}^\infty(\pi \alpha^3 yn^3+\frac{1}{4}a^2 n-a^4 n^5\pi^2 y)\cdot\vartheta_Y(\frac{y}{\alpha};nx)e^{-\pi \alpha y n^2}\\
&+\sum_{n=1}^\infty 2\alpha yn\cdot\vartheta_{XY}(\frac{y}{\alpha};nx)e^{-\pi \alpha y n^2}
+\sum_{n=1}^\infty y^2n\cdot\vartheta_{XXY}(\frac{y}{\alpha};nx)e^{-\pi \alpha y n^2}
\Big).
\endaligned\end{equation}

\end{lemma}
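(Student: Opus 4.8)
The plan is to obtain the identity by differentiating the expansion of $\partial_x M(\alpha,z)$ from Lemma \ref{Lemma4} with respect to $y$, exploiting $\partial_{xy}M=\partial_y(\partial_x M)$; this is equivalent to the route through Lemma \ref{Lemma3} advertised in the text, since Lemma \ref{Lemma4} is itself derived from Lemmas \ref{Lemma2} and \ref{Lemma3}. Writing the right-hand side of Lemma \ref{Lemma4} as a sum of three series, each of the schematic shape
\[
c\,y^{p}\alpha^{q}\sum_{n=1}^\infty n^{r}\,e^{-\pi\alpha y n^2}\,\vartheta_{\ast}\!\left(\tfrac{y}{\alpha};nx\right),\qquad \vartheta_\ast\in\{\vartheta_Y,\vartheta_{XY}\},
\]
I would apply $\partial_y$ termwise. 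The derivative then acts in exactly three places: on the power prefactor $y^{p}$ (contributing a factor $p\,y^{-1}$), on the Gaussian $e^{-\pi\alpha y n^2}$ (contributing $-\pi\alpha n^2$), and on the first slot $X=\tfrac{y}{\alpha}$ of the one-dimensional theta function, where the chain rule gives $\partial_y=\tfrac{1}{\alpha}\partial_X$, so that $\vartheta_Y\mapsto\tfrac{1}{\alpha}\vartheta_{XY}$ and $\vartheta_{XY}\mapsto\tfrac{1}{\alpha}\vartheta_{XXY}$.

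Before differentiating under the summation sign I would record the routine convergence justification: for fixed $z$ with $y>0$ the factor $e^{-\pi\alpha y n^2}$ decays faster than any power of $n$, while $\vartheta_Y,\vartheta_{XY},\vartheta_{XXY}$ are bounded uniformly for $X=\tfrac{y}{\alpha}$ in a neighbourhood of the given value; hence each series and its formal $y$-derivative converge locally uniformly, which legitimizes termwise differentiation.

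Carrying out the nine resulting contributions (three per series) and regrouping by the type of theta-derivative that appears, I would collect the coefficients of $\vartheta_Y$, $\vartheta_{XY}$, and $\vartheta_{XXY}$ separately, each as a polynomial in $n$ whose coefficients are monomials in $y$ and $\alpha$. The one genuinely pleasant point is that the two $n^3\vartheta_{XY}$ contributions — one from the theta-slot derivative of the $n^3\vartheta_Y$ series, the other from the Gaussian derivative of the $n\,\vartheta_{XY}$ series — cancel exactly, leaving a single linear-in-$n$ term for $\vartheta_{XY}$; likewise the $\vartheta_Y$ block combines into $\tfrac14\alpha^2 n+\pi\alpha^3 y\,n^3-\pi^2\alpha^4 y\,n^5$ and the $\vartheta_{XXY}$ block into $y^2 n$. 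Factoring out the common prefactor $\tfrac{2}{\pi}\bigl(\tfrac{y}{\alpha}\bigr)^{-1/2}\alpha^{-4}$ then reproduces exactly the claimed identity.

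The computation is entirely elementary, so the main obstacle is purely organizational: keeping the three differentiation mechanisms separate and tracking the powers of $y$ and $\alpha$ accurately, so that the prefactor factors out cleanly and the advertised cancellation of the $n^3\vartheta_{XY}$ terms is actually verified rather than assumed. (The symbol $a$ in the stated coefficients should be read as $\alpha$.)
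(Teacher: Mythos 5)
Your route --- differentiating the expansion of $\partial_x M$ from Lemma \ref{Lemma4} termwise in $y$, keeping the three differentiation mechanisms (power prefactor, Gaussian, first theta slot via $\partial_y=\frac{1}{\alpha}\partial_X$) separate and then regrouping by $\vartheta_Y,\vartheta_{XY},\vartheta_{XXY}$ --- is exactly the paper's (unwritten) computation, the termwise differentiation is justified for the reason you give, and the cancellation of the two $n^3\vartheta_{XY}$ contributions is real. However, your claimed output of the regrouping repeats a misprint in the statement rather than what the calculation actually yields: the $n^5$ term arises solely from differentiating the Gaussian in the piece $2y^{3/2}\alpha^{-1/2}\sum_{n\ge1} n^3 e^{-\pi\alpha yn^2}\vartheta_Y(\tfrac{y}{\alpha};nx)$ of $\partial_xM$, which contributes $-2\pi y^{3/2}\alpha^{1/2}\,n^5\,\vartheta_Y e^{-\pi\alpha yn^2}$, and dividing by the prefactor $\tfrac{2}{\pi}y^{-1/2}\alpha^{-7/2}$ gives $-\pi^2\alpha^4 y^{2} n^5$ inside the bracket, not $-\pi^2\alpha^4 y\, n^5$. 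So the $\vartheta_Y$ block is $\tfrac14\alpha^2 n+\pi\alpha^3 y\,n^3-\pi^2\alpha^4 y^{2} n^5$; the single power of $y$ in the printed lemma (like the symbol $a$) is a typo, as you can confirm against the paper's own expression for $M_A$ in Lemma \ref{Lemma3.7}, whose polynomial part is $\alpha^2\pi^2y^2-\pi\alpha y-\tfrac14$ (the $n=1$ instance of your bracket divided by $-\alpha^2$), and against the analogous $\alpha^4\pi^2y^2n^4$ term in $P_n$ for $\partial_yM$. Since the whole content of this lemma is the bookkeeping, you should actually carry the powers of $y$ through and record the corrected coefficient rather than asserting agreement with the display as printed; with that emendation your argument is complete and coincides with the paper's.
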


To estimate second mixed order derivative $\partial_{xy} M(\alpha,z)$, we further deform the expression in Lemma \ref{Lemma38} as follows

\begin{lemma}\label{Lemma3.7} A simplified form of $\partial_{xy} M(\alpha,z)$ after a positive factor.
\begin{equation}\aligned\nonumber
\partial_{xy} M(\alpha,z)
=&\frac{2}{\pi\sqrt{\frac{y}{\alpha}}\alpha^2}\cdot(-\vartheta_Y(\frac{y}{\alpha};x))e^{-\pi\alpha y}
\cdot\Big(
M_A(\alpha,z)+M_B(\alpha,z)
\Big).
\endaligned\end{equation}

\end{lemma}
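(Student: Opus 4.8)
The plan is to derive Lemma \ref{Lemma3.7} directly from the expansion in Lemma \ref{Lemma38} by the same factoring device that passes from Lemma \ref{Lemma4} to Lemma \ref{Lemma5}: isolate the $n=1$ contribution and extract a common nonnegative prefactor. Each of the three series in Lemma \ref{Lemma38} runs over $n\geq 1$ and carries the weight $e^{-\pi\alpha y n^2}$. First I would pull the leading factor $e^{-\pi\alpha y}$ (the $n=1$ weight) out of every summand, so that the generic term acquires the decaying factor $e^{-\pi\alpha y(n^2-1)}$, which equals $1$ at $n=1$ and decays geometrically for $n\geq 2$. Simultaneously I would divide and multiply each theta value $\vartheta_Y(\frac{y}{\alpha};nx)$, $\vartheta_{XY}(\frac{y}{\alpha};nx)$, $\vartheta_{XXY}(\frac{y}{\alpha};nx)$ by the single quantity $-\vartheta_Y(\frac{y}{\alpha};x)$, which is nonnegative on $x\in[0,\tfrac12]$ by Lemma \ref{Lemma6}. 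Together with the rescaling that moves one factor $\alpha^2$ out of the prefactor $\frac{2}{\pi\sqrt{y/\alpha}\,\alpha^4}$ and into the bracket, this produces precisely the advertised overall factor $\frac{2}{\pi\sqrt{y/\alpha}\,\alpha^2}\,(-\vartheta_Y(\frac{y}{\alpha};x))\,e^{-\pi\alpha y}$.

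The remaining step is purely definitional. I would declare $M_A(\alpha,z)$ to be the $n=1$ summand and $M_B(\alpha,z)$ the tail $\sum_{n\geq 2}$. Thus $M_A$ collects the three contributions at $n=1$: the rescaled coefficient $\alpha^{-2}(\pi\alpha^3 y+\tfrac14\alpha^2-\alpha^4\pi^2 y)$ multiplying $\vartheta_Y(\frac{y}{\alpha};x)/(-\vartheta_Y(\frac{y}{\alpha};x))$, together with the ratios $\frac{2y}{\alpha}\,\vartheta_{XY}(\frac{y}{\alpha};x)/(-\vartheta_Y(\frac{y}{\alpha};x))$ and $\frac{y^2}{\alpha^2}\,\vartheta_{XXY}(\frac{y}{\alpha};x)/(-\vartheta_Y(\frac{y}{\alpha};x))$; while $M_B$ is the series in $n\geq 2$ whose summands carry the factor $e^{-\pi\alpha y(n^2-1)}$ and the quotients $\vartheta_{\bullet}(\frac{y}{\alpha};nx)/(-\vartheta_Y(\frac{y}{\alpha};x))$. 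Because the identity is an exact rearrangement of an absolutely convergent series, no estimate is needed at this stage; the factorization is valid wherever $\vartheta_Y(\frac{y}{\alpha};x)\neq 0$, i.e. for $x\in(0,\tfrac12]$, and extends by continuity.

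The substance of the lemma is that the sign of $\partial_{xy}M$ is then governed entirely by $M_A+M_B$, since the extracted prefactor is nonnegative: $-\vartheta_Y(\frac{y}{\alpha};x)\geq 0$ by Lemma \ref{Lemma6} and $\frac{2}{\pi\sqrt{y/\alpha}\,\alpha^2}e^{-\pi\alpha y}>0$. This is exactly the reduction needed later to verify condition \eqref{Cabc2} of the minimum principle (Proposition \ref{PropA}), namely $\partial_{xy}M>0$, by showing $M_A+M_B>0$ with $M_A$ an explicit main term and $M_B$ an error term controllable through the theta-quotient bounds of Lemmas \ref{Lemma2a}--\ref{Lemma2c}. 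I expect the only delicate point — and hence the main, if modest, obstacle — to be the bookkeeping of the third series involving $\vartheta_{XXY}$, which must be carried through the factoring with the correct sign and the correct power of $\alpha$; the $\alpha^{4}\to\alpha^{2}$ rescaling and the $-\vartheta_Y$ normalization have to be applied uniformly across all three sums so that the $n=1$ terms combine cleanly into $M_A$.
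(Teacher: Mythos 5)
Your proposal is correct and is essentially the paper's (implicit) argument: Lemma \ref{Lemma3.7} is obtained from Lemma \ref{Lemma38} by pulling out the common factor $e^{-\pi\alpha y}$, normalizing by $-\vartheta_Y(\frac{y}{\alpha};x)\geq 0$ (Lemma \ref{Lemma6}), rescaling $\alpha^{-4}\to\alpha^{-2}$, and splitting the resulting series into a main part and an exponentially small tail. The only discrepancy is bookkeeping: the paper's $M_A$ collects the $n=1$ \emph{and} $n=2$ contributions (the latter carrying the factor $e^{-3\pi\alpha y}$) with $M_B$ the tail over $n\geq 3$, whereas you place the cut after $n=1$; since $M_A+M_B$ is the same absolutely convergent sum either way, the identity is unaffected, but to match the paper's stated $M_A$, $M_B$ you should adopt its grouping.
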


Note that by Lemma \ref{Lemma6}, the factor $\frac{2}{\pi\sqrt{\frac{y}{\alpha}}\alpha^2}\cdot(-\vartheta_Y(\frac{y}{\alpha};x))e^{-\pi\alpha y}$ in Lemma \ref{Lemma3.7} is positive.

To analyze $\partial_{xy} M(\alpha,z)$, we denote that

\begin{equation}\aligned\nonumber
M_A(\alpha,z)
:&=2\frac{y}{\alpha} \cdot(-\frac{\vartheta_{XY}(\frac{y}{\alpha};x)}{\vartheta_{Y}(\frac{y}{\alpha};x)})
-(\frac{y}{\alpha})^2\cdot\frac{\vartheta_{XXY}(\frac{y}{\alpha};x)}{\vartheta_{Y}(\frac{y}{\alpha};x)}
+(\alpha^2\pi^2 y^2-\pi \alpha y-\frac{1}{4})\\
&+\Big(
(\alpha^2 2^5\pi^2 y^2-8\pi \alpha y -\frac{1}{2})\frac{\vartheta_{Y}(\frac{y}{\alpha};2x)}{\vartheta_{Y}(\frac{y}{\alpha};x)}\\
&-4\frac{y}{\alpha} \frac{\vartheta_{XY}(\frac{y}{\alpha};2x)}{\vartheta_{Y}(\frac{y}{\alpha};x)}
-2(\frac{y}{\alpha})^2 \frac{\vartheta_{XXY}(\frac{y}{\alpha};2x)}{\vartheta_{Y}(\frac{y}{\alpha};x)}
\Big)\cdot e^{-3\pi \alpha y }
,\\
M_B(\alpha,z)
:&=\sum_{n=3}^\infty
\Big(
(\alpha^2 n^5\pi^2 y^2-\pi \alpha y n^3-\frac{1}{4}n )\frac{\vartheta_{Y}(\frac{y}{\alpha};nx)}{\vartheta_{Y}(\frac{y}{\alpha};x)}\\
&-2\frac{y}{\alpha} n\frac{\vartheta_{XY}(\frac{y}{\alpha};nx)}{\vartheta_{Y}(\frac{y}{\alpha};x)}
-(\frac{y}{\alpha})^2n \frac{\vartheta_{XXY}(\frac{y}{\alpha};nx)}{\vartheta_{Y}(\frac{y}{\alpha};x)}
\Big)\cdot e^{-\pi \alpha y (n^2-1)}
\endaligned\end{equation}
being the major and error terms in estimating $\partial_{xy} M(\alpha,z)$.

In the range of $(x,\frac{y}{\alpha})\in[0,\frac{1}{2}]\times[\frac{\sqrt3}{2},\infty)$, one roughly has
$$\frac{\vartheta_{Y}(\frac{y}{\alpha};nx)}{\vartheta_{Y}(\frac{y}{\alpha};x)}\cong n (n\in \mathbb{N}^+),\;\;
-\frac{\vartheta_{XY}(\frac{y}{\alpha};nx)}{\vartheta_{Y}(\frac{y}{\alpha};x)}\cong n\pi,\;\;
\frac{\vartheta_{XXY}(\frac{y}{\alpha};nx)}{\vartheta_{Y}(\frac{y}{\alpha};x)}\cong n\pi^2.
$$
A precise version of these bounds can be found in Lemmas \ref{Lemma2a}-\ref{Lemma2c}.
Then approximately,
\begin{equation}\aligned\nonumber
M_A(\alpha,z)
\cong&2\pi\frac{y}{\alpha}
-\pi^2(\frac{y}{\alpha})^2
+(\alpha^2\pi^2 y^2-\pi \alpha y-\frac{1}{4})\\
&+\Big(
(\alpha^2 2^6\pi^2 y^2-16\pi \alpha y -1)-8\pi\frac{y}{\alpha}
-4\pi^2(\frac{y}{\alpha})^2
\Big)\cdot e^{-3\pi \alpha y }\\
|M_B(\alpha,z)|
\cong&\alpha^2 3^6\pi^2 y^2 e^{-8\pi \alpha y }.
\endaligned\end{equation}

Indeed, one can show that
\begin{lemma}\label{Lemma3.8} For $(\alpha,x,y)\in[0.9155730607,1]\times[0,\frac{1}{2}]\times[\frac{\sqrt3}{2},1.05]$, it holds that
\begin{equation}\aligned\nonumber
M_A(\alpha,z)\geq10^{-1},\;\;|M_B(\alpha,z)|\leq2\cdot 10^{-5}.
\endaligned\end{equation}
Then
\begin{equation}\aligned\nonumber
M_A(\alpha,z)+M_B(\alpha,z)>0.
\endaligned\end{equation}

\end{lemma}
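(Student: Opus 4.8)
The plan is to estimate the major term $M_A(\alpha,z)$ from below and the error term $M_B(\alpha,z)$ from above, separately, over the compact parameter box $[0.9155730607,1]\times[0,\frac12]\times[\frac{\sqrt3}{2},1.05]$. Writing $s:=\frac{y}{\alpha}$, I first observe that $s$ ranges over a bounded interval bounded below by $\frac{\sqrt3}{2}$ (since $y\ge\frac{\sqrt3}{2}$ and $\alpha\le1$), which is exactly the regime where Lemmas \ref{Lemma2a}--\ref{Lemma2c} apply with $X=s>\frac15$. The heuristic identities quoted before the statement, namely $\frac{\vartheta_Y(s;nx)}{\vartheta_Y(s;x)}\cong n$, $-\frac{\vartheta_{XY}(s;nx)}{\vartheta_Y(s;x)}\cong n\pi$ and $\frac{\vartheta_{XXY}(s;nx)}{\vartheta_Y(s;x)}\cong n\pi^2$, are the skeleton; the point is to replace each ``$\cong$'' by a genuine two-sided inequality with explicitly controlled error, using the $\mu(X),\nu(X)$ bounds of Lemmas \ref{Lemma2a}--\ref{Lemma2b}.

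For the error term first: I would bound $|M_B(\alpha,z)|$ by pulling the factor $e^{-\pi\alpha y(n^2-1)}$ out and applying the quotient estimates term by term, so that the $n$-th summand is dominated by a constant times $(\alpha^2 n^5\pi^2 y^2+\pi\alpha y n^3+\tfrac14 n+2\frac{y}{\alpha}n\cdot n\pi+(\frac{y}{\alpha})^2 n\cdot n\pi^2)\,e^{-\pi\alpha y(n^2-1)}$. Since the sum starts at $n=3$, the leading exponential is $e^{-8\pi\alpha y}$ with $\alpha y\ge\frac{\sqrt3}{2}\cdot0.9155\cdots$, and the polynomial-times-geometric tail is summable and tiny; on the given box this yields the claimed bound $|M_B|\le 2\cdot10^{-5}$. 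This is routine once the per-term quotient bounds are in hand, because the exponential decay overwhelms every polynomial prefactor.

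The substantive step is the lower bound $M_A(\alpha,z)\ge 10^{-1}$. Here I would feed the sharp two-sided quotient bounds from Lemmas \ref{Lemma2a}(1), \ref{Lemma2b}(3) and \ref{Lemma2c}(1) (with $k=1,2$) into the explicit expression for $M_A$, so that the genuine value is squeezed between the heuristic expression $2\pi s-\pi^2 s^2+(\alpha^2\pi^2 y^2-\pi\alpha y-\frac14)+(\cdots)e^{-3\pi\alpha y}$ and that same expression perturbed by $\mu(s),\nu(s)$-weighted corrections. One then checks that the dominant polynomial block $2\pi\frac{y}{\alpha}-\pi^2(\frac{y}{\alpha})^2+\alpha^2\pi^2 y^2-\pi\alpha y-\frac14$ stays comfortably above $10^{-1}$ throughout the box, and that the $e^{-3\pi\alpha y}$ correction block together with the $\mu,\nu$ errors cannot erode that margin. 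Finally, combining $M_A\ge 10^{-1}$ with $|M_B|\le 2\cdot10^{-5}$ gives $M_A+M_B>0$.

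I expect the main obstacle to be the lower bound on $M_A$, and specifically controlling the $e^{-3\pi\alpha y}$ correction block, whose coefficient $\alpha^2 2^6\pi^2 y^2-16\pi\alpha y-1$ is large and positive while the accompanying $-8\pi\frac{y}{\alpha}-4\pi^2(\frac{y}{\alpha})^2$ is negative; one must verify the net sign and magnitude of this block does not destroy the $10^{-1}$ margin. Because $y$ is confined to the short interval $[\frac{\sqrt3}{2},1.05]$, I would handle this by a monotonicity-plus-interval-evaluation argument rather than a global estimate: restrict to the worst corner of the box (largest $s$, where $-\pi^2 s^2$ is most damaging) and verify the inequality there, using that the polynomial block is smooth and slowly varying on so small a region.
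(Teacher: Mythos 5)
Your plan follows the same route as the paper: the paper itself only sketches this lemma, by replacing the quotients $\vartheta_Y(\frac{y}{\alpha};nx)/\vartheta_Y(\frac{y}{\alpha};x)$, $-\vartheta_{XY}/\vartheta_Y$, $\vartheta_{XXY}/\vartheta_Y$ with $n$, $n\pi$, $n\pi^2$ up to errors controlled via Lemmas \ref{Lemma2a}--\ref{Lemma2c}, and then checking the resulting explicit expression on the box; your treatment of $M_B$ via the $e^{-8\pi\alpha y}$ leading decay is likewise exactly the paper's.

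There is, however, one concrete numerical slip in your plan for the lower bound on $M_A$. You assert that the dominant polynomial block $2\pi\frac{y}{\alpha}-\pi^2(\frac{y}{\alpha})^2+\alpha^2\pi^2y^2-\pi\alpha y-\frac14$ ``stays comfortably above $10^{-1}$ throughout the box,'' and you then treat the $e^{-3\pi\alpha y}$ correction block only as a potential threat to that margin. At the worst corner you correctly identify (largest $s=y/\alpha$, i.e.\ $\alpha=0.9155730607$, $y=1.05$), the polynomial block evaluates to approximately $7.206-12.981+9.121-3.020-0.25\approx 0.076$, which is \emph{below} $10^{-1}$. The coefficient of $e^{-3\pi\alpha y}$ there is about $584-48-1-29-52\approx 454>0$ and $e^{-3\pi\alpha y}\approx 1.16\times10^{-4}$, so that block contributes roughly $+0.053$; it is needed to reach the stated bound $M_A\geq 10^{-1}$, not merely tolerated. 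So as written your argument proves $M_A\gtrsim 0.07$ minus the $\mu,\nu$ errors, which still yields the final conclusion $M_A+M_B>0$ (since $|M_B|\leq 2\cdot10^{-5}$), but does not establish the intermediate constant $10^{-1}$ claimed in the lemma. A second, smaller point shared with the paper: Lemmas \ref{Lemma2a}--\ref{Lemma2c} as stated bound only quotients involving $\vartheta_Y$ and $\vartheta_{XY}$, so the two-sided control of the $\vartheta_{XXY}/\vartheta_Y$ terms that both you and the paper invoke requires an additional estimate not actually recorded in the preliminaries.
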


By Lemmas \ref{Lemma3.7} and \ref{Lemma3.8}, one has
\begin{lemma}\label{LemmaMxy} For $(\alpha,x,y)\in[0.9155730607,1]\times[0,\frac{1}{2}]\times[\frac{\sqrt3}{2},1.05]$,
\begin{equation}\aligned\nonumber
\partial_{xy} M(\alpha,z)\geq0.
\endaligned\end{equation}
\end{lemma}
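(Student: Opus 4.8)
The plan is to obtain Lemma~\ref{LemmaMxy} directly from the product representation in Lemma~\ref{Lemma3.7}, by checking separately that the scalar prefactor and the bracket are of the right sign on the parameter box. First I would recall the factorization supplied by Lemma~\ref{Lemma3.7},
\begin{equation}\aligned\nonumber
\partial_{xy} M(\alpha,z)
=\frac{2}{\pi\sqrt{\frac{y}{\alpha}}\,\alpha^2}\cdot\big(-\vartheta_Y(\tfrac{y}{\alpha};x)\big)\,e^{-\pi\alpha y}
\cdot\big(M_A(\alpha,z)+M_B(\alpha,z)\big),
\endaligned\end{equation}
whose virtue is that it pushes all the cancellation into the bracket $M_A+M_B$ and leaves a prefactor whose sign is transparent. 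Thus the sign of $\partial_{xy}M$ reduces to the sign of the prefactor multiplied by the sign of $M_A+M_B$, and I treat these two factors independently.

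Next I would verify that the prefactor is nonnegative on the whole box $[0.9155730607,1]\times[0,\tfrac12]\times[\tfrac{\sqrt3}{2},1.05]$. Since $\alpha>0$ and $y>0$ we have $\frac{2}{\pi\sqrt{y/\alpha}\,\alpha^2}>0$ and $e^{-\pi\alpha y}>0$, while Lemma~\ref{Lemma6}, applied with $X=\tfrac{y}{\alpha}>0$ and $x\in[0,\tfrac12]$, gives $-\vartheta_Y(\tfrac{y}{\alpha};x)\geq0$; hence the prefactor is $\geq 0$. It is worth noting that $\vartheta_Y(\tfrac{y}{\alpha};x)$ vanishes exactly on the vertical edges $x\in\{0,\tfrac12\}$ (where the relevant sine factors vanish) and is strictly negative in the interior, which is precisely why the conclusion of the lemma is the \emph{nonstrict} inequality $\partial_{xy}M\geq0$ rather than a strict one. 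With the prefactor settled, I would invoke Lemma~\ref{Lemma3.8}, which gives $M_A(\alpha,z)\geq10^{-1}$ and $|M_B(\alpha,z)|\leq2\cdot10^{-5}$ on the same box, so that $M_A+M_B>0$. Multiplying a nonnegative prefactor by this positive bracket yields $\partial_{xy}M(\alpha,z)\geq0$, which is the assertion; no further computation is needed for this final assembly.

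The genuine difficulty of course sits not in this last step but in the two inputs I am assuming. The reduction-of-dimension/regrouping behind Lemma~\ref{Lemma3.7} is what manufactures a universally positive prefactor, and the real obstacle is the uniform lower bound $M_A\geq10^{-1}$ of Lemma~\ref{Lemma3.8}: the leading part of $M_A$ contains terms of opposite sign, namely $2\tfrac{y}{\alpha}\big(-\vartheta_{XY}/\vartheta_Y\big)$, $-(\tfrac{y}{\alpha})^2\vartheta_{XXY}/\vartheta_Y$ and $\big(\alpha^2\pi^2y^2-\pi\alpha y-\tfrac14\big)$, whose signs do not align, so one cannot conclude positivity termwise. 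The hard part will therefore be to control these competing theta quotients sharply enough—using the bounds of Lemmas~\ref{Lemma2a}--\ref{Lemma2c} together with the approximations $\vartheta_Y(\cdot;nx)/\vartheta_Y(\cdot;x)\approx n$, $-\vartheta_{XY}/\vartheta_Y\approx n\pi$, $\vartheta_{XXY}/\vartheta_Y\approx n\pi^2$—to keep $M_A$ bounded below uniformly in $(\alpha,x,y)$ and to make the $n\geq3$ tail $M_B$ exponentially negligible. Once Lemmas~\ref{Lemma3.7} and \ref{Lemma3.8} are granted, Lemma~\ref{LemmaMxy} is immediate by the sign bookkeeping described above.
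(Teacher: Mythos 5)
Your proposal is correct and follows essentially the same route as the paper: the paper likewise observes that the prefactor $\frac{2}{\pi\sqrt{y/\alpha}\,\alpha^2}\cdot(-\vartheta_Y(\tfrac{y}{\alpha};x))e^{-\pi\alpha y}$ in Lemma~\ref{Lemma3.7} is nonnegative by Lemma~\ref{Lemma6}, and then concludes Lemma~\ref{LemmaMxy} directly from the bounds $M_A\geq 10^{-1}$ and $|M_B|\leq 2\cdot 10^{-5}$ of Lemma~\ref{Lemma3.8}. Your identification of where the real difficulty lies (the uniform lower bound on $M_A$ despite its competing terms) also matches the paper's discussion.
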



\section{The analysis on $\Gamma_a$}

Note that by Propositions \ref{Prop1} and \ref{PropA1}, we obtain that
\begin{equation}\aligned\nonumber
\min_{z\in\mathbb{H}} M(\alpha,z)=\min_{z\in \Gamma} M(\alpha,z)\;\;\hbox{for}\;\; \alpha\in(0,1].
\endaligned\end{equation}
See Theorem \ref{ThA}. It remains to determine the minimizers of $M(\alpha,z)$ on the boundary $\Gamma$.
$\Gamma$ consists of two parts $\Gamma_a$ and $\Gamma_b$. We shall analyze $M(\alpha,z)$ on $\Gamma_a$ and $\Gamma_b$
 in this and next section respectively.

Recall that
\begin{equation}\aligned\nonumber
\Gamma_a=\{
z\in\mathbb{H}: \Re(z)=0,\; \Im(z)\geq1
\}.
\endaligned\end{equation}

In this section, we aim to establish that
\begin{theorem}\label{Th31} Assume that $\alpha\in(0,1]$, then
\begin{equation}\aligned
\min_{z\in\Gamma_a}M(\alpha,z)=
\begin{cases}
\hbox{is achieved at}\;\;i, &\hbox{if}\;\; \alpha\in[\alpha_1,1],\\
\hbox{is achieved at}\;\;y_\alpha i, &\hbox{if}\;\; \alpha\in(0,\alpha_1).
\end{cases}
\endaligned\end{equation}
Here $\alpha_1=0.8947042694\cdots$ and $y_\alpha>1$.

$\frac{1}{\alpha_1}$ is the unique solution of
\begin{equation}\aligned\nonumber
\theta_{yy}(\alpha,i)=\pi\alpha M_{yy}(\alpha,i)\;\;\hbox{for}\;\;\alpha\in[1,\frac{9}{8}],
\endaligned\end{equation}
or equivalently,
\begin{equation}\aligned\nonumber
\frac{2}{\pi\alpha}\frac{\sum_{n,m}(2m^2-\pi\alpha(n^2-m^2)^2)e^{-\pi\alpha(n^2+m^2)}}
{\sum_{n,m}(n^4+3m^4-\pi\alpha(n^4-m^4)(n^2-m^2))e^{-\pi\alpha(n^2+m^2)}
}=1\;\;\hbox{for}\;\;\alpha\in[1,\frac{9}{8}].
\endaligned\end{equation}

\end{theorem}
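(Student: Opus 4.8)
The plan is to reduce the problem on $\Gamma_a$ to a one–variable minimization and then to locate the single sign change of the $y$–derivative. On the rectangular locus $z=iy$ the Gaussian sum factorizes: writing $F(X):=\sum_{k\in\mathbb{Z}}e^{-\pi k^2X}$ and $P(X):=-\tfrac1\pi F'(X)=\sum_{k}k^2e^{-\pi k^2X}>0$, one has $\theta(\alpha,iy)=F(\alpha y)\,F(\alpha/y)$, and hence by Lemma \ref{Lemma2},
\[
M(\alpha,iy)=-\tfrac1\pi\,\partial_\alpha\theta(\alpha,iy)
= y\,P(\alpha y)\,F(\alpha/y)+\tfrac1y\,F(\alpha y)\,P(\alpha/y).
\]
Thus $g(y):=M(\alpha,iy)$ is an explicit expression in the single theta constant $F$ and its derivative. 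By Lemma \ref{Lemma3.4}, $g(y)=g(1/y)$, so $g'(1)=0$ and $y=1$ is always a critical point; writing $t=\log y$, the even function $h(t)=g(e^t)$ satisfies $h''(0)=g''(1)=M_{yy}(\alpha,i)$, so the sign of $M_{yy}(\alpha,i)$ decides whether $i$ is a local minimum or a local maximum of $M$ along $\Gamma_a$. One checks directly from the factorized form that $g(y)\to+\infty$ as $y\to\infty$ (the factor $F(\alpha/y)$, i.e. the $m=0$ part, blows up like $\sqrt y$), so $g$ always attains a global minimum on $[1,\infty)$.

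Next I would pin down the threshold $\alpha_1$. Differentiating $M(\alpha,iy)$ twice in $y$ and setting $y=1$ expresses $M_{yy}(\alpha,i)$ as an explicit lattice sum, and likewise for $\theta_{yy}(\alpha,i)$. Applying Lemma \ref{Lemma1} twice in the $y$–variable gives the identity $M_{yy}(\tfrac1\beta,i)=\tfrac{\beta^2}{\pi}\bigl(\theta_{yy}(\beta,i)-\pi\beta M_{yy}(\beta,i)\bigr)$, so that $M_{yy}(\alpha_1,i)=0$ with $\alpha_1=1/\beta$ is \emph{equivalent} to $\theta_{yy}(\beta,i)=\pi\beta M_{yy}(\beta,i)$ for $\beta=1/\alpha_1\in[1,\tfrac98]$; expanding into lattice sums yields exactly the transcendental equation displayed in the statement. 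The remaining point is uniqueness of the root in $[1,\tfrac98]$, which I would get by showing the explicit quotient there is strictly monotone in $\beta$, using rapid convergence of the sums to reduce to finitely many dominant terms plus a controlled tail. This gives $M_{yy}(\alpha,i)>0$ on $(\alpha_1,1]$ and $M_{yy}(\alpha,i)<0$ on $(0,\alpha_1)$.

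The heart of the proof is the global statement that for every $\alpha\in(0,1]$ the derivative $g'(y)=\partial_yM(\alpha,iy)$ has at most one zero in the open interval $(1,\infty)$ and crosses it upward. I would establish this by proving that every interior critical point is a strict local minimum, i.e.
\[
\partial_yM(\alpha,iy_0)=0,\ \ y_0>1\ \ \Longrightarrow\ \ \partial_{yy}M(\alpha,iy_0)>0,
\]
since between two upcrossings $g'$ would have to return downward through $0$, producing a forbidden critical point. Combined with $g'(1)=0$, $g\to+\infty$, and the sign of $M_{yy}(\alpha,i)$ from the previous step, this yields the dichotomy: for $\alpha\in(\alpha_1,1]$ one has $g'>0$ throughout $(1,\infty)$, so the minimum is at $i$; for $\alpha\in(0,\alpha_1)$ the point $y=1$ is a local maximum and the unique interior upcrossing $y_\alpha>1$ is the minimizer; the endpoint $\alpha=\alpha_1$ follows by continuity since $y_\alpha\to1^+$. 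To prove the displayed implication I would write $\partial_yM$ and $\partial_{yy}M$ on $\Gamma_a$ through the factorized form (or the reduction–of–dimension formula of Lemma \ref{Lemma3} with $x=0$, where $\vartheta_Y$ vanishes by evenness), isolate a dominant term, control the exponentially small remainder with Lemmas \ref{Lemma2a}–\ref{Lemma2c}, and substitute $\partial_yM=0$ to recast $\partial_{yy}M$ as a manifestly positive expression built from the monotonicity and log–convexity of $F$ and $P$.

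I expect this last step to be the main obstacle, for two reasons. First, the implication must hold uniformly over the whole range $\alpha\in(0,1]$ and all $y>1$, in particular in the regime of small $\alpha$ and large $y$ where the second–order estimate of Section~4 (Lemma \ref{Lemma37}, valid only for $\alpha\ge0.9155\ldots$ and $y\le2$) gives no information, so genuinely new large–$y$ estimates are required. Second, the implication \emph{must} fail at the boundary point $y=1$ when $\alpha<\alpha_1$ (there $\partial_yM=0$ but $\partial_{yy}M<0$), so the analysis has to be quantitative enough to show that $\partial_{yy}M$ has already turned positive by the time $g'$ first returns to zero in $(1,\infty)$; controlling this transition for $\alpha$ near $\alpha_1$, where $y_\alpha\to1^+$, is the delicate part.
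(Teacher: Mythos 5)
Your reduction to the one-variable function $g(y)=M(\alpha,iy)$, the factorization $\theta(\alpha,iy)=F(\alpha y)F(\alpha/y)$, the identification of $y=1$ as a universal critical point, and the derivation of the threshold equation for $\alpha_1$ via the duality of Lemma \ref{Lemma1} all match the paper's architecture (compare Proposition \ref{PropJb} and Lemma \ref{Lemmayyy}). But the central analytic claim --- that $\partial_y M(\alpha,iy_0)=0$ with $y_0>1$ forces $\partial_{yy}M(\alpha,iy_0)>0$, uniformly for all $\alpha\in(0,1]$ and all $y_0>1$ --- is exactly the hard part of the theorem, and you do not prove it; you state a strategy (``isolate a dominant term, control the remainder, recast as a manifestly positive expression'') and then candidly flag it as the main obstacle. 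As written, the proposal therefore has a genuine gap at its core: without that implication there is no bound on the number of interior critical points, and the entire dichotomy collapses. The difficulty you yourself identify --- that the implication must fail at $y=1$ for $\alpha<\alpha_1$ yet hold at the nearby interior zero $y_\alpha\to1^+$ --- is real and is precisely why a pointwise second-derivative test at critical points is delicate to run near the bifurcation.

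It is worth noting that the paper avoids this difficulty by a structurally different argument. It first dualizes the \emph{whole} minimization (not just the threshold equation) to $\theta(\alpha,z)-\pi\alpha M(\alpha,z)$ with $\alpha\ge\tfrac{101}{100}$, where the lattice sums converge rapidly; it then factors the $y$-derivative as $\theta_y\cdot\bigl(1-\pi\alpha M_y/\theta_y\bigr)$ with $\theta_y(\alpha,iy)\ge0$ on $\Gamma_a$ (Lemma \ref{Lemma552}), and proves that the quotient $M_y/\theta_y$ (equivalently $X_b/X_a$ of \eqref{Xab}) is monotone in $y$ on $[1,2\alpha]$ while the full expression is outright positive for $y\ge2\alpha$ (Lemma \ref{Lemmaxxx}). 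Monotonicity of the quotient gives ``at most one sign change'' globally in one stroke, with no need to control the second derivative at an unknown critical point, and it degenerates gracefully at $y=1$ where both $X_a$ and $X_b$ vanish (handled by the monotonicity rule cited before Lemma \ref{Lemma514}). If you want to complete your route, you would either need to carry out the uniform large-$y$, small-$\alpha$ second-derivative estimates you describe, or switch to a quotient-monotonicity argument of this kind; the threshold computation for $\alpha_1$ and the coercivity $g(y)\to\infty$ in your sketch are correct and can be kept as is.
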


The proof of of Theorem \ref{Th31} consists of two cases, the case $a\in(0,\frac{100}{101}]$ and the case $a\in[\frac{100}{101},1]$.
The case $a\in[\frac{100}{101},1]$ is given in previous section.
In fact, by Lemmas \ref{Lemma3.4} and \ref{Lemma37}, one gets
\begin{lemma} For $\alpha\in[\frac{100}{101},1]$,
$$
\min_{z\in\Gamma_a}M(\alpha,z)\;\;\hbox{is achieved at}\;\;i.
$$
\end{lemma}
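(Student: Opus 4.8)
The plan is to exploit the structural tools that have already been assembled in Section~4, rather than analyzing $M(\alpha,z)$ on $\Gamma_a$ directly. The key observation is that $\Gamma_a$ is the vertical segment $\{z=iy: y\geq 1\}$, which lies on the boundary of $\overline{\mathcal{D}_{\mathcal{G}}}$ where $x=0$. On this segment the relevant monotonicity is in the $y$-direction, and the behavior near the square lattice $z=i$ is controlled by the second-order $y$-estimate from Lemma~\ref{Lemma37} together with the boundary derivative vanishing from Lemma~\ref{Lemma3.4}.

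First I would invoke Lemma~\ref{Lemma3.4}, which gives $\frac{\partial}{\partial y}M(\alpha,z)\mid_{z=i}=0$ (the critical-point condition at the square lattice, coming from the reflection symmetry $M(\alpha,iy)=M(\alpha,i\tfrac{1}{y})$). Next I would apply Lemma~\ref{Lemma37}, which asserts $(\partial_{yy}+\tfrac{2}{y}\partial_y)M(\alpha,z)>0$ on $\{[0,\tfrac12]\times[\tfrac{\sqrt3}{2},2]\}$ for $\alpha\in[0.9155730607,1]$; since $[\tfrac{100}{101},1]\subset[0.9155730607,1]$, this hypothesis is available. Restricting to $x=0$ and rewriting the operator as $y^{-2}\frac{\partial}{\partial y}\bigl(y^2\frac{\partial}{\partial y}\bigr)$, I conclude that $y^2\frac{\partial}{\partial y}M(\alpha,iy)$ is strictly increasing in $y$ on $[1,2]$. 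Because this quantity vanishes at $y=1$ (by the critical-point condition), it is strictly positive for $y\in(1,2]$, so $\frac{\partial}{\partial y}M(\alpha,iy)>0$ there. This yields that $M(\alpha,iy)$ is strictly increasing on $[1,2]$, so the minimum over $\{z=iy:1\leq y\leq 2\}$ is attained at $y=1$, i.e.\ at $z=i$.

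It then remains to control the tail $\Gamma_a\cap\{y\geq 2\}$, and here I would use Proposition~\ref{PropA2}, which already establishes $\frac{\partial}{\partial y}M(\alpha,z)>0$ on $\mathcal{D}_{\mathcal{G}}\cap\{y\geq 2\}$ for the same range of $\alpha$; passing to the boundary limit $x\to 0^+$ gives $\frac{\partial}{\partial y}M(\alpha,iy)\geq 0$ for $y\geq 2$, so $M(\alpha,iy)$ is nondecreasing on $[2,\infty)$ and the minimum on the tail is attained at $y=2$. Combining the two regimes, $\min_{z\in\Gamma_a}M(\alpha,z)$ is achieved at $z=i$, as claimed.

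The main obstacle I anticipate is purely a matter of hypothesis-matching at the boundary: Lemmas~\ref{Lemma37} and~\ref{PropA2} are stated on the open or half-open fundamental domain (with $x\in[0,\tfrac12]$ or $z\in\mathcal{D}_{\mathcal{G}}$), so one must justify taking the limit $x\to 0^+$ to transfer the strict inequalities to the boundary line $\Gamma_a$ where $x=0$. Since $M(\alpha,z)$ and all its derivatives are real-analytic in $z$ and the estimates are established by uniform exponentially-convergent series bounds, the continuity argument is routine; the genuinely hard analytic work — the second-order estimate near $z=i$ and the $y\geq 2$ transversal estimate — has already been carried out in the two invoked results, so no further delicate computation is needed here.
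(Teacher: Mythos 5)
Your proof follows essentially the same route as the paper: the paper derives this lemma precisely ``by Lemmas \ref{Lemma3.4} and \ref{Lemma37}'' (the vanishing of $\partial_y M(\alpha,iy)$ at $y=1$ combined with $(\partial_{yy}+\tfrac{2}{y}\partial_y)M>0$ on $[\tfrac{\sqrt3}{2},2]$, rewritten as $y^{-2}\partial_y(y^2\partial_y)$), with the tail $y\geq 2$ covered by Proposition \ref{PropA2} from the same section, exactly as you argue. Your closing concern about passing to the boundary $x=0$ is handled correctly, and is in fact unnecessary for Lemma \ref{Lemma37}, which is already stated on the closed rectangle $[0,\tfrac12]\times[\tfrac{\sqrt3}{2},2]$.
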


In this section, we focus on the case $a\in(0,\frac{100}{101}]$.
By the duality of the functionals(Lemma \ref{Lemma1}), the case $a\in(0,\frac{100}{101}]$ of Theorem \ref{Th31} is equivalent to

\begin{theorem}\label{Th32} Assume that $\alpha\in[\frac{101}{100},\infty)$, then
\begin{equation}\aligned
\min_{z\in\Gamma_a}\Big(\theta(\alpha,z)-\pi\alpha
M(\alpha,z)
\Big)
=
\begin{cases}
\hbox{is achieved at}\;\;i, &\hbox{if}\;\; \alpha\in[1,\frac{1}{\alpha_1}],\\
\hbox{is achieved at}\;\;y_\alpha i, &\hbox{if}\;\; \alpha\in(\frac{1}{\alpha_1},\infty).
\end{cases}
\endaligned\end{equation}
Here $\frac{1}{\alpha_1}=1.117687748\cdots$ is the same as in Theorem \ref{Th31}.
\end{theorem}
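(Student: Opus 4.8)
The plan is to reduce Theorem \ref{Th32} to a one-dimensional analysis on the vertical line $\Gamma_a$ and to exploit the duality relation already established in Lemma \ref{Lemma1}. Writing $z=iy$ with $y\geq1$, I set $G(\alpha,y):=\theta(\alpha,iy)-\pi\alpha M(\alpha,iy)$, which by Lemma \ref{Lemma1} equals $\frac{\pi}{\alpha^2}M(\frac{1}{\alpha},iy)$ up to the positive factor, so minimizing $G$ on $\Gamma_a$ for $\alpha\in[\frac{101}{100},\infty)$ is genuinely equivalent to the stated problem for $M$ at the dual parameter $\frac{1}{\alpha}\in(0,\frac{100}{101}]$. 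First I would use Lemma \ref{Lemma3} to reduce the two-dimensional lattice sums to one-dimensional theta functions: on $x=0$ all the $\vartheta(\frac{y}{\alpha};0)$ terms collapse to plain one-dimensional theta values, so both $\theta(\alpha,iy)$ and $M(\alpha,iy)$ become explicit single sums in $y$. The strategy is then to study the sign of $\partial_y G(\alpha,y)$ along $y\geq1$ and show that, for each fixed $\alpha$, there is exactly one critical configuration, with the location of the minimizer governed by the sign of the second derivative at the distinguished point $y=1$ (the square lattice $z=i$).

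The key step is the transition criterion at $z=i$. By Lemma \ref{Lemma3.4} I already know $\partial_y G(\alpha,iy)|_{y=1}=0$ for every $\alpha$, so $y=1$ is always a critical point of $G$ on $\Gamma_a$; the question is whether it is a local minimum or a local maximum, and this is decided by the sign of the second-order quantity $\theta_{yy}(\alpha,i)-\pi\alpha M_{yy}(\alpha,i)$. I would establish that this function of $\alpha$ is monotone on the relevant window $[1,\frac{9}{8}]$, has a unique zero at $\alpha=\frac{1}{\alpha_1}$, is positive (square is a strict local min) for $\alpha<\frac{1}{\alpha_1}$ and negative for $\alpha>\frac{1}{\alpha_1}$. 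The explicit double-sum expression for this criterion is exactly the one displayed in Theorem \ref{Th31}, and I would verify the sign by the exponentially decaying expansions: truncate the lattice sum after the low-order terms $(n,m)\in\{0,\pm1,\pm2\}$, bound the tail by a geometric series in $e^{-\pi\alpha}$ as in Lemmas \ref{Lemma32}--\ref{Lemma33}, and check the resulting elementary inequality on the short interval $[1,\frac{9}{8}]$ where the truncation error is provably below the size of the main term.

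Once the local behaviour at $y=1$ is pinned down, I would upgrade it to a global statement on all of $\Gamma_a$. For $\alpha\in[1,\frac{1}{\alpha_1}]$ the plan is to show $\partial_y G(\alpha,iy)>0$ for all $y>1$, so that $G$ is strictly increasing and the minimum sits at $y=1$, i.e.\ at $z=i$; here the large-$y$ regime is controlled because each lattice sum is dominated by its $y$-independent constant term plus terms of order $e^{-\pi\alpha y}$, giving a clean positive leading behaviour, while the moderate range $y\in[1,y_0]$ is handled by the same low-order-truncation-plus-tail-bound scheme. For $\alpha>\frac{1}{\alpha_1}$, since $y=1$ becomes a local maximum, $\partial_y G$ changes sign and I would locate the unique interior minimizer $y_\alpha>1$ as the unique zero of $\partial_y G(\alpha,iy)$ for $y>1$, again using monotonicity of $\partial_y G$ in $y$ past the critical point. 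The monotonicity $\frac{dy_\alpha}{d\alpha}$ claimed in Theorem \ref{Th2} would follow by implicit differentiation of $\partial_y G(\alpha,y_\alpha)=0$, reducing it to a sign check on a mixed second derivative.

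I expect the main obstacle to be the rigorous control of the sign of $\partial_y G(\alpha,iy)$ uniformly over the \emph{two}-parameter region $(\alpha,y)\in[1,\frac{1}{\alpha_1}]\times[1,\infty)$ (and its complement), rather than the transition analysis at the single point $y=1$, which is essentially a one-variable computation. The difficulty is that near the threshold $\alpha\approx\frac{1}{\alpha_1}$ the leading terms of $\partial_y G$ nearly cancel, so the crude tail estimates must be sharpened: one needs the quantitative quotient bounds of Lemmas \ref{Lemma2a}--\ref{Lemma2c} to separate the genuinely positive main part from an error that is an order of magnitude smaller, exactly as in the $10^{-3}$ versus $10^{-6}$ separation used in Lemma \ref{PropA3}. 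Verifying that this separation persists up to $y=\infty$ is the delicate part, and I would handle the far field by an explicit asymptotic expansion and the near field $y\in[1,2]$ by the finite-truncation numerical certificate, matching the two on an overlap.
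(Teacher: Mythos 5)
Your top-level skeleton agrees with the paper's: $y=1$ is always a critical point of $G(\alpha,y):=\theta(\alpha,iy)-\pi\alpha M(\alpha,iy)$ on $\Gamma_a$, the transition is governed by the sign of $\theta_{yy}(\alpha,i)-\pi\alpha M_{yy}(\alpha,i)$ with unique zero at $\frac{1}{\alpha_1}$, and the theorem follows from a shape classification of $G$ (this is exactly Proposition \ref{PropJb} plus Lemma \ref{Lemmayyy}). But there is a genuine gap in how you propose to prove the global shape statement. You want to show $\partial_y G>0$ for all $y>1$ (when $\alpha\le\frac{1}{\alpha_1}$) by a low-order truncation plus a tail bound, and to handle the near-cancellation at the threshold by "sharpening the tail estimates". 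This cannot work as stated: $\partial_y G$ vanishes at $y=1$ for every $\alpha$, and at $\alpha=\frac{1}{\alpha_1}$ it vanishes to second order, so near $y=1$ there is no positive main term against which to compare a tail --- any bound of the form $\partial_y G\ge c>0$ is false on a neighbourhood of $y=1$, and no sharpening of the error estimate repairs that. Likewise your plan to locate the interior minimizer for $\alpha>\frac{1}{\alpha_1}$ "using monotonicity of $\partial_y G$ in $y$ past the critical point" in effect asserts convexity of $G$ in $y$, which is neither claimed in the paper nor obviously true.

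The missing idea is the factorization \eqref{J0y}, $\partial_y G=\theta_y\cdot\bigl(1-\frac{\pi\alpha M_y}{\theta_y}\bigr)$, where $\theta_y(\alpha,iy)\ge0$ with equality only at $y=1$ (Lemma \ref{Lemma552}). The vanishing at $y=1$ is thus carried entirely by the nonnegative prefactor, and the sign changes of $\partial_y G$ are controlled by the second factor: the paper proves $\partial_y G>0$ directly for $y\ge 2\alpha$ via the positivity of $R_n$ (Lemma \ref{LemmaK2}), and on $[1,2\alpha]$ proves that the quotient $\frac{M_y}{\theta_y}=-\frac{1}{\pi\alpha}+\frac{X_b}{X_a}$ is monotone decreasing (Lemma \ref{Lemma510}). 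Since $X_a$ and $X_b$ both vanish at $y=1$, this quotient is a $0/0$ form there, resolved by the monotone form of l'H\^opital's rule (the monotonicity rule of \cite{And}): showing $\partial_y\frac{\partial_y X_b}{\partial_y X_a}<0$ on $[1,\frac{11}{10}\alpha]$ yields $\partial_y\frac{X_b}{X_a}<0$ there. A monotone quotient crosses the level $1$ at most once, which gives exactly the dichotomy "increasing" versus "first decreasing then increasing" for $G$, with the branch selected by the second-derivative criterion at $i$. Without this factorization-plus-quotient-monotonicity structure, your degenerate-critical-point problem at $y=1$ remains unresolved; the remaining ingredients of your plan (the threshold computation on $[1,\frac98]$, the large-$y$ control, the implicit differentiation for $\frac{dy_\alpha}{d\alpha}$) are consistent with the paper.
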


\begin{figure}
\centering
 \includegraphics[scale=0.33]{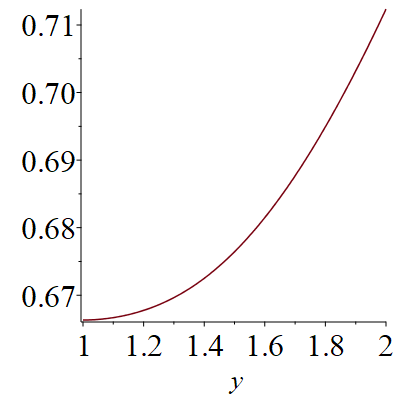}
 \includegraphics[scale=0.33]{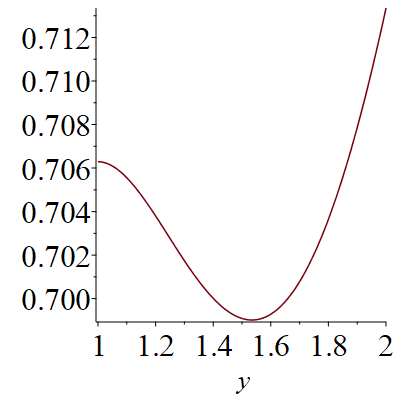}
 \includegraphics[scale=0.33]{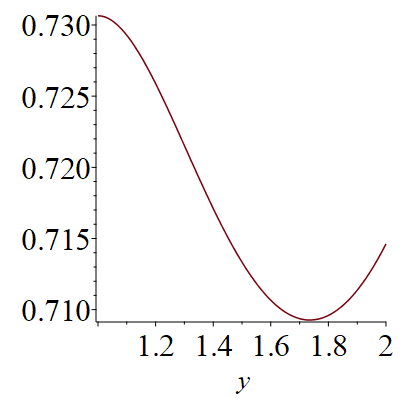}
 \caption{
 The shape of $\Big(\theta(\alpha,z)-\pi\alpha
M(\alpha,z)
\Big)$ on $\Gamma_a$ for various $\alpha\geq1$.
 }
\label{Yvv-Shape}
\end{figure}
We shall prove that $\Big(\theta(\alpha,z)-\pi\alpha
M(\alpha,z)
\Big)$ admits at most two critical points on $\Gamma_a$, an explicit critical point is $y=1$ independent of $\alpha$.
The emergence of the possible second critical point depends on the value of $\alpha$. We shall also determine the threshold
of $\alpha$ such that there is an another critical point beyond the threshold.

Note that $z\in\Gamma_a$ implies that $z=iy$, to locate the critical point of $\Big(\theta(\alpha,z)-\pi\alpha
M(\alpha,z)
\Big)$ on $\Gamma_a$, we deform that
\begin{equation}\aligned\label{J0y}
\Big(\theta_y(\alpha,iy)-\pi\alpha
M_y(\alpha,iy)
\Big)
=
\theta_y(\alpha,iy)\cdot
\Big(1-
\frac{\pi\alpha
M_y(\alpha,iy)}{\theta_y(\alpha,iy)}
\Big)
\endaligned\end{equation}
One observes that(See Lemma \ref{Lemma3.4})
\begin{lemma}\label{Lemma552} Assume that $\alpha\geq1$, then $\theta_y(\alpha,iy)\geq0$ for $y\geq1$ with $"="$ holds if only $y=1$.
\end{lemma}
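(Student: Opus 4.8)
The plan is to restrict to the vertical line $z=iy$ and exploit that the lattice sum separates. Since $|m(iy)+n|^2=m^2y^2+n^2$ and $\Im(iy)=y$, the definition \eqref{T} gives the clean factorization
\[
\theta(\alpha,iy)=\sum_{m,n}e^{-\pi\alpha(m^2y+n^2/y)}=g(\alpha y)\,g(\alpha/y),\qquad g(X):=\vartheta(X;0)=\sum_{n}e^{-\pi n^2X},
\]
where $g>0$ and $g'<0$; this matches Lemma \ref{Lemma3} at $x=0$ after one application of the Jacobi transformation \eqref{PXY}. Differentiating in $y$ and setting $A=\alpha y$, $B=\alpha/y$ (so that $y^{-2}=B/A$), I obtain
\[
\theta_y(\alpha,iy)=\tfrac{\alpha}{A}\,g(A)\,g(B)\,\bigl(\phi(A)-\phi(B)\bigr),\qquad \phi(X):=X\,\frac{g'(X)}{g(X)} .
\]
Because the prefactor $\tfrac{\alpha}{A}g(A)g(B)=\tfrac1y\,g(A)g(B)$ is positive and $A=\alpha y\ge\alpha/y=B$ for $y\ge1$ with equality iff $y=1$, the lemma reduces entirely to the claim that $\phi$ is strictly increasing on $(0,\infty)$. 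This would force $\theta_y\ge0$ with equality exactly at $y=1$, and in fact for every $\alpha>0$.

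Next I would halve the work using the Jacobi functional equation $g(X)=X^{-1/2}g(1/X)$, which is \eqref{PXY} at $Y=0$. Taking logarithms and multiplying by $X$ yields the reflection identity $\phi(X)+\phi(1/X)=-\tfrac12$; differentiating it gives $\phi'(X)=X^{-2}\phi'(1/X)$, so $\phi'$ carries a constant sign under $X\mapsto1/X$. Hence it suffices to prove $\phi'(X)>0$ for $X\ge1$.

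For the remaining inequality I would write $\phi'(X)=(\log g)'(X)+X(\log g)''(X)$ and express both terms through the normalized moments $\langle n^2\rangle,\langle n^4\rangle$ attached to the weights $e^{-\pi n^2X}$: one finds $(\log g)'=-\pi\langle n^2\rangle$ and $(\log g)''=\pi^2(\langle n^4\rangle-\langle n^2\rangle^2)=\pi^2\,\mathrm{Var}(n^2)\ge0$, so that $\phi'(X)=\pi\bigl(\pi X\,\mathrm{Var}(n^2)-\langle n^2\rangle\bigr)$. The target is therefore $\pi X\,\mathrm{Var}(n^2)>\langle n^2\rangle$ for $X\ge1$, which I would establish by truncating the rapidly converging theta series (with $q=e^{-\pi X}\le e^{-\pi}$), keeping the leading behaviour $\langle n^2\rangle\approx2q$, $\mathrm{Var}(n^2)\approx2q$, and bounding the tails by geometric series; since $\pi X\ge\pi>1$, the leading contribution $2q(\pi X-1)$ dominates the $O(q^2)$ corrections with a wide margin.

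The main obstacle is this last moment inequality: it is the only non-formal step and must be made rigorous with explicit tail bounds rather than leading-order heuristics. Fortunately the margin is already large at $X=1$, where $\pi X\,\mathrm{Var}(n^2)\approx0.23$ against $\langle n^2\rangle\approx0.08$, and it only improves as $X$ grows, so a crude truncation at $|n|\le2$ together with an elementary geometric-tail estimate should close it comfortably. The elegant feature that makes everything tractable is the reflection symmetry $\phi(X)+\phi(1/X)=-\tfrac12$, which both restricts the analysis to $X\ge1$ and explains why $y=1$ is forced to be the unique critical point of $\theta(\alpha,iy)$ on $\Gamma_a$.
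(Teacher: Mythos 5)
Your argument is correct and can be made fully rigorous; it is, however, a genuinely different route from the paper's, which offers no self-contained proof of this lemma at all --- it simply records the statement as an observation, pointing to the symmetry properties in Lemma \ref{Lemma3.4} (which only yield $\theta(\alpha,iy)=\theta(\alpha,i/y)$ and the vanishing of $\theta_y$ at $y=1$) and, implicitly, to Montgomery \cite{Mon1988}, where the monotonicity of the theta function along the imaginary axis is established. Your factorization $\theta(\alpha,iy)=g(\alpha y)g(\alpha/y)$ with $g(X)=\vartheta(X;0)$, the reduction to the strict monotonicity of $\phi(X)=Xg'(X)/g(X)$, and the reflection identity $\phi(X)+\phi(1/X)=-\tfrac12$ coming from \eqref{PXY} are all correct (I verified $y^{-2}=B/A$ and the resulting prefactor $\tfrac1y g(A)g(B)>0$), and they buy two things the paper's treatment does not: a self-contained proof, and the stronger conclusion that the statement holds for every $\alpha>0$, not just $\alpha\ge1$. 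The one step you flag as needing care, $\pi X\,\mathrm{Var}(n^2)>\langle n^2\rangle$ for $X\ge1$, closes even more cheaply than by truncation: since $n^4\ge n^2$ for all integers $n$, one has $\langle n^4\rangle\ge\langle n^2\rangle$, hence $\mathrm{Var}(n^2)\ge\langle n^2\rangle\bigl(1-\langle n^2\rangle\bigr)$, and because $\langle n^2\rangle$ is decreasing in $X$ (its derivative is $-\pi\,\mathrm{Var}(n^2)\le0$) it is bounded on $X\ge1$ by its value at $X=1$, which is less than $0.09$; thus $\pi X\bigl(1-\langle n^2\rangle\bigr)\ge\pi\cdot 0.91>1$ and the strict inequality follows with no series truncation at all. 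In short: your proof is valid, supplies the detail the paper defers to the literature, and the moment inequality can be discharged by the elementary bound above.
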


To prove Theorem \ref{Th32}, we classify all the shapes of $(\theta(\alpha,z)-\pi\alpha M(\alpha,z))$ on $\Gamma_a$ for all $\alpha\geq\frac{101}{100}$.

\begin{proposition}\label{PropJb} Assume that $\alpha\geq\frac{101}{100}$.
$\Big(\theta(\alpha,z)-\pi\alpha
M(\alpha,z)
\Big)$ admits at most one critical point on $\Gamma_a$ except $y=1$ depending on the value of $\alpha$.
More precisely,
\begin{itemize}
  \item $(1)$ if $\theta_{yy}(\alpha,i)\geq\pi\alpha M_{yy}(\alpha,i)
$, then $\Big(\theta(\alpha,z)-\pi\alpha
M(\alpha,z)
\Big)$ is increasing on $\Gamma_a$, and
$$\min_{z\in\Gamma_a}\Big(\theta(\alpha,z)-\pi\alpha
M(\alpha,z)
\Big)\;\;
\hbox{is achieved at}\;\;i.
$$

  \item $(2)$ if $\theta_{yy}(\alpha,i)<\pi\alpha M_{yy}(\alpha,i)$, then $\Big(\theta(\alpha,z)-\pi\alpha
M(\alpha,z)
\Big)$ is first decreasing  then increasing on $\Gamma_a$, and
$$\min_{z\in\Gamma_a}\Big(\theta(\alpha,z)-\pi\alpha
M(\alpha,z)
\Big)\;\;
\hbox{is achieved at}\;\;iy_\alpha,\;\;\hbox{where}\;\; y_\alpha>1.
$$
\end{itemize}

\end{proposition}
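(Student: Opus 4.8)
The plan is to set $G(\alpha,y):=\theta(\alpha,iy)-\pi\alpha M(\alpha,iy)$, regard it as a one-variable function of $y\in[1,\infty)$, and count the zeros of $G_y$ on the open half-line $(1,\infty)$. The starting point is the factorization \eqref{J0y},
\[
G_y(\alpha,y)=\theta_y(\alpha,iy)\,\bigl(1-R(y)\bigr),\qquad R(y):=\frac{\pi\alpha\,M_y(\alpha,iy)}{\theta_y(\alpha,iy)},
\]
together with Lemma \ref{Lemma552}, by which $\theta_y(\alpha,iy)>0$ for $y>1$ while $\theta_y(\alpha,i)=0$. Hence on $(1,\infty)$ the sign of $G_y$ is exactly the sign of $1-R$, and a critical point of $G$ other than $y=1$ is precisely a solution of $R(y)=1$, i.e.\ of $\theta_y(\alpha,iy)=\pi\alpha M_y(\alpha,iy)$. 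The proposition thus reduces to showing that this equation has at most one root in $(1,\infty)$, together with the identification of the two shapes.

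Next I would pin down the two ends. Since every modular invariant function satisfies $\mathcal W(iy)=\mathcal W(i/y)$ (Lemma \ref{Lemma3.4}), both $\theta_y(\alpha,iy)$ and $M_y(\alpha,iy)$ are odd in $s=\log y$ and vanish to first order at $y=1$; l'Hôpital's rule then gives
\[
R(1^+)=\frac{\pi\alpha\,M_{yy}(\alpha,i)}{\theta_{yy}(\alpha,i)}.
\]
Because $\theta_y>0$ on $(1,\infty)$ and $\theta_y(\alpha,i)=0$, one has $\theta_{yy}(\alpha,i)\ge0$, and I would check the strict inequality $\theta_{yy}(\alpha,i)>0$ directly from the series; consequently the dichotomy $\theta_{yy}(\alpha,i)\gtrless\pi\alpha M_{yy}(\alpha,i)$ of items $(1)$ and $(2)$ is exactly the dichotomy $R(1^+)\lessgtr1$. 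At the other end, the factorization $\theta(\alpha,iy)=\vartheta(\alpha y;0)\,\vartheta(\alpha/y;0)$ valid on $\Gamma_a$ together with the Poisson formula \eqref{PXY} yields the asymptotics $\theta_y\sim(2\sqrt{\alpha y})^{-1}$ and $\pi\alpha M_y\sim(4\sqrt{\alpha y})^{-1}$, so that $R(y)\to\tfrac12$ and $G(\alpha,y)\to+\infty$ as $y\to\infty$; in particular $G_y>0$ for all large $y$. This already classifies the endpoint $y=1$: since $G_{yy}(\alpha,i)=\theta_{yy}(\alpha,i)-\pi\alpha M_{yy}(\alpha,i)$, it is a local minimum in case $(1)$ and a local maximum in case $(2)$.

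The heart of the matter is to prove that \emph{every} critical point of $G$ in $(1,\infty)$ is a strict local minimum, i.e.
\[
\theta_{yy}(\alpha,iy)>\pi\alpha\,M_{yy}(\alpha,iy)\quad\text{whenever}\quad\theta_y(\alpha,iy)=\pi\alpha M_y(\alpha,iy),\ y>1;
\]
equivalently, since a short computation gives $R'(y^{*})=-G_{yy}(\alpha,iy^{*})/\theta_y(\alpha,iy^{*})$ at any crossing, the level $1$ is met by $R$ only transversally and from above. Granting this, the count is immediate by the usual ``no two minima without a maximum between them'' argument: in case $(1)$ the endpoint $y=1$ is a minimum and $G_y>0$ for large $y$, so an interior critical point (necessarily a strict minimum) would force an interior maximum, a contradiction, whence $G$ is increasing and its minimum on $\Gamma_a$ is at $i$; in case $(2)$ the endpoint is a maximum while $G_y>0$ for large $y$, so there is at least one interior critical point, and by the same reasoning exactly one, giving the decreasing–then–increasing shape with unique minimizer $iy_\alpha$, $y_\alpha>1$. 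The main obstacle is precisely the displayed sign inequality. I would attack it through the absolutely convergent lattice expansion
\[
G_y=\pi\alpha\sum_{(m,n)\in\mathbb Z^2}\Bigl(m^2-\tfrac{n^2}{y^2}\Bigr)\Bigl(\pi\alpha\bigl(m^2y+\tfrac{n^2}{y}\bigr)-2\Bigr)e^{-\pi\alpha(m^2y+n^2/y)}
\]
and the analogous series for $G_{yy}$, isolating the finite collection of leading terms (small $|m|,|n|$) and controlling the remainder by the theta-quotient bounds of Lemmas \ref{Lemma2a}–\ref{Lemma2c} together with the exponential factors $e^{-\pi\alpha y(n^2-1)}$; the hypothesis $\alpha\ge\tfrac{101}{100}$ is exactly what makes these tails small enough to fix the sign uniformly in $y$.
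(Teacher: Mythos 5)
Your reduction coincides with the paper's up to the key counting step: both start from the factorization \eqref{J0y}, use Lemma \ref{Lemma552} to reduce the sign of $G_y$ on $(1,\infty)$ to the sign of $1-R$ with $R=\pi\alpha M_y/\theta_y$, and identify the dichotomy of items $(1)$ and $(2)$ with $R(1^+)=\pi\alpha M_{yy}(\alpha,i)/\theta_{yy}(\alpha,i)\lessgtr 1$. Where you diverge is in how ``at most one solution of $R=1$'' is established. The paper proves two unconditional estimates (Lemma \ref{Lemmaxxx}): $G_y>0$ outright for $y\ge 2\alpha$, and monotonicity $\partial_y\big(M_y/\theta_y\big)\le 0$ on the compact range $y\in[1,2\alpha]$ (reduced via Lemma \ref{Lemma57} to $\partial_y(X_b/X_a)\le 0$ and then to explicit leading-term computations in Lemmas \ref{Lemma513}--\ref{Lemma514}). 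You instead propose the conditional transversality statement $G_{yy}>0$ at every zero of $G_y$ with $y>1$ (equivalently $R'<0$ at every level-$1$ crossing, via your correct identity $R'(y^*)=-G_{yy}(y^*)/\theta_y(y^*)$), combined with $R(\infty)=\tfrac12$; your asymptotics for $\theta_y$, $\pi\alpha M_y$ and $G$ on $\Gamma_a$ check out, and the ``no two minima without a maximum between them'' count is sound (modulo the degenerate boundary case $\theta_{yy}(\alpha,i)=\pi\alpha M_{yy}(\alpha,i)$, where the second-derivative test at $y=1$ is inconclusive and one should argue via the first sign change of $G_y$). What the paper's route buys is that its two inequalities are unconditional and localized to explicit ranges of $y$, which is what makes the leading-term-plus-tail estimates tractable; what your route buys is that it avoids the artificial split at $y=2\alpha$. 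The one genuine caveat is that your ``heart of the matter'' --- the inequality $\theta_{yy}>\pi\alpha M_{yy}$ \emph{on the constraint set} $\theta_y=\pi\alpha M_y$ --- is only sketched, and being a conditional inequality it is arguably harder to verify by series truncation than the paper's unconditional monotonicity on $[1,2\alpha]$: you must first use the constraint to localize the admissible $y$ (e.g.\ show crossings can only occur in a bounded window) before the tail bounds of Lemmas \ref{Lemma2a}--\ref{Lemma2c} become effective, which in effect reintroduces something like the paper's case split. As written, your argument is a correct and coherent strategy whose decisive estimate remains to be carried out, much as the paper's own proof of this proposition defers its decisive estimates to Lemmas \ref{Lemmaxxx} and \ref{Lemma510}--\ref{Lemma515}.
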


The shapes of $\Big(\theta(\alpha,z)-\pi\alpha
M(\alpha,z)
\Big)$ on $\Gamma_a$ for various $\alpha$ are illustrated in Picture \ref{Yvv-Shape}.
We postpone the proof of Proposition \ref{PropJb}, and determine the threshold equation
$\theta_{yy}(\alpha,i)=\pi\alpha M_{yy}(\alpha,i)$ firstly. It is classified by

\begin{lemma}\label{Lemmayyy} Assume that $\alpha\geq\frac{101}{100}$. It holds that
\begin{itemize}
  \item $(1):$ $\theta_{yy}(\alpha,i)=\pi\alpha M_{yy}(\alpha,i)$ admits only one solution, i.e., $\alpha=1.117687748\cdots$;
  \item $(2):$ $\theta_{yy}(\alpha,i)\geq\pi\alpha M_{yy}(\alpha,i)$ $\Leftrightarrow$ $\alpha\leq1.117687748\cdots$;
  \item $(3):$ $\theta_{yy}(\alpha,i)<\pi\alpha M_{yy}(\alpha,i)$ $\Leftrightarrow$ $\alpha>1.117687748\cdots$.
\end{itemize}

\end{lemma}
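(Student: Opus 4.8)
The plan is to reduce the study of the transcendental equation $\theta_{yy}(\alpha,i)=\pi\alpha M_{yy}(\alpha,i)$ to a single real-variable analysis on the compact interval $[\frac{101}{100},\infty)$, and to exploit the explicit double-sum representations of the second $y$-derivatives at the special point $z=i$. First I would write down $\theta_{yy}(\alpha,i)$ and $M_{yy}(\alpha,i)$ as lattice sums over $(m,n)\in\mathbb{Z}^2$, differentiating the expressions in \eqref{T} and \eqref{J} twice in $y$ and then setting $x=0,\ y=1$, so that $\frac{|mz+n|^2}{\Im(z)}$ collapses to $m^2+n^2$. After clearing the common positive exponential weight $e^{-\pi\alpha(n^2+m^2)}$, the threshold equation becomes equivalent (as already recorded in the statement of Theorem \ref{Th31}) to
\begin{equation}\aligned\nonumber
G(\alpha):=\frac{2}{\pi\alpha}\cdot\frac{\sum_{n,m}\big(2m^2-\pi\alpha(n^2-m^2)^2\big)e^{-\pi\alpha(n^2+m^2)}}
{\sum_{n,m}\big(n^4+3m^4-\pi\alpha(n^4-m^4)(n^2-m^2)\big)e^{-\pi\alpha(n^2+m^2)}}=1.
\endaligned\end{equation}
So the three claims $(1)$–$(3)$ amount to showing that $G(\alpha)-1$ has exactly one zero on $[\frac{101}{100},\infty)$, is positive to its left and negative to its right (or vice versa, depending on the sign convention in the reduction).

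The main step is a monotonicity argument: I would prove that $H(\alpha):=\theta_{yy}(\alpha,i)-\pi\alpha M_{yy}(\alpha,i)$, or equivalently the numerator of $G(\alpha)-1$ after clearing the (sign-definite) denominator, is \emph{strictly monotone} in $\alpha$ on $[\frac{101}{100},\infty)$. Since everything is an exponential lattice sum, $H(\alpha)$ is real-analytic, and I would differentiate term by term to get $H'(\alpha)$ as another absolutely convergent double sum with weight $e^{-\pi\alpha(n^2+m^2)}$; the goal is to certify that $H'(\alpha)$ has a fixed sign throughout the interval. Here the strategy is the same term-splitting used repeatedly in Sections 3–4: isolate the dominant low-index contributions (the terms with $(|m|,|n|)$ among $\{0,1\}$, namely $(\pm1,0),(0,\pm1),(\pm1,\pm1)$, and the next shell) as a \emph{major term} whose sign is checked by an explicit elementary estimate, and bound the remaining \emph{error term} by a crude geometric-type tail $\sum_{n^2+m^2\ge R}(\text{poly})e^{-\pi\alpha(n^2+m^2)}$, which is $O(e^{-c\alpha})$ and hence negligible against the major term for $\alpha\ge\frac{101}{100}$. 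Once the sign of $H'$ is pinned down, strict monotonicity forces \emph{at most one} zero; existence of the zero (claim $(1)$) then follows from evaluating $H$ at the two ends — showing $H(\frac{101}{100})$ and $H(\infty)$ (the latter governed purely by the leading shell, since all other terms are exponentially suppressed) have opposite signs — and appealing to the intermediate value theorem. The numerical value $\alpha=1.117687748\cdots$ is identified by solving $H(\alpha)=0$ to the stated precision, and items $(2)$ and $(3)$ are just the sign of $H$ on each side of this unique root, read off from the monotonicity.

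The hard part will be certifying the sign of $H'(\alpha)$ (or of the relevant numerator) uniformly on the whole half-line with a \emph{rigorous}, non-numerical bound: because $G(\alpha)-1$ is a ratio of two alternating-sign lattice sums, the cancellations among the low-index terms are delicate, and one must be careful that the major term does not itself vanish or change sign inside $[\frac{101}{100},\infty)$. I would address this by keeping enough shells in the major term (likely $n^2+m^2\le 4$ or $5$) so that the truncated polynomial-exponential expression is manifestly sign-definite, and by making the tail bound explicit — using the elementary inequality $\sum_{k\ge R}(\text{poly in }k)e^{-\pi\alpha k}\le C e^{-\pi\alpha R}$ together with $\alpha\ge\frac{101}{100}$ — so that the error is provably smaller than the certified gap in the major term, in the same spirit as Lemmas \ref{Lemma32}, \ref{Lemma33} and \ref{PropA3}. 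The rest is the routine intermediate-value and monotonicity bookkeeping.
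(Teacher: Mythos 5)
There is a genuine gap, and it sits at the heart of your argument: the claim that $H(\alpha):=\theta_{yy}(\alpha,i)-\pi\alpha M_{yy}(\alpha,i)$ (equivalently, the numerator of $G(\alpha)-1$ after clearing the sign-definite denominator) is \emph{strictly monotone} on all of $[\tfrac{101}{100},\infty)$ is false, so the ``monotonicity plus intermediate value'' scheme cannot be carried out as stated. Indeed, writing as in \eqref{AC}--\eqref{BD} $\partial_yX_a\mid_{y=1}=-(A+C)$ and $\partial_yX_b\mid_{y=1}=-(B+D)$ with $A=4e^{-\pi\alpha}(\pi\alpha-1-2e^{-\pi\alpha})$ and $B=4e^{-\pi\alpha}(\pi\alpha-2-4e^{-\pi\alpha})$, one gets $H(\alpha)=2\pi\alpha(A+C)-(\pi\alpha)^2(B+D)=-4\pi\alpha e^{-\pi\alpha}\big((\pi\alpha)^2-4\pi\alpha+2\big)+O(\alpha^{3}e^{-2\pi\alpha})$. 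This is strictly negative for all $\alpha$ beyond the root $\alpha^{*}=1.117687748\cdots$ and yet tends to $0^{-}$ as $\alpha\to\infty$; a strictly monotone function cannot vanish at $\alpha^{*}$, be negative on $(\alpha^{*},\infty)$, and still converge to $0$. For the same reason your endpoint test breaks down: the ``value at infinity'' of $H$ is $0$, not a quantity of sign opposite to $H(\tfrac{101}{100})$, so the intermediate value theorem applied to $H$ on the half-line does not by itself produce (let alone isolate) the root.

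The quantity whose sign structure is actually tractable is the \emph{normalized ratio}, and this is how the paper argues (Lemmas \ref{Lemmaxxx2}, \ref{Lemma511}, \ref{Lemma512}, \ref{Lemma515}). One first shows $\partial_yX_b\mid_{y=1}<0$ (i.e.\ $B>0$ and $D>0$ for $\alpha\geq1$), so that the threshold condition becomes the level-$1$ crossing of $\frac{2}{\pi\alpha}\frac{\partial_yX_a\mid_{y=1}}{\partial_yX_b\mid_{y=1}}=\frac{2}{\pi\alpha}\cdot\frac{A+C}{B+D}$; this step is not cosmetic, since without the sign of the denominator the equivalence between $H>0$ and ``ratio~$<1$'' is not available. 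The half-line is then split: for $\alpha\geq\frac{9}{8}$ \emph{no monotonicity is claimed at all} --- the ratio is bounded above by $\frac{2}{\pi\alpha}\cdot\frac{A}{B}$ (via $\frac{C}{D}<1<\frac{A}{B}$), and this explicit elementary function is checked to be $<1$, excluding any solution there; only on the compact interval $[1,\frac{9}{8}]$ does one prove strict monotonicity in $\alpha$ of $\frac{1}{\alpha}\frac{\partial_yX_a\mid_{y=1}}{\partial_yX_b\mid_{y=1}}$, which gives at most one crossing, and the endpoint signs give exactly one. Your shell-splitting and tail-estimate machinery is the right tool for each of these three sub-steps, but it must be applied to the ratio (and to its $\alpha$-derivative on a compact interval containing the root), not to $H$ itself.
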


Theorem \ref{Th32} is then followed by Proposition \ref{PropJb} and Lemma \ref{Lemmayyy}.
It remains to prove Proposition \ref{PropJb} and Lemma \ref{Lemmayyy}.

To prove Proposition \ref{PropJb}, by deformation \eqref{J0y} and Lemma \ref{Lemma552}, it suffices to prove

\begin{lemma}\label{Lemmaxxx} Assume that $\alpha\geq\frac{101}{100}$. It holds that

\begin{itemize}
  \item $(1):$ for $y\geq2\alpha$, $\theta_y(\alpha,iy)-\pi\alpha M_y(\alpha,iy)>0$;
  \item $(2):$ for $y\in[1,2\alpha]$, $\partial_y\frac{M_y(\alpha,iy)}{\theta_y(\alpha,iy)}\leq0$.
\end{itemize}

\end{lemma}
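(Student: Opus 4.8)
The plan is to prove both parts by reducing everything on $\Gamma_a$ (where $z=iy$, so $x=0$) to one-dimensional theta functions. Writing $\phi(t)=\sum_{k\in\mathbb Z}e^{-\pi t k^2}$, $\psi(t)=\sum_k k^2 e^{-\pi t k^2}=-\tfrac1\pi\phi'(t)$ and $\chi(t)=\sum_k k^4 e^{-\pi t k^2}$, the double sums factor as $\theta(\alpha,iy)=\phi(\alpha y)\phi(\alpha/y)$ and $M(\alpha,iy)=y\,\psi(\alpha y)\phi(\alpha/y)+\tfrac1y\phi(\alpha y)\psi(\alpha/y)$, which is consistent with $M=-\tfrac1\pi\partial_\alpha\theta$ (Lemma \ref{Lemma2}). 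Differentiating once in $y$ and using $\phi'=-\pi\psi$, $\psi'=-\pi\chi$ gives closed expressions for $\theta_y$, $M_y$, hence for $\theta_y-\pi\alpha M_y$, purely in terms of $\phi,\psi,\chi$ evaluated at the two arguments $A=\alpha y$ and $B=\alpha/y$ (with $AB=\alpha^2$). The split at $y=2\alpha$ is dictated by these arguments: $y\ge2\alpha\Leftrightarrow B=\alpha/y\le\tfrac12$, while $1\le y\le2\alpha\Leftrightarrow \tfrac12\le B\le\alpha$; in either regime at least one argument admits a geometrically convergent representation, directly when it is $\ge\tfrac12$, or after the Jacobi transformation $\phi(1/t)=\sqrt t\,\phi(t)$ (and its $t$-derivatives) when it is $\le\tfrac12$.

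For Part $(1)$ I would work on $y\ge2\alpha$, so $B\le\tfrac12$ and $A=\alpha y\ge2\alpha^2>2$. Applying the Jacobi functional equation, I rewrite $\phi(B),\psi(B),\chi(B)$ as $\sqrt{y/\alpha}$ times series converging geometrically in $y/\alpha\ge2$, while $\psi(A),\chi(A)$ are already of size $e^{-\pi\alpha y}$. Collecting leading contributions, the two terms carrying $\psi(B)$ and $\chi(B)$ combine into a strictly positive principal part of order $\tfrac1{4\sqrt\alpha}\,y^{-1/2}$, and all remaining terms are exponentially small in both $y/\alpha$ and $\alpha y$. The remaining work is to bound that remainder below the principal part uniformly for $\alpha\ge\tfrac{101}{100}$, $y\ge2\alpha$, using elementary geometric tail estimates for $\phi,\psi,\chi$. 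As a sanity check on the sign, the duality Lemma \ref{Lemma1} gives $\theta_y-\pi\alpha M_y=\tfrac{\pi}{\alpha^2}\,\partial_y M(\tfrac1\alpha,iy)$, so Part $(1)$ is exactly the assertion that the dual energy $M(\tfrac1\alpha,iy)$ is increasing for $y\ge2\alpha$.

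For Part $(2)$ the cleanest reformulation is $g(y):=\tfrac{M_y}{\theta_y}=-\tfrac1\pi\,\partial_\alpha\log\theta_y(\alpha,iy)$, valid because $M=-\tfrac1\pi\partial_\alpha\theta$; hence $\partial_y g\le0$ is equivalent to the Wronskian inequality
\begin{equation}\aligned\nonumber
W(\alpha,y):=M_{yy}(\alpha,iy)\,\theta_y(\alpha,iy)-M_y(\alpha,iy)\,\theta_{yy}(\alpha,iy)\le0,\qquad y\in[1,2\alpha].
\endaligned\end{equation}
On this interval $B=\alpha/y\ge\tfrac12$ and $A=\alpha y\ge1$, so every one-dimensional theta appearing (now up to $\sum_k k^6 e^{-\pi t k^2}$, produced by the second $y$-derivatives) is captured by its first one or two terms with a rigorously controlled geometric tail. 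I would expand $W$ in $\phi,\psi,\chi$ and the next derivative at $A,B$, extract the principal part, and verify $W\le0$ with the error dominated, uniformly over $\{\alpha\ge\tfrac{101}{100},\,1\le y\le2\alpha\}$. At the left endpoint both $\theta_y$ and $M_y$ vanish (Lemma \ref{Lemma3.4}), so $W(\alpha,1)=0$ and $g$ is continued by $g(1^+)=M_{yy}(\alpha,i)/\theta_{yy}(\alpha,i)$; this is precisely the quantity that produces the dichotomy in Proposition \ref{PropJb}, and I would treat a neighbourhood of $y=1$ by Taylor expansion in $s=\log y$, using that $\theta,M$ are even in $s$ under the $y\mapsto1/y$ symmetry, to confirm that $W\le0$ persists there.

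I expect Part $(2)$ to be the main obstacle: $W$ is quadratic in the theta series, so its expansion generates many cross terms, and the sign must be certified uniformly as $\alpha\to\infty$, where the interval $[1,2\alpha]$ is unbounded. The uniform control is rescued by the lower bound $\alpha/y\ge\tfrac12$ valid throughout the region, which keeps all relevant series geometrically convergent, but isolating the dominant negative contribution of $W$ and bounding its error is the delicate step; the degenerate behaviour at $y=1$, where the numerator and denominator of $g$ vanish simultaneously, is the secondary technical point.
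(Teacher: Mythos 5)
Your reduction to one--dimensional theta functions and your treatment of Part $(1)$ coincide with the paper's: Lemma \ref{LemmaK1} is exactly your factorization after the Jacobi transformation on the small argument $\alpha/y\le\tfrac12$, the positive principal part $\tfrac14\alpha^2\vartheta(\tfrac{y}{\alpha};0)$ (i.e.\ $\tfrac{1}{4\sqrt{\alpha y}}$ after the prefactor) is the same one you identify, and Lemma \ref{LemmaK2} disposes of the remainder with the elementary bounds $\vartheta(\tfrac{y}{\alpha};0)\ge1$, $\vartheta_X\le0$, $|\vartheta_{XX}(\tfrac{y}{\alpha};0)|\le4\pi^2e^{-\pi y/\alpha}$. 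Part $(1)$ of your proposal is sound and essentially identical to the paper.

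For Part $(2)$ you take a genuinely different and, as written, incomplete route. The paper first observes (Lemma \ref{Lemma57}) that $\frac{M_y}{\theta_y}=-\frac{1}{\pi\alpha}+\frac{X_b}{X_a}$ with $X_a,X_b$ as in \eqref{Xab}, so the claim becomes $\partial_y\frac{X_b}{X_a}\le0$; it then splits $[1,2\alpha]$ into $[1,\tfrac{11}{10}\alpha]$ and $[\tfrac{11}{10}\alpha,2\alpha]$ and, on the piece containing the degenerate point $y=1$ (where $X_a=X_b=0$), proves instead $\partial_y\frac{\partial_yX_b}{\partial_yX_a}<0$ and invokes the monotone form of l'H\^opital's rule \cite{And} --- the ratio of \emph{derivatives} is nondegenerate at $y=1$, so the dominant-term-plus-error decomposition \eqref{Xab4}--\eqref{Xab5} applies uniformly. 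Your plan to certify the Wronskian $W=M_{yy}\theta_y-M_y\theta_{yy}\le0$ directly collides with precisely the degeneracy you flag: since $\theta_y(\alpha,i)=M_y(\alpha,i)=0$, both $W$ and $W'$ vanish at $y=1$, so a ``principal part dominates the error'' argument has nothing to dominate near $y=1$; the sign there is governed by $W''(1)=M_{yyy}\theta_{yy}-M_{yy}\theta_{yyy}$ at $z=i$, and you would need its sign together with a neighbourhood of validity that does not shrink as $\alpha\to\infty$. You name this as ``the secondary technical point'' but do not supply the tool that resolves it; in the paper's argument it is the \emph{primary} point, and the monotone l'H\^opital rule is exactly the missing device. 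I would regard this as a genuine gap in your Part $(2)$: the strategy is repairable (either by adopting the derivative-ratio trick or by carrying out the second-order Taylor analysis uniformly in $\alpha$), but as proposed the crucial step is not established.
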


While to prove Lemma \ref{Lemma552}, it suffices to show that
\begin{lemma}\label{Lemmaxxx2}It holds that

\begin{itemize}
  \item $(1):$ for $\alpha\geq\frac{9}{8}$, $\theta_{yy}(\alpha,i)<\pi\alpha M_{yy}(\alpha,i)$;
  \item $(2):$ for $\alpha\in[1,\frac{9}{8}]$,
  $\partial_\alpha\big(\frac{\alpha M_y(\alpha,i)}{\theta_y(\alpha,i)}\big)>0$.
\end{itemize}

\end{lemma}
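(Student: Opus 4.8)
The plan is to reduce every quantity to an explicit lattice sum evaluated at $z=i$ and then to isolate a positive dominant shell against a rapidly decaying tail. First I would restrict to $z=iy$, where $|mz+n|^2/\Im(z)=n^2/y+m^2y=:Q(y)$, so that at $y=1$ one has $Q=m^2+n^2$, $Q'=m^2-n^2$, $Q''=2n^2$. Differentiating the defining series of $\theta$ and $M$ twice in $y$ and setting $y=1$ yields the closed forms
$$\theta_{yy}(\alpha,i)=\sum_{(m,n)}\big(\pi^2\alpha^2(m^2-n^2)^2-2\pi\alpha n^2\big)e^{-\pi\alpha(m^2+n^2)},$$
together with an analogous (longer) expression for $M_{yy}(\alpha,i)$; the $m\leftrightarrow n$ symmetry at $y=1$ simultaneously records $\theta_y(\alpha,i)=M_y(\alpha,i)=0$, in agreement with Lemma \ref{Lemma3.4}. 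Since $z=i$ forces $x=0$, all these are genuinely explicit double sums with Gaussian weight $e^{-\pi\alpha(m^2+n^2)}$, so the whole lemma becomes a question about signs of such sums.

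For item $(1)$ I would study $F(\alpha):=\pi\alpha M_{yy}(\alpha,i)-\theta_{yy}(\alpha,i)$, whose $(m,n)$-summand simplifies to $\big(4\pi\alpha n^2-3\pi^2\alpha^2(m^2-n^2)^2-2\pi^2\alpha^2 n^2(m^2+n^2)+\pi^3\alpha^3(m^2+n^2)(m^2-n^2)^2\big)e^{-\pi\alpha(m^2+n^2)}$. The shell $m^2+n^2=1$ contributes $4\pi\alpha(\pi^2\alpha^2-4\pi\alpha+2)e^{-\pi\alpha}$, which is positive for $\alpha\ge\frac98$ because then $\pi\alpha>2+\sqrt2$; the shell $m^2+n^2=2$ contributes the negative quantity $16\pi\alpha(1-\pi\alpha)e^{-2\pi\alpha}$. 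Writing $u=\pi\alpha$, the sum of these two shells has the sign of $u^2-4u+2-4(u-1)e^{-u}$; since $u^2-4u+2$ is increasing and $(u-1)e^{-u}$ is decreasing for $u\ge2$, comparing at the endpoint $u=\frac{9\pi}{8}$ (where $0.35>0.30$) shows this is positive for all $\alpha\ge\frac98$. It then remains to bound the tail $m^2+n^2\ge4$: each such summand carries a factor at most $e^{-4\pi\alpha}$, and I would dominate the full tail by an explicit geometric-type series, showing it is smaller than the positive margin from the first two shells, whence $F(\alpha)>0$.

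For item $(2)$ I would first use $\theta_y(\alpha,i)=M_y(\alpha,i)=0$ together with L'H\^opital in $y$ to interpret the indeterminate ratio as $\frac{\alpha M_y(\alpha,i)}{\theta_y(\alpha,i)}=\frac{\alpha M_{yy}(\alpha,i)}{\theta_{yy}(\alpha,i)}=:g(\alpha)$, noting that $\theta_{yy}(\alpha,i)>0$ on $[1,\frac98]$ (its shell $m^2+n^2=1$ equals $4\pi\alpha(\pi\alpha-1)e^{-\pi\alpha}$, which dominates the remaining shells). Then $\operatorname{sgn}g'(\alpha)=\operatorname{sgn}\big(\partial_\alpha(\alpha M_{yy})\cdot\theta_{yy}-\alpha M_{yy}\cdot\partial_\alpha\theta_{yy}\big)$, where $\partial_\alpha$ merely brings down factors $-\pi(m^2+n^2)$. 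I would expand this product of two double sums as a quadruple sum over $(m,n,p,q)$ with weight $e^{-\pi\alpha(m^2+n^2+p^2+q^2)}$, symmetrize, and split it into a finite dominant block coming from the smallest values of $m^2+n^2+p^2+q^2$ plus a controlled remainder; positivity of $g'$ on $[1,\frac98]$ follows once the dominant block is shown positive with a margin exceeding the tail, uniformly in $\alpha$.

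The main obstacle is item $(2)$. On $[1,\frac98]$ the factor $e^{-\pi\alpha}$ ranges only over roughly $[0.029,0.043]$, so many shells contribute at comparable order and the quadruple sum exhibits heavy near-cancellation — the same delicate balance already visible in $(1)$, where the $m^2+n^2=1$ and $m^2+n^2=2$ shells nearly cancel. Thus the difficulty is not a lack of closed forms (the theta-quotient estimates of Lemmas \ref{Lemma2a}--\ref{Lemma2c}, needed only for general $x$, are not required at $x=0$) but the need to certify a uniform positive lower bound despite the cancellation, keeping enough shells in the dominant block and bounding the explicit tail against the small surviving margin. A simplification I would exploit throughout is the duality identity of Lemma \ref{Lemma1}, which gives $\theta_{yy}(\alpha,i)-\pi\alpha M_{yy}(\alpha,i)=\frac{\pi}{\alpha^2}M_{yy}(\tfrac1\alpha,i)$; this recasts item $(1)$ as $M_{yy}(\beta,i)<0$ for $\beta=\tfrac1\alpha\le\tfrac89$ and the threshold equation as $M_{yy}(\tfrac1\alpha,i)=0$, i.e. the loss of stability of the square point $i$ along $\Gamma_a$, which both clarifies the mechanism and can shorten the sign analysis.
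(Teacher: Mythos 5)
Your proposal is correct and follows essentially the same route as the paper: both reduce everything to explicit Gaussian double sums at $y=1$ (your $F(\alpha)=\pi\alpha M_{yy}(\alpha,i)-\theta_{yy}(\alpha,i)$ is, up to sign and a factor, the paper's $2\partial_yX_a-\pi\alpha\,\partial_yX_b$ at $y=1$), both isolate the shells $m^2+n^2\le 2$ as the dominant block against the $m^2+n^2\ge 3$ tail, and both resolve the $0/0$ ratio in item $(2)$ by passing to $\alpha M_{yy}/\theta_{yy}$. The only organizational difference is that the paper controls the tail for item $(1)$ via the mediant-type inequality $\frac{C}{D}<1<\frac{A}{B}$ applied to the explicit ratio $\frac{A}{B}=\frac{\pi\alpha-1-2e^{-\pi\alpha}}{\pi\alpha-2-4e^{-\pi\alpha}}$, and for item $(2)$ differentiates $\frac{1}{\alpha}\frac{A}{B}\cdot\frac{1+C/A}{1+D/B}$ with $(C/A)',(D/B)'=O(e^{-3\pi\alpha})$, which is slightly slicker than your direct difference-plus-tail and quadruple-sum bookkeeping but not a genuinely different argument.
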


Now by the reduction(Proposition \ref{PropJb} and Lemma \ref{Lemmayyy}),
 we need only to show Lemmas \ref{Lemmaxxx} and \ref{Lemmaxxx2}.
 The proof of item $(1)$ in Lemma \ref{Lemmaxxx} is different to item $(2)$ in Lemma \ref{Lemmaxxx} and Lemma \ref{Lemmaxxx2}.
 We prove the former firstly then prove the latter.

 By Lemmas \ref{Lemma2} and \ref{Lemma3}, after some computations and regrouping, one gets
 \begin{lemma}\label{LemmaK1} An exponentially decaying expansion for $\Big(\theta_y(\alpha,iy)-\pi\alpha M_y(\alpha,iy)\Big)$.
\begin{equation}\aligned\nonumber
\theta_y(\alpha,iy)-\pi\alpha M_y(\alpha,iy)
=&\frac{1}{\sqrt{\frac{y}{\alpha}}\alpha^3}
\sum_{n\in \mathbb{Z}}
\Big(
\alpha^2(\pi^2 n^4 \alpha^2 y^2-2\pi\alpha y n^2+\frac{1}{4})\vartheta(\frac{y}{\alpha};0)\\
&-\alpha y\vartheta_X(\frac{y}{\alpha};0)-y^2\vartheta_{XX}(\frac{y}{\alpha};0)
\Big)\cdot e^{-\pi\alpha y n^2}.
\endaligned\end{equation}
\end{lemma}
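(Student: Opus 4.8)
The plan is to eliminate $M$ entirely in favour of $\theta$ and then differentiate the one-dimensional expansion of Lemma \ref{Lemma3}. By Lemma \ref{Lemma2} one has $M(\alpha,z)=-\frac{1}{\pi}\partial_\alpha\theta(\alpha,z)$ for every $z$, and since $\partial_y$ and $\partial_\alpha$ commute this gives $M_y(\alpha,z)=-\frac{1}{\pi}\partial_\alpha\theta_y(\alpha,z)$. Substituting, the quantity in question collapses to the compact form
\[
\theta_y(\alpha,z)-\pi\alpha M_y(\alpha,z)=\theta_y(\alpha,z)+\alpha\,\partial_\alpha\theta_y(\alpha,z)=\partial_\alpha\big(\alpha\,\theta_y(\alpha,z)\big).
\]
Because $x=0$ on $\Gamma_a$ is a constant independent of $\alpha$, I may first restrict to $z=iy$ and only afterwards apply $\partial_\alpha$. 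Thus it suffices to compute $\theta_y(\alpha,iy)$ explicitly and then differentiate the result once more in $\alpha$.

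For the first step I differentiate the reduction formula $\theta(\alpha,z)=\sqrt{y/\alpha}\sum_{n}e^{-\pi\alpha y n^2}\vartheta(\tfrac{y}{\alpha};nx)$ of Lemma \ref{Lemma3} with respect to $y$. Writing $X=y/\alpha$, the product rule produces exactly three contributions: one from the prefactor $\sqrt{y/\alpha}$, one from the Gaussian factor $e^{-\pi\alpha y n^2}$ (which brings down $-\pi\alpha n^2$), and one from the $X$-dependence of $\vartheta$ (which brings down $\partial_y X=1/\alpha$ times $\vartheta_X$). Setting $x=0$ collapses the second argument of every theta factor to $0$, so $\vartheta(\tfrac{y}{\alpha};0)$ and $\vartheta_X(\tfrac{y}{\alpha};0)$ become independent of $n$, and I obtain
\[
\theta_y(\alpha,iy)=\sqrt{\tfrac{y}{\alpha}}\sum_{n\in\mathbb{Z}}\Big[\big(\tfrac{1}{2y}-\pi\alpha n^2\big)\vartheta(\tfrac{y}{\alpha};0)+\tfrac{1}{\alpha}\vartheta_X(\tfrac{y}{\alpha};0)\Big]e^{-\pi\alpha y n^2}.
\]

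The second step is to apply $\partial_\alpha$ to $\alpha\,\theta_y(\alpha,iy)$. Again $\partial_\alpha$ distributes over the prefactor (now $\sqrt{\alpha}\cdot\sqrt{y/\alpha}=\sqrt{y}$ after absorbing the extra $\alpha$), the coefficients $\tfrac{1}{2y}-\pi\alpha n^2$ and $\tfrac{1}{\alpha}$, the Gaussian (now contributing $-\pi y n^2$, which multiplied against the pre-existing $\pi\alpha n^2$ generates the $n^4$ term), and the $X$-argument (contributing $\partial_\alpha X=-y/\alpha^2$, which sends $\vartheta\mapsto\vartheta_X$ and the already-present $\vartheta_X\mapsto\vartheta_{XX}$). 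Collecting all resulting pieces by the three theta functions $\vartheta,\vartheta_X,\vartheta_{XX}$ and by powers of $n$, then factoring out $\tfrac{1}{\sqrt{y/\alpha}\,\alpha^3}$, produces exactly the coefficient $\alpha^2(\pi^2 n^4\alpha^2 y^2-2\pi\alpha y n^2+\tfrac{1}{4})$ on $\vartheta$, the coefficient $-\alpha y$ on $\vartheta_X$, and $-y^2$ on $\vartheta_{XX}$, which is the claimed identity.

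The statement is therefore entirely computational, and the only point demanding care lies in the bookkeeping of the second step. In particular, the sole source of the second $X$-derivative $\vartheta_{XX}$ is the cross term in which $\partial_y$ first produces a $\vartheta_X$ from the argument $X=y/\alpha$ and then $\partial_\alpha$ promotes it to $\vartheta_{XX}$ with weight $-y/\alpha^2$ relative to the common factor $\sqrt{y/\alpha}\,e^{-\pi\alpha y n^2}$; this must be tracked so that it emerges with coefficient $-y^2$ once the prefactor $\tfrac{1}{\sqrt{y/\alpha}\,\alpha^3}$ is extracted, and similarly the two distinct origins of $\vartheta_X$ must be checked to combine into $-\alpha y$. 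All series converge with Gaussian speed in $n$, uniformly on compact subsets of $\{\alpha>0,\ y>0\}$, so the successive term-by-term differentiations in $y$ and $\alpha$ and all rearrangements are rigorously justified.
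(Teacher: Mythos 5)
Your proposal is correct and follows essentially the same route the paper indicates: the paper derives this identity from Lemma \ref{Lemma2} (the relation $M=-\tfrac{1}{\pi}\partial_\alpha\theta$) and Lemma \ref{Lemma3} (the one-dimensional reduction), "after some computations and regrouping," and your rewriting $\theta_y-\pi\alpha M_y=\partial_\alpha(\alpha\theta_y)$ is exactly that computation carried out explicitly. I checked the bookkeeping: the $\vartheta$ coefficient $\tfrac{1}{\sqrt{y\alpha}}(\pi^2n^4\alpha^2y^2-2\pi\alpha yn^2+\tfrac14)$, the cancellation of the $\pi yn^2$ cross terms leaving $-\sqrt{y}\,\alpha^{-3/2}$ on $\vartheta_X$, and the single $-y^{3/2}\alpha^{-5/2}$ contribution to $\vartheta_{XX}$ all match the stated coefficients after extracting the prefactor $\tfrac{1}{\sqrt{y/\alpha}\,\alpha^{3}}$.
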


Based on the expansion in Lemma \ref{LemmaK1}, for convenience, we denote that
\begin{equation}\aligned\label{RRR}
R_n(\alpha,y):=
\Big(\alpha^2(\pi^2 n^4 \alpha^2 y^2-2\pi\alpha y n^2+\frac{1}{4})\vartheta(\frac{y}{\alpha};0)
-\alpha y\vartheta_X(\frac{y}{\alpha};0)-y^2\vartheta_{XX}(\frac{y}{\alpha};0)\Big).
\endaligned\end{equation}

Then one has the summation
\begin{equation}\aligned\label{LLL}
\theta_y(\alpha,iy)-\pi\alpha M_y(\alpha,iy)
=&\frac{1}{\sqrt{\frac{y}{\alpha}}\alpha^3}
\sum_{n\in \mathbb{Z}}
R_n(\alpha,y)\cdot e^{-\pi\alpha y n^2}
\endaligned\end{equation}
Therefore, item $(1)$ in Lemma \ref{Lemmaxxx} is proved by \eqref{LLL} and the following
straightforward observation.

\begin{lemma}\label{LemmaK2} Assume that $\alpha\geq\frac{101}{100}$. Then
\begin{equation}\aligned\nonumber
R_n(\alpha,y)>0\;\;\hbox{for}\;\;\frac{y}{\alpha}\geq2,\;\;\forall n\in\mathbb{Z}.
\endaligned\end{equation}
\end{lemma}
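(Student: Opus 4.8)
The claim is that for $\alpha \geq \frac{101}{100}$, the coefficient
$$R_n(\alpha,y) = \alpha^2\Big(\pi^2 n^4 \alpha^2 y^2 - 2\pi\alpha y n^2 + \tfrac14\Big)\vartheta(\tfrac{y}{\alpha};0) - \alpha y\,\vartheta_X(\tfrac{y}{\alpha};0) - y^2\,\vartheta_{XX}(\tfrac{y}{\alpha};0)$$
is strictly positive whenever $\frac{y}{\alpha}\geq 2$ and for every $n\in\mathbb{Z}$.

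The plan is to show positivity termwise after isolating the dominant contribution. First I would record the sign structure of the three theta quantities evaluated at $Y=0$: by \eqref{TXY} one has $\vartheta(X;0)=\sum_n e^{-\pi n^2 X}>0$, while $\vartheta_X(X;0)=-\pi\sum_n n^2 e^{-\pi n^2 X}<0$ and $\vartheta_{XX}(X;0)=\pi^2\sum_n n^4 e^{-\pi n^2 X}>0$. Thus the middle term $-\alpha y\,\vartheta_X(\tfrac{y}{\alpha};0)$ is \emph{positive} and helps us, whereas only the last term $-y^2\vartheta_{XX}(\tfrac{y}{\alpha};0)$ has the unfavorable sign. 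So the real task is to dominate $y^2\vartheta_{XX}$ by the leading bracketed term times $\vartheta$.

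Next I would treat the bracket $B_n:=\pi^2 n^4\alpha^2 y^2 - 2\pi\alpha y n^2 + \tfrac14$ as a quadratic in $n^2$ and observe that on the regime $\frac{y}{\alpha}\geq 2$, i.e. $y\geq 2\alpha$, this bracket is bounded below favorably: for $n\neq 0$ the term $\pi^2 n^4\alpha^2 y^2$ dwarfs $2\pi\alpha y n^2$ (since $\pi\alpha y n^2\geq 2\pi\alpha^2 n^2\gg 1$), so $B_n>0$ and the whole first piece $\alpha^2 B_n\vartheta>0$; combined with the positive middle term this already beats the negative last term after a crude bound. The genuinely delicate case is $n=0$, where $B_0=\tfrac14$ and the first term contributes only $\tfrac{\alpha^2}{4}\vartheta(\tfrac{y}{\alpha};0)$. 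There I must verify
$$\tfrac{\alpha^2}{4}\vartheta(\tfrac{y}{\alpha};0) - \alpha y\,\vartheta_X(\tfrac{y}{\alpha};0) - y^2\,\vartheta_{XX}(\tfrac{y}{\alpha};0)>0,$$
and since $-\vartheta_X>0$ this reduces to controlling $y^2\vartheta_{XX}$ by $\alpha^2\vartheta/4$. I would extract the $n=0$ summand (equal to $1$) from $\vartheta(\tfrac{y}{\alpha};0)$ and bound the tails: writing $X=\tfrac{y}{\alpha}\geq 2$, the ratio $\vartheta_{XX}(X;0)/\vartheta(X;0)=\pi^2\frac{\sum n^4 e^{-\pi n^2 X}}{\sum e^{-\pi n^2 X}}\leq 2\pi^2\,\nu(X)$ in the notation of \eqref{mmmx}, and $\nu(X)$ is exponentially small ($\nu(2)\approx 16e^{-3\pi}$), so $y^2\vartheta_{XX}\leq \alpha^2 X^2\cdot 2\pi^2\nu(X)\vartheta$; since $X^2\nu(X)e^{3\pi X}$ stays controlled and $2\pi^2 X^2\nu(X)<\tfrac14$ for $X\geq 2$, the $n=0$ inequality follows with room to spare.

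The main obstacle I anticipate is purely the $n=0$ estimate: for $n\neq 0$ the quartic growth in $n$ makes positivity essentially automatic, but at $n=0$ we are asking the tiny constant $\tfrac14$ (the $+\tfrac14$ in $R_n$, which traces back to the $-\tfrac{1}{4}\alpha^2$ regrouping in the $\partial_y M$ expansion) to outweigh a genuine second-derivative term $y^2\vartheta_{XX}$ that grows like $X^2$ before the exponential suppression kicks in. The success of the argument hinges on the hypothesis $X=\tfrac{y}{\alpha}\geq 2$ being large enough that $2\pi^2 X^2\nu(X)<\tfrac14$; this is exactly where the threshold $\frac{y}{\alpha}\geq 2$ (rather than $y\geq\tfrac{\sqrt3}{2}$) is used, and I would make the competition between the polynomial factor $X^2$ and the exponential decay $e^{-3\pi X}$ in $\nu(X)$ explicit and monotone to close the case cleanly.
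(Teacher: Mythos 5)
Your overall strategy coincides with the paper's: read off the signs of $\vartheta(X;0)$, $\vartheta_X(X;0)$, $\vartheta_{XX}(X;0)$ from the explicit series, dispose of $n\neq 0$ by noting that the bracket $(\pi\alpha yn^2-1)^2-\tfrac34$ is enormous, and reduce everything to the $n=0$ inequality $\tfrac{\alpha^2}{4}\vartheta-\alpha y\vartheta_X-y^2\vartheta_{XX}>0$ on $X=\tfrac{y}{\alpha}\geq 2$. That part is fine and correctly identifies $n=0$ as the only delicate case.

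However, the decisive estimate in your $n=0$ step is wrong as stated. You claim $\vartheta_{XX}(X;0)/\vartheta(X;0)\leq 2\pi^2\nu(X)$, but by \eqref{mmmx} the quantity $\nu(X)=\sum_{n\geq2}n^4e^{-\pi(n^2-1)X}$ excludes the dominant $n=\pm1$ contribution; the correct identity is
\begin{equation*}
\vartheta_{XX}(X;0)=\pi^2\sum_{n\in\mathbb{Z}}n^4e^{-\pi n^2X}=2\pi^2e^{-\pi X}\bigl(1+\nu(X)\bigr),
\end{equation*}
so at $X=2$ the true ratio is about $0.0367$ while $2\pi^2\nu(2)\approx 2\cdot 10^{-6}$ (your parenthetical $\nu(2)\approx 16e^{-3\pi}$ should be $16e^{-6\pi}$, which is symptomatic of the same confusion). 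Consequently the inequality you actually verify, $2\pi^2X^2\nu(X)<\tfrac14$, is trivially true but does not bound the relevant quantity. What must be checked is $2\pi^2X^2e^{-\pi X}(1+\nu(X))<\tfrac14$: the competition is between $X^2$ and $e^{-\pi X}$, not $e^{-3\pi X}$. This inequality does hold for $X\geq2$, but only with value $\approx 0.147$ against $0.25$ at $X=2$ — nowhere near "room to spare" — and it fails for $X\lesssim 1.5$, which is exactly why the hypothesis $\tfrac{y}{\alpha}\geq2$ cannot be weakened the way your reasoning would suggest. (For comparison, the paper's own stated bound $\vartheta_{XX}\leq 4\pi^2e^{-\pi X}$ errs in the opposite direction: it is too lossy to close the $n=0$ case at $X=2$ by itself and must be supplemented by the positive term $-\alpha y\vartheta_X\geq 2\pi\alpha^2Xe^{-\pi X}$, or replaced by the sharp constant $2\pi^2(1+\nu)$.) To repair your proof, replace the false ratio bound by the displayed identity, check $8\pi^2e^{-2\pi}(1+\nu(2))<\tfrac14$ numerically, and note that $X^2e^{-\pi X}$ is decreasing on $[2,\infty)$ so the single evaluation at $X=2$ suffices.
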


 In fact, from the explicit expressions of $\vartheta(\frac{y}{\alpha};0), \vartheta_X(\frac{y}{\alpha};0), \vartheta_{XX}(\frac{y}{\alpha};0)$(See \eqref{TXY} and its variants), one has the following immediately,
 $$
 \vartheta(\frac{y}{\alpha};0)\geq1,\;\; \vartheta_X(\frac{y}{\alpha};0)\leq0, \;\;-\vartheta_{XX}(\frac{y}{\alpha};0)\leq4\pi^2 e^{-\pi\frac{y}{\alpha}}  (\hbox{if}\;\;\frac{y}{\alpha}\geq2).
 $$
 These estimates above will lead to Lemma \ref{LemmaK2} through \eqref{RRR}.

It remains to prove item $(2)$ in Lemma \ref{Lemmaxxx} and Lemma \ref{Lemmaxxx2}. We use an alternative expression of
$\Big(\theta_y(\alpha,iy)-\pi\alpha M_y(\alpha,iy)\Big)$. Before going to the proof, we introduce some notations.
We denote that
\begin{equation}\aligned\label{Xab}
X_a:&=X_a(\alpha,y):=
\sum_{n,m}(n^2-\frac{m^2}{y^2}) e^{-\pi\alpha (yn^2+\frac{m^2}{y})}
\\
X_b:&=X_b(\alpha,y):={\sum_{n,m}(yn^4-\frac{m^4}{y^3}) e^{-\pi\alpha (yn^2+\frac{m^2}{y})}}.
\endaligned\end{equation}

Then
\begin{equation}\aligned\label{Xab0}
\partial_y X_a:&=\partial_yX_a(\alpha,y):=
-\sum_{n,m}(\pi\alpha(n^2-\frac{m^2}{y^2})^2-\frac{2m^2}{y^3}) e^{-\pi\alpha (yn^2+\frac{m^2}{y})}
\\
\partial_yX_b:&=\partial_yX_b(\alpha,y):=-{\sum_{n,m}(\pi\alpha y(n^4-\frac{m^4}{y^4})(n^2-\frac{m^2}{y^2})-(n^4+\frac{3m^4}{y^4})) e^{-\pi\alpha (yn^2+\frac{m^2}{y})}}.
\endaligned\end{equation}

Therefore, one has the following simplifications immediately(Lemmas \ref{Lemma56} and \ref{Lemma57}).
\begin{lemma} A relation between $\theta(\alpha, iy), M(\alpha,iy)$ and $X_a, X_b$, and its variant.
\label{Lemma56}
\begin{equation}\aligned\nonumber
\theta_y(\alpha, iy)&=-\pi\alpha X_a,\;\; \pi\alpha M_y(\alpha,iy)=\pi\alpha X_a-(\pi\alpha)^2 X_b;\\
\theta_{yy}(\alpha, iy)&=-\pi\alpha\partial_y X_a,\;\; \pi\alpha M_{yy}(\alpha,iy)=\pi\alpha \partial_yX_a-(\pi\alpha)^2 \partial_yX_b.
\endaligned\end{equation}

\end{lemma}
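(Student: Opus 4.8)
The plan is to prove all four identities by a single mechanism: restrict $\theta(\alpha,z)$ and $M(\alpha,z)$ to the imaginary axis $z=iy$ and differentiate the resulting series termwise in $y$. Because every summand carries a Gaussian factor $e^{-\pi\alpha(yn^2+m^2/y)}$, the series and all its $y$-derivatives converge uniformly on compact subsets of $\{y>0\}$, so termwise differentiation is legitimate; I would record this justification once and not repeat it.

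First I would substitute $z=iy$, using $\Im(iy)=y$ and $|m(iy)+n|^2=n^2+m^2y^2$, so that $\frac{|miy+n|^2}{\Im(iy)}=\frac{n^2}{y}+m^2y$. After interchanging the dummy indices $m$ and $n$ this gives the working forms
\[
\theta(\alpha,iy)=\sum_{n,m}e^{-\pi\alpha(yn^2+\frac{m^2}{y})},\qquad
M(\alpha,iy)=\sum_{n,m}\Big(yn^2+\tfrac{m^2}{y}\Big)e^{-\pi\alpha(yn^2+\frac{m^2}{y})}.
\]
Writing $u=u(y):=yn^2+\frac{m^2}{y}$ so that $\partial_y u=n^2-\frac{m^2}{y^2}$, term-by-term differentiation of $\theta(\alpha,iy)$ produces $\theta_y(\alpha,iy)=-\pi\alpha\sum_{n,m}\big(n^2-\frac{m^2}{y^2}\big)e^{-\pi\alpha u}=-\pi\alpha X_a$, which is the first identity. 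For $M$, the summand $u\,e^{-\pi\alpha u}$ has $y$-derivative $(\partial_y u)(1-\pi\alpha u)e^{-\pi\alpha u}$, and splitting off the constant $1$ recovers a copy of $X_a$ while the $\pi\alpha u$ piece leaves $-\pi\alpha\sum_{n,m}(n^2-\frac{m^2}{y^2})(yn^2+\frac{m^2}{y})e^{-\pi\alpha u}$.

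The only place where anything has to cancel is the product identity
\[
\Big(n^2-\frac{m^2}{y^2}\Big)\Big(yn^2+\frac{m^2}{y}\Big)=yn^4-\frac{m^4}{y^3},
\]
where the cross terms $\pm\frac{n^2m^2}{y}$ annihilate exactly; this turns the leftover sum into $X_b$ and yields $\pi\alpha M_y(\alpha,iy)=\pi\alpha X_a-(\pi\alpha)^2X_b$, the second identity. The two second-order relations then follow by differentiating the first-order ones once more in $y$, namely $\theta_{yy}(\alpha,iy)=\partial_y(-\pi\alpha X_a)=-\pi\alpha\,\partial_y X_a$ and $\pi\alpha M_{yy}(\alpha,iy)=\partial_y\big(\pi\alpha X_a-(\pi\alpha)^2X_b\big)=\pi\alpha\,\partial_y X_a-(\pi\alpha)^2\partial_y X_b$, with $\partial_y X_a$ and $\partial_y X_b$ exactly the expressions recorded in \eqref{Xab0}. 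I do not expect any genuine obstacle here: the entire content is the bookkeeping of the single product identity above together with the routine termwise-differentiation justification. As an independent consistency check I would note that Lemma \ref{Lemma2}, $M=-\frac{1}{\pi}\partial_\alpha\theta$, reproduces $M_y=X_a-\pi\alpha X_b$ through $\partial_\alpha X_a=-\pi X_b$, which is once again the same product identity.
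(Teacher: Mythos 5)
Your proposal is correct: the substitution $z=iy$, the termwise differentiation, and the single cancellation $(n^2-\frac{m^2}{y^2})(yn^2+\frac{m^2}{y})=yn^4-\frac{m^4}{y^3}$ are exactly the computation the paper has in mind when it declares Lemma \ref{Lemma56} to follow ``immediately'' from the definitions \eqref{Xab} and \eqref{Xab0}, and your derived expressions for $\partial_y X_a$ and $\partial_y X_b$ match \eqref{Xab0}. The consistency check via Lemma \ref{Lemma2} is a nice extra but adds nothing beyond what the direct calculation already establishes.
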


\begin{lemma}A relation between $\frac{M_y(\alpha, iy)}{\theta_y(\alpha,iy)}$ and $\frac{X_b}{X_a}$, and its variant.
\label{Lemma57}
\begin{equation}\aligned\nonumber
\frac{M_y(\alpha, iy)}{\theta_y(\alpha,iy)}=
-\frac{1}{\pi\alpha}+\frac{X_b}{X_a},\;\;
\frac{M_{yy}(\alpha, iy)}{\theta_{yy}(\alpha,iy)}=
-\frac{1}{\pi\alpha}+\frac{\partial_yX_b}{\partial_yX_a}.
\endaligned\end{equation}

\end{lemma}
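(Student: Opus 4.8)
The plan is to obtain Lemma \ref{Lemma57} as an immediate algebraic consequence of Lemma \ref{Lemma56}, so that no new estimates or series manipulations are required at this stage. First I would isolate $M_y(\alpha,iy)$ from the first line of Lemma \ref{Lemma56}: dividing the identity $\pi\alpha M_y(\alpha,iy)=\pi\alpha X_a-(\pi\alpha)^2 X_b$ by the nonzero factor $\pi\alpha$ gives $M_y(\alpha,iy)=X_a-\pi\alpha X_b$. Since the same line records $\theta_y(\alpha,iy)=-\pi\alpha X_a$, I then form the quotient and split it termwise:
\begin{equation}\aligned\nonumber
\frac{M_y(\alpha,iy)}{\theta_y(\alpha,iy)}
=\frac{X_a-\pi\alpha X_b}{-\pi\alpha X_a}
=\frac{X_a}{-\pi\alpha X_a}-\frac{\pi\alpha X_b}{-\pi\alpha X_a}
=-\frac{1}{\pi\alpha}+\frac{X_b}{X_a},
\endaligned\end{equation}
which is exactly the first claimed identity.

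For the second identity I would repeat the identical manipulation with the $y$-differentiated quantities. From the second line of Lemma \ref{Lemma56}, dividing $\pi\alpha M_{yy}(\alpha,iy)=\pi\alpha \partial_yX_a-(\pi\alpha)^2 \partial_yX_b$ by $\pi\alpha$ yields $M_{yy}(\alpha,iy)=\partial_yX_a-\pi\alpha \partial_yX_b$, and using $\theta_{yy}(\alpha,iy)=-\pi\alpha \partial_yX_a$ I obtain
\begin{equation}\aligned\nonumber
\frac{M_{yy}(\alpha,iy)}{\theta_{yy}(\alpha,iy)}
=\frac{\partial_yX_a-\pi\alpha \partial_yX_b}{-\pi\alpha \partial_yX_a}
=-\frac{1}{\pi\alpha}+\frac{\partial_yX_b}{\partial_yX_a},
\endaligned\end{equation}
completing the derivation. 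The quotients are taken on the set where the denominators $X_a$ and $\partial_yX_a$ do not vanish; for $X_a$ this is the locus $y>1$ (equivalently $\theta_y(\alpha,iy)\neq0$, guaranteed by Lemma \ref{Lemma552}), and the identities extend as formal equalities wherever both sides are defined.

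There is essentially no obstacle at this final step, since it is pure algebra once Lemma \ref{Lemma56} is in hand. The genuine content therefore lies upstream in Lemma \ref{Lemma56}, whose proof is where I would concentrate effort: it amounts to specializing the lattice sums \eqref{T} and \eqref{J} to $z=iy$, using $|mz+n|^2=n^2y^2+m^2$ on $\Gamma_a$, differentiating termwise in $y$, and then recognizing the resulting double sums over $(n,m)$ as precisely $X_a,X_b$ from \eqref{Xab} together with $\partial_yX_a,\partial_yX_b$ from \eqref{Xab0}. Relative to that bookkeeping, Lemma \ref{Lemma57} is merely the clean packaging that isolates the ratio $X_b/X_a$ (respectively $\partial_yX_b/\partial_yX_a$) as the object controlling the monotonicity analysis needed for Lemma \ref{Lemmaxxx} and Lemma \ref{Lemmaxxx2}.
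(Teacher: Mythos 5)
Your derivation is correct and matches the paper, which presents Lemma \ref{Lemma57} as an immediate algebraic consequence of Lemma \ref{Lemma56} (dividing $M_y=X_a-\pi\alpha X_b$ by $\theta_y=-\pi\alpha X_a$, and likewise for the second derivatives). Your added remark about the non-vanishing of the denominators $X_a$ and $\partial_y X_a$ is a sensible precaution that the paper leaves implicit.
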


With these simplifications(Lemmas \ref{Lemma56} and \ref{Lemma57}),
item $(2)$ in Lemma \ref{Lemmaxxx} and Lemma \ref{Lemmaxxx2} are reformed in the following two lemmas(based on Lemma \ref{Lemma512}).

\begin{lemma}[=item $(2)$ in Lemma \ref{Lemmaxxx}]\label{Lemma510}
Assume that $\alpha\geq\frac{101}{100}$, then
$
\partial_y\frac{X_b}{X_a}\leq0\;\;\hbox{for}\;\; y\in[1,2\alpha].
$

\end{lemma}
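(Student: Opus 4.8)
The plan is to prove that $\partial_y\frac{X_b}{X_a}\le 0$ for $y\in[1,2\alpha]$ and $\alpha\ge\frac{101}{100}$ by reducing it to a sign condition on a single combination of the quantities $X_a,X_b,\partial_yX_a,\partial_yX_b$ introduced in \eqref{Xab} and \eqref{Xab0}. Since
\[
\partial_y\frac{X_b}{X_a}=\frac{(\partial_yX_b)X_a-X_b(\partial_yX_a)}{X_a^2},
\]
and I would first verify that $X_a>0$ on the relevant range (this follows because $\theta_y(\alpha,iy)=-\pi\alpha X_a$ by Lemma~\ref{Lemma56} and $\theta_y(\alpha,iy)\le 0$ for $y\ge 1$ is the mirror of Lemma~\ref{Lemma552} under the duality, or directly from $\frac{\partial}{\partial y}\mathcal{W}(iy)|_{y=1}=0$ in Lemma~\ref{Lemma3.4} together with monotonicity), the denominator $X_a^2$ is positive and the inequality is equivalent to the numerator being nonpositive:
\[
N(\alpha,y):=(\partial_yX_b)X_a-X_b(\partial_yX_a)\le 0,\qquad y\in[1,2\alpha].
\]
This is the cleanest formulation because it clears the denominator and turns the problem into a sign estimate for a double lattice sum.

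First I would insert the explicit series from \eqref{Xab} and \eqref{Xab0} into $N(\alpha,y)$, which expresses it as a product of two absolutely convergent double sums in $(n,m)$. The standard device here—used throughout Sections~3--5 of this paper—is to \emph{split off the leading lattice shells} and bound the remainder. Concretely, I would isolate the contributions from $|n|,|m|\le 1$ (or perhaps $\le 2$) as the \emph{major term} $N_A$ and collect the rest as an \emph{error term} $N_B$ carrying exponential factors $e^{-\pi\alpha(yn^2+m^2/y)}$ with $\max(|n|,|m|)$ large. On the range $y\in[1,2\alpha]$ with $\alpha\ge\frac{101}{100}$ one has $yn^2+m^2/y$ bounded below in a controlled way, so each omitted shell is exponentially small; I expect to show $N_A(\alpha,y)\le -c<0$ for an explicit constant while $|N_B(\alpha,y)|$ is smaller than $c$ by several orders of magnitude, exactly paralleling the $P_A/P_B$ and $M_A/M_B$ splittings in Lemmas~\ref{PropA3} and \ref{Lemma3.8}. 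Because $N$ is a \emph{difference} of products of sums, some care is needed: cross terms between the major parts of one factor and the error parts of the other must be folded into $N_B$, so the bookkeeping is heavier than in the single-sum estimates.

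The main obstacle I anticipate is controlling $N$ uniformly in the two-parameter region $(\alpha,y)\in[\frac{101}{100},\infty)\times[1,2\alpha]$, since the upper endpoint $y=2\alpha$ grows with $\alpha$. To handle this I would reparametrize in terms of the combination $\frac{y}{\alpha}$ that already appears naturally (note $yn^2+m^2/y$ and the theta arguments $\frac{y}{\alpha}$ govern decay), so that the constraint $y\le 2\alpha$ becomes $\frac{y}{\alpha}\le 2$, a \emph{fixed} bounded interval; the lower constraint $y\ge 1$ keeps the exponential gaps from degenerating. With this change of variable the error bounds become uniform and the major-term sign can be checked on a compact region. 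I would close the argument by an interval/monotonicity analysis of the explicit finite expression $N_A$—showing it is bounded above by a negative constant throughout—which, combined with the exponentially small bound on $N_B$, gives $N(\alpha,y)<0$ and hence $\partial_y\frac{X_b}{X_a}\le 0$, completing the proof of Lemma~\ref{Lemma510}.
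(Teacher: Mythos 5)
There is a genuine gap at the left endpoint of the interval. Both $X_a$ and $X_b$ vanish identically at $y=1$: setting $y=1$ in \eqref{Xab} gives $X_a(\alpha,1)=\sum_{n,m}(n^2-m^2)e^{-\pi\alpha(n^2+m^2)}=0$ and $X_b(\alpha,1)=\sum_{n,m}(n^4-m^4)e^{-\pi\alpha(n^2+m^2)}=0$ by the symmetry $n\leftrightarrow m$ (the paper states this explicitly when introducing Lemma~\ref{Lemma513}). Consequently your numerator $N(\alpha,y)=(\partial_yX_b)X_a-X_b(\partial_yX_a)$ satisfies $N(\alpha,1)=0$, so the central claim of your plan --- that the major part $N_A$ is bounded above by a strictly negative constant $-c$ uniformly on $[1,2\alpha]$, with the error $|N_B|$ smaller --- cannot hold near $y=1$: there $N$ is at best of order $y-1$, and the ``error'' and ``major'' parts are of comparable (vanishing) size, so the shell-splitting bookkeeping does not close. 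The denominator $X_a^2$ also vanishes at $y=1$, so $X_b/X_a$ is a $0/0$ expression there and even the meaning of $\partial_y(X_b/X_a)$ at the endpoint needs to be addressed. (Minor additional point: your sign check on $X_a$ is backwards --- Lemma~\ref{Lemma552} gives $\theta_y(\alpha,iy)\ge 0$ for $y\ge1$, hence $X_a=-\frac{1}{\pi\alpha}\theta_y(\alpha,iy)\le 0$, with equality exactly at $y=1$; this does not hurt the $X_a^2>0$ argument for $y>1$ but confirms the degeneracy at $y=1$.)

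The paper's proof is built precisely around this degeneracy: it splits $[1,2\alpha]$ into a region near $y=1$, namely $[1,\tfrac{11}{10}\alpha]$, and a region $[\tfrac{11}{10}\alpha,2\alpha]$ away from it (Lemma~\ref{Lemma513}). Away from $y=1$ it argues essentially as you propose, decomposing $X_a=\mathcal{A}_a+\mathcal{A}_e$, $X_b=\mathcal{B}_a+\mathcal{B}_e$ into the shells $n^2+m^2\le 2$ and $n^2+m^2\ge 3$, proving the sign of $\partial_y\mathcal{B}_a\cdot\mathcal{A}_a-\mathcal{B}_a\cdot\partial_y\mathcal{A}_a$ (Lemma~\ref{Lemma514}) and bounding the cross/error terms by half the main term (\eqref{Xab6}--\eqref{Xab7}). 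But near $y=1$ it instead proves $\partial_y\frac{\partial_yX_b}{\partial_yX_a}<0$ and invokes the monotone form of l'H\^opital's rule (the monotonicity rule of \cite{And}), which is applicable exactly because $X_a$ and $X_b$ both vanish at $y=1$; this transfers the monotonicity of $\frac{\partial_yX_b}{\partial_yX_a}$ to that of $\frac{X_b}{X_a}$ on $[1,\tfrac{11}{10}\alpha]$. Your proposal is missing this second device, and without it (or some equivalent treatment of the common zero at $y=1$) the argument does not go through.
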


\begin{lemma}[=Lemma \ref{Lemmaxxx2}]\label{Lemma511}
It holds that

\begin{itemize}
  \item $(1):$ for $\alpha\geq\frac{9}{8}$, $\frac{2}{\pi\alpha}\frac{\partial_yX_a\mid_{y=1}}
{\partial_yX_b\mid_{y=1}}<1$;
  \item $(2):$ for $\alpha\in[1,\frac{9}{8}]$,
  $\frac{1}{\alpha}\frac{\partial_yX_a\mid_{y=1}}
{\partial_yX_b\mid_{y=1}}<0$.
\end{itemize}
\end{lemma}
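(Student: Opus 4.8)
The plan is to reduce both items of Lemma~\ref{Lemma511} to inequalities in the single real variable $\alpha$, by exploiting the fact that on the imaginary axis the lattice is rectangular and the Gaussian sum factorises. Writing $\phi(t):=\sum_{n\in\mathbb{Z}}e^{-\pi t n^2}$, one has $\theta(\alpha,iy)=\phi(\alpha y)\,\phi(\alpha/y)$, and differentiating the defining sums \eqref{Xab}, \eqref{Xab0} term-by-term shows that $X_a$ and $X_b$ are likewise bilinear in $\phi$ and its derivatives evaluated at $\alpha y$ and $\alpha/y$. At $y=1$ the $n\leftrightarrow m$ symmetry forces $X_a\mid_{y=1}=X_b\mid_{y=1}=0$, so the genuinely relevant objects are the transversal derivatives $A(\alpha):=\partial_yX_a\mid_{y=1}$ and $B(\alpha):=\partial_yX_b\mid_{y=1}$. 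Carrying out the differentiation I expect to obtain
\[
A(\alpha)=-\tfrac{2}{\pi}\big[\phi'\phi+\alpha(\phi''\phi-(\phi')^2)\big],\qquad
B(\alpha)=\tfrac{2}{\pi^2}\big[2\phi''\phi+\alpha(\phi'''\phi-\phi''\phi')\big],
\]
with all $\phi$ and derivatives taken at $\alpha$; these are rapidly convergent theta series in $\alpha$ (each term of size $e^{-\pi\alpha n^2}$), which is what makes the truncation arguments effective.

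For part $(1)$ I would first record the reformulation coming from Lemma~\ref{Lemma56}: since $\theta_{yy}(\alpha,i)=-\pi\alpha A$ and $\pi\alpha M_{yy}(\alpha,i)=\pi\alpha A-(\pi\alpha)^2B$, the inequality $\frac{2}{\pi\alpha}\frac{A}{B}<1$ is, once the sign of $B$ is known, equivalent to $2A-\pi\alpha B>0$, i.e. $\pi\alpha M_{yy}(\alpha,i)>\theta_{yy}(\alpha,i)$, which is Lemma~\ref{Lemmaxxx2}$(1)$. So the two sub-steps are: (i) show $B(\alpha)<0$ for $\alpha\ge1$, whose leading term is $4e^{-\pi\alpha}(2-\pi\alpha)<0$ and whose tail I control by the quotient bounds of Lemmas~\ref{Lemma2a}--\ref{Lemma2c}; and (ii) show $2A-\pi\alpha B>0$ for $\alpha\ge\frac{9}{8}$. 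A short computation gives the leading term $4e^{-\pi\alpha}(\pi^2\alpha^2-4\pi\alpha+2)$, positive precisely for $\alpha>\frac{2+\sqrt2}{\pi}\approx1.087$; since $\frac{9}{8}>1.087$ and the quadratic is increasing there, it suffices to bound the $O(e^{-2\pi\alpha})$ remainder and check the single endpoint $\alpha=\frac{9}{8}$.

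For part $(2)$ I would use the same reduction to write the displayed ratio as an explicit function of $\alpha$ built from $\phi,\phi',\phi'',\phi'''$ (and, after differentiating in $\alpha$, $\phi^{(4)}$), and then verify the claimed sign/monotonicity on $[1,\frac{9}{8}]$. By Lemma~\ref{Lemma57} this is precisely the statement that $\alpha\,M_{yy}(\alpha,i)/\theta_{yy}(\alpha,i)$ is increasing, which is the uniqueness ingredient behind Lemma~\ref{Lemmaxxx2} and the threshold equation of Theorem~\ref{Th31}: together with $(1)$ it pins the unique crossing of the threshold. Here too the proof is truncation plus exponential tail control.

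The hard part is not the algebra but the \emph{near-threshold cancellation}. The root $\alpha_1^{-1}\approx1.1177$ of $2A-\pi\alpha B=0$ lies just inside $[1,\frac{9}{8}]$ and extremely close to its right endpoint $\frac{9}{8}=1.125$; at $\alpha=\frac{9}{8}$ one finds $2A-\pi\alpha B\approx0.007$, so the dominant $e^{-\pi\alpha}$ contributions almost cancel and the sign is decided by the $e^{-2\pi\alpha}$ (and, near the root, $e^{-3\pi\alpha}$) corrections. Consequently the tail estimates must be genuinely quantitative rather than merely qualitative, and the endpoint evaluations must be carried out to enough precision (rigorously, via the explicit bounds of Lemmas~\ref{Lemma2a}--\ref{Lemma2c}) to separate the competing exponentials. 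This is the only delicate point; away from the threshold the leading term dominates and positivity/monotonicity is immediate.
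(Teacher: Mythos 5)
Your proposal is correct and takes essentially the same route as the paper: both reduce $\partial_yX_a\mid_{y=1}$ and $\partial_yX_b\mid_{y=1}$ to explicit, rapidly convergent series in $\alpha$, isolate the $n^2+m^2\le 2$ contribution (your $\phi$-factorization reproduces exactly the paper's closed forms $A=4e^{-\pi\alpha}(\pi\alpha-1-2e^{-\pi\alpha})$ and $B=4e^{-\pi\alpha}(\pi\alpha-2-4e^{-\pi\alpha})$), and settle part $(1)$ by the sign of $\pi^2\alpha^2-4\pi\alpha+2$ together with the near-cancelling $e^{-2\pi\alpha}$ correction and an $O(e^{-3\pi\alpha})$ tail bound. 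Your reading of part $(2)$ as the monotonicity in $\alpha$ of $\frac{1}{\alpha}\,\partial_yX_a\mid_{y=1}/\partial_yX_b\mid_{y=1}$ (equivalently of $\alpha M_{yy}(\alpha,i)/\theta_{yy}(\alpha,i)$) matches what the paper actually proves, by the same truncation-plus-tail scheme.
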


\begin{lemma}\label{Lemma512}
Assume that $\alpha\geq1$, then
$
\partial_y X_b\mid_{y=1}<0.
$
\end{lemma}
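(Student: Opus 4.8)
The plan is to prove the equivalent statement $-\partial_y X_b\big|_{y=1}>0$ by converting the double sum in \eqref{Xab0} into a closed form built from one-dimensional Gaussian sums. Evaluating \eqref{Xab0} at $y=1$ gives
\begin{equation}\aligned\nonumber
-\partial_y X_b\big|_{y=1}=\sum_{(n,m)\in\mathbb{Z}^2}\Big(\pi\alpha(n^4-m^4)(n^2-m^2)-(n^4+3m^4)\Big)e^{-\pi\alpha(n^2+m^2)}.
\endaligned\end{equation}
First I would use the algebraic identity $(n^4-m^4)(n^2-m^2)=(n^2+m^2)(n^2-m^2)^2$ and symmetrize the sum under the swap $n\leftrightarrow m$ (the weight $e^{-\pi\alpha(n^2+m^2)}$ is symmetric), which replaces $n^4+3m^4$ by $2(n^4+m^4)$ and expands $(n^2+m^2)(n^2-m^2)^2=n^6-n^4m^2-n^2m^4+m^6$.

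The key observation is that at $y=1$ the Gaussian weight is diagonal, so every monomial double sum factorizes as a product. Writing $s_j:=\sum_{n\in\mathbb{Z}}n^{2j}e^{-\pi\alpha n^2}$ for $j=0,1,2,3$ and $q:=e^{-\pi\alpha}$, one has $\sum n^6 e^{-\pi\alpha(n^2+m^2)}=s_3 s_0$, $\sum n^4m^2 e^{-\pi\alpha(n^2+m^2)}=s_2 s_1$, and $\sum n^4 e^{-\pi\alpha(n^2+m^2)}=s_2 s_0$. Collecting terms yields the closed form
\begin{equation}\aligned\nonumber
-\partial_y X_b\big|_{y=1}=2\Big(s_0\,(\pi\alpha s_3-2s_2)-\pi\alpha\,s_1 s_2\Big),
\endaligned\end{equation}
and it remains to show the bracket is positive for all $\alpha\geq1$.

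For the positivity I would treat the two groups separately. The first factor is a sum of positive terms, $\pi\alpha s_3-2s_2=\sum_{k\geq1}2k^4(\pi\alpha k^2-2)q^{k^2}$, and since $\pi\alpha k^2\geq\pi>2$ for $\alpha\geq1$, retaining only $k=1$ gives the clean lower bound $\pi\alpha s_3-2s_2\geq 2(\pi\alpha-2)q$; together with $s_0\geq1$ this bounds the positive group below by $2(\pi\alpha-2)q$. For the subtracted term I would bound $s_1$ and $s_2$ above by $2q$ times explicit constants close to $1$, using $q\leq e^{-\pi}$ to control the tails $\sum_{n\geq2}$, so that $\pi\alpha s_1 s_2\leq 4\pi\alpha q^2(1+\epsilon)$ with $\epsilon$ tiny. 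The problem then reduces to the one-variable inequality $(\pi\alpha-2)>2\pi\alpha q(1+\epsilon)$, i.e. $e^{-\pi\alpha}<\tfrac{\pi\alpha-2}{2\pi\alpha(1+\epsilon)}$, which holds on $[1,\infty)$ because the right side is already about $0.18$ at $\alpha=1$ while $e^{-\pi}\approx0.043$, and the right side increases toward $\tfrac12$ as $q$ decreases.

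The main obstacle is conceptual rather than technical: the only negative contribution, $-\pi\alpha s_1 s_2$, arises from the lattice points $(\pm1,\pm1)$ and is of order $q^2$, whereas the surviving positive main term is of order $q$. The whole argument hinges on the fact that for $\alpha\geq1$ the parameter $q=e^{-\pi\alpha}\leq e^{-\pi}$ is small enough that the linear-in-$q$ term dominates the quadratic one with substantial room. Once the factorization into the $s_j$ is in place, the required tail estimates are routine consequences of their super-exponential decay, so no delicate cancellation needs to be tracked.
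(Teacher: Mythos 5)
Your proof is correct, but it follows a different route from the paper's. The paper decomposes the double sum $-\partial_y X_b|_{y=1}=B+D$ by lattice shells, with $B$ the contribution of $n^2+m^2\leq2$ and $D$ the tail $n^2+m^2\geq3$ (see \eqref{AC}--\eqref{BD}); it computes $B=4e^{-\pi\alpha}(\pi\alpha-2-4e^{-\pi\alpha})$ explicitly and simply notes that $B>0$ and $D>0$ for $\alpha\geq1$. You instead exploit the fact that at $y=1$ the Gaussian weight factorizes, symmetrize in $n\leftrightarrow m$, and reduce everything to the one-dimensional sums $s_j$, arriving at $-\partial_yX_b|_{y=1}=2\bigl(s_0(\pi\alpha s_3-2s_2)-\pi\alpha s_1s_2\bigr)$; I checked the algebra ($(n^4-m^4)(n^2-m^2)=(n^2+m^2)(n^2-m^2)^2$, the symmetrization of $n^4+3m^4$ to $2(n^4+m^4)$, and the factorizations) and the final inequality $(\pi\alpha-2)>2\pi\alpha e^{-\pi\alpha}(1+\epsilon)$ does hold on $[1,\infty)$ by the monotonicity you invoke. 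Both arguments ultimately rest on the same mechanism ($\pi\alpha-2>0$ together with the smallness of $e^{-\pi\alpha}$), but your factorization buys something concrete: it eliminates the need to verify positivity of the lattice tail $D$, which in the paper contains genuinely negative terms at $(\pm2,\pm2)$, $(\pm3,\pm3)$, etc.\ and is asserted without justification; in your version the only tails are those of $s_1,s_2$, which are trivial. One small imprecision in your prose: the subtracted term $-\pi\alpha s_1s_2$ collects all off-axis points $nm\neq0$, not only $(\pm1,\pm1)$, and $-2s_2$ is also formally a negative contribution (absorbed termwise into $\pi\alpha s_3-2s_2$); neither affects the validity of the argument.
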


To prove Lemma \ref{Lemma510}, we have to(since $X_b\mid_{y=1}=X_a\mid_{y=1}=0$) split it into two cases, i.e., near $y=1$ and away from $y=1$.

\begin{lemma}\label{Lemma513}
Assume that $\alpha\geq\frac{101}{100}$. Then

\begin{itemize}
  \item $(1):$ for $y\in[1,\frac{11}{10}\alpha]$, $\partial_y\frac{\partial_y X_b}{\partial_y X_a}<0$;
  \item $(2):$ for $y\in[\frac{11}{10}\alpha, 2\alpha]$,
  $\partial_y\frac{X_b}{X_a}<0$.
\end{itemize}
\end{lemma}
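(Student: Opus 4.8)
The plan is to read Lemma \ref{Lemma513} as the two complementary halves that together yield $\partial_y\frac{X_b}{X_a}\le 0$ on $[1,2\alpha]$ (Lemma \ref{Lemma510}). The split at $y=\frac{11}{10}\alpha$ is forced by a degeneracy at $y=1$: the summand defining $X_a$ and $X_b$ in \eqref{Xab} is antisymmetric under $(n,m)\mapsto(m,n)$ when $y=1$, so $X_a\mid_{y=1}=X_b\mid_{y=1}=0$. Hence $\frac{X_b}{X_a}$ is of the form $\frac00$ near $y=1$ and one cannot estimate $\partial_y\frac{X_b}{X_a}$ directly there. I would therefore control a neighborhood of $y=1$ through the derivative ratio $\frac{\partial_y X_b}{\partial_y X_a}$ (part $(1)$), and the region bounded away from $y=1$ through the ratio $\frac{X_b}{X_a}$ itself (part $(2)$).

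For part $(2)$, on $y\in[\frac{11}{10}\alpha,2\alpha]$, recall from Lemma \ref{Lemma56} that $\theta_y(\alpha,iy)=-\pi\alpha X_a$, while Lemma \ref{Lemma552} gives $\theta_y(\alpha,iy)>0$ for $y>1$; thus $X_a<0$ and is bounded away from $0$ on this interval, so $\partial_y\frac{X_b}{X_a}<0$ is equivalent to the numerator $V:=(\partial_y X_b)\,X_a-X_b\,(\partial_y X_a)<0$. Using the separation $e^{-\pi\alpha(yn^2+m^2/y)}=e^{-\pi\alpha yn^2}e^{-\pi\alpha m^2/y}$ valid on $\Gamma_a$, each factor in $V$ reduces, as in Lemma \ref{Lemma3}, to finite combinations of the one–dimensional theta sums $\sum_n n^k e^{-\pi\alpha yn^2}$ and $\sum_m m^k e^{-\pi\alpha m^2/y}$, so that $V$ becomes a quadruple Gaussian sum over $(n_1,m_1,n_2,m_2)$. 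I would symmetrize this sum, isolate the leading block from the smallest indices (those in $\{0,\pm1\}$), verify that this explicit finite part is strictly negative uniformly for $\alpha\ge\frac{101}{100}$ and $y\in[\frac{11}{10}\alpha,2\alpha]$, and dominate the remaining tail by a convergent Gaussian majorant that is exponentially smaller; the constraints $\frac{11}{10}\alpha\le y\le 2\alpha$ keep both conjugate parameters $\alpha y$ and $\alpha/y$ in a controlled range, so the tail bounds are uniform.

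For part $(1)$, on $y\in[1,\frac{11}{10}\alpha]$, I would first record that $\partial_y X_a<0$ there (equivalently $\theta_{yy}(\alpha,iy)>0$, to be verified just past the critical point $y=1$), so that $\frac{\partial_y X_b}{\partial_y X_a}$ is well defined and $\partial_y\frac{\partial_y X_b}{\partial_y X_a}=\frac{W}{(\partial_y X_a)^2}$ with the Wronskian-type term $W:=(\partial_{yy}X_b)(\partial_y X_a)-(\partial_y X_b)(\partial_{yy}X_a)$; the claim then reduces to $W<0$. This $W$ is again a quadruple Gaussian sum in the one–dimensional theta data of \eqref{Xab}--\eqref{Xab0}, to be handled by the same leading-block-plus-tail scheme. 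Combining $W<0$ with $\partial_y X_b\mid_{y=1}<0$ (Lemma \ref{Lemma512}) and the monotone form of l'Hôpital's rule applied to $f=X_b$, $g=X_a$ at the base point $y=1$, I obtain that $\frac{X_b}{X_a}$ is itself decreasing near $y=1$, dovetailing with part $(2)$ to produce Lemma \ref{Lemma510}.

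The main obstacle I expect is the uniform sign of the Wronskian-type sums $V$ and $W$ precisely near $y=1$, where the leading contributions nearly cancel (reflecting $X_a,X_b\to 0$), so that one must carry several orders of the small-index expansion, not merely the first term, to expose the strictly negative sign, and simultaneously prove the Gaussian tail is smaller than this already small leading part. Choosing the crossover value $\frac{11}{10}\alpha$ is exactly what lets the near-$y=1$ derivative estimate and the away-from-$y=1$ direct estimate each hold with room to spare, and checking that a single split point works uniformly in $\alpha\ge\frac{101}{100}$ is the delicate bookkeeping at the heart of the argument.
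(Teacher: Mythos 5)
Your proposal matches the paper's proof in all essentials: the same split at $y=\tfrac{11}{10}\alpha$ forced by the vanishing of $X_a,X_b$ at $y=1$, the same reduction to Wronskian-type sign conditions for $\tfrac{X_b}{X_a}$ away from $y=1$ and for $\tfrac{\partial_y X_b}{\partial_y X_a}$ near it (fed into the monotone l'H\^opital rule to recover Lemma \ref{Lemma510}), and the same leading-block (indices $n^2+m^2\le 2$) plus exponentially small tail scheme. The paper merely organizes the bookkeeping slightly differently, proving the sign for the explicit three-term truncations $\mathcal{A}_a,\mathcal{B}_a$ in Lemma \ref{Lemma514} and then bounding the cross error terms by half of the main Wronskian, which is exactly the estimate your plan calls for.
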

Note that (since $X_b\mid_{y=1}=X_a\mid_{y=1}=0$), by a monotonicity rule(see e.g. \cite{And}), item $(1)$ in Lemma \ref{Lemma513} implies that $\partial_y\frac{X_b}{X_a}<0$ for $y\in[1,\frac{11}{10}\alpha]$, thus Lemma \ref{Lemma513} implies Lemma \ref{Lemma510}.
Now we give the outline of the proof of Lemmas \ref{Lemma512} and \ref{Lemma513}.

Recall the expressions in \eqref{Xab}, we further separate

\begin{equation}\aligned\label{Xab2}
\mathcal{A}_a:&=\mathcal{A}_a(\alpha,y):=
\sum_{n^2+m^2\leq2}(n^2-\frac{m^2}{y^2}) e^{-\pi\alpha (yn^2+\frac{m^2}{y})},\\
 \mathcal{A}_e:&=\mathcal{A}_e(\alpha,y):=
\sum_{n^2+m^2\geq3}(n^2-\frac{m^2}{y^2}) e^{-\pi\alpha (yn^2+\frac{m^2}{y})}
\endaligned\end{equation}
being the dominated and error terms of $X_a$.
Then $
X_a=\mathcal{A}_a+\mathcal{A}_e.
$
Similarly for $X_b$, one separates that
\begin{equation}\aligned\label{Xab2}
\mathcal{B}_a:&=\mathcal{B}_a(\alpha,y):={\sum_{n^2+m^2\leq2}(yn^4-\frac{m^4}{y^3}) e^{-\pi\alpha (yn^2+\frac{m^2}{y})}}\\
\mathcal{B}_e:&=\mathcal{B}_a(\alpha,y):={\sum_{n^2+m^2\geq3}(yn^4-\frac{m^4}{y^3}) e^{-\pi\alpha (yn^2+\frac{m^2}{y})}}.
\endaligned\end{equation}
being the the dominated and error terms of $X_b$, and then
$
X_b=\mathcal{B}_a+\mathcal{B}_e.
$

Explicitly, one has
\begin{equation}\aligned\label{Xab4}
\mathcal{A}_a&=2e^{-\pi\alpha y}-\frac{2}{y^2}e^{-\frac{\pi\alpha}{y}}+4(1-\frac{1}{y^2})e^{-\pi\alpha(y+\frac{1}{y})},\\
\mathcal{B}_a&=2ye^{-\pi\alpha y}-\frac{2}{y^3}e^{-\frac{\pi\alpha}{y}}+4(y-\frac{1}{y^3})e^{-\pi\alpha(y+\frac{1}{y})}.
\endaligned\end{equation}

$\mathcal{A}_e, \mathcal{B}_e$ are the error terms of $X_a, X_b$ respectively, indeed, one has

\begin{equation}\aligned\label{Xab5}
\mid\frac{\mathcal{A}_e}{\mathcal{A}_a}\mid&=O(e^{-\frac{3\pi\alpha}{y}}), \;\;\mid\frac{\mathcal{B}_e}{\mathcal{B}_a}\mid=O(e^{-\frac{3\pi\alpha}{y}})\;\;\hbox{for}\;\;y\in[\frac{11}{10},2\alpha],\;\alpha\geq1,\\
\mid\frac{\partial_y\mathcal{A}_e}{\partial_y\mathcal{A}_a}\mid&=O(e^{-\frac{3\pi\alpha}{y}}), \;\;\mid\frac{\partial_y\mathcal{B}_e}{\partial_y\mathcal{B}_a}\mid=O(e^{-\frac{3\pi\alpha}{y}})
\;\;\hbox{for}\;\;y\in[1,\frac{11}{10}\alpha],\;\alpha\geq1.
\endaligned\end{equation}

At this stage, we shall state a monotonicity property on $\frac{\mathcal{B}_a}{\mathcal{A}_a}$ and $\frac{\partial_y\mathcal{B}_a}{\partial_y\mathcal{A}_a}$ based on the explicit expressions in \eqref{Xab4}.
The proof is by straightforward and tedious computations, we omit the detail here.

\begin{lemma}\label{Lemma514}
Assume that $\alpha\geq\frac{101}{100}$. Then

\begin{itemize}
  \item $(1):$ for $y\in[1,\frac{11}{10}\alpha]$, $\partial_y\frac{\partial_y\mathcal{B}_a}{\partial_y\mathcal{A}_a}<0$ or equivalently,
  $\partial_{yy}\mathcal{B}_a\cdot \partial_{y}\mathcal{A}_a-\partial_{y}\mathcal{B}_a\cdot \partial_{yy}\mathcal{A}_a<0$;
  \item $(2):$ for $y\in[\frac{11}{10}\alpha, 2\alpha]$, $\partial_y\frac{\mathcal{B}_a}{\mathcal{A}_a}$
   or equivalently,
  $\partial_{y}\mathcal{B}_a\cdot \mathcal{A}_a-\mathcal{B}_a\cdot \partial_{y}\mathcal{A}_a<0$.
\end{itemize}
\end{lemma}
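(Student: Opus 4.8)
The plan is to exploit the fact that, by \eqref{Xab4}, both $\mathcal{A}_a$ and $\mathcal{B}_a$ are explicit elementary functions, so each assertion is a sign statement about a concrete analytic function on $\{\alpha\geq\frac{101}{100}\}\times[1,2\alpha]$. Writing $E_1:=e^{-\pi\alpha y}$ and $E_2:=e^{-\pi\alpha/y}$ (so that $E_1E_2=e^{-\pi\alpha(y+1/y)}$), both $\mathcal{A}_a=\sum_k a_k(y)E_k$ and $\mathcal{B}_a=\sum_k b_k(y)E_k$ are combinations of $E_1,E_2,E_1E_2$ with coefficients rational in $y$, and this structure is preserved under $\partial_y$ because $E_k'/E_k$ is rational. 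Consequently each of the two Wronskian-type determinants in the statement expands into a finite sum $\sum_{k,l}P_{kl}(y,\alpha)\,E_kE_l$. The key simplification is that in every diagonal term $k=l$ the exponential-derivative factors cancel, leaving a purely rational coefficient; only the off-diagonal terms retain the factors $\lambda_k-\lambda_l$, where $\lambda_1=-\pi\alpha$ and $\lambda_2=\pi\alpha/y^2$.

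For part $(2)$, where $y\in[\frac{11}{10}\alpha,2\alpha]$ is bounded away from $1$, I would isolate the term of largest exponential weight, namely $E_2^2=e^{-2\pi\alpha/y}$. Its coefficient is the diagonal $b_2'a_2-b_2a_2'=-\tfrac{4}{y^6}<0$, using $a_2=-\tfrac{2}{y^2}$ and $b_2=-\tfrac{2}{y^3}$. Every other term carries a power of $E_1/E_2=e^{-\pi\alpha(y-1/y)}$; on this region $\pi\alpha(y-1/y)\geq \pi(\tfrac{11}{10}\alpha^2-\tfrac{10}{11})$, so these factors decay super-exponentially in $\alpha$ and swamp the polynomial coefficients for large $\alpha$, while on the bounded moderate window near $\alpha=\frac{101}{100}$ one checks by direct estimate that they cannot overturn $-\tfrac{4}{y^6}$. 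This yields $\partial_y\mathcal{B}_a\cdot\mathcal{A}_a-\mathcal{B}_a\cdot\partial_y\mathcal{A}_a<0$, which is the equivalent form in part $(2)$.

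Part $(1)$ is the genuinely delicate case, and it is where I expect the main obstacle. The reason is the exact symmetry $\mathcal{A}_a(\alpha,1/y)=-y^2\mathcal{A}_a(\alpha,y)$ and $\mathcal{B}_a(\alpha,1/y)=-y^2\mathcal{B}_a(\alpha,y)$, inherited from the invariance of the index set $\{n^2+m^2\le2\}$ under $(n,m)\mapsto(m,n)$ together with $y\mapsto1/y$; it makes $\mathcal{B}_a/\mathcal{A}_a$ even in $\log y$ and in particular forces $\mathcal{A}_a(\alpha,1)=\mathcal{B}_a(\alpha,1)=0$. At $y=1$ the three degree-two products $E_1^2,E_2^2,E_1E_2$ all equal $e^{-2\pi\alpha}$, so no single exponential dominates; worse, a direct computation shows that the combined coefficient of these three products in the second Wronskian $\partial_{yy}\mathcal{B}_a\,\partial_y\mathcal{A}_a-\partial_y\mathcal{B}_a\,\partial_{yy}\mathcal{A}_a$ vanishes at $y=1$ (the pieces $4\pi^2\alpha^2$, $16\pi\alpha-24-4\pi^2\alpha^2$ and $24-16\pi\alpha$ sum to zero). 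Thus the sign at $y=1$ is not governed by the leading exponential at all, but by the subleading coefficient $D_3$ of $e^{-3\pi\alpha}$, which must be shown to be negative.

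The reason part $(1)$ is phrased via the derivative ratio is the monotone form of L'Hospital's rule (\cite{And}): since $\mathcal{A}_a,\mathcal{B}_a$ vanish at $y=1$, decrease of $\partial_y\mathcal{B}_a/\partial_y\mathcal{A}_a$ propagates to decrease of $\mathcal{B}_a/\mathcal{A}_a$, which is how Lemma \ref{Lemma514}$(1)$ feeds into Lemma \ref{Lemma513}$(1)$. Thus the task here is exactly the second Wronskian inequality $\partial_{yy}\mathcal{B}_a\,\partial_y\mathcal{A}_a-\partial_y\mathcal{B}_a\,\partial_{yy}\mathcal{A}_a<0$ on $[1,\frac{11}{10}\alpha]$, which I would again split: for $y$ bounded away from $1$ the $E_2^2$-dominant balance from part $(2)$ applies, while in a right-neighborhood of $y=1$ I would use the negativity of $D_3$ together with the first $y$-derivative of the Wronskian to guarantee that it departs from $0$ downward and stays negative. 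The hard part is precisely this near-$y=1$ bookkeeping: since the leading exponential cancels, one must track several subleading exponential coefficients (and possibly one further order of the Taylor expansion in $y-1$) and verify that they combine with a definite negative sign uniformly for $\alpha\geq\frac{101}{100}$; this is the content of the straightforward but tedious computation referred to in the text.
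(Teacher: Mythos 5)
The paper itself offers no proof of this lemma (it is dismissed as ``straightforward and tedious computations''), so there is no argument to compare yours against; your general framework --- expand $\mathcal{A}_a,\mathcal{B}_a$ in the basis $E_1=e^{-\pi\alpha y}$, $E_2=e^{-\pi\alpha/y}$, $E_1E_2$, note that the log-derivatives cancel on diagonal products of the Wronskians, and use the symmetry $\mathcal{A}_a(\alpha,1/y)=-y^2\mathcal{A}_a(\alpha,y)$, $\mathcal{B}_a(\alpha,1/y)=-y^2\mathcal{B}_a(\alpha,y)$ --- is the natural one and is surely what the authors have in mind. However, there are two concrete problems with your outline. First, in part $(1)$ your claim that the sign at $y=1$ is decided by a nonzero negative coefficient $D_3$ of $e^{-3\pi\alpha}$ is false: the entire Wronskian $W_2=\partial_{yy}\mathcal{B}_a\,\partial_y\mathcal{A}_a-\partial_y\mathcal{B}_a\,\partial_{yy}\mathcal{A}_a$ vanishes identically at $y=1$ (the $e^{-3\pi\alpha}$ coefficient is $288\pi\alpha-384-(288\pi\alpha-384)=0$ and the $e^{-4\pi\alpha}$ coefficient is $-384+384=0$). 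This is in fact forced by the very symmetry you record: writing $g=\mathcal{B}_a/\mathcal{A}_a$, evenness in $\log y$ gives $g'(1)=0$, and since $\mathcal{A}_a(1)=\mathcal{B}_a(1)=0$ one has $\partial_y\bigl(\partial_y\mathcal{B}_a/\partial_y\mathcal{A}_a\bigr)\big|_{y=1}=2g'(1)=0$. So the stated strict inequality degenerates to an equality at $y=1$, and your mechanism for ``departing downward'' must be replaced by an expansion of $W_2$ to first order in $y-1$ (showing $\partial_yW_2(1)<0$), which is a different and harder computation than the one you sketch.

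Second, in part $(2)$ the ``dominant balance'' by the $E_2^2$ term with coefficient $-4/y^6$ is quantitatively untenable near the left endpoint. At $\alpha=\tfrac{101}{100}$, $y=\tfrac{11}{10}\alpha$, the three degree-two contributions to $\partial_y\mathcal{B}_a\cdot\mathcal{A}_a-\mathcal{B}_a\cdot\partial_y\mathcal{A}_a$ are approximately $+3.47\times10^{-3}$ (from $E_1^2$), $-7.03\times10^{-3}$ (from $E_2^2$) and $+3.52\times10^{-3}$ (from $E_1E_2$), with the full Wronskian equal to about $-1.8\times10^{-5}$: the positive terms amount to more than $99\%$ of the negative one, because this endpoint is continuous with the degeneracy at $y=1$ described above. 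No estimate of the form ``the remaining terms are at most a fixed fraction of $-4/y^6\,E_2^2$'' can hold there, so the ``direct estimate on the bounded moderate window'' you defer to cannot be of the term-by-term type; it has to resolve a near-exact cancellation, either by exact algebra in $(y-1)$ or by rigorous two-dimensional interval computation. In short, the skeleton is right, but both of the places where you wave at a routine verification are precisely where the inequality is within a fraction of a percent of failing, and part $(1)$ as literally stated (strict inequality at $y=1$) is actually false.
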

Lemma \ref{Lemma514} is very close to Lemma \ref{Lemma510}, up to a controllable small term. We shall see this in the proof.
We give the proof of item $(2)$ in Lemma \ref{Lemma513} firstly. A direct calculation shows that

\begin{equation}\aligned\label{Xab6}
\partial_y\frac{X_b}{X_a}=\frac{1}{X_a^2}\Big(
\partial_y X_b\cdot X_a-\partial_y X_a\cdot X_b\Big)
=\frac{1}{X_a^2}
\Big(
\partial_y \mathcal{B}_a\cdot \mathcal{A}_a-\partial_y \mathcal{A}_a\cdot \mathcal{B}_a
+\epsilon_{\uppercase\expandafter{\romannumeral1}}
\Big),
\endaligned\end{equation}
where explicitly, the error term $\epsilon_{\uppercase\expandafter{\romannumeral1}}$ is denoted by
\begin{equation}\aligned\nonumber
\epsilon_{\uppercase\expandafter{\romannumeral1}}
=(\partial_y\mathcal{B}_a\mathcal{A}_e+\partial_y\mathcal{B}_e\mathcal{A}_a+\partial_y\mathcal{B}_e\mathcal{A}_e)
-(\mathcal{B}_a\partial_y\mathcal{A}_e+\mathcal{B}_e\partial_y\mathcal{A}_a+\mathcal{B}_e\partial_y\mathcal{A}_e).
\endaligned\end{equation}
Comparing to the major term $\Big(\partial_y \mathcal{B}_a\cdot \mathcal{A}_a-\partial_y \mathcal{A}_a\cdot \mathcal{B}_a\Big)$, the error term
$\epsilon_{\uppercase\expandafter{\romannumeral1}}$ is relative small(each of them has one of $\mathcal{A}_e, \mathcal{B}_e, \partial_y\mathcal{A}_e,\partial_y\mathcal{B}_e$(see \eqref{Xab5})), it is straightforward but tedious to get that
\begin{equation}\aligned\label{Xab7}
|\frac{\epsilon_{\uppercase\expandafter{\romannumeral1}}}
{\partial_y \mathcal{B}_a\cdot \mathcal{A}_a-\partial_y \mathcal{A}_a\cdot \mathcal{B}_a}|
\leq\frac{1}{2}\;\;\hbox{for}\;\;y\in[\frac{11}{10}\alpha, 2\alpha],\;\alpha\geq\frac{101}{100}.
\endaligned\end{equation}
The upper bound $\frac{1}{2}$ in \eqref{Xab7} is very rough, actually it is much smaller, however the bound is enough for our proof.
Therefore, \eqref{Xab6}, \eqref{Xab7} and  item $(2)$ in Lemma \ref{Lemma514} lead to item $(2)$ in Lemma \ref{Lemma513}.

Similar steps and estimates work for $\partial_y\frac{\partial_yX_b}{\partial_yX_a}$.

\begin{equation}\aligned\label{Xab8}
\partial_y\frac{\partial_yX_b}{\partial_yX_a}=\frac{1}{\partial_yX_a^2}\Big(
\partial_{yy}X_b\cdot  \partial_yX_a-\partial_{yy} X_a\cdot \partial_yX_b\Big)
=\frac{1}{ \partial_yX_a^2}
\Big(
\partial_{yy} \mathcal{B}_a\cdot \partial_y\mathcal{A}_a-\partial_{yy} \mathcal{A}_a\cdot \partial_y\mathcal{B}_a
+\epsilon_{\uppercase\expandafter{\romannumeral2}}
\Big),
\endaligned\end{equation}
where explicitly,
\begin{equation}\aligned\nonumber
\epsilon_{\uppercase\expandafter{\romannumeral2}}
=(\partial_{yy}\mathcal{B}_a\partial_y\mathcal{A}_e+\partial_{yy}\mathcal{B}_e\partial_y\mathcal{A}_a
+\partial_{yy}\mathcal{B}_e\partial_y\mathcal{A}_e)
-(\partial_y\mathcal{B}_a\partial_{yy}\mathcal{A}_e+\partial_y\mathcal{B}_e\partial_{yy}\mathcal{A}_a
+\partial_y\mathcal{B}_e\partial_{yy}\mathcal{A}_e).
\endaligned\end{equation}
And the remainder estimate holds
\begin{equation}\aligned\label{Xab9}
|\frac{\epsilon_{\uppercase\expandafter{\romannumeral2}}}
{\partial_{yy}X_b\cdot  \partial_yX_a-\partial_{yy} X_a\cdot \partial_yX_b}|
\leq\frac{1}{2}\;\;\hbox{for}\;\;y\in[1,\frac{11}{10}\alpha],\;\alpha\geq\frac{101}{100}.
\endaligned\end{equation}
Therefore, \eqref{Xab8}, \eqref{Xab9} and  item $(1)$ in Lemma \ref{Lemma514} lead to item $(1)$ in Lemma \ref{Lemma513}.

At the end of this Section, we give the proof of Lemmas \ref{Lemma511} and \ref{Lemma512}. For $y=1$, one has the clean form(by \eqref{Xab0})
\begin{equation}\aligned\nonumber
\partial_yX_a\mid_{y=1}&=-\sum_{n,m}(\pi\alpha(n^2-m^2)^2-2m^2)e^{-\pi\alpha(n^2+m^2)}\\
\partial_yX_b\mid_{y=1}&=-
\sum_{n,m}(\pi\alpha(n^4-m^4)(n^2-m^2)-(n^4+3m^4))e^{-\pi\alpha(n^2+m^2)}
.
\endaligned\end{equation}
Before going to the proof, we do some preparing work. As the same decomposition in \eqref{Xab2}, we denote that
\begin{equation}\aligned\label{AC}
A:&=A(\alpha)=\sum_{n^2+m^2\leq2}(\pi\alpha(n^2-m^2)^2-2m^2)e^{-\pi\alpha(n^2+m^2)},\\
C:&=C(\alpha)=\sum_{n^2+m^2\geq3}(\pi\alpha(n^2-m^2)^2-2m^2)e^{-\pi\alpha(n^2+m^2)}.
\endaligned\end{equation}

And similarly,
\begin{equation}\aligned\label{BD}
B:&=B(\alpha)=\sum_{n^2+m^2\leq2}(\pi\alpha(n^4-m^4)(n^2-m^2)-(n^4+3m^4))e^{-\pi\alpha(n^2+m^2)},\\
D:&=D(\alpha)=\sum_{n^2+m^2\geq3}(\pi\alpha(n^4-m^4)(n^2-m^2)-(n^4+3m^4))e^{-\pi\alpha(n^2+m^2)}.
\endaligned\end{equation}
Explicitly, one has
\begin{equation}\aligned\nonumber
A=A(\alpha)=4e^{-\pi\alpha}(\pi\alpha-1-2e^{-\pi\alpha}),\;\;
B=B(\alpha)=4e^{-\pi\alpha}(\pi\alpha-2-4e^{-\pi\alpha}).
\endaligned\end{equation}

Then
\begin{equation}\aligned\nonumber
\partial_yX_a\mid_{y=1}=-(A+C),\;\partial_yX_b\mid_{y=1}=-(B+D),\;
\frac{\partial_yX_a\mid_{y=1}}
{\partial_yX_b\mid_{y=1}}=\frac{A+C}{B+D}.
\endaligned\end{equation}
One also notes that $\frac{C}{A}=O(e^{-3\pi\alpha}),\;\frac{D}{B}=O(e^{-3\pi\alpha})$

For the proof of Lemma \ref{Lemma512}, one needs only to note that $B>0$ and $D>0$ for $\alpha\geq1$.

For the proof of item $(1)$ in Lemma \ref{Lemma511}, one needs only the following upper bound estimate and some simple algebra.
\begin{lemma}\label{Lemma515}
Assume that $\alpha\geq1$, then
\begin{equation}\aligned\nonumber
\frac{\partial_yX_a\mid_{y=1}}
{\partial_yX_b\mid_{y=1}}
\leq \frac{A}{B}=\frac{\pi\alpha-1-2e^{-\pi\alpha}}{\pi\alpha-2-4 e^{-\pi\alpha}}.
\endaligned\end{equation}
\end{lemma}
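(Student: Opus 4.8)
The plan is to reduce the claimed bound to a single inequality between the two correction terms $C$ and $D$, and then dispatch it by an exponentially convergent shell estimate. First I would recall from the computation preceding the lemma that $\partial_yX_a\mid_{y=1}=-(A+C)$ and $\partial_yX_b\mid_{y=1}=-(B+D)$, so that the quantity to be bounded is exactly $\frac{A+C}{B+D}$. The explicit formulas $A=4e^{-\pi\alpha}(\pi\alpha-1-2e^{-\pi\alpha})$ and $B=4e^{-\pi\alpha}(\pi\alpha-2-4e^{-\pi\alpha})$ show that $A$ and $B$ are both strictly positive for $\alpha\geq1$: for $B$ one checks $\pi\alpha-2-4e^{-\pi\alpha}>\pi-2-4e^{-\pi}>0$, and then $A>B$ follows from $A-B=4e^{-\pi\alpha}+8e^{-2\pi\alpha}>0$. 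Moreover Lemma \ref{Lemma512} gives $\partial_yX_b\mid_{y=1}<0$, i.e. $B+D>0$. With these positivity facts, multiplying the desired inequality $\frac{A+C}{B+D}\leq\frac{A}{B}$ by the positive quantity $B(B+D)$ turns it into the equivalent, cleaner statement $CB\leq AD$.

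Next I would prove $CB\leq AD$ through the chain $CB\leq DB\leq DA$. The second inequality is immediate from $0<B\leq A$ together with $D\geq0$; the first follows from $B>0$ together with $C\leq D$. Thus the whole lemma is reduced to the two tail facts $D\geq0$ and $D-C\geq0$. (Note that the trivial route $C\leq0$ is unavailable here, since the leading shell of $C$ is already strictly positive, so the comparison $C\leq D$ is genuinely needed.)

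To establish these I would organize $C$ and $D$ (recall \eqref{AC}, \eqref{BD}) by the shells $s=n^2+m^2\geq4$, using that $3$ is not a sum of two squares. Via $(n^4-m^4)(n^2-m^2)=(n^2-m^2)^2(n^2+m^2)$ the summand of $D-C$ becomes $\pi\alpha(n^2-m^2)^2(s-1)-(n^4+3m^4-2m^2)$, and a short case check shows that for $\alpha\geq1$ its only negative contributions come from the exact diagonal $n^2=m^2$, which within the range $s\geq4$ first occurs on the shell $s=8$. The two lowest shells already contribute strictly positive shell-sums: $s=4$ gives $(192\pi\alpha-112)e^{-4\pi\alpha}$ and $s=5$ gives $(288\pi\alpha-232)e^{-5\pi\alpha}$, both positive for $\alpha\geq1$; likewise the leading shell of $D$ alone contributes $(256\pi\alpha-128)e^{-4\pi\alpha}>0$, while every shell with $s\geq8$ is weighted by at most $e^{-8\pi\alpha}$. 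The main work, and the only real obstacle, is then the routine but delicate bookkeeping: bounding the combined tail $\sum_{s\geq8}$ by a geometric majorant (the number of lattice points on a shell grows only polynomially in $s$, against the decay $e^{-\pi\alpha s}$) and verifying that for all $\alpha\geq1$ this tail is dominated in absolute value by the positive shell-$4$ and shell-$5$ contributions. Carrying out this estimate yields $D\geq0$ and $D-C\geq0$, hence $CB\leq AD$ and the lemma. The derivation of item $(1)$ of Lemma \ref{Lemma511} is then the advertised ``simple algebra,'' namely verifying $\frac{2}{\pi\alpha}\frac{A}{B}<1$ for $\alpha\geq\frac{9}{8}$.
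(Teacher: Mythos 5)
Your proposal is correct and follows essentially the same route as the paper: the paper likewise reduces the claim to $\frac{A+C}{B+D}<\frac{A}{B}$, rewrites it as $\frac{C}{D}<\frac{A}{B}$, and settles it via $\frac{C}{D}<1<\frac{A}{B}$, which is exactly your chain $CB\leq DB\leq DA$ resting on $C\leq D$, $D\geq0$, and $0<B\leq A$. Your shell-by-shell verification of $C\leq D$ and $D\geq 0$ (with the correct leading contributions $(192\pi\alpha-112)e^{-4\pi\alpha}$ and $(256\pi\alpha-128)e^{-4\pi\alpha}$) simply supplies the details the paper leaves as "easily verified."
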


The upper bound estimate in Lemma \ref{Lemma515} is equivalent to
\begin{equation}\aligned\label{ABCD}
\frac{A+C}{B+D}<\frac{A}{B}.
\endaligned\end{equation}
Inequality \eqref{ABCD} is equivalent to
\begin{equation}\aligned\label{ABCDa}
\frac{C}{D}<\frac{A}{B}.
\endaligned\end{equation}
\eqref{ABCDa} can be easily verified by \eqref{AC} and \eqref{BD}. Indeed, one has
\begin{equation}\aligned\nonumber
\frac{C}{D}<1<\frac{A}{B}.
\endaligned\end{equation}

For the proof of item $(1)$ in Lemma \ref{Lemma511}, one has
\begin{equation}\aligned\nonumber
\frac{1}{\alpha}\frac{\partial_yX_a\mid_{y=1}}
{\partial_yX_b\mid_{y=1}}=\frac{1}{\alpha}\cdot\frac{A}{B}\cdot\frac{1+\frac{C}{A}}{1+\frac{D}{B}}.
\endaligned\end{equation}
See $\frac{A}{B}$ in Lemma \ref{Lemma515}, and note that $\frac{1+\frac{C}{A}}{1+\frac{D}{B}}\approx1$(if $\alpha\geq1$).
A direct calculation shows that
\begin{equation}\aligned\nonumber
-\partial_\alpha\Big(\frac{1}{\alpha}\frac{\partial_yX_a\mid_{y=1}}
{\partial_yX_b\mid_{y=1}}\Big)&=\frac{1}{\alpha}\cdot\frac{1}{1+\frac{D}{B}}
\Big(
\frac{1}{\alpha}(1+\frac{1+2e^{-\pi\alpha}}{\pi\alpha-2-4e^{-\pi\alpha}})(1+\frac{C}{A})
+\pi\frac{1+2(\pi\alpha+1)e^{-\pi\alpha}}{(\pi\alpha-2-4e^{-\pi\alpha})^2}(1+\frac{C}{A})\\
&+(1+\frac{1+2e^{-\pi\alpha}}{\pi\alpha-2-4e^{-\pi\alpha}})\cdot\big(
(\frac{C}{A})'-(\frac{D}{B})'\frac{1+\frac{C}{A}}{1+\frac{D}{B}}
\big)
\Big)\\
&\geq
\frac{1}{\alpha}\cdot\frac{1}{1+\frac{D}{B}}
\Big(
\frac{1}{\alpha}+\frac{1}{\pi\alpha}
+(1+\frac{1+2e^{-\pi\alpha}}{\pi\alpha-2-4e^{-\pi\alpha}})\cdot\big(
(\frac{C}{A})'-(\frac{D}{B})'\frac{1+\frac{C}{A}}{1+\frac{D}{B}}
\big)
\Big)\\
&=
\frac{1}{\alpha}\cdot\frac{1}{1+\frac{D}{B}}
\Big(
\frac{1}{\alpha}+\frac{1}{\pi\alpha}
+O(e^{-3\pi\alpha})
\Big)>0
\endaligned\end{equation}
here $(\frac{C}{A})'=O(e^{-3\pi\alpha}), (\frac{D}{B})'=O(e^{-3\pi\alpha})$. And indeed
$
\mid\frac{\big(
(\frac{C}{A})'-(\frac{D}{B})'\frac{1+\frac{C}{A}}{1+\frac{D}{B}}
\big)}{\frac{1}{\alpha}}\mid\leq\frac{1}{2}
$ for $\alpha\in[1,\frac{9}{8}]$(with tedious but simple computations and hence omitted here).
These complete the proof.

\section{The analysis on $\Gamma_b$}

Recall that
\begin{equation}\aligned\nonumber
\Gamma_b=\{
z\in\mathbb{H}: z=e^{i\theta},\; \theta\in[\frac{\pi}{3},\frac{\pi}{2}]
\}.
\endaligned\end{equation}
In this section, we aim to establish that
\begin{theorem}\label{Th41} Assume that $\alpha\in(0,1]$, then
\begin{equation}\aligned
\min_{z\in\Gamma_b}M(\alpha,z)=
\begin{cases}
\hbox{is achieved at}\;\;i, &\hbox{if}\;\; \alpha\in(0,\alpha_2),\\
\hbox{is achieved at}\;\;i\;\hbox{or}\;e^{i\frac{\pi}{3}} &\hbox{if}\;\; \alpha=\alpha_2,\\
\hbox{is achieved at}\;\;e^{i\frac{\pi}{3}}, &\hbox{if}\;\; \alpha\in(\alpha_2,1].
\end{cases}
\endaligned\end{equation}
Here $\alpha_2=0.9203340937\cdots$. Further,
${\alpha_2}$ is the unique solution of
\begin{equation}\aligned\nonumber
M(\alpha,i)=M(\alpha,e^{i\frac{\pi}{3}}),\;\;\hbox{for}\;\;\alpha\in[\frac{5}{6},1].
\endaligned\end{equation}

\end{theorem}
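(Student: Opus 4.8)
The plan is to study $M(\alpha,\cdot)$ directly on the arc by writing $z=e^{i\theta}$ with $\theta\in[\frac{\pi}{3},\frac{\pi}{2}]$ and setting $g_\alpha(\theta):=M(\alpha,e^{i\theta})$, so that $\theta=\frac{\pi}{2}$ is the square $i$ and $\theta=\frac{\pi}{3}$ is the hexagonal $e^{i\pi/3}$. Differentiating along the arc and invoking the reflection identity of Lemma \ref{Lemma3.3} gives the clean relation $g_\alpha'(\theta)=\frac{1}{\cos\theta}\,\partial_yM(\alpha,e^{i\theta})$ for $\theta\in[\frac{\pi}{3},\frac{\pi}{2})$, while Lemma \ref{Lemma3.4} shows that both $i$ and $e^{i\pi/3}$ are genuine critical points of $M$, whence $g_\alpha'(\frac{\pi}{3})=g_\alpha'(\frac{\pi}{2})=0$. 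Since $\cos\theta>0$ on the open arc, the sign of $g_\alpha'$ coincides with that of $\partial_yM$, and the theorem splits into two independent tasks: \textbf{(I)} for every $\alpha\in(0,1]$ the minimum of $g_\alpha$ is attained at an endpoint, so that $\min_{\Gamma_b}M=\min\{M(\alpha,i),M(\alpha,e^{i\pi/3})\}$; and \textbf{(II)} the endpoint difference changes sign exactly once, at $\alpha_2$.

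For \textbf{(I)} I would prove the sharper statement that $\partial_yM(\alpha,e^{i\theta})$ is positive on $(\frac{\pi}{3},\theta_\alpha^\ast)$ and negative on $(\theta_\alpha^\ast,\frac{\pi}{2})$ for a single $\theta_\alpha^\ast$; equivalently $g_\alpha$ is strictly increasing and then strictly decreasing, so both endpoints are strict local minima and the only interior critical point is a maximum. The engine is the dimension reduction of Lemma \ref{Lemma3} together with $M=-\frac{1}{\pi}\partial_\alpha\theta$ (Lemma \ref{Lemma2}), which rewrites $\partial_yM$ on the arc as a rapidly convergent series in the one-dimensional theta function; isolating the terms with $m^2+n^2\le 2$ and estimating the tail and its $Y$-derivative by the quotient bounds of Lemmas \ref{Lemma2a}--\ref{Lemma2c} reduces the sign question to an explicit finite expression. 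That both endpoints are strict local minima follows by evaluating the tangential second derivatives, namely $g_\alpha''(\frac{\pi}{2})=M_{xx}(\alpha,i)$ and the corresponding combination $g_\alpha''(\frac{\pi}{3})=\frac34 M_{xx}-\frac{\sqrt3}{2}M_{xy}+\frac14 M_{yy}$ at $e^{i\pi/3}$, as absolutely convergent series and checking positivity for $\alpha\in(0,1]$. That there is only one interior sign change I would obtain by showing every interior zero of $\partial_yM$ is a strict local maximum of $g_\alpha$, so two of them would force an intervening minimum, which is excluded. Near $\alpha=1$ this uses the second-order estimates of Section 4 (Lemmas \ref{Lemma37} and \ref{LemmaMxy}, valid precisely on the band $y\in[\frac{\sqrt3}{2},1]$ swept by the arc), whereas for small $\alpha$ the effective parameter $y/\alpha$ is large and the leading exponentials dominate overwhelmingly, making the sign pattern transparent.

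For \textbf{(II)} set $D(\alpha):=M(\alpha,i)-M(\alpha,e^{i\pi/3})$. Its derivative is $D'(\alpha)=-\pi\big(N(\alpha,i)-N(\alpha,e^{i\pi/3})\big)$, where $N(\alpha,z)=\sum_{\mathbb{P}\in L,\,|L|=1}|\mathbb{P}|^4 e^{-\pi\alpha|\mathbb{P}|^2}$ is the fourth-moment lattice sum; comparing the two sums through their leading short-vector contributions and exponentially small tails shows $D'>0$ on $[\frac{5}{6},1]$. Since the explicit series give $D(\frac{5}{6})<0<D(1)$, there is a unique root $\alpha_2\in(\frac{5}{6},1)$, with $D<0$ below it and $D>0$ above it on $[\frac{5}{6},1]$; this is exactly the assertion that $\alpha_2$ is the unique solution of $M(\alpha,i)=M(\alpha,e^{i\pi/3})$ there. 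To extend $D<0$ to the remaining range $\alpha\in(0,\frac{5}{6})$ I would apply the duality of Lemma \ref{Lemma1} to rewrite $D(\alpha)$ in terms of $\theta$ and $M$ at the large argument $1/\alpha$, where the short-vector asymptotics of the square and hexagonal lattices give $D(\alpha)<0$ directly. Combining \textbf{(I)} and \textbf{(II)} yields the stated trichotomy.

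The main difficulty is the uniform-in-$\alpha$ control in task \textbf{(I)}: the effective variable $y/\alpha$ sweeps the whole range $[\frac{\sqrt3}{2},\infty)$ as $\alpha$ runs over $(0,1]$, so no single regime of the theta-quotient estimates of Lemmas \ref{Lemma2a}--\ref{Lemma2c} applies throughout, and one must partition the $\alpha$-range and match the dominant-term analysis against the tail bounds, precisely the bookkeeping already carried out for $\Gamma_a$ in Section 5. Guaranteeing that the number of interior sign changes of $\partial_yM$ is \emph{exactly} one, rather than merely odd, is the delicate point, and it is here that the second-order estimates of Section 4 are indispensable near $\alpha=1$, where the leading exponentials are no longer well separated and the crude dominant-term bounds cease to decide the sign on their own.
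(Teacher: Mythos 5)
Your overall architecture --- rule out interior local minima along the arc so that $\min_{\Gamma_b}M=\min\{M(\alpha,i),M(\alpha,e^{i\pi/3})\}$, then settle the endpoint comparison by a single-crossing argument --- matches the paper's, and the reduction $g_\alpha'(\theta)=\frac{1}{\cos\theta}\partial_yM(\alpha,e^{i\theta})$ via Lemma \ref{Lemma3.3} is correct. But there are two genuine gaps. First, the structural claim you set out to verify is false on part of the range: for $\alpha$ below roughly $\frac{5}{6}$ the restriction of $M(\alpha,\cdot)$ to $\Gamma_b$ is \emph{monotone} (this is the content of Lemma \ref{Lemma5.3} after dualizing: $\frac{\pi\alpha}{2}Y_b/Y_a>1$ on the whole interval, so there is no interior critical point at all), and consequently $e^{i\pi/3}$ is a one-sided local \emph{maximum} along the arc there. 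Your planned verification that $g_\alpha''(\frac{\pi}{3})>0$ for every $\alpha\in(0,1]$ therefore cannot succeed; the correct picture is the trichotomy "decreasing / increasing-then-decreasing / increasing" (Picture \ref{Y-Shape}), and the argument must be organized around "no interior local minimum" rather than "exactly one interior maximum flanked by two endpoint minima".

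Second, and more seriously, your treatment of small $\alpha$ in task (I) --- "the leading exponentials dominate overwhelmingly" --- is backwards. For small $\alpha$ the weight $u\mapsto ue^{-\pi\alpha u}$ peaks at $u\sim(\pi\alpha)^{-1}$, so on the order of $1/\alpha$ lattice vectors contribute comparably and no dominant-term/tail splitting closes; even after the one-dimensional Poisson summation of Lemma \ref{Lemma3} the outer factor $e^{-\pi\alpha yn^2}$ decays slowly in $n$. The duality $M(\frac{1}{\alpha},z)=\frac{\alpha^2}{\pi}\big(\theta(\alpha,z)-\pi\alpha M(\alpha,z)\big)$ of Lemma \ref{Lemma1}, which you deploy only for the endpoint comparison, is indispensable for the interior analysis as well: the paper dualizes first (Theorem \ref{Th42}), transfers the arc to the segment $\{\frac12+iy:\ y\in[\frac12,\frac{\sqrt3}{2}]\}$ by Lemma \ref{LemmaH1}, and then controls the single ratio $Y_b/Y_a$ (a lower bound for $\alpha\geq\frac65$, a monotonicity statement for $\alpha\in[1,\frac65]$), which is where all the uniform-in-$\alpha$ control actually comes from. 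Your part (II), the monotonicity of $D(\alpha)=M(\alpha,i)-M(\alpha,e^{i\pi/3})$ via the fourth-moment lattice sum, is a plausible (though unverified) alternative to the paper's Lemma \ref{Lemma5.5}, but it does not repair the gaps in part (I).
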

\begin{remark}
It is interesting to note that for all $\alpha\in(0,1]$,
\begin{equation}\aligned
\min_{z\in\Gamma_b}M(\alpha,z)\;\;\hbox{is achieved at}\;\;i\;\;\hbox{or}\;\;e^{i\frac{\pi}{3}}.
\endaligned\end{equation}
\end{remark}
The direct analysis on the arc $\Gamma_b$ is probably involved, we transfer the analysis on the arc $\Gamma_b$
to a straight vertical interval. This is done by the next lemma and it is followed by conformal invariance(Lemma \ref{G111}).
\begin{lemma}[{\bf From the arc $\Gamma_b$ to the $\frac{1}{2}-$axis}]\label{LemmaH1} For all $\alpha>0$,
\begin{equation}\aligned\nonumber
M(\alpha,u+i\sqrt{1-u^2})
=M(\alpha,\frac{1}{2}+\frac{i}{2}\sqrt{\frac{1+u}{1-u}}).
\endaligned\end{equation}
\end{lemma}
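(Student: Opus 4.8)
The plan is to realize the identity as a single application of the invariance of $M$ recorded in Lemma~\ref{G111}. Since $M(\alpha,\cdot)$ is invariant under all of $\mathcal{G}$, and in particular under $\mathrm{SL}(2,\mathbb{Z})\subset\mathcal{G}$, it suffices to produce one element $\gamma\in\mathcal{G}$ with
\begin{equation}\aligned\nonumber
\gamma\bigl(u+i\sqrt{1-u^2}\bigr)=\tfrac{1}{2}+\tfrac{i}{2}\sqrt{\tfrac{1+u}{1-u}}.
\endaligned\end{equation}
Geometrically this asks for a modular map carrying the unit arc $\Gamma_b$ (where $|z|=1$) onto the vertical $\tfrac12$-axis; such a map should be the elliptic element fixing the corner $e^{i\pi/3}$, since that corner is the common endpoint of the arc and the $\tfrac12$-line.

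The natural candidate is $\gamma(z)=\dfrac{1}{1-z}$, i.e.\ the matrix $\left(\begin{smallmatrix}0&1\\-1&1\end{smallmatrix}\right)\in\mathrm{SL}(2,\mathbb{Z})$; its characteristic polynomial $\lambda^2-\lambda+1$ has the primitive sixth roots of unity as roots, so it has order $6$ and fixes $e^{i\pi/3}$, matching the geometric picture. First I would confirm $\gamma\in\mathcal{G}$ (it lies in $\mathrm{SL}(2,\mathbb{Z})$ and preserves $\mathbb{H}$). Then, writing $z=e^{i\theta}=\cos\theta+i\sin\theta$ with $u=\cos\theta\in[0,\tfrac12]$, I would compute
\begin{equation}\aligned\nonumber
\frac{1}{1-e^{i\theta}}=\frac{(1-\cos\theta)+i\sin\theta}{(1-\cos\theta)^2+\sin^2\theta}=\frac{1}{2}+\frac{i\sin\theta}{2(1-\cos\theta)},
\endaligned\end{equation}
so the real part is exactly $\tfrac12$, and the imaginary part simplifies via $\dfrac{\sin\theta}{2(1-\cos\theta)}=\dfrac{\sqrt{1-u^2}}{2(1-u)}=\dfrac{1}{2}\sqrt{\dfrac{1+u}{1-u}}$, which is precisely the claimed point. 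Invoking Lemma~\ref{G111} with this $\gamma$ then yields Lemma~\ref{LemmaH1}. As sanity checks, the endpoints behave correctly: $\gamma(i)=\tfrac12+\tfrac{i}{2}$ and $\gamma(e^{i\pi/3})=e^{i\pi/3}=\tfrac12+i\tfrac{\sqrt3}{2}$, agreeing with the formula at $u=0$ and $u=\tfrac12$.

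I do not expect a genuine obstacle: the only real content is (i) spotting the correct group element and (ii) the bookkeeping in the imaginary-part simplification, where the factorization $1-u^2=(1-u)(1+u)$ is the crux. If a single element were not apparent, a safe two-step fallback is available, namely the map $z\mapsto \dfrac{z}{z+1}=\left(\begin{smallmatrix}1&0\\1&1\end{smallmatrix}\right)$, which already sends $u+i\sqrt{1-u^2}$ to $\tfrac12+\tfrac{i}{2}\sqrt{\tfrac{1-u}{1+u}}$, followed by the $\tfrac12$-axis reflection $\tfrac12+i\tfrac{y}{2}\mapsto\tfrac12+i\tfrac{1}{2y}$ of Lemma~\ref{Lemma3.4}, which flips $\sqrt{\tfrac{1-u}{1+u}}$ into $\sqrt{\tfrac{1+u}{1-u}}$ and produces the same conclusion.
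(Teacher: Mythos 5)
Your proposal is correct and follows the same route the paper takes: the paper proves the lemma by a one-line appeal to the modular invariance of $M$ (Lemma \ref{G111}), and you simply supply the explicit group element $z\mapsto \frac{1}{1-z}$ together with the verification that it carries $u+i\sqrt{1-u^2}$ to $\frac12+\frac{i}{2}\sqrt{\frac{1+u}{1-u}}$, all of which checks out (including the endpoint sanity checks and the two-step fallback).
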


By Lemma \ref{LemmaH1}, Theorem \ref{Th41} is equivalent to the following

\begin{theorem}\label{Th41a} Assume that $\alpha\in(0,1]$, then
\begin{equation}\aligned
\min_{y\in[\frac{1}{2},\frac{\sqrt3}{2}]}M(\alpha,\frac{1}{2}+iy)=
\begin{cases}
\hbox{is achieved at}\;\;\frac{1}{2}, &\hbox{if}\;\; \alpha\in(0,\alpha_2),\\
\hbox{is achieved at}\;\;\frac{1}{2}\;\hbox{or}\;\frac{\sqrt3}{2} &\hbox{if}\;\; \alpha=\alpha_2,\\
\hbox{is achieved at}\;\;\frac{\sqrt3}{2}, &\hbox{if}\;\; \alpha\in(\alpha_2,1].
\end{cases}
\endaligned\end{equation}
Here $\alpha_2=0.9203340937\cdots$.

\end{theorem}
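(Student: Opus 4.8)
The plan is to combine the reduction already supplied by Lemma \ref{LemmaH1} with a one-variable shape analysis of $g(Y):=M(\alpha,\tfrac12+iY)$ on the segment $Y\in[\tfrac12,\tfrac{\sqrt3}{2}]$, and then to compare its two endpoint values. By Lemma \ref{LemmaH1} together with the conformal/modular invariance of Lemma \ref{G111}, minimizing $M(\alpha,\cdot)$ over the arc $\Gamma_b$ is the same as minimizing $g$ over $[\tfrac12,\tfrac{\sqrt3}{2}]$, where $Y=\tfrac12$ is the image of the square point $i$ and $Y=\tfrac{\sqrt3}{2}$ is the hexagonal point $e^{i\pi/3}$. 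Applying the symmetry relations of Lemma \ref{Lemma3.4} on the $\tfrac12$-axis shows that both endpoints are critical points, $g'(\tfrac12)=g'(\tfrac{\sqrt3}{2})=0$, so the whole difficulty is to control the interior behaviour and then to compare the endpoint values $M(\alpha,i)$ and $M(\alpha,e^{i\pi/3})$.

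First I would establish the key structural claim: for every $\alpha\in(0,1]$ the function $g$ has no interior local minimum on $(\tfrac12,\tfrac{\sqrt3}{2})$. It is cleanest to transport this to the arc. Writing $z=e^{i\theta}$ and using the boundary identity of Lemma \ref{Lemma3.3} (namely $\partial_x M(e^{i\theta})\cos\theta=-\partial_y M(e^{i\theta})\sin\theta$), one obtains $\frac{d}{d\theta}M(\alpha,e^{i\theta})=\frac{\partial_y M(\alpha,e^{i\theta})}{\cos\theta}$, so on the open arc (where $\cos\theta>0$) the tangential derivative has exactly the sign of $\partial_y M(\alpha,e^{i\theta})$. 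It therefore suffices to show that, as $\theta$ runs from $\tfrac\pi3$ to $\tfrac\pi2$, the quantity $\partial_y M(\alpha,e^{i\theta})$ is positive just above $\theta=\tfrac\pi3$ and has at most one zero in $(\tfrac\pi3,\tfrac\pi2)$, so that it never passes from $-$ to $+$ and $M(\alpha,e^{i\theta})$ has no interior minimum. For this I would insert the arc parametrization into the exponentially decaying expansion of $\partial_y M$ derived in Section 4, factor out a manifestly positive prefactor, and split the remaining factor into a dominant term plus a theta-tail, bounding the tail uniformly exactly in the spirit of Lemmas \ref{Lemma32} and \ref{Lemma33} and of Lemma \ref{Lemma37}. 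Once no interior minimum is present, the minimum of $g$ over the closed segment is attained at an endpoint, whence $\min_{\Gamma_b}M(\alpha,\cdot)=\min\{M(\alpha,i),\,M(\alpha,e^{i\pi/3})\}$.

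The second half is the threshold comparison. I would set $D(\alpha):=M(\alpha,i)-M(\alpha,e^{i\pi/3})$ and show it has a unique zero $\alpha_2\in[\tfrac56,1]$ with $D<0$ on $(0,\alpha_2)$ and $D>0$ on $(\alpha_2,1]$, which is precisely the statement that the smaller endpoint switches from square to hexagonal at $\alpha_2$. On the window $[\tfrac56,1]$ the natural route is to prove strict monotonicity of $D$ by differentiating in $\alpha$: using $M=-\tfrac1\pi\partial_\alpha\theta$ from Lemma \ref{Lemma2} one rewrites $\partial_\alpha M$ as an explicit lattice sum, and then shows $D'(\alpha)$ keeps a fixed sign via dominant-term-plus-tail estimates; combined with sign checks of $D$ at $\alpha=\tfrac56$ and $\alpha=1$ this isolates the unique root $\alpha_2$. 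For the remaining range $\alpha\in(0,\tfrac56)$ I would instead show directly that $M(\alpha,i)<M(\alpha,e^{i\pi/3})$, so the $\Gamma_b$-minimum there is always the square endpoint; this is a one-variable estimate in $\alpha$ in which the two explicit lattice sums separate cleanly once $\alpha$ is bounded away from $1$ (and can, if convenient, be recast through the duality of Lemma \ref{Lemma1}).

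The main obstacle is the structural claim of the second paragraph: controlling the sign of $\partial_y M(\alpha,e^{i\theta})$, and in particular showing it has at most one interior zero, uniformly over the two-parameter region $(\alpha,\theta)\in(0,1]\times[\tfrac\pi3,\tfrac\pi2]$. In contrast to the endpoint comparison, which is a genuinely one-variable monotonicity in $\alpha$, here both $\alpha$ and the position on the arc vary simultaneously, and near the square point $i$ the sign of $\partial_y M$ along $\Gamma_b$ changes character with $\alpha$ (the square lattice passes from a local minimum to a local maximum of $M$ along $\Gamma_b$ as $\alpha$ increases through $\alpha_2$). The delicate part is thus to extract a main term whose single sign change can be localized while bounding the theta-tail uniformly, so that the conclusion ``positive near the hexagonal end and at most one interior zero'' survives across the entire parameter range and rules out an interior minimum for all $\alpha\in(0,1]$.
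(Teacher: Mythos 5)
Your skeleton (reduce to the segment, note both endpoints are critical by Lemma \ref{Lemma3.4}, rule out an interior local minimum, then compare the two endpoint values) coincides with the paper's, but two steps as you propose them do not go through.

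First, the uniformity in $\alpha$. You plan to prove the structural claim by a dominant-term-plus-theta-tail decomposition of $\partial_y M$ ``in the spirit of Lemmas \ref{Lemma32}, \ref{Lemma33} and \ref{Lemma37}'', uniformly over $(\alpha,\theta)\in(0,1]\times[\tfrac{\pi}{3},\tfrac{\pi}{2}]$. Those estimates rest on the factors $e^{-\pi\alpha(yn^2+\cdots)}$ making everything beyond the first shell negligible, which requires $\alpha$ bounded away from $0$; as $\alpha\to0^+$ the lattice sum spreads over $O(\alpha^{-1})$ shells and no finite truncation dominates. This is precisely why the paper does \emph{not} analyze $M(\alpha,\cdot)$ directly for small $\alpha$: it first applies the duality $M(\tfrac1\alpha,z)=\tfrac{\alpha^2}{\pi}\bigl(\theta(\alpha,z)-\pi\alpha M(\alpha,z)\bigr)$ of Lemma \ref{Lemma1} to convert the whole range $\alpha\in(0,1]$ into the range $\beta=\tfrac1\alpha\ge1$ for the functional $\theta-\pi\beta M$ (Theorem \ref{Th42}), and only then runs first-shell-plus-tail estimates. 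In your write-up the duality appears only as an optional convenience for the endpoint comparison on $(0,\tfrac56)$; for the structural claim it is indispensable.

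Second, the sign you assert near the hexagonal end is wrong on part of the range. For $\alpha\le\tfrac56$ the function $y\mapsto M(\alpha,\tfrac12+iy)$ is (weakly) increasing on all of $[\tfrac12,\tfrac{\sqrt3}2]$: in the dual picture this is Lemma \ref{Lemma5.3}, which gives $\tfrac{\pi\beta}{2}\tfrac{Y_b}{Y_a}>1$ for $\beta\ge\tfrac65$ and hence $\partial_y(\theta-\pi\beta M)\ge0$ via \eqref{Py}. Since $y$ decreases as $\theta$ increases under the correspondence of Lemma \ref{LemmaH1}, this means $\tfrac{d}{d\theta}M(\alpha,e^{i\theta})\le0$, hence $\partial_y M(\alpha,e^{i\theta})\le0$, just above $\theta=\tfrac{\pi}{3}$ --- not $>0$ as you claim. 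With a nonpositive sign at the hexagonal end, ``at most one interior zero'' no longer excludes a crossing from $-$ to $+$, i.e.\ your argument as stated would still permit an interior minimum exactly in the regime where the square endpoint is supposed to win. The correct dichotomy, and the one the paper uses, is: for $\beta\ge\tfrac65$ the tangential derivative has \emph{no} interior zero at all (strict monotonicity, Lemma \ref{Lemma5.3}), while for $\beta\in[1,\tfrac65]$ it factors as $-2\pi\beta Y_a\bigl(1-\tfrac{\pi\beta}{2}\tfrac{Y_b}{Y_a}\bigr)$ with $Y_a\ge0$ and $\partial_y(Y_b/Y_a)\le0$ (Lemma \ref{Lemma5.4}), so the critical-point equation has at most one root and the unique possible interior critical point is a local maximum (Proposition \ref{Prop5.1}). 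Your endpoint comparison via monotonicity in $\alpha$ of $M(\alpha,i)-M(\alpha,e^{i\pi/3})$ on $[\tfrac56,1]$ is sound and essentially reproduces Lemma \ref{Lemma5.5} after duality.
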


In Theorem \ref{Th41a}, we have reduced to the analysis on the arc $\Gamma_b$ to an interval. Another difficulty here is the parameter $\alpha$ has no positive lower bound. We shall get rid of this difficulty by a duality property(Lemma \ref{Lemma1}) of the parameter $\alpha$.

By the duality of the functionals(Lemma \ref{Lemma1}), Theorem \ref{Th41a} is then equivalent to
\begin{theorem}\label{Th42} Assume that $\alpha\geq1$, then
\begin{equation}\aligned
\min_{y\in[\frac{1}{2},\frac{\sqrt3}{2}]}\Big(
\theta(\alpha,\frac{1}{2}+iy)-\pi\alpha M(\alpha,\frac{1}{2}+iy)
\Big)=
\begin{cases}
\hbox{is achieved at}\;\;\frac{\sqrt3}{2}, &\hbox{if}\;\; \alpha\in[1,\frac{1}{\alpha_2}),\\
\hbox{is achieved at}\;\;\frac{1}{2}\;\hbox{or}\;\frac{\sqrt3}{2} &\hbox{if}\;\; \alpha=\frac{1}{\alpha_2},\\
\hbox{is achieved at}\;\;\frac{1}{2}, &\hbox{if}\;\; \alpha\in(\frac{1}{\alpha_2},\infty).
\end{cases}
\endaligned\end{equation}
The $\frac{1}{\alpha_2}=1.086561943\cdots$ is located as the same in Theorem \ref{Th41}.
\end{theorem}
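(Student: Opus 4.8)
The plan is to invoke duality once more and then reduce \textbf{Theorem \ref{Th42}} to a one-variable shape analysis on the vertical segment $\{\tfrac12+iY:\ Y\in[\tfrac12,\tfrac{\sqrt3}{2}]\}$, in close parallel with the treatment of $\Gamma_a$ in Section 5. Set $h(\alpha,Y):=\theta(\alpha,\tfrac12+iY)-\pi\alpha M(\alpha,\tfrac12+iY)$; by Lemma \ref{Lemma1} one has $h(\alpha,Y)=\tfrac{\pi}{\alpha^2}M(\tfrac1\alpha,\tfrac12+iY)$, so $h$ inherits the symmetries of a modular invariant function. Applying Lemma \ref{Lemma3.4} to $M(\tfrac1\alpha,\cdot)$ shows that both endpoints are critical points, $\partial_Y h(\alpha,\tfrac12)=0$ and $\partial_Y h(\alpha,\tfrac{\sqrt3}{2})=0$: the first because $\tfrac12+\tfrac{i}{2}$ is the fixed point of the reflection $Y\mapsto\tfrac{1}{4Y}$, the second because $\tfrac12+i\tfrac{\sqrt3}{2}=e^{i\frac{\pi}{3}}$ is an elliptic fixed point. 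The theorem then splits into a \emph{shape claim} (the minimum of $h(\alpha,\cdot)$ over the closed segment is attained at an endpoint) and an \emph{endpoint comparison} (which endpoint wins, as a function of $\alpha$).

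For the shape claim I would mimic the factorization of Section 5 and write $\partial_Y h=\theta_Y(\alpha,\tfrac12+iY)\big(1-\pi\alpha\,\tfrac{M_Y(\alpha,\frac12+iY)}{\theta_Y(\alpha,\frac12+iY)}\big)$, after producing an exponentially convergent expansion of $\partial_Y h$ on $x=\tfrac12$ in the spirit of Lemma \ref{LemmaK1}; at $x=\tfrac12$ the one-dimensional theta values collapse to $\vartheta(\tfrac{y}{\alpha};0)$ for even indices and $\vartheta(\tfrac{y}{\alpha};\tfrac12)$ for odd indices, which splits the sums cleanly. Introducing the analogues of $X_a,X_b$ from Lemmas \ref{Lemma56}--\ref{Lemma57} for the line $x=\tfrac12$, I would show that $\theta_Y<0$ strictly on the open segment (so $\theta$ decreases from the square to the hexagonal point) and that the ratio $M_Y/\theta_Y$ is monotone in $Y$, exactly as in Lemma \ref{Lemma510}. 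This forces the factor $1-\pi\alpha M_Y/\theta_Y$ to vanish at most once, so $h(\alpha,\cdot)$ has at most one interior critical point; a second-order check showing that the square endpoint is a local minimum along the segment then pins this interior critical point down as a maximum, whence $h$ has no interior local minimum and $\min_{Y}h(\alpha,Y)$ is attained at $Y=\tfrac12$ or $Y=\tfrac{\sqrt3}{2}$.

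For the endpoint comparison I would use modular invariance to identify $h(\alpha,\tfrac12)=\theta(\alpha,i)-\pi\alpha M(\alpha,i)$ (since $\tfrac12+\tfrac{i}{2}\sim i$) and $h(\alpha,\tfrac{\sqrt3}{2})=\theta(\alpha,e^{i\frac{\pi}{3}})-\pi\alpha M(\alpha,e^{i\frac{\pi}{3}})$, and then invoke Lemma \ref{Lemma1} to rewrite the difference of the two endpoint values as $\tfrac{\pi}{\alpha^2}\big(M(\tfrac1\alpha,i)-M(\tfrac1\alpha,e^{i\frac{\pi}{3}})\big)$. Setting $D(\beta):=M(\beta,i)-M(\beta,e^{i\frac{\pi}{3}})$, the task becomes showing that $D$ has a unique zero $\alpha_2$ on $[\tfrac56,1]$, with $D<0$ below it and $D>0$ above; I would expand $D$ as a rapidly convergent lattice sum and prove strict monotonicity $D'(\beta)>0$ on $[\tfrac56,1]$ together with the sign checks $D(\tfrac56)<0<D(1)$, the latter being the known fact that the hexagonal lattice beats the square at $\beta=1$. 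Substituting $\beta=\tfrac1\alpha$ then yields precisely the stated transition at the threshold $\tfrac1{\alpha_2}$.

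The main obstacle is the shape claim. Unlike the regime $y\ge 2\alpha$ used on $\Gamma_a$, here $Y\le\tfrac{\sqrt3}{2}<1\le\alpha$ keeps $\tfrac{y}{\alpha}$ small, so one must run the analysis through the dual (Poisson) expansion \eqref{PXY} and the quotient bounds of Lemmas \ref{Lemma2a}--\ref{Lemma2c}; moreover, both endpoints are symmetry-forced (hence degenerate-in-$\theta_Y$) critical points, so near them the dominant terms of $\partial_Y h$ nearly cancel and the sign must be recovered from the carefully bounded odd-index contributions. Establishing the ratio monotonicity and the endpoint second-order sign uniformly over all $\alpha\ge 1$ is therefore the crux; the strict monotonicity $D'(\beta)>0$ is a secondary difficulty that I expect to settle by the same dominant-term-plus-exponentially-small-error splitting as in Lemma \ref{Lemma514}.
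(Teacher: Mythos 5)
Your proposal reproduces the paper's architecture almost exactly: the endpoints $Y=\tfrac12,\tfrac{\sqrt3}{2}$ are symmetry-forced critical points (the paper's Lemma \ref{Lemma5.2}, via Lemmas \ref{Lemma3.3}--\ref{Lemma3.4}); the $Y$-derivative is factored as a nonnegative theta-factor times a ratio expression (the paper's \eqref{Py}, which is literally your $\theta_Y\big(1-\pi\alpha M_Y/\theta_Y\big)$ rewritten through $M_Y/\theta_Y=-\tfrac{1}{\pi\alpha}+\tfrac{Y_b}{Y_a}$); monotonicity of the ratio limits the interior critical points to at most one; and the endpoint comparison is exactly the paper's Lemma \ref{Lemma5.5}, equivalently the unique crossing of $M(\beta,i)-M(\beta,e^{i\pi/3})$ on $[\tfrac56,1]$. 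Two structural differences are worth noting. First, the paper does \emph{not} prove ratio monotonicity uniformly for $\alpha\ge1$: it splits at $\alpha=\tfrac65$, using the crude lower bound $\tfrac{\pi\alpha}{2}\tfrac{Y_b}{Y_a}\ge\tfrac{11}{10}$ (Lemma \ref{Lemma5.3}) to exclude interior critical points altogether for $\alpha\ge\tfrac65$, and proving $\partial_y(Y_b/Y_a)\le0$ only on $[1,\tfrac65]$ (Lemma \ref{Lemma5.4}). Your plan to establish monotonicity for all $\alpha\ge1$ is an unverified extra burden; the two-regime split is the safer route.

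The genuine flaw is your mechanism for deciding that the interior critical point is a maximum. You propose ``a second-order check showing that the square endpoint is a local minimum along the segment.'' This is false for $\alpha$ near $1$: at $\alpha=1$ the duality gives $\theta(1,z)=2\pi M(1,z)$, so $h(1,Y)=\tfrac12\,\theta(1,\tfrac12+iY)$, which by Lemma \ref{Lemma5.2} (Montgomery) is \emph{strictly decreasing} on the open segment; hence $Y=\tfrac12$ is a one-sided local \emph{maximum} there, and the same holds for $\alpha$ in a neighbourhood of $1$ (this is precisely the ``decreasing'' shape in the paper's Figure \ref{Y-Shape}). With your logic, ``at most one interior critical point'' plus ``$h$ initially decreasing'' would leave open the shape decreasing-then-increasing, whose interior critical point is an interior \emph{minimum} --- exactly what must be excluded. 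The correct fix, which is what the paper actually uses, is that the \emph{direction} of the ratio's monotonicity does the work: since $\tfrac{Y_b}{Y_a}$ is decreasing in $Y$, the factor $\tfrac{\pi\alpha}{2}\tfrac{Y_b}{Y_a}-1$ can only change sign from $+$ to $-$, so $\partial_Yh$ passes from nonnegative to nonpositive and the only admissible shapes are increasing, decreasing, or increasing-then-decreasing; no endpoint second-order check is needed or available. As written, your argument has a step that fails on part of the parameter range, though it is repairable without new estimates.
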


\begin{figure}
\centering
 \includegraphics[scale=0.25]{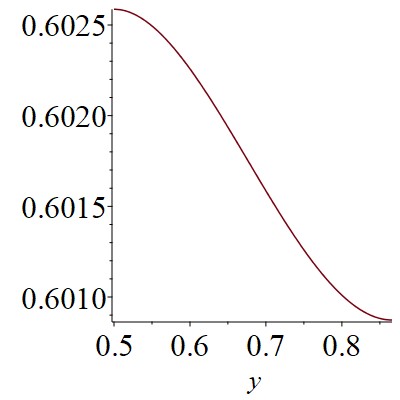}\includegraphics[scale=0.25]{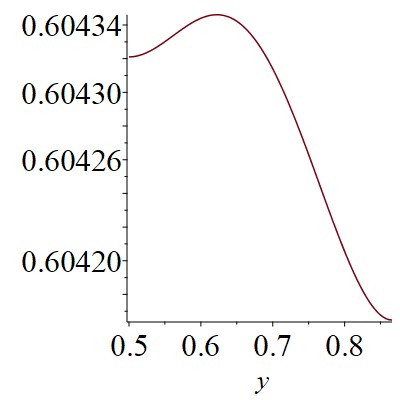}
 \includegraphics[scale=0.25]{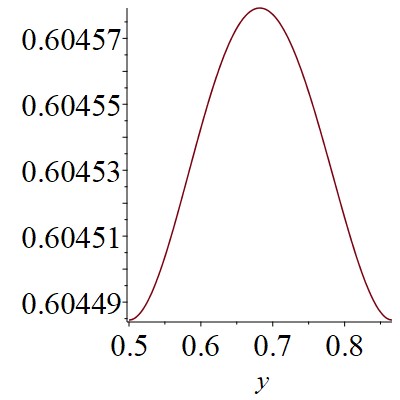}\includegraphics[scale=0.25]{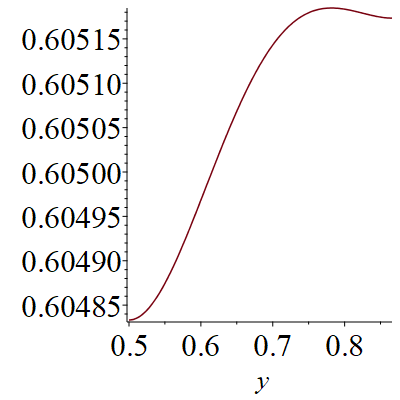}
 \includegraphics[scale=0.25]{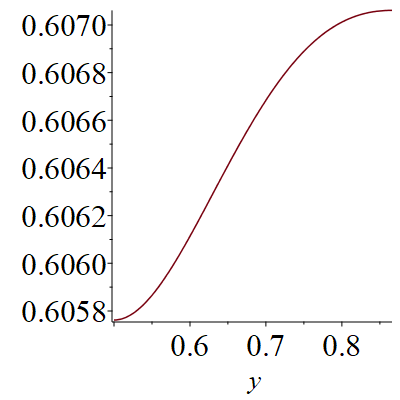}
 \caption{
 The shape of $\Big(
\theta(\alpha,\frac{1}{2}+iy)-\pi\alpha M(\alpha,\frac{1}{2}+iy)
\Big)$ on $y\in[\frac{1}{2},\frac{\sqrt3}{2}]$ for various $\alpha\geq1$.
 }
\label{Y-Shape}
\end{figure}

In the rest of this Section, we aim to prove Theorem \ref{Th42}. To simplify the notations and reveal the inner structures, we denote that
\begin{equation}\aligned\label{Nab}
Y_a:&=Y_a(\alpha;y):=\sum_{n,m}(n^2-\frac{(m+\frac{n}{2})^2}{y^2}) e^{-\pi\alpha(yn^2+\frac{(m+\frac{n}{2})^2}{y})},\\
Y_b:&=Y_b(\alpha;y):=\sum_{n,m}y(n^4-\frac{(m+\frac{n}{2})^4}{y^4}) e^{-\pi\alpha(yn^2+\frac{(m+\frac{n}{2})^2}{y})}.
\endaligned\end{equation}

Then a direct calculation shows that
\begin{lemma}[A relation between $\theta_y, M_y$ and $Y_a, Y_b$] It holds that
\begin{equation}\aligned\nonumber
\theta_y(\alpha,\frac{1}{2}+iy)=-\pi\alpha Y_a,\;\;\pi\alpha M_y(\alpha,\frac{1}{2}+iy)=\pi\alpha Y_a-(\pi\alpha)^2 Y_b.
\endaligned\end{equation}
\end{lemma}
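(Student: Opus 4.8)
The plan is to reduce both identities to a term-by-term differentiation of the explicit Gaussian sums defining $\theta$ and $M$. First I would evaluate the quadratic form at $z=\tfrac12+iy$: since $|mz+n|^2=(n+\tfrac m2)^2+m^2y^2$, after interchanging the dummy indices $m\leftrightarrow n$ (both range over $\mathbb{Z}$) one gets $\tfrac{|mz+n|^2}{\Im z}=\Phi_{m,n}(y)$ with $\Phi_{m,n}(y):=yn^2+\tfrac{(m+\frac n2)^2}{y}$. Writing $E_{m,n}:=e^{-\pi\alpha\Phi_{m,n}}$, the definitions in \eqref{T} and \eqref{J} become $\theta(\alpha,\tfrac12+iy)=\sum_{m,n}E_{m,n}$ and $M(\alpha,\tfrac12+iy)=\sum_{m,n}\Phi_{m,n}E_{m,n}$. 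The Gaussian decay of $E_{m,n}$ makes these sums and all their $y$-derivatives absolutely and locally uniformly convergent on $y>0$, which justifies differentiating under the summation sign throughout.

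For the first identity I would simply differentiate $E_{m,n}$ in $y$. Since $\partial_y\Phi_{m,n}=n^2-\tfrac{(m+\frac n2)^2}{y^2}=:\Psi_{m,n}$ --- exactly the bracket appearing in $Y_a$ --- we have $\partial_yE_{m,n}=-\pi\alpha\,\Psi_{m,n}E_{m,n}$, hence $\theta_y(\alpha,\tfrac12+iy)=-\pi\alpha\sum_{m,n}\Psi_{m,n}E_{m,n}=-\pi\alpha Y_a$, as claimed.

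For the second identity I would apply the product rule to $M=\sum_{m,n}\Phi_{m,n}E_{m,n}$, obtaining $M_y=\sum_{m,n}(\partial_y\Phi_{m,n})E_{m,n}+\sum_{m,n}\Phi_{m,n}(\partial_yE_{m,n})=Y_a-\pi\alpha\sum_{m,n}\Phi_{m,n}\Psi_{m,n}E_{m,n}$. The crux is the algebraic identity $\Phi_{m,n}\Psi_{m,n}=(yn^2+\tfrac{(m+\frac n2)^2}{y})(n^2-\tfrac{(m+\frac n2)^2}{y^2})=yn^4-\tfrac{(m+\frac n2)^4}{y^3}$, in which the two cross terms $\pm\tfrac{(m+\frac n2)^2n^2}{y}$ cancel; the right-hand side is precisely the bracket of $Y_b$, so $\sum_{m,n}\Phi_{m,n}\Psi_{m,n}E_{m,n}=Y_b$ and therefore $M_y=Y_a-\pi\alpha Y_b$, i.e.\ $\pi\alpha M_y(\alpha,\tfrac12+iy)=\pi\alpha Y_a-(\pi\alpha)^2Y_b$.

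There is no real obstacle here; the computation is routine, and the only points requiring care are bookkeeping ones: matching the index convention of $Y_a,Y_b$ (which forces the $m\leftrightarrow n$ relabeling) and verifying the clean cancellation in $\Phi_{m,n}\Psi_{m,n}$. As a consistency check I note that the relation $M=-\tfrac1\pi\partial_\alpha\theta$ of Lemma \ref{Lemma2} gives an independent route for the $M$ part: differentiating $\theta=\sum_{m,n}E_{m,n}$ in $\alpha$ reproduces $M=\sum_{m,n}\Phi_{m,n}E_{m,n}$, and commuting $\partial_\alpha$ with $\partial_y$ yields the same final formula.
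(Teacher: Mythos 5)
Your computation is correct and is exactly the "direct calculation" the paper invokes without writing out: evaluating $|mz+n|^2/\Im z$ at $z=\tfrac12+iy$, differentiating under the sum, and using the cancellation $\Phi_{m,n}\Psi_{m,n}=yn^4-\tfrac{(m+n/2)^4}{y^3}$ to identify $Y_b$. Nothing further is needed.
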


By \cite{Mon1988}, the sign of $Y_a$ is nonnegative.
\begin{lemma} For $\alpha>0$, it holds that
\begin{equation}\aligned\nonumber
Y_a\geq0\;\;\hbox{for}\;\; y\in[\frac{1}{2},\frac{\sqrt3}{2}].
\endaligned\end{equation}
\end{lemma}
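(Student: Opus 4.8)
The plan is to turn the claim into a monotonicity statement for the classical theta function and then invoke Montgomery. By the displayed relation just above, $\theta_y(\alpha,\tfrac12+iy)=-\pi\alpha\,Y_a$, and since $\pi\alpha>0$ the inequality $Y_a\ge0$ on $y\in[\tfrac12,\tfrac{\sqrt3}2]$ is \emph{equivalent} to
\[
\frac{\partial}{\partial y}\,\theta(\alpha,\tfrac12+iy)\le0,\qquad y\in[\tfrac12,\tfrac{\sqrt3}2],
\]
i.e.\ to $\theta(\alpha,\tfrac12+iy)$ being non-increasing in $y$ there, for each fixed $\alpha>0$. So the whole lemma is the monotonicity of $\theta$ along the vertical segment $\{\tfrac12+iy\}$.

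First I would record the two endpoints and that both are critical. At $y=\tfrac12$ one has $\tfrac12+\tfrac i2\equiv i$ (square) and at $y=\tfrac{\sqrt3}2$ one has $\tfrac12+i\tfrac{\sqrt3}2=e^{i\pi/3}$ (hexagonal); these are exactly the two distinguished points mapped onto this segment in the correspondence of Lemma~\ref{LemmaH1}. The double-sum form
\[
\theta(\alpha,\tfrac12+iy)=\sum_{n,m}e^{-\pi\alpha\left(yn^2+\frac{(m+n/2)^2}{y}\right)}
\]
is invariant under the reflection $y\mapsto\tfrac1{4y}$, whose fixed point $y=\tfrac12$ is the square end; this forces $\theta_y=0$ at $y=\tfrac12$, while Lemma~\ref{Lemma3.4} gives $\theta_y=0$ at $y=\tfrac{\sqrt3}2$. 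Hence $\theta_y$ vanishes at both endpoints, and the real content is the \emph{sign of $\theta_y$ in the interior}.

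The main step is that $\theta$ decreases from the square endpoint to the hexagonal one. This is Montgomery's monotonicity of the Gaussian theta function along the boundary of the fundamental domain (\cite{Mon1988}; compare Theorem~\ref{Theorem4}, which places the global minimum at the hexagonal vertex): transporting the segment to the arc $\Gamma_b$ by Lemma~\ref{LemmaH1} and using the modular invariance of $\theta$, Montgomery's argument shows that $\theta(\alpha,\cdot)$ is monotone along $\Gamma_b$ with minimum at $e^{i\pi/3}$, for \emph{every} $\alpha>0$; equivalently $\theta(\alpha,\tfrac12+iy)$ is non-increasing on $[\tfrac12,\tfrac{\sqrt3}2]$. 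This is exactly $Y_a\ge0$.

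The hard part is precisely this global interior sign: the endpoint data alone cannot exclude interior critical points, and ruling them out is the substance of Montgomery's theorem. If one prefers a self-contained argument in place of the citation, I would follow Montgomery's scheme: using the reduction of dimension (Lemma~\ref{Lemma3}) and splitting by the parity of $n$, write $\theta(\alpha,\tfrac12+iy)$ as a sum of two products of one-dimensional Jacobi theta constants, differentiate in $y$, extract the positive prefactor, and show that the remaining lattice sum keeps one sign throughout the interval; the quotient bounds for derivatives of $\vartheta$ in Lemmas~\ref{Lemma2a}--\ref{Lemma2c} are the natural tool to dominate the high-frequency tail and reduce the sign determination to the leading shells. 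Carrying this sign control across the \emph{whole} interval is the essential difficulty, whereas the reduction to $\theta_y\le0$ and the endpoint vanishing are routine.
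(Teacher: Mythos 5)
Your proposal is correct and takes essentially the same route as the paper: the paper disposes of this lemma with a one-line appeal to Montgomery \cite{Mon1988}, which is exactly your reduction $Y_a\ge 0\Leftrightarrow \partial_y\theta(\alpha,\tfrac12+iy)\le 0$ on $[\tfrac12,\tfrac{\sqrt3}{2}]$ followed by transport to the arc $\Gamma_b$ and Montgomery's monotonicity of the Gaussian theta function there. Your additional remarks (endpoint criticality, the sketch of a self-contained proof via dimension reduction) are consistent with the paper's surrounding machinery but are not needed for, nor present in, the paper's own proof.
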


To prove Theorem \ref{Th42}, we use the deformation
\begin{equation}\aligned\label{Py}
\frac{\partial}{\partial y}\Big(
\theta(\alpha,\frac{1}{2}+iy)-\pi\alpha M(\alpha,\frac{1}{2}+iy)
\Big)
=-2\pi\alpha Y_a\cdot(1-\frac{\pi\alpha}{2}\cdot\frac{Y_b}{Y_a}).
\endaligned\end{equation}

Here we use the following notations:

We first have two universal critical points of $\Big(
\theta(\alpha,\frac{1}{2}+iy)-\pi\alpha M(\alpha,\frac{1}{2}+iy)
\Big)$ independent of $\alpha$. It is a consequence of Lemma \ref{Lemma3.3}.

\begin{lemma}\label{Lemma5.2} For $\alpha\geq1$, both $\theta_y(\alpha,\frac{1}{2}+iy), M_y(\alpha,\frac{1}{2}+iy)$ satisfy the following property
\begin{equation}\aligned
(\cdot)\leq0\;\;\hbox{for}\;\;y\in[\frac{1}{2},\frac{\sqrt3}{2}],
\endaligned\end{equation}
with $"="$ holds if only $y\in\{\frac{1}{2},\frac{\sqrt3}{2}\}$.
It follows that $\Big(
\theta(\alpha,\frac{1}{2}+iy)-\pi\alpha M(\alpha,\frac{1}{2}+iy)
\Big)$ has two critical points $y=\frac{1}{2},\frac{\sqrt3}{2}$ independent of $\alpha$.
\end{lemma}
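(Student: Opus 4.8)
The plan is to reduce everything to the two summation identities recorded just above, $\theta_y(\alpha,\frac12+iy)=-\pi\alpha Y_a$ and $\pi\alpha M_y(\alpha,\frac12+iy)=\pi\alpha Y_a-(\pi\alpha)^2Y_b$, and to treat the three assertions — the sign of $\theta_y$, the sign of $M_y$, and the critical-point conclusion — separately. Writing $a=n^2$, $b=(m+\frac n2)^2/y^2$ and $t=yn^2+(m+\frac n2)^2/y=y(a+b)$, one checks $Y_a=\sum(a-b)e^{-\pi\alpha t}$ and $Y_b=\sum(a-b)\,t\,e^{-\pi\alpha t}$, so that $M_y=\sum_{(n,m)\neq(0,0)}(a-b)(1-\pi\alpha t)e^{-\pi\alpha t}$ and, using $M=-\frac1\pi\partial_\alpha\theta$ (Lemma~\ref{Lemma2}), $M_y=\partial_\alpha(\alpha Y_a)$.

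For the critical-point statement I would argue purely by symmetry. Both $\theta(\alpha,\cdot)$ and $M(\alpha,\cdot)$ are modular invariant (classically for $\theta$, by Lemma~\ref{G111} for $M$), so the reflection identities of Lemma~\ref{Lemma3.4} apply to $\theta$, to $M$, and hence to $\theta-\pi\alpha M$. The relation $\mathcal W(\frac12+iy)=\mathcal W(\frac12+i\frac1{4y})$ has fixed point $y=\frac12$; differentiating and evaluating there forces $\partial_y\mathcal W=0$, and the first identity of Lemma~\ref{Lemma3.3} at $z=i$ gives the same conclusion, while Lemma~\ref{Lemma3.4} yields $\partial_y\mathcal W(\frac12+iy)|_{y=\sqrt3/2}=0$ directly. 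Thus $y=\frac12,\sqrt3/2$ are critical points of $\theta-\pi\alpha M$ for every $\alpha$, the two universal critical points claimed.

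The sign of $\theta_y$ is immediate: $\theta_y=-\pi\alpha Y_a\le0$ by the positivity $Y_a\ge0$ recorded above (\cite{Mon1988}), with equality only at $y=\frac12,\sqrt3/2$ by the strict form of that positivity. The real difficulty is $M_y\le0$. Here the key elementary observation is that on $[\frac12,\sqrt3/2]$ the shortest nonzero value is $t_{\min}=y+\frac1{4y}\ge1$, attained at the $(1,0)$-type vectors; consequently, for $\alpha\ge1$ one has $\pi\alpha t\ge\pi>1$ on every nonzero term, each factor $1-\pi\alpha t$ is negative, and $M_y\le0$ becomes equivalent to $\sum_{(n,m)\neq(0,0)}(a-b)(\pi\alpha t-1)e^{-\pi\alpha t}\ge0$, i.e.\ to the monotonicity statement that $\alpha Y_a(\alpha)$ is nonincreasing for $\alpha\ge1$.

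This monotonicity is the main obstacle, and it is genuinely delicate because the coefficient $a-b$ changes sign, so the bare positivity $Y_a\ge0$ is not enough (a nonnegative sum of exponentials may well have $\alpha Y_a$ increasing). Concretely, the horizontal vectors of type $(1,0)$ and $(2,-1)$ (where $a-b>0$) drive $M_y$ negative while the vertical vectors $(0,\pm1)$ (where $a-b<0$) drive it positive, and the two nearly cancel, so the residual negativity must be captured quantitatively. I would do this in the paper's own style: use the dimension reduction of Lemma~\ref{Lemma3} to split the sum into a main part over $\{|n|,|m|\le2\}$ and an exponentially small tail bounded through Lemmas~\ref{Lemma2a}--\ref{Lemma2c}, then check the sign of the main part on $[\frac12,\sqrt3/2]\times[1,\infty)$, where the increasing weight $\pi\alpha t-1$ works in our favour by amplifying the dominant positive-coefficient terms. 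A cleaner alternative, if available, would be to upgrade Montgomery's positivity to a Laplace representation $Y_a(\alpha)=\int_1^\infty e^{-\pi\alpha s}\,d\mu(s)$ with $\mu\ge0$ supported on $s\ge1$, since then $-\frac{d}{d\alpha}\log Y_a=\pi\langle s\rangle_\mu\ge\pi>1\ge\frac1\alpha$ gives $(\alpha Y_a)'\le0$ at once.
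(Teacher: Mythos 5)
Your treatment of two of the three claims is sound and follows essentially the route the paper (tersely) indicates: $\theta_y(\alpha,\tfrac12+iy)=-\pi\alpha Y_a\le 0$ is exactly Montgomery's positivity of $Y_a$ quoted in the unnumbered lemma just above, and the two universal critical points follow from the reflection identities of Lemma \ref{Lemma3.4} (the fixed point of $y\mapsto\tfrac1{4y}$ at $y=\tfrac12$, and the stated vanishing of $\partial_y\mathcal W(\tfrac12+iy)$ at $y=\tfrac{\sqrt3}{2}$). Your identity $M_y(\alpha,\tfrac12+iy)=\partial_\alpha\bigl(\alpha Y_a\bigr)=Y_a-\pi\alpha Y_b$ is correct and is a clean reformulation consistent with the paper's relation $\pi\alpha M_y=\pi\alpha Y_a-(\pi\alpha)^2Y_b$; likewise your observation that $t=yn^2+(m+\tfrac n2)^2/y\ge y+\tfrac1{4y}\ge1$ on $[\tfrac12,\tfrac{\sqrt3}{2}]$ is correct.

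The gap is the sign of $M_y$, which you identify as the main obstacle but do not close. Neither completion route is carried out, and the ``cleaner alternative'' is not actually available: a representation $Y_a(\alpha)=\int_1^\infty e^{-\pi\alpha s}\,d\mu(s)$ with $\mu\ge0$ would require the coefficients $a-b=n^2-(m+\tfrac n2)^2/y^2$ to become nonnegative after some regrouping, which is precisely the sign-change obstruction you flag two sentences earlier (the $(n,m)=(0,\pm1)$ terms carry $a-b=-1/y^2<0$, and Montgomery's $Y_a\ge0$ for all $\alpha$ does not upgrade to positivity of the underlying measure). The first route (truncate at $|n|,|m|\le2$ and bound the tail) is the plausible one, but the entire content of the inequality $\pi\alpha Y_b\ge Y_a$ — and of the strictness claim away from the endpoints — lives in that unexecuted estimate, since as you note the positive and negative contributions nearly cancel. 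In fairness, the paper itself gives no explicit proof of this part either (it attributes Lemma \ref{Lemma5.2} to Lemma \ref{Lemma3.3} and to \cite{Mon1988}), and within the paper's logic the needed lower bound on $\pi\alpha Y_b/Y_a$ is what Lemmas \ref{Lemma5.3} and \ref{Lemma5.4} are later brought in to control; but as a self-contained proof of the stated lemma, your argument for $M_y\le0$ is missing its decisive step.
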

We shall further investigate the critical point of $\Big(
\theta(\alpha,\frac{1}{2}+iy)-\pi\alpha M(\alpha,\frac{1}{2}+iy)
\Big)$. In fact, we prove that

\begin{proposition}\label{Prop5.1} Assume that $\alpha\geq1$. Then
on the interval $[\frac{1}{2},\frac{\sqrt3}{2}]$, the function $\Big(
\theta(\alpha,\frac{1}{2}+iy)-\pi\alpha M(\alpha,\frac{1}{2}+iy)
\Big)$ admits at most one critical point except $y=\frac{1}{2},\frac{\sqrt3}{2}$.
Furthermore, the possible additional critical point is a local maximum, and
\begin{equation}\aligned
\min_{y\in[\frac{1}{2},\frac{\sqrt3}{2}]}\Big(
\theta(\alpha,\frac{1}{2}+iy)-\pi\alpha M(\alpha,\frac{1}{2}+iy)
\Big)\;\;
\hbox{is achieved at}\;\;\frac{1}{2}\;\hbox{or}\;\frac{\sqrt3}{2}.
\endaligned\end{equation}
\end{proposition}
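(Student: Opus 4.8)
The plan is to study the one–variable function $F(y):=\theta(\alpha,\tfrac12+iy)-\pi\alpha M(\alpha,\tfrac12+iy)$ on $[\tfrac12,\tfrac{\sqrt3}{2}]$ entirely through the deformation \eqref{Py}, which I rewrite as
\begin{equation}\aligned\nonumber
F'(y)=-2\pi\alpha\,Y_a\Big(1-\frac{\pi\alpha}{2}\,\frac{Y_b}{Y_a}\Big).
\endaligned\end{equation}
By the sign lemma for $Y_a$ we have $Y_a\ge 0$ on the interval, and since $\theta_y(\alpha,\tfrac12+iy)=-\pi\alpha Y_a$, Lemma \ref{Lemma5.2} forces $Y_a>0$ on the open interval $(\tfrac12,\tfrac{\sqrt3}{2})$ with $Y_a=0$ exactly at the two endpoints (which are the two universal critical points of $F$). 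Hence in the interior the prefactor $-2\pi\alpha Y_a$ is strictly negative, and the sign of $F'$ is governed solely by $g(y):=1-\tfrac{\pi\alpha}{2}\,\tfrac{Y_b}{Y_a}$; in particular the interior critical points of $F$ are precisely the solutions of $\tfrac{Y_b}{Y_a}=\tfrac{2}{\pi\alpha}$.

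The whole proposition then reduces to one monotonicity statement: that $y\mapsto \tfrac{Y_b}{Y_a}$ is strictly \emph{decreasing} on $(\tfrac12,\tfrac{\sqrt3}{2})$. Granting this, $g$ is strictly increasing, so $g(y)=0$ has at most one root, giving at most one interior critical point besides $y=\tfrac12,\tfrac{\sqrt3}{2}$. Moreover at such a root $g$ passes from negative to positive, so $F'=-2\pi\alpha Y_a\,g$ passes from positive to negative, i.e.\ the interior critical point is a local maximum. Consequently $F$ rises from $y=\tfrac12$ to this interior maximum and then falls to $y=\tfrac{\sqrt3}{2}$ (and if no interior root exists, the monotone $g$ makes $F$ monotone), so in every case $\min_{[\frac12,\frac{\sqrt3}{2}]}F$ is attained at an endpoint $y=\tfrac12$ or $y=\tfrac{\sqrt3}{2}$, which is exactly the asserted conclusion.

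To prove $\partial_y(Y_b/Y_a)<0$ I would follow the scheme of Section 5. Since $\theta_y=M_y=0$ at both endpoints, both $Y_a$ and $Y_b$ vanish there (to first order), so the Wronskian $Y_a\partial_yY_b-Y_b\partial_yY_a$ degenerates at the endpoints; I therefore split the interval into endpoint neighborhoods and a middle region. Away from the endpoints I would establish $\partial_y(Y_b/Y_a)<0$ directly by expanding $Y_a=\mathcal{Y}_a+\mathcal{E}_a$ and $Y_b=\mathcal{Y}_b+\mathcal{E}_b$ into a dominant block (the finitely many terms minimizing the exponent $yn^2+\tfrac{(m+n/2)^2}{y}$) plus an exponentially smaller error, computing the dominant Wronskian $\partial_y\mathcal{Y}_b\cdot\mathcal{Y}_a-\mathcal{Y}_b\cdot\partial_y\mathcal{Y}_a$ in closed form and checking its sign, just as in Lemma \ref{Lemma514} and \eqref{Xab6}--\eqref{Xab7}. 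Near each endpoint, where the plain ratio is indeterminate, I would instead analyze $\partial_y\tfrac{\partial_yY_b}{\partial_yY_a}$ and invoke the monotonicity rule (see \cite{And}): because $Y_a=Y_b=0$ there, monotonicity of the derivative ratio transfers to monotonicity of $Y_b/Y_a$, exactly as in Lemma \ref{Lemma513}.

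The main obstacle is precisely this quantitative estimate $\partial_y(Y_b/Y_a)<0$, uniform in $\alpha\ge1$ and $y\in(\tfrac12,\tfrac{\sqrt3}{2})$. Unlike the $\Gamma_a$ case, the dominant block here is the full set of six ``hexagonal neighbors'' $(0,\pm1),(\pm1,0),(\pm1,\mp1)$, whose exponents coincide at $y=\tfrac{\sqrt3}{2}$ and compete throughout the interval; one must identify this block correctly, verify by explicit but lengthy computation that the dominant Wronskian keeps the right sign across $[\tfrac12,\tfrac{\sqrt3}{2}]$, and bound the error terms by an exponentially small quantity so they cannot overturn the sign. The simultaneous first–order degeneracy at \emph{both} endpoints, which compels the derivative–ratio argument and a careful matching between the endpoint and middle regimes, is the most delicate part of the argument.
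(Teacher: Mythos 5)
Your reduction of the proposition to the single statement ``$Y_b/Y_a$ is strictly decreasing on $(\tfrac12,\tfrac{\sqrt3}{2})$'' is logically sound, but that statement is \emph{false} for large $\alpha$, so the proof as proposed cannot be completed. To see this, look at the dominant block you yourself identify: for $y\in(\tfrac12,\tfrac{\sqrt3}{2})$ the exponent $y+\tfrac{1}{4y}$ of the four terms $(n,m)=\pm(1,0),\pm(1,-1)$ is strictly smaller than the exponent $\tfrac1y$ of $(0,\pm1)$ (they coincide only at $y=\tfrac{\sqrt3}{2}$), so as $\alpha\to\infty$, at any fixed interior $y$,
\begin{equation}\aligned\nonumber
\frac{Y_b}{Y_a}\;\longrightarrow\;
y\,\frac{1-\frac{1}{16y^4}}{1-\frac{1}{4y^2}}
\;=\;y\Bigl(1+\frac{1}{4y^2}\Bigr)\;=\;y+\frac{1}{4y},
\endaligned\end{equation}
whose derivative $1-\frac{1}{4y^2}$ is strictly \emph{positive} for $y>\tfrac12$. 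Hence $Y_b/Y_a$ is increasing, not decreasing, on the bulk of the interval once $\alpha$ is large, and the ``explicit but lengthy computation that the dominant Wronskian keeps the right sign'' that you defer to would come out with the wrong sign. The conclusion of the proposition survives in that regime only because $\frac{\pi\alpha}{2}\cdot\frac{Y_b}{Y_a}$ is then uniformly larger than $1$, so $g=1-\frac{\pi\alpha}{2}\frac{Y_b}{Y_a}$ has no root at all --- but that is a different argument from monotonicity of the ratio, and your proposal does not contain it.

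This is exactly why the paper splits the parameter range rather than proving one uniform monotonicity: for $\alpha\geq\frac65$ it establishes the lower bound $\frac{\pi\alpha}{2}\cdot\frac{Y_b}{Y_a}\geq\frac{11}{10}>1$ (Lemma \ref{Lemma5.3}, via the approximation $P(\alpha,y)$ of Lemma \ref{Lemma5.3a}), which excludes interior critical points outright and makes the functional monotone on the interval; only for $\alpha\in[1,\frac65]$, where the competing exponential blocks are of comparable size and the ratio genuinely decreases, does it prove $\partial_y(Y_b/Y_a)\leq0$ (Lemma \ref{Lemma5.4}). Everything else in your write-up --- the sign analysis of the prefactor $-2\pi\alpha Y_a$ via \eqref{Py}, the ``increasing $g$ implies at most one interior root which is a local maximum'' deduction, and the endpoint degeneracy handled by the derivative-ratio monotonicity rule --- matches the paper and is correct; the fix is to replace the claimed global monotonicity by the two-regime argument above.
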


By Proposition \ref{Prop5.1}, all various kinds shapes of the function $\Big(
\theta(\alpha,\frac{1}{2}+iy)-\pi\alpha M(\alpha,\frac{1}{2}+iy)
\Big)$ on the interval $[\frac{1}{2},\frac{\sqrt3}{2}]$ are illustrated in Picture \ref{Y-Shape}. Generally speaking, it admits three kinds of shapes, {\bf $(1):$ decreasing; $(2):$ first increasing and then decreasing; $(3):$ increasing.}

By \eqref{Py}, to locate the possible additional critical point of $\Big(
\theta(\alpha,\frac{1}{2}+iy)-\pi\alpha M(\alpha,\frac{1}{2}+iy)
\Big)$, it suffices to solve the following equation
\begin{equation}\aligned\label{Yab}
\frac{\pi\alpha}{2}\cdot\frac{Y_b}{Y_a}=1, \;\;y\in[\frac{1}{2},\frac{\sqrt3}{2}].
\endaligned\end{equation}
Here $Y_a, Y_b$ are denoted in \eqref{Nab}.

We first show that \eqref{Yab} has no solution for $\alpha\geq\frac{6}{5}$, i.e., $\Big(
\theta(\alpha,\frac{1}{2}+iy)-\pi\alpha M(\alpha,\frac{1}{2}+iy)
\Big)$ has no other critical point on $[\frac{1}{2},\frac{\sqrt3}{2}]$ except $y=\frac{1}{2},\frac{\sqrt3}{2}$.
This is done by a lower bound estimate.
\begin{lemma}\label{Lemma5.3} For $\alpha\geq\frac{6}{5}$,
\begin{equation}\aligned
\frac{\pi\alpha}{2}\cdot\frac{Y_b}{Y_a}\geq\frac{11}{10}>1\;\;\hbox{for}\;\;y\in[\frac{1}{2},\frac{\sqrt3}{2}].
\endaligned\end{equation}
\end{lemma}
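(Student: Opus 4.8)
The plan is to prove the equivalent polynomial-weighted inequality $\frac{\pi\alpha}{2}Y_b-\frac{11}{10}Y_a\ge 0$ on $[\frac12,\frac{\sqrt3}{2}]$, which—since $Y_a\ge 0$ there, and in fact $Y_a>0$ on the open interval because $\theta_y=-\pi\alpha Y_a<0$ by Lemma~\ref{Lemma5.2}—returns $\frac{\pi\alpha}{2}\frac{Y_b}{Y_a}\ge\frac{11}{10}$ at every interior point, exactly what is needed to exclude solutions of \eqref{Yab}. Writing $s:=m+\tfrac n2$ and $Q(n,m;y):=yn^2+\tfrac{s^2}{y}$, I would start from \eqref{Nab} and expand
\begin{equation}\nonumber
\frac{\pi\alpha}{2}Y_b-\frac{11}{10}Y_a=\sum_{(n,m)\in\mathbb{Z}^2}\Big(\tfrac{\pi\alpha}{2}\,y\big(n^4-\tfrac{s^4}{y^4}\big)-\tfrac{11}{10}\big(n^2-\tfrac{s^2}{y^2}\big)\Big)e^{-\pi\alpha Q(n,m;y)},
\end{equation}
and then split this into a dominant part $D(\alpha,y)$ over the finitely many points of smallest exponent and an error part $E(\alpha,y)$ over the rest, exactly in the style of \eqref{Xab2}--\eqref{Xab5}.

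On $[\frac12,\frac{\sqrt3}{2}]$ the smallest exponents are realized by $(0,\pm1)$ with $Q=\tfrac1y$, by the four points $(\pm1,0),(\pm1,\mp1)$ with $Q=y+\tfrac1{4y}$, and by $(\pm2,\mp1)$ with $Q=4y$; every other lattice point satisfies $Q\ge Q_0$ for a fixed $Q_0>3$ throughout the interval, so that $|E|\le Ce^{-\pi\alpha Q_0}\le 10^{-5}$ for $\alpha\ge\frac65$ by a crude geometric bound on $\sum e^{-\pi\alpha Q}$, which the Lemma~\ref{Lemma2a}-type estimates make rigorous. For $y$ bounded away from $\frac12$, say $y\in[\frac12+\delta,\frac{\sqrt3}{2}]$, the four points $(\pm1,0),(\pm1,\mp1)$ dominate $D$: their coefficient $\tfrac{\pi\alpha}{2}y\big(1-\tfrac1{16y^4}\big)-\tfrac{11}{10}\big(1-\tfrac1{4y^2}\big)$ is positive with a definite gap because $\alpha\ge\frac65$ lies strictly above the threshold $\tfrac1{\alpha_2}\approx1.087$, and the exponential weight $e^{-\pi\alpha(y+\frac1{4y})}$ dominates that of the negative $(0,\pm1)$ term (as $y+\tfrac1{4y}\le\tfrac1y$ on the interval). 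Hence $D$ exceeds an explicit positive constant swamping $|E|$, giving the claim on this subinterval.

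The delicate point is the square endpoint $y=\frac12$ (the lattice $z=i$). There both four-point coefficients carry the factor $4y^2-1$—indeed $1-\tfrac1{4y^2}=\tfrac{4y^2-1}{4y^2}$ and $y-\tfrac1{16y^3}=\tfrac{(4y^2-1)(4y^2+1)}{16y^3}$—so they vanish at $y=\frac12$, while $(0,\pm1)$ and $(\pm2,\mp1)$ share the common exponent $Q=2$ there and cancel in pairs; thus the whole expression vanishes at $y=\frac12$, reflecting $Y_a(\frac12)=Y_b(\frac12)=0$, and the ``bounded below by a positive constant'' strategy necessarily breaks down near $\frac12$. This degeneracy is the main obstacle. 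I would resolve it by a local expansion in $(y-\frac12)$: differentiating the eight-point dominant part, the vanishing four-point coefficients contribute $+16(y-\frac12)e^{-\pi\alpha}$ while the $(0,\pm1)/(\pm2,\mp1)$ pair contributes only an $O(e^{-2\pi\alpha})$ correction, so that
\begin{equation}\nonumber
\Big(\tfrac{\pi\alpha}{2}\,\partial_y Y_b-\tfrac{11}{10}\,\partial_y Y_a\Big)\Big|_{y=\frac12}=\Big(8\pi\alpha-\tfrac{88}{5}\Big)e^{-\pi\alpha}+O(e^{-2\pi\alpha})>0\qquad\text{for }\alpha\ge\tfrac65 .
\end{equation}
By continuity this derivative remains positive on $[\frac12,\frac12+\delta]$, so the combination, being $0$ at $y=\frac12$, is nonnegative there; gluing this with the dominant-term estimate on $[\frac12+\delta,\frac{\sqrt3}{2}]$ yields $\frac{\pi\alpha}{2}Y_b-\frac{11}{10}Y_a\ge0$ on the entire interval, and hence the asserted ratio bound.
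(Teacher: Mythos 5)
Your reduction to $\frac{\pi\alpha}{2}Y_b-\frac{11}{10}Y_a\ge 0$ and your local expansion at $y=\tfrac12$ are fine, but there is a genuine gap at the other endpoint: the expression also vanishes at $y=\tfrac{\sqrt3}{2}$. Indeed $z=\tfrac12+i\tfrac{\sqrt3}{2}=e^{i\pi/3}$ is the hexagonal point, where Lemma \ref{Lemma5.2} (via Lemma \ref{Lemma3.4}) gives $\theta_y=M_y=0$, hence $Y_a=Y_b=0$ there. So your claim that on $[\tfrac12+\delta,\tfrac{\sqrt3}{2}]$ the dominant part $D$ ``exceeds an explicit positive constant swamping $|E|$'' is false near $y=\tfrac{\sqrt3}{2}$. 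Concretely, in the $(p,q)$ parametrization the coefficients of the four points $(\pm1,\pm1)$ and the two points $(0,\pm2)$ in $\frac{\pi\alpha}{2}Y_b-\frac{11}{10}Y_a$ cancel exactly at $y=\tfrac{\sqrt3}{2}$ (both carry the common exponent $\tfrac{2}{\sqrt3}$ there, and $4\cdot\tfrac{\sqrt3}{2}\tfrac{8}{9}=2\cdot\tfrac{\sqrt3}{2}\tfrac{16}{9}$, $4\cdot\tfrac23=2\cdot\tfrac43$), so $D$ collapses to the $(\pm2,0)$ contribution of order $e^{-2\sqrt3\pi\alpha}$; meanwhile the four neglected points $(\pm1,\pm3)$ have the \emph{same} exponent $2\sqrt3$ at $y=\tfrac{\sqrt3}{2}$ and contribute $\approx -9\times10^{-5}$ at $\alpha=\tfrac65$, i.e.\ exactly the same order as $D$ (and larger than your claimed $|E|\le10^{-5}$). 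The total is $0$, not bounded below by a positive constant, so the gluing argument does not close; you would need a second one-sided local analysis at $y=\tfrac{\sqrt3}{2}$ symmetric to the one you did at $y=\tfrac12$, which you have not supplied.

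This is precisely why the paper works with the \emph{ratio} rather than the difference: it proves (Lemmas \ref{Lemma5.3a}--\ref{Lemma5.3c}) the sandwich $P(\alpha,y)-\tfrac1{20}\le \tfrac{Y_b}{Y_a}\le P(\alpha,y)$ with $P=Y_{bp}/Y_{ap}$ built from the same eight lattice points you isolate, and then checks $\frac{\pi\alpha}{2}(P-\tfrac1{20})>\tfrac{11}{10}$. In the ratio the endpoint cancellations occur simultaneously in numerator and denominator ($P(\alpha,\tfrac{\sqrt3}{2})=2\sqrt3$, well above the threshold), so no degenerate-endpoint surgery is needed. If you want to keep the difference formulation, you must either add the missing local argument at $y=\tfrac{\sqrt3}{2}$ or switch to the ratio as the paper does.
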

Lemma \ref{Lemma5.3} is proved by an approximation of $\frac{Y_b}{Y_a}$. Indeed, we have
\begin{lemma}\label{Lemma5.3a} For $\alpha\geq\frac{6}{5}$, it holds that
\begin{equation}\aligned
P(\alpha,y)-\frac{1}{20}\leq\frac{Y_b}{Y_a}\leq P(\alpha,y)\;\;\hbox{for}\;\;y\in[\frac{1}{2},\frac{\sqrt3}{2}].
\endaligned\end{equation}

Here
\begin{equation}\aligned
P(\alpha,y):=\frac{2y(1-\frac{1}{16y^4}) e^{-\pi\alpha(y-\frac{3}{4y})}-\frac{1}{y^3}+16y e^{-\pi\alpha(4y-\frac{1}{y})}}
{2(1-\frac{1}{4y^2}) e^{-\pi\alpha(y-\frac{3}{4y})}-\frac{1}{y^2}+4 e^{-\pi\alpha(4y-\frac{1}{y})}}.
\endaligned\end{equation}
\end{lemma}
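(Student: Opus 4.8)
The plan is to isolate in both $Y_a$ and $Y_b$ the three lowest ``energy shells'' and to show that $P(\alpha,y)$ is exactly the ratio of the resulting truncations, the remaining shells contributing only a controlled error. Writing $E(n,m):=yn^2+\frac{(m+n/2)^2}{y}$ for the exponent appearing in \eqref{Nab}, one checks that for $y\in[\frac12,\frac{\sqrt3}{2}]$ the smallest values of $E$ occur on three shells: $(n,m)\in\{(0,\pm1)\}$ with $E=\frac1y$; $(n,m)\in\{(1,0),(1,-1),(-1,0),(-1,1)\}$ with $E=y+\frac1{4y}$; and $(n,m)\in\{(2,-1),(-2,1)\}$ with $E=4y$. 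After factoring out $e^{-\pi\alpha/y}$, I would denote by $\widetilde{Y}_a,\widetilde{Y}_b$ the sums of these three shells inside $Y_a,Y_b$; evaluating the coefficients $n^2-\frac{(m+n/2)^2}{y^2}$ and $y\bigl(n^4-\frac{(m+n/2)^4}{y^4}\bigr)$ shell by shell shows $\widetilde{Y}_b/\widetilde{Y}_a=P(\alpha,y)$, since the common factor (the multiplicity-induced $2$ together with $e^{-\pi\alpha/y}$) cancels in the quotient. This is the direct analogue here of the explicit formulas \eqref{Xab4}.

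Next I would set $Y_a=\widetilde{Y}_a+Y_a^{e}$ and $Y_b=\widetilde{Y}_b+Y_b^{e}$, where $Y_a^{e},Y_b^{e}$ collect all shells with $E\ge y+\frac{9}{4y}$, and use the identity
\[
\frac{Y_b}{Y_a}-P(\alpha,y)=\frac{Y_b^{e}\,\widetilde{Y}_a-Y_a^{e}\,\widetilde{Y}_b}{\widetilde{Y}_a\,\bigl(\widetilde{Y}_a+Y_a^{e}\bigr)} .
\]
Here $\widetilde{Y}_a+Y_a^{e}=Y_a\ge0$ by the sign lemma for $Y_a$ on $[\frac12,\frac{\sqrt3}{2}]$, while $\widetilde{Y}_a>0$ from the explicit three-shell formula, so the denominator is nonnegative. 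The upper bound $\frac{Y_b}{Y_a}\le P$ then reduces to $Y_b^{e}\widetilde{Y}_a-Y_a^{e}\widetilde{Y}_b\le0$, and the lower bound to $Y_b^{e}\widetilde{Y}_a-Y_a^{e}\widetilde{Y}_b\ge-\frac1{20}\,\widetilde{Y}_a\,Y_a$. The leading contribution to both $Y_a^{e}$ and $Y_b^{e}$ comes from the four terms at $E=y+\frac{9}{4y}$, whose coefficients $1-\frac{9}{4y^2}$ and $y\bigl(1-\frac{81}{16y^4}\bigr)$ are \emph{both negative} on the interval; every further shell is dominated by a geometric tail that, using $\alpha\ge\frac65$, is smaller by a factor $O(e^{-3\pi\alpha/y})$, exactly in the spirit of \eqref{Xab5}. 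Writing $Y_a^{e}=-a+(\text{tail})$, $Y_b^{e}=-b+(\text{tail})$ with $a,b>0$, the sign of the numerator is governed by the \emph{fourth-shell partial ratio} $\frac{b}{a}=\frac{y(1-\frac{81}{16y^4})}{1-\frac{9}{4y^2}}$ against $P$: the inequality $\frac{b}{a}\ge P$ gives $Y_b^{e}\widetilde{Y}_a-Y_a^{e}\widetilde{Y}_b\le0$ and hence the upper bound.

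The main obstacle is the behaviour near the hexagonal endpoint $y=\frac{\sqrt3}{2}$. There $Y_a\to0$ (the sign lemma forces equality at both endpoints), and simultaneously the fourth-shell energy $y+\frac{9}{4y}$ collides with the retained third-shell energy $4y$ (both equal $2\sqrt3$ at $y=\frac{\sqrt3}{2}$); consequently $Y_a^{e}$ is no longer exponentially small relative to $\widetilde{Y}_a$, a crude bound on $Y_a^{e}/\widetilde{Y}_a$ fails, and $Y_b/Y_a$ must be read as a $0/0$ limit. Moreover the partial ratio $\frac{b}{a}$ equals $P$ exactly at $y=\frac{\sqrt3}{2}$, so the decisive inequality $\frac{b}{a}\ge P$ is tight precisely there. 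I would resolve this by keeping the sign information rather than discarding it: the negativity of the fourth-shell coefficients combined with $Y_a=\widetilde{Y}_a+Y_a^{e}\ge0$ pins the numerator to the correct (nonpositive) sign throughout, and the endpoint values are recovered by continuity and l'H\^opital once the collision term is merged into the retained shells.

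Finally, the quantitative constant $\frac1{20}$ comes from bounding the deficit $P-\frac{Y_b}{Y_a}$: this equals the gap between $\frac{b}{a}$ and $P$ weighted by the relative mass of the fourth shell, and I would estimate it over the compact range $(\alpha,y)\in[\frac65,\infty)\times[\frac12,\frac{\sqrt3}{2}]$, the $\alpha$-dependence entering monotonically through the factors $e^{-\pi\alpha(\cdot)}$, so that the supremum over $\alpha$ is attained at the extreme value $\alpha=\frac65$. Away from $y=\frac{\sqrt3}{2}$ the gap is exponentially small by the tail estimates \eqref{Xab5}; near $y=\frac{\sqrt3}{2}$ it is controlled by the vanishing of both $\widetilde{Y}_a$ and the collision contribution at the same rate. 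The remaining work is then a straightforward, if tedious, estimation of the geometric remainders, yielding $P-\frac1{20}\le\frac{Y_b}{Y_a}\le P$ as claimed.
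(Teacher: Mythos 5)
Your setup coincides exactly with the paper's: the three shells you isolate are precisely the points $(p,q)\in\{\pm(1,\pm1),\pm(2,0),\pm(0,2)\}$ in the reindexed sums of Lemma \ref{Lemma5.3b} (with $p=n$, $q=2m+n$), and after cancelling the common factor $2e^{-\pi\alpha/y}$ the truncated ratio is indeed $P(\alpha,y)=Y_{bp}/Y_{ap}$. Your structural observation that every lattice point contributes coefficients in the exact ratio $c_b/c_a=E(n,m)$ is correct and is the right way to organize the error, so up to this point you are reproducing the paper's argument.

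The gap is in the error control, specifically in the claim that the negativity of the fourth-shell coefficients ``pins the numerator to the correct (nonpositive) sign throughout.'' Writing each error term's contribution to $Y_b^{e}\widetilde Y_a-Y_a^{e}\widetilde Y_b$ as $c_a\,e^{-\pi\alpha E}\,\widetilde Y_a\,(E-P)$, a shell helps the upper bound only when $c_a(E-P)\le 0$. The shell $(p,q)=(\pm2,\pm2)$, i.e.\ $(n,m)=\pm(2,0),\pm(2,-2)$ with $E=4y+\tfrac1y$, has $c_a=4-\tfrac{1}{y^2}>0$ for $y>\tfrac12$ and $E>P$ near $y=\tfrac{\sqrt3}{2}$, so it pushes the numerator the wrong way; meanwhile your controlling negative term at $E=y+\tfrac{9}{4y}$ is weighted by $E-P$, which (as you yourself note) vanishes at $y=\tfrac{\sqrt3}{2}$. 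So at and near the hexagonal endpoint the sign argument collapses: the good term degenerates while the bad terms survive individually, and they are neutralized only by the hexagonal-orbit cancellation $\sum c_a=0$ on each full shell at $y=\tfrac{\sqrt3}{2}$ (the same mechanism forcing $Y_a\to0$), which your argument never invokes. Two further misstatements feed into this: the dominant error shell is not uniformly $E=y+\tfrac{9}{4y}$ (at $y=\tfrac12$ one has $4y+\tfrac1y=4<5=y+\tfrac{9}{4y}$, and only the vanishing of $4-\tfrac1{y^2}$ there saves you), and $\widetilde Y_a$ is \emph{not} positive on the whole interval --- the retained point set is symmetric under $(p,q)\mapsto(q,p)$, so $Y_{ap}(\alpha,\tfrac12)=0$ (indeed the displayed formula for $P$ is itself $0/0$ at $y=\tfrac12$), which invalidates the sign of the denominator in your key identity at that endpoint. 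What is actually required --- and what the paper also leaves implicit --- is a genuine quantitative two-sided estimate tracking the simultaneous degeneration of $Y_a$, $\widetilde Y_a$ and the competing shells at both endpoints; ``continuity and l'H\^opital'' names the difficulty rather than resolving it.
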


We construct Lemma \ref{Lemma5.3a} by
a reformulation of $Y_a, Y_b$.
\begin{lemma}[New form of $Y_b$]\label{Lemma5.3b} It holds that
\begin{equation}\aligned
Y_a&=\sum_{p\equiv q(\mod2)}(p^2-\frac{q^2}{4y^2}) e^{-\pi\alpha (p^2y+\frac{q^2}{4y})},\\
Y_b&=\sum_{p\equiv q(\mod2)}y(p^4-\frac{q^4}{16y^4}) e^{-\pi\alpha (p^2y+\frac{q^2}{4y})}.
\endaligned\end{equation}
\end{lemma}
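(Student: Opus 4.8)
The plan is to prove Lemma \ref{Lemma5.3b} by a single linear change of summation index designed to clear the half-integer shift $m+\tfrac{n}{2}$ appearing in the definitions \eqref{Nab}. Setting $p:=n$ and $q:=2m+n$, one has $m+\tfrac{n}{2}=\tfrac{q}{2}$, so the two building blocks of the summand transform cleanly:
$$ yn^2+\frac{(m+\frac{n}{2})^2}{y}=p^2y+\frac{q^2}{4y},\qquad n^2-\frac{(m+\frac{n}{2})^2}{y^2}=p^2-\frac{q^2}{4y^2}, $$
and likewise $n^4-\frac{(m+\frac{n}{2})^4}{y^4}=p^4-\frac{q^4}{16y^4}$. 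Hence, term by term, the summands of $Y_a$ and $Y_b$ become exactly the summands appearing on the right-hand sides of the claimed identities.

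First I would verify that $(n,m)\mapsto(p,q)=(n,\,2m+n)$ is a bijection from $\mathbb{Z}^2$ onto the index set $\{(p,q)\in\mathbb{Z}^2:\ p\equiv q\ (\mathrm{mod}\ 2)\}$. Injectivity is immediate, since $n=p$ and $m=\tfrac{q-p}{2}$ are uniquely recovered from $(p,q)$; for surjectivity, given any pair with $p\equiv q\ (\mathrm{mod}\ 2)$, the integer $m=\tfrac{q-p}{2}$ is well defined and maps back to $(p,q)$. The parity congruence $p\equiv q\ (\mathrm{mod}\ 2)$ is precisely the constraint $q-p=2m\in 2\mathbb{Z}$, which is exactly the condition recorded under the summation signs in the statement; this guarantees that no index is omitted and none is double-counted.

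Since the Gaussian factor $e^{-\pi\alpha(yn^2+(m+n/2)^2/y)}$ decays super-polynomially while the polynomial prefactors grow only algebraically, both series converge absolutely for every $\alpha>0$ and $y>0$. This justifies rearranging the sums freely and reindexing by $(p,q)$. Substituting the identities above and summing over the image index set then produces the two displayed formulas for $Y_a$ and $Y_b$, completing the proof.

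There is essentially no analytic obstacle here; the only point requiring genuine care is identifying the correct substitution $q=2m+n$ (rather than, say, $q=m$), which is the unique linear choice that both eliminates the half-integer shift and produces precisely the parity class $p\equiv q\ (\mathrm{mod}\ 2)$ as the image lattice. Getting this congruence right is what makes the reindexed sums match the stated expressions exactly.
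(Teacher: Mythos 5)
Your proof is correct and is exactly the reindexing the paper intends: the paper states Lemma \ref{Lemma5.3b} without proof as a "reformulation" of $Y_a,Y_b$, and the substitution $(p,q)=(n,2m+n)$, which sends $m+\tfrac{n}{2}$ to $\tfrac{q}{2}$ and maps $\mathbb{Z}^2$ bijectively onto the parity class $p\equiv q\ (\mathrm{mod}\ 2)$, is the evident intended argument. Your additional checks (bijectivity of the index map and absolute convergence justifying the rearrangement) are exactly the right points to verify and nothing is missing.
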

By the expressions in Lemma \ref{Lemma5.3b}. We then define
\begin{equation}\aligned
Y_{ap}:&=\sum_{(p,q)\in\{\pm(1,\pm1),\pm(2,0),\pm(0,2)\}}(p^2-\frac{q^2}{4y^2}) e^{-\pi\alpha (p^2y+\frac{q^2}{4y})},\\
Y_{bp}:&=\sum_{(p,q)\in\{\pm(1,\pm1),\pm(2,0),\pm(0,2)\}}y(p^4-\frac{q^4}{16y^4}) e^{-\pi\alpha (p^2y+\frac{q^2}{4y})}
\endaligned\end{equation}
be the approximate parts of $Y_a$ and $Y_b$ respectively. In fact, the approximation function $P(\alpha,y)$ in Lemma \ref{Lemma5.3a}
is given by
\begin{equation}\aligned
P(\alpha,y)=\frac{Y_{bp}}{Y_{ap}}.
\endaligned\end{equation}

Proceeding by Lemma \ref{Lemma5.3a}, Lemma \ref{Lemma5.3} is proved by
\begin{lemma}\label{Lemma5.3c} For $\alpha\geq\frac{6}{5}$,
\begin{equation}\aligned
\frac{\pi\alpha}{2}\cdot\frac{Y_b}{Y_a}\geq\frac{\pi\alpha}{2}\cdot \big(P(\alpha,y)-\frac{1}{20}\big)>\frac{11}{10}>1\;\;\hbox{for}\;\;y\in[\frac{1}{2},\frac{\sqrt3}{2}].
\endaligned\end{equation}
\end{lemma}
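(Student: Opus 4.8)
The first inequality in Lemma~\ref{Lemma5.3c} is immediate from Lemma~\ref{Lemma5.3a}: since $\alpha>0$, the factor $\tfrac{\pi\alpha}{2}$ is positive, so the lower bound $\tfrac{Y_b}{Y_a}\ge P(\alpha,y)-\tfrac1{20}$ obtained there yields $\tfrac{\pi\alpha}{2}\tfrac{Y_b}{Y_a}\ge\tfrac{\pi\alpha}{2}\big(P(\alpha,y)-\tfrac1{20}\big)$ at once. Everything therefore reduces to the self-contained inequality
\[
\frac{\pi\alpha}{2}\Big(P(\alpha,y)-\frac1{20}\Big)>\frac{11}{10},\qquad \alpha\ge\frac65,\ \ y\in\Big[\frac12,\frac{\sqrt3}{2}\Big],
\]
for the explicit function $P=Y_{bp}/Y_{ap}=N/D$, where $N,D$ are the numerator and denominator written out in Lemma~\ref{Lemma5.3a}.

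Set $E_1:=e^{-\pi\alpha(y-\frac{3}{4y})}$ and $E_2:=e^{-\pi\alpha(4y-\frac1y)}$; on $[\tfrac12,\tfrac{\sqrt3}{2}]$ one has $y-\tfrac{3}{4y}\le0$ and $4y-\tfrac1y\ge0$, hence $E_1\ge1\ge E_2>0$. The plan is to clear the denominator. First I would check that $D>0$ on $(\tfrac12,\tfrac{\sqrt3}{2}]$ (at $y=\tfrac{\sqrt3}{2}$ it equals $4E_2>0$, and it vanishes only at $y=\tfrac12$). Writing $g:=\tfrac{\pi\alpha}{2}(P-\tfrac1{20})$, one has $g-\tfrac{11}{10}=F/D$ with
\[
F(\alpha,y):=\frac{\pi\alpha}{2}N-\Big(\frac{\pi\alpha}{40}+\frac{11}{10}\Big)D,
\]
so the target becomes $F>0$. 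Grouping $F=c_1E_1+c_0+c_2E_2$ by the two exponentials, the coefficients $c_1,c_2$ are positive and $c_0$ is negative on the interval, and the governing mechanism is that the growing term $c_1E_1$ (using $E_1\ge1$) together with $c_2E_2$ outweighs the negative constant $c_0$.

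The genuine obstacle is the endpoint $y=\tfrac12$ (the square lattice), where $Y_a=Y_b=0$, so $P$ is an honest $0/0$ and both $F$ and $D$ vanish; the cancellation there is between $c_0$ and $c_2E_2$, and there is a further near-cancellation between $c_1E_1$ and $c_0$ throughout, so the margin is small and crude sign bookkeeping is not enough near $y=\tfrac12$. I would resolve this corner by a first-order expansion: compute $\partial_yD(\tfrac12)=8e^{\pi\alpha}+16-32\pi\alpha$, which is positive for $\alpha\ge\tfrac65$ because the $8e^{\pi\alpha}$ term dominates, and likewise verify $\partial_yF(\tfrac12)>0$; then $\lim_{y\to\frac12^+}\big(g-\tfrac{11}{10}\big)=\partial_yF(\tfrac12)/\partial_yD(\tfrac12)>0$ gives the strict inequality at the endpoint, while for $y$ bounded away from $\tfrac12$ the positivity of $c_1,c_2$ combined with $E_1\ge1$ delivers $F>0$ after a routine but careful estimate.

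To cover the whole range $\alpha\ge\tfrac65$ rather than a single value, I would use that $c_1,c_0,c_2$ are affine in $\alpha$ while $E_1$ increases and $E_2$ decreases in $\alpha$, so that large $\alpha$ is easy (the dominant $c_1E_1$ with $c_1>0$ for $y>\tfrac12$ grows without bound) and the extremal case is $\alpha=\tfrac65$. The hard part is squarely the endpoint $y=\tfrac12$: because of the $0/0$ behaviour of $P$ and the tightness of the bound, one must show that the $\tfrac1{20}$ cushion of Lemma~\ref{Lemma5.3a} does not consume the available margin, which is precisely what the explicit form of $P$ is built to secure.
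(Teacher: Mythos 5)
Your overall strategy is reasonable and, as far as one can tell, is in the same spirit as what the paper intends: the paper gives no proof of this lemma at all, saying only that it is ``similar to that of Lemma \ref{Lemma514}'', i.e.\ a direct computation with the explicit two-exponential expressions, which is exactly what you set up. Several of your concrete steps check out: the first inequality is indeed immediate from Lemma \ref{Lemma5.3a}; the reduction to $F=\frac{\pi\alpha}{2}N-(\frac{\pi\alpha}{40}+\frac{11}{10})D>0$ with $D>0$ is correct; $F$ and $D$ both vanish at $y=\frac12$; your value $\partial_yD(\frac12)=8e^{\pi\alpha}+16-32\pi\alpha$ is right and positive for $\alpha\ge\frac65$; and the signs $c_1>0$ (for $y>\frac12$), $c_0<0$, $c_2>0$ are all correct.

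The genuine gap is in the interior of the interval. The claimed mechanism --- that ``the positivity of $c_1,c_2$ combined with $E_1\ge1$ delivers $F>0$'' away from $y=\frac12$ --- fails: the bound $E_1\ge1$ only gives $F\ge c_1+c_0+c_2E_2$, and this lower bound is strictly negative on a large middle portion of the interval. For instance at $\alpha=\frac65$, $y=0.7$ one finds $c_1\approx0.78$, $c_0\approx-3.06$, $c_2E_2\approx0.09$, so $c_1+c_0+c_2E_2\approx-2.2$, whereas the true value $F\approx0.21$ is positive only because $E_1=e^{\pi\alpha(\frac{3}{4y}-y)}\approx4.06$ there; the same failure occurs at $y=0.6$. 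So the step you defer as ``a routine but careful estimate'' cannot be closed with $E_1\ge1$: one must carry the full exponential $e^{\pi\alpha(\frac{3}{4y}-y)}$ against $|c_0|$, and the margin is thin throughout, since $\frac{\pi\alpha}{2}(P-\frac1{20})$ only reaches about $1.2$ at $\alpha=\frac65$ against the target $\frac{11}{10}$. The same defect already infects your preliminary claim $D>0$: with $E_1\ge1$ one only gets $D\ge2-\frac{3}{2y^2}+4E_2$, which is negative for $y$ slightly above $\frac12$. Finally, near $y=\frac{\sqrt3}{2}$, where $E_1\to1$, positivity comes from the exact cancellation $c_1+c_0\to0$ together with $c_2E_2>0$, a different mechanism again. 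A correct writeup therefore needs three regimes (near $\frac12$ via the derivative quotient, the middle via the genuine exponential growth of $E_1$, and near $\frac{\sqrt3}{2}$ via the cancellation of the polynomial parts), not the single bound $E_1\ge1$.
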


Next, we shall show that \eqref{Yab} admits at most one solution for $\alpha\in[1,\frac{6}{5}]$. This is proved by a
monotonicity property
\begin{lemma} \label{Lemma5.4}For $\alpha\in[1,\frac{6}{5}]$,
\begin{equation}\aligned
\frac{\partial}{\partial y}\frac{Y_b}{Y_a}\leq0\;\;\hbox{for}\;\;y\in[\frac{1}{2},\frac{\sqrt3}{2}].
\endaligned\end{equation}
\end{lemma}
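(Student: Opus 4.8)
The plan is to reduce the monotonicity statement of Lemma~\ref{Lemma5.4} to the sign of a Wronskian-type quantity and to analyze it through the dominant-plus-error decomposition already prepared in \eqref{Nab} and Lemma~\ref{Lemma5.3b}, mirroring the treatment of $\frac{X_b}{X_a}$ on $\Gamma_a$ in Lemmas~\ref{Lemma510}--\ref{Lemma514}. First, since $Y_a\ge0$ on $[\frac12,\frac{\sqrt3}{2}]$ (vanishing only at the two endpoints), the identity
\[
\partial_y\frac{Y_b}{Y_a}=\frac{\partial_y Y_b\cdot Y_a-Y_b\cdot\partial_y Y_a}{Y_a^2}
\]
shows that $\partial_y\frac{Y_b}{Y_a}$ has the sign of the numerator $W:=\partial_y Y_b\cdot Y_a-Y_b\cdot\partial_y Y_a$, so it suffices to prove $W\le0$ on $(\frac12,\frac{\sqrt3}{2})$. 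The essential difficulty is that both $Y_a$ and $Y_b$ vanish at $y=\frac12$ and at $y=\frac{\sqrt3}{2}$ (the square and hexagonal points, where $\theta_y=-\pi\alpha Y_a=0$ and $M_y=0$ by Lemma~\ref{Lemma5.2}, forcing $Y_a=Y_b=0$); hence $\frac{Y_b}{Y_a}$ is of type $\frac00$ at both ends and $W$ degenerates there, so a crude bound on $W$ destroys all the cancellation. I would therefore split $[\frac12,\frac{\sqrt3}{2}]$ into an interior block and two endpoint neighborhoods.

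On the interior block I would prove $W<0$ directly. Using Lemma~\ref{Lemma5.3b} I write $Y_a=Y_{ap}+Y_{ae}$ and $Y_b=Y_{bp}+Y_{be}$ with the six-term dominant parts $Y_{ap},Y_{bp}$ whose closed forms already feed the approximant $P(\alpha,y)=\frac{Y_{bp}}{Y_{ap}}$ of Lemma~\ref{Lemma5.3a}. The dominant Wronskian $W_p:=\partial_y Y_{bp}\cdot Y_{ap}-Y_{bp}\cdot\partial_y Y_{ap}$ is computed from these explicit expressions and shown to be strictly negative, exactly as in Lemma~\ref{Lemma514} for $\Gamma_a$; the remaining contributions to $W$ each carry a factor $Y_{ae},Y_{be},\partial_yY_{ae}$ or $\partial_yY_{be}$, which are $O(e^{-c\pi\alpha/y})$ relative to $W_p$, so by the bookkeeping of \eqref{Xab5}--\eqref{Xab7} they are absorbed and $W<0$ holds for $\alpha\in[1,\frac65]$.

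Near the endpoints I would instead invoke the monotone form of l'H\^opital's rule (\cite{And}): since $Y_a(y_*)=Y_b(y_*)=0$ at $y_*\in\{\frac12,\frac{\sqrt3}{2}\}$, the monotonicity of $\frac{Y_b}{Y_a}$ is inherited from that of $\frac{\partial_y Y_b}{\partial_y Y_a}$, which is governed by the second Wronskian $W_2:=\partial_{yy}Y_b\cdot\partial_yY_a-\partial_yY_b\cdot\partial_{yy}Y_a$; one then shows $W_2<0$ near each endpoint, again via the dominant-plus-error splitting (the analogue of item~$(1)$ of Lemma~\ref{Lemma514} together with estimate \eqref{Xab9}). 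The hard part will be the hexagonal endpoint $y=\frac{\sqrt3}{2}$: there the lattice shells $(1,\pm1)$ and $(0,\pm2)$ coalesce, so the leading contributions to $Y_a$ and $Y_b$ cancel and the terms $(\pm1,\pm3)$ that were relegated to the error become the \emph{same} exponential order as the retained $(\pm2,0)$ terms, so that $Y_{ap}(\frac{\sqrt3}{2})\ne0$ while $Y_a(\frac{\sqrt3}{2})=0$; the bare six-term block is thus insufficient there. To keep the error genuinely subordinate near $\frac{\sqrt3}{2}$ I would enlarge the dominant block to include these degenerate shells (or equivalently exploit the exact hexagonal symmetry to evaluate the endpoint limit of $\frac{\partial_yY_b}{\partial_yY_a}$) before running the error estimates, and securing all of this uniformly in $\alpha\in[1,\frac65]$ down to the square endpoint $y=\frac12$, whose minimal shell also has vanishing coefficient, is where the real work lies.
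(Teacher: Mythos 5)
Your proposal takes essentially the same approach as the paper, which omits the details of this lemma entirely and states only that the proof is ``similar to that of Lemma \ref{Lemma514}'' --- that is, reduce to the sign of the Wronskian $\partial_y Y_b\cdot Y_a-Y_b\cdot\partial_y Y_a$, split off the dominant shells from the exponentially small remainder, and treat the endpoints $y=\frac12,\frac{\sqrt3}{2}$ (where $Y_a=Y_b=0$) via the monotone l'H\^opital rule of \cite{And} applied to the second Wronskian, exactly as you outline. Your further observation that at $y=\frac{\sqrt3}{2}$ the shells $\pm(1,\pm3)$ acquire the same exponential order as $\pm(2,0)$, so that $Y_{ap}$ does not vanish there while $Y_a$ does and the six-term dominant block must be enlarged before the error estimates can close, is a genuine subtlety that the paper's one-line sketch does not address.
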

The proof of Lemmas \ref{Lemma5.3c} and \ref{Lemma5.4} is similar to that of Lemma \ref{Lemma514}, we omit the details here.

By the deformation \eqref{Py}, Lemmas \ref{Lemma5.2}, \ref{Lemma5.3} and \ref{Lemma5.4} give the proof of Proposition \ref{Prop5.1}.
In Proposition \ref{Prop5.1}, we already obtain that
\begin{equation}\aligned
\min_{y\in[\frac{1}{2},\frac{\sqrt3}{2}]}\Big(
\theta(\alpha,\frac{1}{2}+iy)-\pi\alpha M(\alpha,\frac{1}{2}+iy)
\Big)\;\;
\hbox{is achieved at}\;\;\frac{1}{2}\;\hbox{or}\;\frac{\sqrt3}{2}.
\endaligned\end{equation}
By the deformation \eqref{Py} and Lemma \ref{Lemma5.3}, we have
\begin{equation}\aligned
\hbox{for}\;\;\alpha\geq\frac{6}{5}, \;\;\min_{y\in[\frac{1}{2},\frac{\sqrt3}{2}]}\Big(
\theta(\alpha,\frac{1}{2}+iy)-\pi\alpha M(\alpha,\frac{1}{2}+iy)
\Big)\;\;
\hbox{is achieved at}\;\;\frac{\sqrt3}{2}.
\endaligned\end{equation}
For $\alpha\in[1,\frac{6}{5}]$, we shall further determine where the minimizer is $\frac{1}{2}$ and where the minimizer is $\frac{\sqrt3}{2}$.
This is classified by a comparison lemma.

\begin{lemma}[A comparison between the values on $\frac{1}{2}$ and $\frac{\sqrt3}{2}$]\label{Lemma5.5}
For $\alpha\in[1,\frac{6}{5}]$, then
\begin{equation}\aligned
\Big(
\theta(\alpha,\frac{1}{2}+iy)-\pi\alpha M(\alpha,\frac{1}{2}+iy)
\Big)\mid_{y=\frac{1}{2}}&\leq\Big(
\theta(\alpha,\frac{1}{2}+iy)-\pi\alpha M(\alpha,\frac{1}{2}+iy)
\Big)\mid_{y=\frac{\sqrt3}{2}}\\
&\Leftrightarrow\alpha\leq1.086561943\cdots.
\endaligned\end{equation}
\end{lemma}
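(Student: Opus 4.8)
The plan is to avoid estimating the transcendental quantity $\big(\theta(\alpha,\tfrac12+iy)-\pi\alpha M(\alpha,\tfrac12+iy)\big)$ at the two endpoints directly, and instead to collapse the entire comparison onto a single sign question for the $M$-functional at the \emph{dual} parameter $\tfrac1\alpha$. By the duality identity (Lemma \ref{Lemma1}) one has $\theta(\alpha,z)-\pi\alpha M(\alpha,z)=\tfrac{\pi}{\alpha^2}M(\tfrac1\alpha,z)$ for every $z$; since $\tfrac{\pi}{\alpha^2}>0$, the inequality to be proved is equivalent to the comparison of $M(\tfrac1\alpha,\tfrac12+\tfrac{i}{2})$ with $M(\tfrac1\alpha,\tfrac12+i\tfrac{\sqrt3}{2})$. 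The second endpoint is the hexagonal point $\tfrac12+i\tfrac{\sqrt3}{2}=e^{i\frac{\pi}{3}}$, while $\tfrac12+\tfrac{i}{2}$ is $\mathcal G$-equivalent to the square point $i$ — this is exactly the $u=0$ instance of Lemma \ref{LemmaH1} (or directly the invariance in Lemma \ref{G111}). Hence, writing $\beta:=\tfrac1\alpha\in[\tfrac56,1]$, the lemma reduces to determining the sign of
\[
g(\beta):=M(\beta,i)-M(\beta,e^{i\frac{\pi}{3}}).
\]

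Next I would make $g$ fully explicit. Writing the two lattice sums as $M(\beta,i)=\sum_{m,n}(m^2+n^2)e^{-\pi\beta(m^2+n^2)}$ and $M(\beta,e^{i\frac{\pi}{3}})=\tfrac{2}{\sqrt3}\sum_{m,n}(m^2+mn+n^2)e^{-\frac{2\pi\beta}{\sqrt3}(m^2+mn+n^2)}$, both converge geometrically on $[\tfrac56,1]$. Isolating the short orbits (the four vectors of square-length $1$ and the four of square-length $2$ for $i$; the six vectors of square-length $\tfrac{2}{\sqrt3}$ for $e^{i\frac{\pi}{3}}$) in the dominant-plus-error style of Sections 3--6 gives $g(\beta)=4e^{-\pi\beta}+8e^{-2\pi\beta}-4\sqrt3\,e^{-\frac{2\pi}{\sqrt3}\beta}+(\text{tail})$, with the tail dominated by an explicit geometric majorant. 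The delicate feature, which I would highlight, is the interleaving $1<\tfrac{2}{\sqrt3}<2$ of the leading exponents: the hexagonal leading term decays strictly faster than the square's first orbit but slower than its second, so the sign of $g$ is genuinely decided by a balance of the first few orbits and cannot be read off a single exponential.

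To finish I would prove that $g$ changes sign exactly once on $[\tfrac56,1]$ by establishing strict monotonicity. Differentiating under the sum (using Lemma \ref{Lemma2}) gives $g'(\beta)=-\pi\big(N(\beta,i)-N(\beta,e^{i\frac{\pi}{3}})\big)$, where $N(\beta,z)=\sum|\mathbb{P}|^2\cdot|\mathbb{P}|^2e^{-\pi\beta|\mathbb{P}|^2}$ is the fourth-moment functional. On the restricted window $[\tfrac56,1]$ the quantity $N(\beta,e^{i\frac{\pi}{3}})-N(\beta,i)$ stays strictly positive once the leading orbits are separated and the tails bounded, so $g'>0$ and $g$ is strictly increasing with a unique root. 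Since the hexagonal term decays faster, $g(\beta)>0$ near $\beta=1$ (the hexagonal value is strictly smaller, consistent with the $\alpha\ge1$ optimality already proved in \cite{LuoA}) and $g(\beta)<0$ as $\beta\downarrow\tfrac56$; the crossing occurs at $\alpha_2=0.9203340937\cdots$, so the two endpoint values coincide precisely at $\alpha=\tfrac1{\alpha_2}=1.086561943\cdots$, and the ordering of $\big(\theta-\pi\alpha M\big)|_{y=\frac12}$ against $\big(\theta-\pi\alpha M\big)|_{y=\frac{\sqrt3}{2}}$ is governed by whether $\tfrac1\alpha$ lies below or above $\alpha_2$.

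The main obstacle is exactly this uniqueness/monotonicity step. Up to sign, $g'$ compares the \emph{fourth-moment} theta functionals of the square and hexagonal lattices, and the paper explicitly warns that the $|\mathbb{P}|^4$-weighted sum does \emph{not} obey a clean hexagonal/square extremality; hence no general lattice-minimization principle (à la Montgomery) is available. One is forced to exploit the narrowness of the window $\beta\in[\tfrac56,1]$ and to make the dominant-orbit comparison quantitative, bounding every tail by an explicit geometric series exactly as in Lemmas \ref{Lemma514} and \ref{Lemma5.3c}, so that the sign of $g'$ — and with it the uniqueness of the threshold — is certified by a finite, rigorously estimated expression rather than by any asymptotic or global minimization theorem.
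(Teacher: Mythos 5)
Your reduction is exactly the one the paper itself uses: Lemma \ref{Lemma1} converts $\theta(\alpha,z)-\pi\alpha M(\alpha,z)$ into $\tfrac{\pi}{\alpha^2}M(\tfrac1\alpha,z)$, and Lemma \ref{LemmaH1} (at $u=0$ and $u=\tfrac12$) identifies the two endpoints with $i$ and $e^{i\pi/3}$, so the lemma collapses to the sign of $g(\beta)=M(\beta,i)-M(\beta,e^{i\frac{\pi}{3}})$ on $\beta\in[\tfrac56,1]$ --- which is precisely the ``alternate and equivalent version'' the paper records at the end of Section 6. Where you genuinely add something is the uniqueness of the crossing: the paper asserts the threshold $\alpha_2$ without argument, whereas you propose to certify it by showing $g'(\beta)=-\pi\big(N(\beta,i)-N(\beta,e^{i\frac{\pi}{3}})\big)>0$ on the window via a dominant-orbit comparison of the fourth-moment functionals. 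This is sound in principle (numerically $N(\beta,e^{i\frac{\pi}{3}})-N(\beta,i)$ is of order $10^{-2}$ on $[\tfrac56,1]$ while the competing tails are of order $10^{-3}$ or smaller, so the separation survives rigorous geometric majorants), and your observation that no Montgomery-type global principle is available for the $|\mathbb{P}|^4$ weight --- forcing a window-restricted quantitative argument --- is exactly the right diagnosis. Your leading-term expansion $g(\beta)=4e^{-\pi\beta}+8e^{-2\pi\beta}-4\sqrt3\,e^{-\frac{2\pi}{\sqrt3}\beta}+O(e^{-3\pi\beta})$ and the endpoint signs $g(\tfrac56)<0<g(1)$ check out.

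One point you must make explicit before this can stand as a proof: carried to its conclusion, your chain gives $\big(\theta-\pi\alpha M\big)\!\mid_{y=\frac12}\;\leq\;\big(\theta-\pi\alpha M\big)\!\mid_{y=\frac{\sqrt3}{2}}$ if and only if $M(\tfrac1\alpha,i)\leq M(\tfrac1\alpha,e^{i\frac{\pi}{3}})$, i.e.\ if and only if $\tfrac1\alpha\leq\alpha_2$, i.e.\ $\alpha\geq 1.086561943\cdots$. This is the \emph{reverse} of the biconditional printed in the statement, but it is the direction consistent with Theorem \ref{Th42} (minimum at $y=\tfrac{\sqrt3}{2}$ for $\alpha\in[1,\tfrac1{\alpha_2})$), with the unnumbered lemma closing Section 6, and with direct numerics at $\alpha=1$ (where $\pi M(1,i)\approx 0.590 > \pi M(1,e^{i\pi/3})\approx 0.579$). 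So the printed inequality appears to be a typo; your write-up stops just short of committing to a direction (``governed by whether $\tfrac1\alpha$ lies below or above $\alpha_2$''), and you should state the corrected biconditional explicitly rather than leaving it implicit.
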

By Lemma \ref{Lemma5.5} and Proposition \ref{Prop5.1}, the proof of Theorem \ref{Th42} is complete.
An alternate and equivalent version of the comparison in Lemma \ref{Lemma5.5}(by the duality in Lemma \ref{LemmaH1}) is
\begin{lemma}
For $\alpha\in[\frac{5}{6},1]$, then
\begin{equation}\aligned
M(\alpha,i)\leq M(\alpha,e^{i\frac{\pi}{3}})
&\Leftrightarrow\alpha\leq0.9203340937\cdots.
\endaligned\end{equation}

\end{lemma}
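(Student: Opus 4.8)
The plan is to deduce this $M$-comparison from the $G$-comparison of Lemma \ref{Lemma5.5} by combining the arc-to-axis identity (Lemma \ref{LemmaH1}) with the reciprocal duality (Lemma \ref{Lemma1}). The guiding observation is that $i$ and $e^{i\frac{\pi}{3}}$ are carried to the two endpoints $y=\frac12$ and $y=\frac{\sqrt3}{2}$ of the segment on the $\frac12$-axis that appears in Theorem \ref{Th42}, so the desired inequality is exactly an endpoint comparison of the dual functional.

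First I would place both $M(\alpha,i)$ and $M(\alpha,e^{i\frac{\pi}{3}})$ onto the $\frac12$-axis using Lemma \ref{LemmaH1}. Taking $u=0$ sends the arc point $i$ to the axis point $\frac12+\frac{i}{2}$, so $M(\alpha,i)=M(\alpha,\frac12+\frac{i}{2})$; taking $u=\frac12$ gives $M(\alpha,e^{i\frac{\pi}{3}})=M(\alpha,\frac12+i\frac{\sqrt3}{2})$ (here $e^{i\frac{\pi}{3}}=\frac12+i\frac{\sqrt3}{2}$ already lies on the axis). Thus the claimed inequality is precisely the comparison of $M(\alpha,\cdot)$ at the two endpoints $y=\frac12$ and $y=\frac{\sqrt3}{2}$.

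Next, set $s=\frac1\alpha$, so that $\alpha\in[\frac56,1]$ corresponds to $s\in[1,\frac65]$. Applying Lemma \ref{Lemma1} with its parameter equal to $s$ gives, for every $z$,
\begin{equation*}
M(\alpha,z)=M(\tfrac1s,z)=\frac{s^2}{\pi}\Big(\theta(s,z)-\pi s\,M(s,z)\Big)=\frac{s^2}{\pi}\,G(s,z),
\end{equation*}
where $G(s,z):=\theta(s,z)-\pi s\,M(s,z)$ is exactly the functional analysed in Theorem \ref{Th42}. Since the prefactor $\frac{s^2}{\pi}$ is strictly positive, evaluating at the two endpoints shows that $M(\alpha,i)\le M(\alpha,e^{i\frac{\pi}{3}})$ holds if and only if $G(s,\tfrac12+\tfrac{i}{2})\le G(s,\tfrac12+i\tfrac{\sqrt3}{2})$.

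Finally, I would invoke the comparison Lemma \ref{Lemma5.5} at the parameter $s\in[1,\frac65]$, which pins the threshold of the $G$-comparison at $s=\frac{1}{\alpha_2}=1.086561943\cdots$. Translating back through the decreasing substitution $s=\frac1\alpha$ reverses the direction of the inequality in the parameter and sends the threshold to its reciprocal $\alpha_2=\frac{1}{1.086561943\cdots}=0.9203340937\cdots$, giving exactly the stated equivalence. The only genuinely substantive input is Lemma \ref{Lemma5.5}; the point requiring care is the bookkeeping—checking that $i,\,e^{i\frac{\pi}{3}}$ land at $y=\frac12,\frac{\sqrt3}{2}$, that the positive factor $\frac{s^2}{\pi}$ leaves the inequality unchanged, and that the passage $\alpha\mapsto\frac1\alpha$ simultaneously inverts the threshold value and flips the inequality direction, so that the final condition reads $\alpha\le\alpha_2$ rather than $\alpha\ge\alpha_2$.
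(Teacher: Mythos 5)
Your overall route is exactly the paper's: the paper presents this lemma as nothing more than an ``alternate and equivalent version'' of Lemma \ref{Lemma5.5}, obtained by combining the arc-to-axis identity (Lemma \ref{LemmaH1}) with the duality of Lemma \ref{Lemma1}, and your three steps (send $i$ and $e^{i\frac{\pi}{3}}$ to $y=\frac12$ and $y=\frac{\sqrt3}{2}$ on the $\frac12$-axis, write $M(\alpha,z)=\frac{s^2}{\pi}\big(\theta(s,z)-\pi s M(s,z)\big)$ with $s=\frac1\alpha$, and compare the two endpoint values of the dual functional) are precisely that argument, correctly executed up to the final line.

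The final line, however, is where the bookkeeping you yourself flag as the delicate point goes wrong. Taking Lemma \ref{Lemma5.5} at face value, the endpoint comparison $\big(\theta-\pi s M\big)\big|_{y=1/2}\le\big(\theta-\pi s M\big)\big|_{y=\sqrt3/2}$ holds iff $s\le\frac{1}{\alpha_2}$; with $s=\frac1\alpha$ this reads $\frac1\alpha\le\frac{1}{\alpha_2}$, i.e.\ $\alpha\ge\alpha_2$ --- the opposite of the stated conclusion. The inversion does flip the inequality, but it flips $s\le\frac{1}{\alpha_2}$ into $\alpha\ge\alpha_2$, not into $\alpha\le\alpha_2$. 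The way out is to notice that Lemma \ref{Lemma5.5} as printed has its threshold condition reversed: it contradicts Theorem \ref{Th42} of the same section, which asserts that for $\alpha\in[1,\frac{1}{\alpha_2})$ the minimum of $\theta(\alpha,\frac12+iy)-\pi\alpha M(\alpha,\frac12+iy)$ over $[\frac12,\frac{\sqrt3}{2}]$ is attained at $\frac{\sqrt3}{2}$, which forces the value at $\frac{\sqrt3}{2}$ to be the smaller one there; the correct statement is that the value at $y=\frac12$ is the smaller one iff $\alpha\ge\frac{1}{\alpha_2}$. With that corrected version your chain does produce $\alpha\le\alpha_2$, in agreement with Theorems \ref{Th2} and \ref{Th41}. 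So: same approach as the paper and correct in substance, but as written your derivation cites Lemma \ref{Lemma5.5} literally and then asserts a direction of the equivalence that does not follow from it; you should either correct the direction in Lemma \ref{Lemma5.5} before invoking it or note explicitly that the literal citation yields the reversed condition.
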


\section{Proof of Theorem \ref{Th1}}

We are ready to prove our main result.

By \cite{LuoA}, we already prove that
\begin{equation}\aligned\nonumber
\hbox{for}\;\;\alpha\geq1\;\;\min_{z\in\mathbb{H}}\Big(\pi\alpha M(\alpha,z)-\frac{1}{2}\theta(\alpha,z)\Big)\;\;\hbox{is achieved at}\;\;i.
\endaligned\end{equation}
This, in particular, implies that
\begin{equation}\aligned\nonumber
\hbox{for}\;\;\alpha\geq1\;\;\min_{z\in\mathbb{H}}M(\alpha,z)\;\;\hbox{is achieved at}\;\;i.
\endaligned\end{equation}
The cases of $\alpha\in(0,1)$ is much more complicated. This is partially due to the slow convergence of the double series
in $M(\alpha,z)$. We actually invert it into the cases $\alpha\geq1$ with a new functional, namely, the following two minimization problems are equivalent
\begin{equation}\aligned\nonumber
\min_{z\in\mathbb{H}}M(\alpha,z)\;\;\hbox{for}\;\;\alpha\in(0,1]\;\;
\Leftrightarrow\;\; \min_{z\in\mathbb{H}}\Big(\theta(\alpha,z)-\pi\alpha M(\alpha,z)\Big)\;\;\hbox{for}\;\;\alpha\in[1,\infty).
\endaligned\end{equation}
This is achieved by an observed duality relation
\begin{equation}\aligned\nonumber
M(\frac{1}{\alpha},z)=\frac{\alpha^2}{\pi}\Big(
\theta(\alpha,z)-\pi\alpha M(\alpha,z)
\Big).
\endaligned\end{equation}
in Lemma \ref{Lemma1}.
Thanks to this duality relation, by Propositions \ref{Prop1} and \ref{PropA1}, we obtain that
\begin{equation}\aligned\label{L100}
\min_{z\in\mathbb{H}}M(\alpha,z)=\min_{z\in\Gamma}M(\alpha,z)\;\;\hbox{for}\;\;\alpha\in(0,1],
\endaligned\end{equation}
here $\Gamma=\Gamma_a\cup\Gamma_b$. \eqref{L100} reduces the location of the minimizers onto a curve $\Gamma$.
The minimizers of $M(\alpha,z)$ on $\Gamma_a$ and $\Gamma_b$ for $\alpha\in(0,1]$ were classified by Theorems \ref{Th31} and \ref{Th41} respectively.
Theorem \ref{Th1} then follows by \eqref{L100} and Theorems \ref{Th31} and \ref{Th41}.

\bigskip
\noindent
{\bf Acknowledgements.}
S. Luo thanks Prof. Y.Y. Hu(Central South University) for useful discussions.
S. Luo is grateful to Prof. W. M. Zou(Tsinghua university) and Prof. H. J. Zhao(Wuhan University) for their constant support and encouragement.
 The research of S. Luo is partially supported by NSFC(Nos. 12261045, 12001253) and double thousands plan of Jiangxi(jxsq2019101048). The research of J. Wei is partially supported by NSERC of Canada.

{\bf Statements and Declarations: there is no conflict of interest.}

{\bf Data availability: the manuscript has no associated data.}
\bigskip


\end{document}